\newtheorem{thm}{Theorem}[section] 
\newtheorem{lem}[thm]{Lemma} 
\newtheorem{cor}[thm]{Corollary} 
\newtheorem{prop}[thm]{Proposition}
\theoremstyle{definition}
\newtheorem*{ack}{Acknowledgments}
\newtheorem{defi}[thm]{Definition} 
\newtheorem{rem}[thm]{Remark}
\DeclareMathOperator{\Res}{Res}
\DeclareMathOperator{\Hom}{Hom}
\DeclareMathOperator{\Ind}{Ind}
\DeclareMathOperator{\Cur}{Cur}
\DeclareMathOperator{\Lie}{Lie}
\DeclareMathOperator{\Sing}{Sing}
\DeclareMathOperator{\Ker}{Ker}
\DeclareMathOperator{\CoKer}{CoKer}
\DeclareMathOperator{\Ima}{Im}
\DeclareMathOperator{\rk}{rk}
\DeclareMathOperator{\End}{End}
\DeclareMathOperator{\I}{I}
\DeclareMathOperator{\Id}{Id}
\DeclareMathOperator{\Gr}{Gr}
\DeclareMathOperator{\size}{size}
\DeclareMathOperator{\ch}{ch}
\DeclareMathOperator{\dime}{dim}
\DeclareMathOperator{\tr}{tr}
\newcommand{\C}{\mathbb{C}}
\newcommand{\Z}{\mathbb{Z}}
\DeclareMathOperator{\degr}{deg}
\newcommand{\inlinewedge}{\textrm{\raisebox{0.6mm}{\footnotesize $\bigwedge$}}}
\newcommand{\displaywedge}{\textrm{\raisebox{0.6mm}{\tiny $\bigwedge$}}}
\newcommand{\g}{\mathfrak {g}}
\newcommand*{\bigchi}{\mbox{\Large$\chi$}}
\title[]{Computation of the homology of the complexes of finite Verma modules for $K'_4$}
\author{Lucia Bagnoli}
\subjclass[2010]{08A05, 17B05 (primary), 17B65, 17B70 (secondary)}
\keywords{conformal superalgebras, finite Verma modules, annihilation superalgebra, singular vectors}
\address{Lucia Bagnoli, Dipartimento di matematica, Universit\`a di Bologna, Piazza di Porta San Donato 5, 40126 Bologna, Italy}
\email{luciabagnoli93@gmail.com, lucia.bagnoli4@unibo.it}
\begin{document}
\maketitle

\begin{abstract}
We compute the homology of the complexes of finite Verma modules over the annihilation superalgebra $\mathcal A(K'_{4})$, associated with the conformal superalgebra $K'_{4}$, obtained in \cite{K4}. We use the computation of the homology in order to provide an explicit realization of all the irreducible quotients of finite Verma modules over $\mathcal A(K'_{4})$.
	\end{abstract}

	\section{Introduction} 
	Finite simple conformal superalgebras were completely classified in \cite{fattorikac} and consist of the following list: $\Cur \mathfrak{g}$, where $\mathfrak{g}$ is a simple finite$-$dimensional Lie superalgebra, $W_{n} (n\geq 0)$, $S_{n,b}$, $\tilde{S}_{n}$ $(n\geq 2, \, b \in \mathbb{C})$, $K_{n} (n\geq 0, \, n \neq 4)$, $K'_{4}$, $CK_{6} $. The finite irreducible modules over the conformal superalgebras $\Cur \mathfrak{g}$, $K_{0}$, $K_{1}$ were studied in \cite{chengkac}. Boyallian, Kac, Liberati and Rudakov classified all finite irreducible modules over the conformal superalgebras of type $W$ and $S$ in \cite{bklr}. The finite irreducible modules over the conformal superalgebras $S_{2,0}$, $K_{n}$, for $n =2,3, 4$ were studied in \cite{chenglam}. Boyallian, Kac and Liberati classified all finite irreducible modules over the conformal superalgebras of type $K_{n}$ in \cite{kac1}. A classification of all finite irreducible modules over the conformal superalgebra $CK_{6}$ was obtained in \cite{ck6} and \cite{zm} with different approaches. 
	Finally the classification of all finite irreducible modules over the conformal superalgebra $K'_{4}$ is obtained in \cite{K4}.\\
	In \cite{K4} the classification of all finite irreducible modules over the conformal superalgebra $K'_{4}$ is obtained by their correspondence with irreducible \textit{finite conformal} modules over the annihilation superalgebra $\g:=\mathcal{A}(K'_{4})$ associated with $K'_{4}$. In order to obtain this classification, the authors classify all highest weight singular vectors, i.e. highest weight vectors which are annihilated by $\mathcal A(K'_4)_{>0}$, of finite Verma modules, that are the induced modules $\Ind(F)=U(\g) \otimes _{U(\g_{\geq 0})} F$ such that $F$ is a finite$-$dimensional irreducible $\g_{\geq 0}$-module \cite{kacrudakov,chenglam}. 
	In \cite{K4} the authors show that for $\mathcal A(K'_{4})$ there are four families of singular vectors of degree 1, four families of singular vectors of degree 2 and two singular vectors of degree 3.
	Since the classification of singular vectors of finite Verma modules is equivalent to the classification of morphisms between such modules, in \cite{K4} it is shown that these morphisms can be arranged in an infinite number of bilateral complexes as in Figure \ref{figura}, which is similar to those obtained for the exceptional Lie superalgebras $E(1,6)$, $E(3,6)$, $E(3,8)$ and $E(5,10)$ (see \cite{kacrudakovE36,kacrudakov,kacrudakovE38,E36III, cantacaselliE510, cantacasellikacE510}).\\
	The aim of this work is to compute the homology of the complexes in Figure \ref{figura} and provide an explicit construction of all the irreducible quotients of finite Verma modules over $\mathcal A(K'_{4})$. We compute the homology through the theory of spectral sequences of bicomplexes, using an argument similar to the one used in \cite{kacrudakovE36} for the homology of the complexes of finite Verma modules over $E(3,6)$. We obtain in particular that the complexes in Figure \ref{figura} are exact in each point except for the origin of the first quadrant and the point of coordinates $(1,1)$ in the third quadrant, in which the homology space is isomorphic to the trivial representation. \\
	The computation of the homology allows us to show that all the irreducible quotients of finite Verma modules over $\mathcal A(K'_4)$ occur among cokernels, kernel and images of complexes in Figure \ref{figura}.
	As an application of this result, we compute the size of all the irreducible quotients of finite Verma modules, that is defined following \cite{kacrudakovE36}.\\
	The paper is organized as follows. In section 2 we recall some notions on conformal superalgebras. In section 3 we recall the definition of the conformal superalgebra $K'_4$ and the classification of singular vectors obtained in \cite{K4}. In section 4 we find an explicit expression for the morphisms represented in Figure \ref{figura}. In section 5 we recall the preliminaries on spectral sequences that we need. In section 6 we compute the homology of the complexes in Figure \ref{figura}. Finally in section 7 we compute the size of all the irreducible quotients of finite Verma modules.

\section{Preliminaries on conformal superalgebras}
We recall some notions on conformal superalgebras. For further details see \cite[Chapter 2]{kac1vertex}, \cite{dandrea}, \cite{bklr}, \cite{kac1}.\\
Let $\g$ be a Lie superalgebra; a formal distribution with coefficients in $\g$, or equivalently a $\g-$valued formal distribution, in the indeterminate $z$ is an expression of the following form:
\begin{align*}
a(z)=\sum_{n \in \Z}a_{n}z^{-n-1},  
\end{align*}
with $a_{n} \in \g$ for every $n \in \Z$. We denote the vector space of formal distributions with coefficients in $\g$ in the indeterminate $z$ by $\g[[z,z^{-1}]]$. We denote by $\Res(a(z))=a_{0}$ the coefficient of $z^{-1}$ of $a(z)$. The vector space $\g[[z,z^{-1}]]$ has a natural structure of $\C[\partial_{z}]-$module. We define for all $a(z) \in \g[[z,z^{-1}]]$ its derivative:
\begin{align*}
\partial_{z}a(z)=\sum_{n \in \Z}(-n-1)a_{n}z^{-n-2}.
\end{align*} 
A formal distribution with coefficients in $\g$ in the indeterminates $z$ and $w$ is an expression of the following form:
\begin{align*}
a(z,w)=\sum_{m,n \in \Z}a_{m,n}z^{-m-1}w^{-n-1},  
\end{align*}
with $a_{m,n} \in \g$ for every $m,n \in \Z$. We denote the vector space of formal distributions with coefficients in $\g$ in the indeterminates $z$ and $w$ by $\g[[z,z^{-1},w,w^{-1}]]$.
Given two formal distributions $a(z) \in \g[[z,z^{-1}]]$ and $b(w) \in \g[[w,w^{-1}]]$, we define the commutator $[a(z),b(w)]$:
\begin{align*}
[a(z),b(w)]=\bigg[\sum_{n \in \Z}a_{n}z^{-n-1} ,\sum_{m \in \Z}b_{m}w^{-m-1}  \bigg]=\sum_{m,n \in \Z}[a_{n},b_{m}] z^{-n-1}  w^{-m-1} .
\end{align*}
\begin{defi}
Two formal distributions $a(z),b(z) \in \g[[z,z^{-1}]]$ are called local if:
\begin{align*}
(z-w)^{N}[a(z),b(w)]=0 \ for \ some \ N \gg 0.
\end{align*}
\end{defi}
We call $\delta-$function the following formal distribution in the indeterminates $z$ and $w$:
\begin{align*}
\delta(z-w)=z^{-1}\sum_{n \in \Z} \left( \frac{w}{z} \right)^{n}.
\end{align*}
See Corollary $2.2$ in \cite{kac1vertex} for the following equivalent condition of locality.
\begin{prop}
Two formal distributions $a(z),b(z) \in \g[[z,z^{-1}]]$ are local if and only if $[a(z),b(w)]$ can be expressed as a finite sum of the form:
\begin{align*}
[a(z),b(w)]=\sum_{j}(a(w)_{(j)}b(w))\frac{\partial_{w}^{j}}{j!} \delta(z-w),
\end{align*}
where the coefficients $(a(w)_{(j)}b(w)):=\Res_{z}(z-w)^{j}[a(z),b(w)]$ are formal distributions in the indeterminate $w$.
\end{prop}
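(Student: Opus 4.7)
My plan is to prove the two directions separately, isolating the work in a decomposition lemma for two-variable formal distributions killed by a power of $(z-w)$.

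For the backward direction, I would first record the chain of identities $(z-w)\delta(z-w)=0$ (immediate from $\delta(z-w)=\sum_{n\in\Z} w^{n}z^{-n-1}$) and, by induction on $j$, $(z-w)^{j+1}\partial_{w}^{j}\delta(z-w)/j!=0$; the inductive step uses the relation $(z-w)\partial_{w}^{j}\delta(z-w)/j!=\partial_{w}^{j-1}\delta(z-w)/(j-1)!$, obtained from Pascal's identity (or equivalently from differentiating $(z-w)\delta=0$). If $[a(z),b(w)]$ has the stated finite expansion with indices $0\le j<N$, multiplying by $(z-w)^{N}$ kills every summand, which gives locality.

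For the forward direction, the core is the lemma: \emph{if $f(z,w)\in\g[[z,z^{-1},w,w^{-1}]]$ satisfies $(z-w)^{N}f(z,w)=0$, then}
\[
f(z,w)=\sum_{j=0}^{N-1} c^{j}(w)\,\frac{\partial_{w}^{j}}{j!}\delta(z-w),\qquad c^{j}(w):=\Res_{z}(z-w)^{j}f(z,w).
\]
To establish it, I would first verify the orthogonality relation $\Res_{z}(z-w)^{i}\partial_{w}^{j}\delta(z-w)/j!=\delta_{i,j}$, splitting into $i>j$ (handled by the vanishing identity above) and $i\le j$ (handled by iterating the lowering relation and computing the residue directly). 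The difference $h:=f-\sum_{j<N} c^{j}(w)\partial_{w}^{j}\delta(z-w)/j!$ then satisfies both $(z-w)^{N}h=0$ and $\Res_{z}(z-w)^{i}h=0$ for $0\le i<N$. Writing $h(z,w)=\sum_{m,n}h_{m,n}z^{-m-1}w^{-n-1}$, the residue conditions read
\[
\sum_{k=0}^{i}\binom{i}{k}(-1)^{i-k}h_{k,\,t+i-k}=0\qquad (0\le i<N,\ t\in\Z),
\]
and a triangular induction on $i$ forces $h_{k,t}=0$ for all $0\le k<N$ and $t\in\Z$. The recursion
\[
\sum_{k=0}^{N}\binom{N}{k}(-1)^{N-k}h_{s+k,\,t+N-k}=0
\]
extracted from $(z-w)^{N}h=0$ then propagates the vanishing forward by solving for $h_{s+N,t}$ with $s\ge 0$ and backward by solving for $h_{s,t+N}$ with $s\le -1$, so $h\equiv 0$. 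Applying the lemma to $f=[a(z),b(w)]$ and unwinding the definition identifies $c^{j}(w)$ with $(a(w)_{(j)}b(w))$, completing the proof.

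The main obstacle is the coefficient book-keeping in the lemma: one must translate two distinct formal conditions on $h$ into compatible systems on the doubly-indexed $h_{m,n}$ and verify that together they admit only the trivial solution on all of $\Z\times\Z$. Everything else reduces to small computations with $\delta(z-w)$ and its $w$-derivatives.
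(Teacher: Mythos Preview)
Your argument is correct and is the standard proof of this decomposition lemma. Note, however, that the paper does not supply its own proof of this proposition at all: it simply refers the reader to Corollary~2.2 in Kac's \emph{Vertex algebras for beginners}, so there is nothing in the paper to compare your approach against beyond the citation. Your proof is essentially the argument one finds in that reference.
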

\begin{defi}[Formal Distribution Superalgebra]
Let $\g$ be a Lie superalgebra and $\mathcal{F}$ a family of mutually local $\g-$valued formal distributions in the indeterminate $z$. The pair $(\g,\mathcal{F})$ is called a \textit{formal distribution superalgebra} if the coefficients of all formal distributions in $\mathcal{F}$ span $\g$.
\end{defi}
We define the $\lambda-$bracket between two formal distributions $a(z),b(z) \in \g[[z,z^{-1}]] $ as the generating series of the $(a(z)_{(j)}b(z))$'s:
\begin{align}
\label{bracketformal}
[a(z)_{\lambda}b(z)]=\sum_{j \geq 0} \frac{\lambda^{j}}{j!}(a(z)_{(j)}b(z)).
\end{align}
\begin{defi}[Conformal superalgebra]
A \textit{conformal superalgebra} $R$ is a left $\Z_{2}-$graded $\C[\partial]-$module endowed with a $\C-$linear map, called $\lambda-$bracket, $R \otimes R \rightarrow \C[\lambda]\otimes R$, $a \otimes b \mapsto [a_{\lambda}b]$, that satisfies the following properties for all $a,b,c \in R$:
\begin{align*}
(i)&\,\, conformal \,\, sesquilinearity: &&[\partial a_{\lambda}b]=-\lambda [a_{\lambda}b], \quad  [a_{\lambda} \partial b]=(\lambda+\partial)[a_{\lambda}b]; \\
(ii)&\,\, skew-symmetry:             &&[a_{\lambda}b]=-(-1)^{p(a)p(b)}[b_{-\lambda-\partial}a] ;    \\
(iii)&\,\, Jacobi \,\, identity:           &&[a_{\lambda}[b_{\mu}c]]=[[a_{\lambda}b]_{\lambda+\mu}c]+(-1)^{p(a)p(b)}[b_{\mu}[a_{\lambda}c]];
\end{align*}
where $p(a)$ denotes the parity of the element $a \in R$ and $p(\partial a)=p(a)$ for all $a \in R$.
\end{defi}
We call $n-$products the coefficients $(a_{(n)}b)$ that appear in $[a_{\lambda}b]=\sum_{n\geq 0} \frac{\lambda^{n}}{n!}(a_{(n)}b)$  and give an equivalent definition of conformal superalgebra.
\begin{defi}[Conformal superalgebra]
\label{definizionesuperalgebraconforme}
A \textit{conformal superalgebra} $R$ is a left $\Z_{2}-$graded $\C[\partial]-$module endowed with a $\C-$bilinear product $(a_{(n)}b): R\otimes R\rightarrow R$, defined for every $n \geq 0$, that satisfies the following properties for all $a,b,c \in R$, $m,n \geq 0$:
\begin{itemize}
  \item[(i)] $p(\partial a)=p( a)$;
  \item[(ii)] $(a_{(n)}b)=0, \,\, for \,\, n \gg 0$;
	\item[(iii)] $({\partial a}_{(0)}b)=0$ and $({\partial a}_{(n+1)}b)=-(n+1) (a_{(n)}b)$;
	\item[(iv)] $(a_{(n)}b)=-(-1)^{p(a)p(b)}\sum_{j \geq 0}(-1)^{j+n} \frac{\partial^{j}}{j!}(b_{(n+j)}a)$;
	\item[(v)] $(a_{(m)}(b_{(n)}c))=\sum^{m}_{j=0}\binom{m}{j}((a_{(j)}b)_{(m+n-j)}c)+(-1)^{p(a)p(b)}(b_{(n)}(a_{(m)}c))$.
\end{itemize}
\end{defi}
Using (iii) and (iv) in Definition \ref{definizionesuperalgebraconforme} it is easy to show that for all $a,b \in R$, $n \geq 0$:
\begin{equation*}
(a_{(n)}\partial b)=\partial (a_{(n)} b)+n (a_{(n-1)}b).
\end{equation*}
Due to this relation and (iii) in Definition \ref{definizionesuperalgebraconforme}, the map $\partial: R\rightarrow R$, $a \mapsto \partial a$ is a derivation with respect to the $n-$products.
\begin{rem}
Let $(\g ,\mathcal{F})$ be a formal distribution superalgebra, endowed with $\lambda-$bracket (\ref{bracketformal}). The elements of $\mathcal{F}$ satisfy sesquilinearity, skew-symmetry and Jacobi identity with $\partial=\partial_{z}$; for a proof see Proposition 2.3 in \cite{kac1vertex}.
\end{rem}
We say that a conformal superalgebra $R$ is \textit{finite} if it is finitely generated as a $\C[\partial]-$module. 
An \textit{ideal} $I$ of $R$ is a $\C[\partial]-$submodule of $R$ such that $(a_{(n)}b)\in I$ for every $a \in R$, $b \in I$, $n \geq 0$. A conformal superalgebra $R$ is \textit{simple} if it has no non-trivial ideals and the $\lambda-$bracket is not identically zero. We denote by $R'$ the \textit{derived subalgebra} of $R$, i.e. the $\C-$span of all $n-$products.
\begin{defi}
A module $M$ over a conformal superalgebra $R$ is a left $\Z_{2}-$graded $\C[\partial]-$module endowed with $\C-$linear maps $R \rightarrow \End_{\C} M$, $a\mapsto a_{(n)}$, defined for every $n \geq 0$, that satisfy the following properties for all $a,b \in R$, $v \in M$, $m,n \geq 0$:
\begin{enumerate}
  \item[(i)] $a_{(n)}v=0 \,\, for \,\, n \gg 0$;
	\item[(ii)] $(\partial a)_{(n)}v=[\partial,a_{(n)}]v=-n a_{(n-1)}v $;
	\item[(iii)] $[a_{(m)},b_{(n)}]v=\sum_{j=0}^{m} \binom{m}{j}(a_{(j)}b)_{(m+n-j)}v$.
\end{enumerate}
\end{defi}
For an $R-$module $M$, we define for all $a \in R$ and $v\in M$:
\begin{align*}
a_{\lambda}v=\sum_{n \geq 0}\frac{\lambda^{n}}{n!}a_{(n)}v.
\end{align*}
A module $M$ is called \textit{finite} if it is a finitely generated $\C[\partial]-$module.\\
We can construct a conformal superalgebra starting from a formal distribution superalgebra $(\g,\mathcal{F})$. Let $\mathcal{\overline{F}}$  be the closure of $\mathcal{F}$ under all the $n-$products, $\partial_{z}$ and linear combinations. By  Dong's Lemma, $\mathcal{\overline{F}}$ is still a family of mutually local distributions (see \cite{kac1vertex}). It turns out that $\mathcal{\overline{F}}$ is a conformal superalgebra. We will refer to it as the conformal superalgebra associated with $(\g,\mathcal{F})$.\\
Let us recall the construction of the annihilation superalgebra associated with a conformal superalgebra $R$.
Let $\widetilde{R}=R[y,y^{-1}]$, set $p(y)=0$ and $\widetilde{\partial}=\partial+\partial_{y}$. We define the following $n-$products on $\widetilde{R}$, for all $a,b \in R$, $f,g \in \C[y,y^{-1}]$, $n\geq 0$:
\begin{align*}
(af_{(n)}bg)=\sum_{j \in \Z_{+}}(a_{(n+j)}b) \Big(\frac{\partial_{y}^{j}}{j!}f \Big)g .
\end{align*}
In particular if $f=y^{m}$ and $g=y^{k}$ we have for all $n \geq 0$:
\begin{align*}
({ay^{m}}_{ (n)}by^{k})=\sum_{j \in \Z_{+}}\binom{m}{j}(a_{(n+j)}b)y^{m+k-j}.
\end{align*}
We observe that $\widetilde{\partial}\widetilde{R}$ is a two sided ideal of $\widetilde{R}$ with respect to the $0-$product. The quotient $\Lie R:=\widetilde{R}/ \widetilde{\partial}\widetilde{R}$ has a structure of Lie superalgebra with the bracket induced by the $0-$product, i.e. for all $a,b \in R$, $f,g \in \C[y,y^{-1}]$: 
\begin{align}
\label{bracketannihilation}
[af,bg]=\sum_{j \in \Z_{+}}(a_{( j)}b)\Big(\frac{\partial_{y}^{j}}{j!}f \Big)g .
\end{align}
\begin{defi}
The annihilation superalgebra $\mathcal{A}(R)$ of a conformal superalgebra $R$ is the subalgebra of $\Lie R$ spanned by all elements $ay^{n}$ with $n\geq 0$ and $a\in R$. \\
The extended annihilation superalgebra $\mathcal{A}(R)^{e}$ of a conformal superalgebra $R$ is the Lie superalgebra $\C \partial \ltimes \mathcal{A}(R)$. The semidirect sum $\C \partial \ltimes \mathcal{A}(R)$ is the vector space $\C \partial \oplus \mathcal{A}(R)$ endowed with the structure of Lie superalgebra determined by the bracket:
\begin{align*}
[\partial,ay^{m}]=-\partial_{y}(ay^{m})=-m a y^{m-1},
\end{align*}
for all $a \in R$ and the fact that $\C \partial$, $\mathcal{A}(R)$ are Lie subalgebras.
\end{defi}
For all $a \in R$ we consider the following formal power series in $\mathcal{A}(R)[[\lambda]]$:
\begin{align}
\label{powerseries}
a_{\lambda}=\sum_{n \geq 0}\frac{\lambda^{n}}{n!}ay^{n}.
\end{align}
For all $a,b \in R$, we have: $[a_{\lambda},b_{\mu}]=[a_{\lambda}b]_{\lambda+\mu}$ and $(\partial a)_{\lambda}=-\lambda a_{\lambda}$ (for a proof see \cite{cantacasellikac}).  
\begin{prop}[\cite{chengkac}]
\label{propcorrispmoduli}
Let $R$ be a conformal superalgebra.
	If $M$ is an $R$-module then $M$ has a natural structure of $\mathcal A(R)^e$-module, where the action of $ay^n$ on $M$ is uniquely determined by $a_\lambda v=\sum_{n \geq 0}\frac{\lambda^{n}}{n!}ay^{n}.v$ for all $v\in M$. Viceversa if $M$ is a $\mathcal A(R)^e$-module such that for all $a\in R$, $v\in M$ we have $ay^n.v=0$ for $n\gg0$, then $M$ is also an $R$-module by letting $a_\lambda v=\sum_{n}\frac{\lambda^{n}}{n!}ay^{n}.v$.
\end{prop}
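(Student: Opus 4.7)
The plan is to check directly that the two constructions are mutually inverse by matching the defining axioms of an $R$-module with the Lie bracket relations of $\mathcal{A}(R)^e$, via the identification $ay^n\cdot v \leftrightarrow a_{(n)}v$.

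For the forward direction, given an $R$-module $M$, I would define the action of $\mathcal A(R)^e$ on $M$ by setting $ay^n\cdot v:=a_{(n)}v$ for $n\geq 0$, and by letting $\partial$ act via the given $\C[\partial]$-module structure of $M$. The first check is that this is well defined on $\Lie R=\widetilde R/\widetilde\partial\widetilde R$. Since $\widetilde\partial(ay^n)=(\partial a)y^n+nay^{n-1}$, the quotient imposes the relation $(\partial a)y^n=-n\,ay^{n-1}$ (and $(\partial a)y^0=0$), which after applying to $v$ becomes precisely the module axiom $(\partial a)_{(n)}v=-n\,a_{(n-1)}v$. Then I would check that brackets match: expanding $[ay^m,by^k]$ by (\ref{bracketannihilation}) with $f=y^m$, $g=y^k$ gives
\[
[ay^m,by^k]=\sum_{j\geq 0}\binom{m}{j}(a_{(j)}b)\,y^{m+k-j},
\]
and applying to $v$ yields $\sum_{j=0}^{m}\binom{m}{j}(a_{(j)}b)_{(m+k-j)}v$, which equals $[a_{(m)},b_{(k)}]v$ by module axiom (iii). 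Finally, $[\partial,ay^m]\cdot v=\partial(a_{(m)}v)-a_{(m)}(\partial v)=-m\,a_{(m-1)}v$ is again module axiom (ii), so $\C\partial\ltimes\mathcal A(R)$ acts.

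For the converse, assuming $M$ is an $\mathcal A(R)^e$-module satisfying the finiteness hypothesis $ay^n\cdot v=0$ for $n\gg 0$, I would define $a_{(n)}v:=ay^n\cdot v$ and verify the three module axioms in reverse: axiom (i) is exactly the finiteness assumption; axiom (ii) follows from the defining bracket $[\partial,ay^n]=-n\,ay^{n-1}$ in $\mathcal A(R)^e$, combined with the relation $(\partial a)y^n=-n\,ay^{n-1}$ forced in $\Lie R$; axiom (iii) is the same computation of $[ay^m,by^k]$ read in the opposite direction. The two constructions are then visibly inverse, and the identity $a_\lambda v=\sum_{n\geq 0}\tfrac{\lambda^n}{n!}ay^n\cdot v$ follows by comparing (\ref{powerseries}) with the definition of $a_\lambda v$ on an $R$-module.

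The argument is essentially a dictionary translation, and the main obstacle is not conceptual but bookkeeping: one must keep careful track of the distinction between elements of $\widetilde R$, their images in $\Lie R$, and the restriction to the nonnegative-power subalgebra $\mathcal A(R)$, and in particular verify that the consequence $(\partial a)y^0=0$ in $\Lie R$ matches the module identity $(\partial a)_{(0)}v=0$. Once this is set up, each axiom on one side corresponds termwise to a defining relation on the other, and the finiteness hypothesis on the $\mathcal A(R)^e$-side is precisely what is needed to make the series $a_\lambda v$ polynomial in $\lambda$ so that the $R$-module $\lambda$-action is well defined.
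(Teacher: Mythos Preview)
The paper does not actually prove this proposition: it is stated with a citation to \cite{chengkac} and no argument is given. Your sketch is correct and is essentially the standard dictionary argument one finds in the literature---matching $ay^n\cdot v$ with $a_{(n)}v$, checking well-definedness on the quotient $\widetilde R/\widetilde\partial\widetilde R$ via module axiom (ii), and reading the bracket formula (\ref{bracketannihilation}) as module axiom (iii)---so there is nothing to compare against here.
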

Proposition \ref{propcorrispmoduli} reduces the study of modules over a conformal superalgebra $R$ to the study of a class of modules over its (extended) annihilation superalgebra.
The following proposition states that, under certain hypotheses, it is sufficient to consider the annihilation superalgebra. We recall that, given a $\Z-$graded Lie superalgebra $\g=\oplus_{i \in \Z}\g_{i}$, we say that $\g$ has finite depth $d\geq0$ if $\g_{-d}\neq 0$ and $\g_{i}=0$ for all $i<-d$.
\begin{prop}[\cite{kac1},\cite{chenglam}]
\label{keythmannihi}
Let $\g$ be the annihilation superalgebra of a conformal superalgebra $R$. Assume that $\g$ satisfies the following conditions:
\begin{description}
	\item[L1] $\g$ is $\Z-$graded with finite depth $d$;
	\item[L2] There exists an element whose centralizer in $\g$ is contained in $\g_{0}$;
	\item[L3] There exists an element $\Theta \in \g_{-d}$ such that $\g_{i-d}=[\Theta,\g_{i}]$, for all $i\geq 0$.
\end{description}
 Finite modules over $R$ are the same as modules $V$ over $\g$, called \textit{finite conformal}, that satisfy the following properties:
\begin{enumerate}
	\item for every $v \in V$, there exists $j_{0} \in \Z$, $j_{0}\geq -d$, such that $\g_{j}.v=0$ when $j\geq j_{0}$;
	\item $V$ is finitely generated as a $\C[\Theta]-$module.
\end{enumerate}
\end{prop}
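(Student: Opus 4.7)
The strategy is to use Proposition \ref{propcorrispmoduli} as a bridge: that result already identifies $R$-modules with those $\mathcal{A}(R)^e$-modules on which $ay^n.v=0$ for $n\gg 0$. What remains is to match this class with $\g$-modules satisfying (1) and (2), so the proof splits into transferring the action of $\partial \in \mathcal{A}(R)^e = \C\partial \ltimes \g$ in and out of $\g$, and translating the local-nilpotency and finiteness conditions across the correspondence.

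For the forward direction I would start with a finite $R$-module $M$, apply Proposition \ref{propcorrispmoduli} to produce an $\mathcal{A}(R)^e$-module with $ay^n.v = 0$ for $n \gg 0$, then simply restrict the action to $\g$. Condition (1) then follows from L1: each graded piece $\g_j$ lies in the span of finitely many $ay^n$-type terms whose $n$-indices are bounded below by a linear function of $j$, so the local-nilpotency of each $ay^n$ forces $\g_j.v = 0$ for $j \geq j_0(v)$. Condition (2) follows from finite generation of $M$ as a $\C[\partial]$-module together with L3, by which $\Theta$ spreads a finite generating set across the remaining graded components of $\g$ and matches the $\C[\partial]$-module structure with the $\C[\Theta]$-module structure up to a controllable discrepancy in $\g_0$.

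In the reverse direction, given a $\g$-module $V$ satisfying (1) and (2), the core task is to extend the $\g$-action to $\mathcal{A}(R)^e$, i.e.\ to define an operator $\partial:V \to V$ satisfying $[\partial,ay^m] = -m\,ay^{m-1}$. Hypothesis L3 is what makes this possible: the surjectivity of $\mathrm{ad}(\Theta):\g_i \to \g_{i-d}$ allows a recursive definition of $\partial$ once it is prescribed on a top-degree generating piece, and L2 ensures uniqueness because any two candidate $\partial$'s would differ by an operator commuting with all of $\g$, and such an operator is rigidly constrained by the centralizer hypothesis. One then verifies the required commutation relations by induction on the grading, using the Jacobi identity to propagate the relation $[\partial,a y^m]=-m\,ay^{m-1}$ across brackets. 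Once $\partial$ is in place, (1) combined with the commutation relation yields $ay^n.v = 0$ for $n \gg 0$, so Proposition \ref{propcorrispmoduli} applies in reverse and delivers an $R$-module structure; condition (2) then hands back finite generation over $\C[\partial]$ by the same identification of $\partial$ with $\Theta$ modulo $\g_0$.

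The main obstacle I anticipate is in this reverse direction, specifically showing that the tentative $\partial$ supplied by L3 actually respects every commutation relation in $\mathcal{A}(R)^e$ consistently, and not just on the image of $\mathrm{ad}(\Theta)$. The verification proceeds by induction on degree, reducing each new relation to one already known by bracketing with $\Theta$ and invoking Jacobi, with L2 serving as a rigidity principle that rules out ambiguity. Once this is in hand, the transfer of finite generation and of local nilpotency is comparatively routine.
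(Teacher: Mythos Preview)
The paper does not prove this proposition; it is stated with citations to \cite{kac1} and \cite{chenglam} and no argument is given in the text. There is therefore nothing in the paper to compare your attempt against.

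That said, your sketch has a genuine gap in the reverse direction. You propose to build $\partial$ on $V$ ``recursively'' using L3, but L3 only asserts that $\mathrm{ad}(\Theta):\g_i\to\g_{i-d}$ is surjective, not injective, so there is no canonical preimage and your recursion is not well-defined as written. Moreover, your appeal to L2 for uniqueness does not work: L2 constrains centralizers of elements \emph{inside} $\g$, not endomorphisms of $V$, so two candidate operators $\partial$ could differ by any $\g$-module endomorphism of $V$, and nothing you have said forces that to vanish. The arguments in the cited references proceed differently: one shows directly that (a scalar multiple of) $\Theta$ already satisfies the relation $[\Theta,\,\cdot\,]=-\partial_y$ on $\g$ up to a correction lying in $\g_{\geq 0}$, so that the action of $\partial$ can simply be \emph{defined} to be that of $\Theta$ (possibly adjusted by an explicit $\g_0$-term). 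With that identification the finite-generation conditions over $\C[\partial]$ and $\C[\Theta]$ match immediately, and no inductive construction is needed.
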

\begin{rem}
\label{gradingelement}
We point out that condition \textbf{L2} is automatically satisfied when $\g$ contains a \textit{grading element}, i.e. an element $t \in \g$ such that $[t,b]=\degr (b) b$ for all $b \in \g$.
\end{rem}
Let $\g=\oplus_{i \in \Z} \g_{i}$ be a $\Z-$graded Lie superalgebra. We will use the notation $\g_{>0}=\oplus_{i>0}\g_{i}$, $\g_{<0}=\oplus_{i<0}\g_{i}$ and $\g_{\geq 0}=\oplus_{i\geq 0}\g_{i}$. We denote by $U(\g) $ the universal enveloping algebra of $\g$.
\begin{defi}
Let $F$ be a $\g_{\geq 0}-$module. The \textit{generalized Verma module} associated with $F$ is the $\g-$module $\Ind (F)$ defined by:
\begin{equation*}
\Ind (F):= \Ind ^{\g}_{\g_{\geq 0}} (F)=U(\g) \otimes _{U(\g_{\geq 0})} F.
\end{equation*}
\end{defi}
If $F$ is a finite$-$dimensional irreducible $\g_{\geq 0}-$module we will say that $\Ind(F)$ is a \textit{finite Verma module}. We will identify $\Ind (F)$ with $U(\g_{<0}) \otimes  F$ as vector spaces via the Poincar\'e$-$Birkhoff$-$Witt Theorem. The $\Z-$grading of $\g$ induces a $\Z-$grading on $U(\g_{<0})$ and $\Ind (F)$. We will invert the sign of the degree, so that we have a $\Z_{\geq 0}-$grading on $U(\g_{<0})$ and $\Ind (F)$. We will say that an element $v \in U(\g_{<0})_{k}$ is homogeneous of degree $k$. Analogously an element $m \in U(\g_{<0})_{k}  \otimes  F$ is homogeneous of degree $k$. For a proof of the following proposition see \cite{K4}.
\begin{prop}
Let $\g=\oplus_{i \in \Z} \g_{i}$ be a $\Z-$graded Lie superalgebra. If $F$ is an irreducible finite$-$ dimensional $\mathfrak{g}_{\geq 0}-$module, then $\Ind (F)$ has a unique maximal submodule. We denote by $\I (F)$ the quotient of $\Ind (F)$ by the unique maximal submodule.
\end{prop}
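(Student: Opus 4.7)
The plan is to locate the unique maximal proper submodule as the sum of all proper submodules, using the $\Z_{\geq 0}$-grading $\Ind(F)=\bigoplus_{n\geq 0}\Ind(F)_n$ and the irreducibility of $F$ in a central way. Identify $F$ with $1\otimes F\subseteq \Ind(F)$, which is precisely the degree zero component $\Ind(F)_0$; two structural facts will be used repeatedly: $F$ generates $\Ind(F)$ as a $U(\g_{<0})$-module, by the PBW identification $\Ind(F)=U(\g_{<0})\otimes F$, and $F$ is a $\g_{\geq 0}$-submodule of $\Ind(F)$ (the subspace $F$ is stable under $\g_0$ for degree reasons and is annihilated by $\g_{>0}$, which would shift it into negative degrees).

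The first step is to show that a $\g$-submodule $M\subseteq \Ind(F)$ is proper if and only if $M\cap F=0$. Indeed, $M\cap F$ is a $\g_{\geq 0}$-submodule of the irreducible module $F$, so $M\cap F\in\{0,F\}$; if $M\cap F=F$ then $F\subseteq M$, and the generation property forces $M=U(\g_{<0})\cdot F=\Ind(F)$, contradicting properness.

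The second step is to show that the sum $J=\sum_{M\text{ proper}} M$ is still proper. For this I would invoke a grading element $t\in\g_0$, whose existence in the relevant annihilation superalgebras is pointed out in Remark \ref{gradingelement}: such a $t$ acts on $\Ind(F)$ semisimply with eigenspaces equal to the homogeneous components $\Ind(F)_n$, so every $\g$-submodule is graded, $M=\bigoplus_n (M\cap \Ind(F)_n)$. Combined with the first step this gives $M\cap \Ind(F)_0=M\cap F=0$ for every proper $M$, and therefore $J\cap F=\sum_M (M\cap F)=0$. By the first step $J$ is then proper, and by construction it contains every proper submodule, so it is the unique maximal one and $\I(F)=\Ind(F)/J$.

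The main obstacle is the second step, specifically the claim that submodules are graded: without the semisimple degree operator coming from a grading element, $\Ind(F)$ can in general admit several distinct maximal proper submodules (simple abelian examples suffice to see this), so the argument really leans on this extra structural input, which is however a standing feature of the annihilation superalgebras considered in the paper.
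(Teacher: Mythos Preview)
The paper does not actually prove this proposition; it defers to \cite{K4}. Your argument is the standard one and is correct under the additional hypothesis that $\g_0$ contains a grading element, which you are right to single out: the eigenspace decomposition for $t$ forces every $\g$-submodule to be graded, and then the two-step argument goes through. One small clean-up: the equality $J\cap F=\sum_M(M\cap F)$ is not valid as written (intersections do not distribute over sums), but your conclusion is fine because each graded proper $M$ lies in $\bigoplus_{n>0}\Ind(F)_n$, hence so does $J$, and therefore $J\cap\Ind(F)_0=0$.

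Your caveat about the necessity of the grading element is exactly on point, and it is worth saying explicitly that the proposition is false at the stated level of generality. A minimal counterexample: take $\g=\g_{-1}=\C x$ one-dimensional even abelian, so $\g_{\geq 0}=0$ and the only irreducible $F$ is $\C$; then $\Ind(F)=\C[x]$ as a module over itself, and every $(x-\alpha)$ is a distinct maximal submodule. So the statement should be read in the context of the paper, where the annihilation superalgebra $\mathcal A(K'_4)$ has the grading element $t$ (Remark~\ref{gradingelement}), and your proof is then complete.
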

\begin{defi}
Given a $\g-$module $V$, we call \textit{singular vectors} the elements of:
\begin{align*}
\Sing (V) =\left\{v \in V \,\, | \, \, \g_{>0}.v=0\right\}.
\end{align*}
Homogeneous components of singular vectors are still singular vectors so we often assume that singular vectors are homogeneous without loss of generality.
In the case $V=\Ind (F)$, for a $\g_{\geq 0}-$module $F$, we will call \textit{trivial singular vectors} the elements of $\Sing (V) $ of degree 0 and \textit{nontrivial singular vectors} the nonzero elements of $\Sing (V) $ of positive degree.
\end{defi}
\begin{thm}[\cite{kacrudakov},\cite{chenglam}]
\label{keythmsingular}
Let $\g$ be a Lie superalgebra that satisfies \textbf{L1}, \textbf{L2}, \textbf{L3}, then:
\begin{enumerate}
\item[(i)] if $F$ is an irreducible finite$-$dimensional $\mathfrak{g}_{\geq 0}-$module, then $\mathfrak{g}_{> 0}$ acts trivially on it;
	\item[(ii)] the map $F \mapsto \I (F)$ is a bijective map between irreducible finite$-$dimensional $\mathfrak{g}_{ 0}-$modules and irreducible finite conformal $\mathfrak{g}-$modules;
	\item[(iii)] the $\mathfrak{g}-$module $\Ind (F)$ is irreducible if and only if the $\mathfrak{g}_{0}-$module $F$ is irreducible and $\Ind (F)$ has no nontrivial singular vectors.
	\end{enumerate}
	\end{thm}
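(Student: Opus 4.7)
The plan is to prove the three parts in the stated order, using each of the hypotheses \textbf{L1}--\textbf{L3} for a distinct purpose. For (i), the goal is to exhibit $\g_{>0}$ as an ideal of $\g_{\geq 0}$ acting by nilpotent operators on $F$, so that Engel's theorem yields a nonzero common kernel. Using \textbf{L2}, and in particular the presence of a grading element $t \in \g_0$ of the kind discussed in Remark \ref{gradingelement}, the identity $[t,x]=ix$ for $x \in \g_i$ with $i>0$ forces $x$ to raise the (finite integral) set of $t$-eigenvalues on $F$, hence to act nilpotently. Engel's theorem then gives $F^{\g_{>0}} \neq 0$, and since $[\g_0,\g_{>0}]\subseteq \g_{>0}$ this subspace is $\g_0$-stable, so equals $F$ by irreducibility.

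For (ii), first observe that by (i) irreducible finite-dimensional $\g_{\geq 0}$-modules are precisely the inflations of irreducible $\g_0$-modules, so the assignment $F \mapsto \I(F)$ has the correct source. One checks that $\I(F)$ is a finite conformal $\g$-module using \textbf{L3}: iterating $\g_{i-d}=[\Theta,\g_i]$ shows that $U(\g_{<0})$ is built, via brackets with $\Theta$, from $U(\g_{\geq 0})$, so $\Ind(F)\cong U(\g_{<0})\otimes F$ is finitely generated over $\C[\Theta]$, and the positivity of the grading together with finite-dimensionality of $F$ yields condition (1) of Proposition \ref{keythmannihi}. For the inverse direction, given an irreducible finite conformal $\g$-module $V$, I would show $\Sing(V)\neq 0$: using \textbf{L1} one decomposes $V$ by the $t$-grading, and finite $\C[\Theta]$-generation (\textbf{L3}) together with the vanishing condition in Proposition \ref{keythmannihi} forces some nonzero vector of maximal $t$-weight; iterating the nilpotent action of $\g_{>0}$ on such a vector produces an element of $\Sing(V)$. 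This $\Sing(V)$ is $\g_0$-stable, hence contains an irreducible finite-dimensional $\g_0$-submodule $F$; the canonical map $\Ind(F)\to V$ factors through $\I(F)$ and, with both sides irreducible, is an isomorphism. Injectivity of $F\mapsto \I(F)$ follows because $F$ is recovered as the degree zero component of $\I(F)$.

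Part (iii) is then a structural consequence. If $\Ind(F)$ is irreducible, then trivially $F$ is $\g_0$-irreducible and there are no nontrivial singular vectors. Conversely, the unique maximal submodule $M$ of $\Ind(F)$ is $\Z_{\geq 0}$-graded; if $F$ is $\g_0$-irreducible then $M$ cannot meet the degree zero component, so a nonzero $M$ would produce a lowest-degree homogeneous component which is automatically annihilated by $\g_{>0}$, i.e.\ a nontrivial singular vector.

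The main obstacle I anticipate is the proof that $\Sing(V)\neq 0$ for a finite conformal irreducible $V$ in part (ii): it requires the simultaneous use of \textbf{L1} (finite-depth $\Z$-grading), \textbf{L2} (to have a grading element controlling weight spaces), and \textbf{L3} (to get finite $\C[\Theta]$-generation and hence a maximal-weight vector). The delicate point is that $V$ can be infinite-dimensional as a vector space, so existence of a singular vector is not automatic from nilpotence of $\g_{>0}$; one genuinely needs the \emph{finite conformal} condition to bound weight multiplicities and to extract a maximal-weight vector upon which to iterate.
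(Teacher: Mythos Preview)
The paper does not prove this theorem; it is quoted from \cite{kacrudakov} and \cite{chenglam} and stated without proof, so there is no in-paper argument to compare your proposal against. Your outline is essentially the standard argument found in those references and is sound for the applications in this paper.

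One technical point deserves attention: in part (i) you invoke a grading element $t\in\g_0$ satisfying $[t,x]=ix$ for $x\in\g_i$, but hypothesis \textbf{L2} as stated only asserts the existence of an element whose centralizer in $\g$ is contained in $\g_0$, which is a priori weaker. Remark \ref{gradingelement} says that the existence of a grading element \emph{implies} \textbf{L2}, not the converse. In the cited references, and certainly for $\mathcal A(K'_4)$ where $t$ is an honest grading element, your argument goes through; but for the theorem at the stated level of generality you would need either to strengthen the hypothesis or to argue differently (for instance, first observing that $\g_{>0}F$ is a $\g_{\geq 0}$-submodule, hence $0$ or $F$ by irreducibility, and then ruling out $\g_{>0}F=F$).

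Parts (ii) and (iii) of your sketch are correct, and you have rightly identified the existence of a nonzero singular vector in an arbitrary irreducible finite conformal module as the substantive step in (ii).
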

We recall the notion of duality for conformal modules (see for further details \cite{bklr}, \cite{cantacasellikac}). Let $R$ be a conformal superalgebra and $M$ a conformal module over $R$.
\begin{defi}
The conformal dual $M^{*}$ of $M$ is defined by:
\begin{align*}
M^{*}=\left\{f_{\lambda}:M\rightarrow \C[\lambda] \,\, | \,\, f_{\lambda}(\partial m)=\lambda f_{\lambda}(m), \,\, \forall m \in M\right\}.
\end{align*}
The structure of $\C[\partial]-$module is given by $(\partial f)_{\lambda}(m)=-\lambda  f_{\lambda}(m)$, for all $f \in M^{*}$, $ m \in M$. The $\lambda-$action of $R$ is given, for all $a \in R$, $m \in M$, $f \in M^{*}$, by:
\begin{align*}
(a_{\lambda}f)_{\mu}(m)=-(-1)^{p(a)p(f)}f_{\mu-\lambda}(a_{\lambda}m).
\end{align*}
\end{defi}
\begin{defi}
Let $T:M\rightarrow N$ be a morphism of $R-$modules, i.e. a linear map such that for all $a\in R$ and $m\in M$:
\begin{itemize}
	\item [i:] $T(\partial m)=\partial T(m)$,
	\item [ii:] $T(a_{\lambda} m)=a_{\lambda} T(m)$.
\end{itemize}
The dual morphism $T^{*}:N^{*} \rightarrow M^{*}$ is defined, for all $f \in N^{*}$ and $m \in M$, by:
\begin{align*}
\left[T^{*}(f)\right]_{\lambda}(m)=-f_{\lambda}\left(T(m)\right).
\end{align*}
\end{defi}
\begin{thm}[\cite{bklr}, Proposition 2.6]
\label{dualeconforme}
Let $R$ be a conformal superalgebra and $M,N$ $R-$modules. Let $T:M\longrightarrow N$ be a homomorphism of $R-$modules such that $N/ \Ima T$ is a finitely generated torsion$-$free $\C[\partial]-$module. Then the standard map $\Psi:N^{*} / \Ker T^{*}\longrightarrow(M/ \Ker T)^{*}$, given by $[\Psi(\overline{f})]_{\lambda}(\overline{m})=f_{\lambda}(T(m))$
 (where by the bar we denote the corresponding class in the quotient), is an isomorphism of $R-$modules.
\end{thm}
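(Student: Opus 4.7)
The plan is to unpack $\Psi$ as a morphism of $\C[\partial]$-modules (equivalently, $\C[\lambda]$-valued $\C[\partial]$-linear maps) and then verify $R$-equivariance, injectivity, and surjectivity in sequence. First I would check that $\Psi$ is well-defined: if $\overline{f_1}=\overline{f_2}$ in $N^*/\Ker T^*$, then $f_1-f_2\in\Ker T^*$, so $(f_1-f_2)_\lambda(T(m))=-[T^*(f_1-f_2)]_\lambda(m)=0$ for every $m\in M$, whence $\Psi(\overline{f_1})=\Psi(\overline{f_2})$. Also $\Psi(\overline f)$ descends through $M/\Ker T$ because $T$ vanishes on $\Ker T$. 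The $R$-equivariance is then a direct computation using that $T$ is a morphism of $R$-modules: one applies the definition of the $\lambda$-action on conformal duals and uses $T(a_\lambda m)=a_\lambda T(m)$ to move the $a_\lambda$ past $T$, matching the two sides.

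For injectivity, suppose $\Psi(\overline f)=0$. Then $f_\lambda(T(m))=0$ for all $m\in M$, which by the definition of $T^*$ means $[T^*(f)]_\lambda(m)=-f_\lambda(T(m))=0$ for every $m$, i.e.\ $f\in\Ker T^*$ and $\overline f=0$.

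Surjectivity is the step that uses the hypothesis on $N/\Ima T$, and this I regard as the only nontrivial point. Given $g\in(M/\Ker T)^*$, the factorization $T:M\twoheadrightarrow\Ima T\hookrightarrow N$ (with $M/\Ker T\cong\Ima T$ as $\C[\partial]$-modules) lets me define an element $\widetilde g\in(\Ima T)^*$ by $\widetilde g_\lambda(T(m))=g_\lambda(\overline m)$. To obtain an $f\in N^*$ with $\Psi(\overline f)=g$ I must extend $\widetilde g$ to a $\C[\partial]$-linear map $N\to\C[\lambda]$. Here the assumption that $N/\Ima T$ is a finitely generated torsion-free $\C[\partial]$-module enters: since $\C[\partial]$ is a principal ideal domain, such a module is free, so the short exact sequence
\begin{equation*}
0\longrightarrow \Ima T\longrightarrow N\longrightarrow N/\Ima T\longrightarrow 0
\end{equation*}
of $\C[\partial]$-modules splits. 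Choosing a splitting $N\cong\Ima T\oplus P$ with $P\cong N/\Ima T$, I extend $\widetilde g$ by zero on $P$ to produce $f\in N^*$ with $f|_{\Ima T}=\widetilde g$. By construction $\Psi(\overline f)_\lambda(\overline m)=f_\lambda(T(m))=\widetilde g_\lambda(T(m))=g_\lambda(\overline m)$, giving surjectivity.

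The main obstacle is the surjectivity argument, and specifically the need to lift $\widetilde g$ from $\Ima T$ to all of $N$ as a $\C[\partial]$-linear map; without some finiteness/torsion-freeness condition on the quotient there is no reason such a lift should exist, and it is precisely the freeness of $N/\Ima T$ over the PID $\C[\partial]$ that makes the extension automatic. Everything else is a routine verification using the definitions of conformal dual and of $T^*$.
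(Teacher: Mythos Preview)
The paper does not supply its own proof of this theorem: it is stated with the attribution \cite{bklr}, Proposition 2.6, and used as a black box (notably in the proof of Proposition~\ref{esattezzafuntoreduale}). So there is no argument in the paper to compare against.

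Your proposal is correct and is essentially the standard proof. Well-definedness, $R$-equivariance, and injectivity are routine unpackings of the definitions, exactly as you say. The only substantive step is surjectivity, and your identification of the mechanism is right: elements of a conformal dual $M^*$ are precisely $\C[\partial]$-module maps $M\to\C[\lambda]$ (with $\partial$ acting on $\C[\lambda]$ as multiplication by $\lambda$), so extending $\widetilde g$ from $\Ima T$ to $N$ is a question of $\C[\partial]$-linear extension. Since $\C[\partial]$ is a PID and $N/\Ima T$ is finitely generated torsion-free, it is free, the short exact sequence splits, and extension by zero on a complement works. This is exactly how the hypothesis is used in \cite{bklr}.
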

We denote by $\textbf{F}$ the functor that maps a conformal module $M$ over a conformal superalgebra $R$ to its conformal dual $M^{*}$ and maps a morphism between conformal modules $T:M \rightarrow N$ to its dual $T^{*}:N^{*} \rightarrow M^{*}$.
\begin{prop}
\label{esattezzafuntoreduale}
The functor $\textbf{F}$ is exact if we consider only morphisms $T:M \rightarrow N$, where $N/ \Ima T$ is a finitely generated torsion free $\C[\partial]-$module.
\end{prop}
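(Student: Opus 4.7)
The plan is to deduce the statement directly from Theorem \ref{dualeconforme}. Consider a short exact sequence of conformal $R$-modules
\begin{equation*}
0 \longrightarrow M \xrightarrow{S} N \xrightarrow{T} P \longrightarrow 0
\end{equation*}
whose two non-trivial maps both lie in the admissible class. The requirement on $T$ is automatic, since $P/\Ima T = 0$ is trivially a finitely generated torsion-free $\C[\partial]$-module, while for $S$ it says exactly that $N/\Ima S \cong P$ is finitely generated and torsion-free. After applying $\textbf{F}$ one needs to check exactness of
\begin{equation*}
0 \longrightarrow P^* \xrightarrow{T^*} N^* \xrightarrow{S^*} M^* \longrightarrow 0
\end{equation*}
at all three positions.

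Injectivity of $T^*$ is immediate from the surjectivity of $T$: if $T^*g = 0$ then $g_\lambda(T(n)) = 0$ for every $n \in N$, and $T(N) = P$ forces $g = 0$. For the surjectivity of $S^*$ I would apply Theorem \ref{dualeconforme} to $S$: since $S$ is injective, the target $(M/\Ker S)^* = M^*$ of the standard map $\Psi : N^*/\Ker S^* \to M^*$ is the whole dual, and unwinding $[\Psi(\bar f)]_\lambda(m) = f_\lambda(S(m)) = -[S^*f]_\lambda(m)$ shows that $\Psi$ is, up to a sign, the factorization of $S^*$ through $N^*/\Ker S^*$. Its image, which must therefore be $\Ima S^*$, equals $M^*$.

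For exactness at $N^*$, the containment $\Ima T^* \subseteq \Ker S^*$ is automatic from $TS = 0$. For the reverse containment, pick $f \in \Ker S^*$; then $f$ vanishes on $\Ima S = \Ker T$, so $f$ descends to a functional $\bar f \in (N/\Ker T)^*$. Applying Theorem \ref{dualeconforme} to $T$, the standard map $\Psi : P^*/\Ker T^* \to (N/\Ker T)^*$ is an isomorphism, so $\bar f = \Psi(\bar g)$ for some $g \in P^*$; reading the equality back on $N$ yields $f = -T^*g \in \Ima T^*$.

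There is no serious obstacle: the only care needed is the bookkeeping of signs between the definitions of $T^*$ and $\Psi$, and the observation that the torsion-free hypothesis is invoked only once — namely in applying Theorem \ref{dualeconforme} to $S$ in order to obtain the surjectivity of $S^*$. Middle exactness uses only the automatic cokernel condition for the surjection $T$, and injectivity of $T^*$ is formal.
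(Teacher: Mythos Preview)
Your proof is correct and follows essentially the same approach as the paper: reduce exactness of the dual sequence to Theorem~\ref{dualeconforme}. The only cosmetic difference is in the middle exactness step: the paper constructs the preimage $\alpha\in P^*$ of a given $\beta\in\Ker S^*$ by hand (setting $\alpha_\lambda(p)=-\beta_\lambda(n_p)$ for any lift $n_p$ of $p$), whereas you obtain it by invoking Theorem~\ref{dualeconforme} a second time, now for the surjection $T$, and reading off the inverse of $\Psi$. Since the paper's explicit formula is precisely that inverse, the two arguments are the same computation packaged differently.
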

\begin{proof}
Let us consider an exact short sequence of conformal modules:
\begin{align*}
0 \rightarrow M \xrightarrow{d_{1}}N \xrightarrow{d_{2}} P \rightarrow 0.
\end{align*}
Therefore we know that $d_{2} \circ d_{1}=0$, $d_{1}$ is injective, $d_{2}$ is surjective and $\Ker d_{2}=\Ima d_{1}$. We consider the dual of this sequence:
\begin{align*}
0 \rightarrow P^{*} \xrightarrow{d^{*}_{2}} N^{*} \xrightarrow{d^{*}_{1}} M^{*} \rightarrow 0.
\end{align*}
By Theorem \ref{dualeconforme} and Remark 3.11 in \cite{cantacasellikac}, we know that $d^{*}_{1}$ is surjective and $d^{*}_{2}$ is injective. We have to show that $\Ker d^{*}_{1} =\Ima d^{*}_{2}$.
Let us first show that $\Ker d^{*}_{1} \supset \Ima d^{*}_{2}$. Let $\beta\in \Ima d^{*}_{2} \subset N^{*}$. We have $\beta=d^{*}_{2}(\alpha)$ for some $\alpha \in P^{*}$. For all $m \in M$:
\begin{align*}
\left[d^{*}_{1}(\beta)\right]_{\lambda}(m)=-\beta_{\lambda}(d_{1}(m))=\alpha_{\lambda}(d_{2}(d_{1}(m)))=0.
\end{align*}
Let us now show that $\Ker d^{*}_{1} \subset \Ima d^{*}_{2}$. Let $\beta \in \Ker d^{*}_{1} \subset N^{*}$. For all $m \in M$:
\begin{align*}
0=\left[d^{*}_{1}(\beta)\right]_{\lambda}(m)=-\beta_{\lambda}(d_{1}(m)).
\end{align*}
Since $\Ker d_{2}=\Ima d_{1}$, this condition tells that $\beta$ vanishes on $\Ker d_{2}$. We also know that for every $p \in P$, $p=d_{2}(n_{p})$, for some $n_{p} \in N$. We define $\alpha \in P^{*}$ as follows, for all $p \in P$:
\begin{align*}
\alpha_{\lambda}(p)=\alpha_{\lambda}(d_{2}(n_{p}))=
-\beta_{\lambda}(n_{p}).
\end{align*}
Let us show that $\alpha$ actually lies in $P^{*}$.
For every $p \in P$:
\begin{align*}
\alpha_{\lambda}(\partial p)=\alpha_{\lambda}(\partial d_{2}(n_{p}))=\alpha_{\lambda}( d_{2}(\partial n_{p}))=
-\beta_{\lambda}(\partial n_{p}).
\end{align*}
Since $\beta \in N^{*}$, we know that $-\beta_{\lambda}(\partial n_{p})=-\lambda \beta_{\lambda}(n_{p})$. Therefore $\alpha_{\lambda}(\partial p)=\lambda \alpha_{\lambda}( p)$.\\
We have for all $n \in N$ that:
\begin{align*}
\left[d^{*}_{2}(\alpha)\right]_{\lambda}(n)=-\alpha_{\lambda}(d_{2}(n))=\beta_{\lambda}(n).
\end{align*}
\end{proof}
\section{The conformal superalgebra $K'_{4}$}
\label{section$K'_{4}$}
In this section we recall some notions and properties about the conformal superalgebra $K'_{4}$ (for further details see \cite{K4},\cite{kac1},\cite{fattorikac}).
We first recall the notion of the contact Lie superalgebra. Let $\inlinewedge(N)$ be the Grassmann superalgebra in the $N$ odd indeterminates $\xi_{1},...,\xi_{N}$. Let $t$ be an even indeterminate and $\inlinewedge (1,N)=\C[t,t^{-1}] \otimes \inlinewedge(N)$. We consider the Lie superalgebra of derivations of $\inlinewedge (1,N)$:
\begin{equation*}
W(1,N)=\bigg\{ D=a \partial_{t}+\sum ^{N}_{i=1} a_{i} \partial_{i} \,\, | \,\, a,a_{i} \in \displaywedge (1,N)\bigg\},
\end{equation*}
where $\partial_{t}=\frac{\partial}{\partial{t}}$ and $\partial_{i} =\frac{\partial}{\partial{\xi_{i}}}$ for every $i \in \left\{1,...,N \right\}$.\\
Let us consider the contact form $\omega = dt-\sum_{i=1}^{N}\xi_{i} d\xi_{i} $. The contact Lie superalgebra $K(1,N)$ is defined by:
\begin{equation*}
K(1,N)=\left\{D \in W(1,N) \,\, | \,\, D\omega=f_{D}\omega \,\, for \,\, some \,\, f_{D} \in \displaywedge (1,N)\right\}.
\end{equation*}
We denote by $K'(1,N)$ the derived subalgebra $[K(1,N),K(1,N)]$ of $K(1,N)$.
Analogously, let $\inlinewedge  (1,N)_{+}=\C[t] \otimes \inlinewedge (N)$. We define the Lie superalgebra $W(1,N)_{+}$ (resp. $K(1,N)_{+}$) similarly to $W(1,N)$ (resp. $K(1,N)$) using $\inlinewedge  (1,N)_{+}$ instead of $\inlinewedge  (1,N)$.\\
One can define on $\inlinewedge (1,N)$ a Lie superalgebra structure as follows: for all $f,g \in \inlinewedge(1,N)$ we let
\begin{equation}
\label{bracketlie}
[f,g]=\Big(2f-\sum_{i=1}^{N} \xi_{i}  \partial_{i} f \Big)(\partial_{t}{g})-(\partial_{t}{f})\Big(2g-\sum_{i=1}^{N} \xi_{i} \partial_{i} g\Big)+(-1)^{p(f)}\Big(\sum_{i=1}^{N} \partial_{i}  f \partial_{i}  g \Big).
\end{equation}
We recall that $K(1,N) \cong \inlinewedge(1,N)$ as Lie superalgebras via the following map (see \cite{{chengkac2}}):
\begin{gather*}
\displaywedge(1,N) \longrightarrow  K(1,N) \\
f \longmapsto 2f \partial_{t}+(-1)^{p(f)} \sum_{i=1}^{N} (\xi_{i} \partial_{t} f+ \partial_{i}f )(\xi_{i} \partial_{t} + \partial_{i}).
\end{gather*}
From now on we will always identify elements of $K(1,N) $ with elements of $\inlinewedge(1,N)$ and we will omit the symbol $\wedge$ between the $\xi_{i}$'s. 
We adopt the following notation: we denote by $\mathcal I$ the set of finite sequences of elements in $\{1,\ldots,N\}$; we will write $I=i_1\cdots i_r$ instead of $I=(i_1,\ldots,i_r)$. Given $I=i_1\cdots i_r$ and $J=j_1\cdots j_s$, we will denote $i_1\cdots i_rj_1\cdots j_s$ by $IJ$; if $I=i_1 \cdots i_r\in \mathcal I$  we let $\xi_{I}=\xi_{i_1}\cdots \xi_{i_r}$ and $|\xi_{I}|=|I|=r$. We denote by $\mathcal{I}_{\neq}$ the subset of $\mathcal{I}$ of sequences with distinct entries. We consider on $K(1,N)$ the standard grading, i.e. for every $m \in \Z$ and $I \in \mathcal I$, $\degr(t^{m} \xi_{I})=2m+|I|-2$.\\
Now we want to recall the definition of the conformal superalgebra $K_{N}$.
In order to do this, we construct a formal distribution superalgebra using the following family of formal distributions:
\begin{equation*}
\mathcal{F}=\bigg\{A(z):= \sum_{m \in \Z}(At^{m})z^{-m-1}=A \delta(t-z), \,\, \forall    A \in \displaywedge(N)\bigg\}. 
\end{equation*}
Note that the set of all the coefficients of formal distributions in $\mathcal{F}$ spans $\inlinewedge(1,N)$ and the distributions are mutually local (for a proof see \cite[Proposition 3.1]{K4}). The conformal superalgebra associated with $(K(1,N),\mathcal{F})$ is identified with $K_{N}:=\C[\partial] \otimes \inlinewedge(N)$. For all $I,J \in \mathcal{I}$ the $\lambda-$bracket is given by
\begin{equation}
\label{lambdabracket}
[{\xi_I}_{\lambda}\xi_J]=(|I|-2)\partial(\xi_{IJ})+(-1)^{|I|}\sum^{N}_{i=1}(\partial_{i}\xi_I)(\partial_{i}\xi_J)+\lambda(|I|+|J|-4)\xi_{IJ},
\end{equation}
for a proof see \cite[Proposition 3.1]{K4}. $K_{N}$ is simple except for the case $N=4$. If $N=4$, $K_{4}=K'_{4}\oplus \C \xi_{1234}$, where $K'_{4}$ is the derived subalgebra, i.e. the $\C-$span of the $n-$products. $K'_{4}$ is a simple conformal superalgebra (see \cite{fattorikac}).
\begin{prop}[\cite{K4}]
\label{anniliK4}
The annihilation superalgebra $\mathcal{A}(K'_{4})$, associated with $K'_{4}$, is a central extension of $K(1,4)_{+}$ by a one$-$dimensional center $\C C$:
\begin{align*}
\mathcal{A}(K'_{4})=K(1,4)_{+} \oplus \C C.
\end{align*}
The extension is given by the $2-$cocycle $\psi \in  Z^{2}(K(1,4)_{+}, \C)$ which computed on basis elements returns non$-$zero values in the following cases only (up to skew-symmetry of $\psi$):
\begin{align*}
&\psi(1  ,\xi_{1234} )=-2,\\
&\psi(\xi_{i}  ,\partial_{i} \xi_{1234}  )=-1.
\end{align*} 
\end{prop}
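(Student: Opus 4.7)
The plan is to exhibit $\mathcal{A}(K'_{4})$ as a central extension of $\mathcal{A}(K_{4})\cong K(1,4)_{+}$ by exploiting the inclusion $K'_{4}\hookrightarrow K_{4}$. The crux is that $\xi_{1234}\notin K'_{4}$ although $\partial\xi_{1234}\in K'_{4}$: the relation $\widetilde{\partial}(\xi_{1234}y^{0})=\partial\xi_{1234}\sim 0$, valid inside $\widetilde{\partial}\widetilde{K_{4}}$ and killing $\partial\xi_{1234}$ in $\mathcal{A}(K_{4})$, is lost upon restriction to $\widetilde{K'_{4}}$. This lone missing relation will account for the center $\mathbb{C}C$ with $C:=\partial\xi_{1234}$.

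To make this precise, I would first pin down a basis of $\mathcal{A}(K'_{4})$. Iterating $\partial^{k+1}\xi_{I}y^{n}\equiv -n\partial^{k}\xi_{I}y^{n-1}$, one reduces, for $I\neq 1234$, every $\partial^{k}\xi_{I}y^{n}$ to a scalar multiple of $\xi_{I}y^{n-k}$ (or to $0$ if $n<k$); for $I=1234$ the same chain may only start at $k\geq 1$, so it stops at a multiple of $\partial\xi_{1234}y^{n-k+1}$ (or at $0$). This produces the basis
\begin{equation*}
\{\xi_{I}y^{n}:I\in\mathcal{I}_{\neq},\,I\neq 1234,\,n\geq 0\}\cup\{\partial\xi_{1234}y^{n}:n\geq 0\}.
\end{equation*}
The inclusion $K'_{4}\subset K_{4}$ induces a Lie superalgebra homomorphism $\pi:\mathcal{A}(K'_{4})\to\mathcal{A}(K_{4})\cong K(1,4)_{+}$ under which $\xi_{I}y^{n}\mapsto t^{n}\xi_{I}$ for $I\neq 1234$ and $\partial\xi_{1234}y^{n}\mapsto -nt^{n-1}\xi_{1234}$ for $n\geq 1$, while $\partial\xi_{1234}\mapsto 0$; hence $\pi$ is surjective with $\ker\pi=\mathbb{C}\partial\xi_{1234}$. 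That $C=\partial\xi_{1234}$ is central is checked by inspecting \eqref{lambdabracket}: one obtains $[1_{\lambda}\xi_{1234}]=-2\partial\xi_{1234}$, $[(\xi_{k})_{\lambda}\xi_{1234}]=-\partial_{k}\xi_{1234}$, and $[(\xi_{I})_{\lambda}\xi_{1234}]=0$ for $|I|\geq 2$ (every summand $(\partial_{i}\xi_{I})(\partial_{i}\xi_{1234})$ carries a repeated index). Using sesquilinearity together with the reductions $\partial^{2}\xi_{1234}y^{m}\equiv -m\partial\xi_{1234}y^{m-1}$ and $\partial(\partial_{k}\xi_{1234})y^{m}\equiv -m\partial_{k}\xi_{1234}y^{m-1}$ (the latter valid because $\partial_{k}\xi_{1234}\in K'_{4}$), a short computation shows that $[\xi_{I}y^{m},\partial\xi_{1234}]=0$ for every $I\neq 1234$ and $m\geq 0$, while $[\partial\xi_{1234}y^{m},\partial\xi_{1234}]=0$ already at the level of the $\lambda$-bracket, since $[{\xi_{1234}}_{\lambda}\xi_{1234}]=0$.

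Finally, to read off the $2$-cocycle, I would fix the section $\sigma(t^{n}\xi_{I})=\xi_{I}y^{n}$ for $I\neq 1234$ and $\sigma(t^{n}\xi_{1234})=-\tfrac{1}{n+1}\partial\xi_{1234}y^{n+1}$, and compute $\psi(a,b)\,C:=[\sigma(a),\sigma(b)]-\sigma([a,b])$ on pairs of basis elements. The only contribution of $\partial\xi_{1234}$ to $[\sigma(a),\sigma(b)]$ comes from the term $(|I|-2)\partial(\xi_{I}\xi_{J})$ of $(\xi_{I})_{(0)}\xi_{J}$ with $I,J$ disjoint and $I\cup J=\{1,2,3,4\}$. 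For pairs $(t^{m}\xi_{I},t^{n}\xi_{J})$ with $m+n\geq 1$ a direct comparison with \eqref{bracketlie}, combined with the chosen normalization of $\sigma$ on $t^{k}\xi_{1234}$, shows that this contribution cancels against $\sigma([a,b])$; pairs having $t^{n}\xi_{1234}$ as one factor contribute zero by the vanishing of $[(\xi_{I})_{\lambda}\xi_{1234}]$ for $|I|\geq 2$ and a cancellation analogous to the centrality check for $|I|\leq 1$. The surviving pairs are therefore $(m,n)=(0,0)$ with $|I|\neq 2$, yielding $\psi(\xi_{I},\xi_{J})=(|I|-2)\epsilon$ where $\xi_{I}\xi_{J}=\epsilon\xi_{1234}$. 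Evaluating $(|I|,|J|)=(0,4)$ gives $\psi(1,\xi_{1234})=-2$, and $(|I|,|J|)=(1,3)$, together with $\partial_{i}\xi_{1234}=(-1)^{i-1}\xi_{\{1,\ldots,4\}\setminus\{i\}}$, gives $\psi(\xi_{i},\partial_{i}\xi_{1234})=-1$; the $(3,1)$ and $(4,0)$ values follow from super skew-symmetry of $\psi$. The main technical burden is the systematic bookkeeping of signs $\epsilon$ and of the reductions of $\partial^{k}\xi_{1234}y^{n}$ to canonical form, but with that done the proposition falls out directly.
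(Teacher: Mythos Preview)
The paper does not give its own proof of this proposition: it is quoted from \cite{K4} and stated without argument. So there is nothing in the present paper to compare your proof against.

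Your argument is correct and is essentially the natural one. The key structural point---that $\xi_{1234}\notin K'_{4}$ while $\partial\xi_{1234}\in K'_{4}$, so the relation $\widetilde{\partial}(\xi_{1234})=\partial\xi_{1234}$ present in $\mathcal{A}(K_{4})$ is lost in $\mathcal{A}(K'_{4})$---is exactly what produces the one-dimensional kernel of the comparison map $\pi$. Your $\lambda$-bracket checks for centrality of $C=\partial\xi_{1234}$ are accurate, and the choice of section $\sigma(t^{n}\xi_{1234})=-\tfrac{1}{n+1}\partial\xi_{1234}\,y^{n+1}$ is the right one to make the cocycle computation clean. The observation that the only possible contribution to $\psi$ comes from the term $(|I|-2)\partial(\xi_{I}\xi_{J})$ when $I\sqcup J=\{1,2,3,4\}$, and that such a contribution survives only at $y^{0}$ (i.e.\ $m+n=0$), is the crux of the vanishing statement and you have it right. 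The sign bookkeeping you flag is indeed the only tedious part; the two displayed values $\psi(1,\xi_{1234})=-2$ and $\psi(\xi_{i},\partial_{i}\xi_{1234})=-1$ check out directly from the formula for $[\xi_{I}{}_{\lambda}\xi_{J}]$.
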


We denote with $\mathfrak{g}:=\mathcal{A}(K'_{4})=K(1,4)_{+} \oplus \C C$. We recall from \cite{K4} the description of $\g$. The grading on $\mathfrak{g}$ is the standard grading of $K(1,4)_{+} $ and $C$ has degree $0$. We have:
\begin{align*}
&\g_{-2} =\left< 1 \right>, \\
&\g_{-1} =\left< \xi_{1},\xi_{2},\xi_{3},\xi_{4} \right> ,\\
&\g_{0} =\left< \left\{C,t, \xi_{ij}: \,\, 1 \leq i<j \leq 4 \right\}\right>.
\end{align*}
The annihilation superalgebra $\g$ satisfies \textbf{L1}, \textbf{L2}, \textbf{L3}: \textbf{L1} is straightforward; \textbf{L2} follows by Remark \ref{gradingelement} since $t$ is a grading element for $\g$; \textbf{L3} follows from the choice $\Theta:=-1/2 \in \g_{-2}$.
We recall that $\g_0=\left< \left\{ C,t, \xi_{ij} :\, 1 \leq i<j \leq 4 \right\} \right>\cong \mathfrak{so}(4)\oplus \C { t }\oplus \C C$, where $\mathfrak{so}(4)$ is the Lie algebra of $4\times 4$ skew$-$symmetric matrices. In the above isomorphism the element $\xi_{ij}$ corresponds to the skew-symmetric matrix $-E_{i,j}+E_{j,i} \in \mathfrak{so}(4)$. 
We consider the following basis of a Cartan subalgebra $\mathfrak{h}$:
\begin{equation}\label{hx} h_{x}:=-i\xi_{12}+i\xi_{34}, \,\,h_{y}:=-i\xi_{12}-i\xi_{34}.
\end{equation}
Let $\alpha_x,\alpha_y\in \mathfrak{h}^*$ be such that $\alpha_x(h_x)=\alpha_y(h_y)=2$ and $\alpha_x(h_y)=\alpha_y(h_x)=0$. 
The set of roots is $\Delta=\{\alpha_x,-\alpha_x,\alpha_y,-\alpha_y\}$ and we have the following root decomposition:
\begin{align*}
\mathfrak{so}(4)=\mathfrak{h} \oplus \left(\oplus_{\alpha \in \Delta} \g_{\alpha}\right) \,\, \text{with} \,\, \g_{\alpha_x}=\C e_{x},\,\g_{-\alpha_x}=\C f_{x},\,\g_{\alpha_y}=\C e_{y},\,\g_{-\alpha_y}=\C f_{y}
\end{align*}
where
\begin{align*}
&e_{x}=\frac{1}{2}(-\xi_{13}-\xi_{24}-i\xi_{14}+i\xi_{23}),\,\,\, &&e_{y}=\frac{1}{2}(-\xi_{13}+\xi_{24}+i\xi_{14}+i\xi_{23}),\\
&f_{x}=\frac{1}{2}(\xi_{13}+\xi_{24}-i\xi_{14}+i\xi_{23}),\,\,\, &&f_{y}=\frac{1}{2}(\xi_{13}-\xi_{24}+i\xi_{14}+i\xi_{23}).
\end{align*}
We will use the following notation:
\begin{align*}
e_1=e_x+e_y=-\xi_{13}+i\xi_{23},\,\,\,
e_2=e_x-e_y=-\xi_{24}-i\xi_{14}.
\end{align*}
The set $\left\{e_1,e_2\right\}$ is a basis of the nilpotent subalgebra $\g_{\alpha_x} \oplus \g_{\alpha_y}$. We denote by $\g_{0}^{ss}$ the semisimple part of $\g_{0}$.
\begin{rem}
	\label{notazioinecasellig0}
	The sets $\{e_x,f_x,h_x\}$ and $\{e_y,f_y,h_y\}$ span two copies of $\mathfrak{sl}_2$ and we think of $\g^{ss}_0$ in the standard way as a Lie algebra of derivations.
	We have that:
	\begin{align*}
	\g^{ss}_{0}= \langle e_{x},f_{x},h_{x}\rangle \oplus \langle e_{y},f_{y},h_{y} \rangle \cong \langle  x_{1} \partial_{x_{2}}, x_{2} \partial_{x_{1}},x_{1} \partial_{x_{1}}-x_{2} \partial_{x_{2}} \rangle \oplus \langle  y_{1} \partial_{y_{2}}, y_{2} \partial_{y_{1}},y_{1} \partial_{y_{1}}-y_{2} \partial_{y_{2}} \rangle .
	\end{align*}
\end{rem}
By direct computations, we obtain the following results.
\begin{lem}[\cite{K4}] \label{g1}
The subalgebra $\g_{>0}$ is generated by $\g_{1}$, i.e. $\g_{i}=\g_{1}^{i}$ for all $i\geq 2$ and as $\g_{0}-$modules:
	\begin{align*}
	\g_{1} \cong \langle t\xi_{i} :\, 1\leq i\leq 4\rangle \oplus \langle \xi_{I}:\, |I|=3\rangle.
	\end{align*}
	The $\g_{0}-$modules $\langle t\xi_{i}:\, 1\leq i\leq 4\rangle$ and $\langle \xi_{I}:\, |I|=3\rangle$ are irreducible and the corresponding lowest weight vectors are $t (\xi_{1}+i \xi_{2})$ and $(\xi_{1}+i\xi_{2})\xi_{3}\xi_{4}$.
	\end{lem}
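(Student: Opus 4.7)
The plan is to prove the three assertions in order: the decomposition of $\g_1$ as $\g_0$-module, the identification of the lowest weight vectors, and the generation of $\g_{>0}$ by $\g_1$.

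The grading constraint $2m+|I|-2=1$ on basis elements $t^{m}\xi_{I}$ of $\g_{1}$ has only the two solutions $(m,|I|)\in\{(0,3),(1,1)\}$, which gives as vector spaces $\g_{1}=\langle t\xi_{i}\rangle \oplus \langle \xi_{I}:|I|=3\rangle$. Since $t$ is the grading element (acting as $+1$ on $\g_{1}$) and $C$ is central, the center $\C t\oplus\C C$ of $\g_{0}$ acts by scalars on each summand, so it is enough to check that $\so(4)=\langle\xi_{ij}\rangle$ preserves both. A direct computation from \eqref{bracketlie} yields $[\xi_{ij},\xi_{k}]=\delta_{ik}\xi_{j}-\delta_{jk}\xi_{i}$ and shows more generally that $[\xi_{ij},\cdot]$ acts as an even derivation on $\inlinewedge(N)$ and satisfies $[\xi_{ij},t\xi_{K}]=t\,[\xi_{ij},\xi_{K}]$; this makes each summand $\so(4)$-invariant. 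On $\langle t\xi_{i}\rangle$ the action is the standard $4$-dimensional representation $\C^{4}$, while on $\langle\xi_{I}:|I|=3\rangle$ it is $\Lambda^{3}\C^{4}\cong\C^{4}$ via Hodge duality. Under the isomorphism $\so(4)\cong\mathfrak{sl}_{2}\oplus\mathfrak{sl}_{2}$ of Remark \ref{notazioinecasellig0}, $\C^{4}$ is the outer tensor product of the two defining $\mathfrak{sl}_{2}$-modules and hence irreducible.

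For the lowest weight vectors I check directly that $t(\xi_{1}+i\xi_{2})$ and $(\xi_{1}+i\xi_{2})\xi_{3}\xi_{4}$ are annihilated by $f_{x}$ and $f_{y}$ of \eqref{hx}: each verification is a short calculation pairing the four $\xi_{ij}$-summands of $f_{x}$ (resp.\ $f_{y}$) against the two summands of the vector via the formulas above, with a clean cancellation of the cross-terms; one also confirms weight $(-1,-1)$ under $(h_{x},h_{y})$, which is the lowest weight of the standard representation.

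For the generation claim I induct on $i\geq 2$. From \eqref{bracketlie} one computes $[t\xi_{i},t\xi_{j}]=-\delta_{ij}\,t^{2}$ and
\[
[t\xi_{j},t^{n}\xi_{I}]=(n+|I|-2)\,t^{n}\xi_{j}\xi_{I}-t^{n+1}\,\partial_{j}\xi_{I}.
\]
Specializing to $n=0$, $|I|=3$ produces every basis element of $\g_{2}$ (namely $t^{2}$, every $t\xi_{kl}$, and $\xi_{1234}$) inside $[\g_{1},\g_{1}]$, settling the base. For the inductive step $i\geq 3$, each basis element $t^{m}\xi_{K}$ of $\g_{i}$ (so $2m+|K|=i+2$) is realized as a bracket of $\g_{1}$ with $\g_{i-1}$ by choosing the second factor as follows: if $|K|\leq 3$ (which forces $m\geq 1$), pick $j\notin K$ and take $I=K\cup\{j\}$, so that the first term of the formula vanishes and the second gives $\pm t^{m}\xi_{K}$; if $|K|=4$, pick $j\in K$ and take $I=K\setminus\{j\}$, so that the second term vanishes and the first gives $(m+1)\,t^{m}\xi_{1234}$ up to sign. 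This completes the induction.

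The main obstacle is covering the case $|K|=4$: the element $t^{m}\xi_{1234}$ cannot be generated using only the $\langle t\xi_{i}\rangle$-part of $\g_{1}$, since an iterated bracket of $t\xi_{\bullet}$-type elements carries at most two $\xi$-letters; the $|I|=3$ summand of $\g_{1}$ must therefore be engaged, and one must verify that the scalar $m+1$ arising from the first term of the bracket formula is nonzero, which it is for all $m\geq 0$.
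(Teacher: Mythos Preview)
Your proof is correct and supplies the explicit verifications that the paper omits: the paper itself does not prove this lemma but only states it with the phrase ``by direct computations'' and a citation to \cite{K4}. Your bracket formula $[t\xi_{j},t^{n}\xi_{I}]=(n+|I|-2)\,t^{n}\xi_{j}\xi_{I}-t^{n+1}\partial_{j}\xi_{I}$ and the case split on $|K|$ in the inductive step are exactly the kind of direct check the paper has in mind, so your approach coincides with the intended one.
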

	By Lemma \ref{g1} to check whether a vector $\vec{m}$ in a $\g$-module is a highest weight singular vector it is sufficient to show that it is annihilated by $e_1$, $e_2$, $t(\xi_1+i\xi_2)$ and $(\xi_1+i\xi_2)\xi_3\xi_4$. 
	\begin{lem}[\cite{K4}]
	\label{appoggiog-1}
	As $\g_{0}^{ss}-$modules:
	\begin{align*}
	\g_{-1} \cong \langle x_{1}y_1,x_1y_2,x_{2}y_1,x_2y_2\rangle.
	\end{align*}
	The isomorphism is given by:
	\begin{align*}
	\xi_{2}+i\xi_{1} \leftrightarrow x_{1}y_{1}, \,\,\, \xi_{2}-i\xi_{1} \leftrightarrow x_{2}y_{2}, \,\,\,  -\xi_{4}+i\xi_{3} \leftrightarrow x_{1}y_{2},  \,\,\, \xi_{4}+i\xi_{3} \leftrightarrow x_{2}y_{1}.
	\end{align*}
\end{lem}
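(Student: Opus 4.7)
The plan is to verify the stated identification directly, by showing that the $\C$-linear map defined on basis elements by $\xi_{2}+i\xi_{1} \mapsto x_{1}y_{1}$, $\xi_{2}-i\xi_{1} \mapsto x_{2}y_{2}$, $-\xi_{4}+i\xi_{3} \mapsto x_{1}y_{2}$ and $\xi_{4}+i\xi_{3} \mapsto x_{2}y_{1}$ intertwines the adjoint action of $\g_{0}^{ss}$ on $\g_{-1}$ with the action of $\mathfrak{sl}_2\oplus\mathfrak{sl}_2$ on $\langle x_{1}y_{1}, x_{1}y_{2}, x_{2}y_{1}, x_{2}y_{2}\rangle$ described in Remark \ref{notazioinecasellig0}.

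First I would compute the elementary bracket $[\xi_{ij},\xi_{k}]$ using formula (\ref{bracketlie}). Since neither factor depends on $t$ and $\xi_{ij}$ is even, only the last (odd-derivation) term of (\ref{bracketlie}) survives, and one quickly gets $[\xi_{ij},\xi_{k}] = \delta_{ik}\xi_{j} - \delta_{jk}\xi_{i}$. Because the $2$-cocycle $\psi$ of Proposition \ref{anniliK4} vanishes on any pair of the form $(\xi_{ij},\xi_{k})$, this bracket is unaffected by the central extension, so it computes the action of $\g_{0}^{ss}\subset\g_{0}$ on $\g_{-1}$ inside $\g$ as well.

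Using this formula I would compute the action of $h_{x}$ and $h_{y}$, as defined in (\ref{hx}), on the four vectors $\xi_{2}+i\xi_{1}$, $\xi_{2}-i\xi_{1}$, $-\xi_{4}+i\xi_{3}$, $\xi_{4}+i\xi_{3}$; the $(h_{x},h_{y})$-weights come out to $(1,1)$, $(-1,-1)$, $(1,-1)$, $(-1,1)$ respectively. These match precisely the $(h_{x},h_{y})$-weights of $x_{1}y_{1}$, $x_{2}y_{2}$, $x_{1}y_{2}$, $x_{2}y_{1}$ under $h_{x}=x_{1}\partial_{x_{1}}-x_{2}\partial_{x_{2}}$ and $h_{y}=y_{1}\partial_{y_{1}}-y_{2}\partial_{y_{2}}$. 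Since the four weights of $\g_{-1}$ under $\mathfrak{h}$ are pairwise distinct, the bijection is already forced at the level of weight spaces up to a scalar on each line.

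Finally I would fix these scalars by computing $[e_{x},\cdot]$ and $[e_{y},\cdot]$ on each of the four basis vectors and matching them against $x_{1}\partial_{x_{2}}$ and $y_{1}\partial_{y_{2}}$; for instance a short calculation gives $[e_{x},\xi_{4}+i\xi_{3}] = \xi_{2}+i\xi_{1}$, which corresponds under the proposed map to $x_{1}\partial_{x_{2}}(x_{2}y_{1}) = x_{1}y_{1}$, and the analogous checks for the remaining raising (and, as a redundant sanity check, lowering) operators pin down the identification as a $\g_{0}^{ss}$-module isomorphism. The main obstacle is purely bookkeeping: keeping straight the signs coming from the skew-symmetric Grassmann brackets $[\xi_{ij},\xi_{k}]$ and the many factors of $i$ introduced by the complex basis $(e_{x},e_{y},f_{x},f_{y},h_{x},h_{y})$ of $\g_{0}^{ss}$.
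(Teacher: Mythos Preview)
Your proposal is correct and matches the paper's approach: the paper states this lemma with the preface ``By direct computations, we obtain the following results'' and cites \cite{K4} without giving any further argument, so your explicit verification via the bracket formula $[\xi_{ij},\xi_k]=\delta_{ik}\xi_j-\delta_{jk}\xi_i$, weight matching under $h_x,h_y$, and a check of the raising operators is exactly the intended direct computation.
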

From now on it is always assumed that $F$ is a finite$-$dimensional irreducible $\mathfrak{g}_{\geq 0}-$module.
\begin{rem}
Since $\Ind (F) \cong U(\mathfrak{g}_{<0}) \otimes F$, it follows that $\Ind (F) \cong \C[\Theta] \otimes \inlinewedge(4) \otimes F$. Indeed, let us denote by $\eta_{i}$ the image in $U(\g)$ of $\xi_{i} \in  \inlinewedge(4) $, for all $i \in \left\{1,2,3,4\right\}$.
 In $U(\g)$ we have that $\eta_{i}^{2}=\Theta$, for all $i \in \left\{1,2,3,4\right\}$: since $[\xi_{i},\xi_{i}]=-1$ in $\g$, we have $\eta_{i}\eta_{i}=-\eta_{i}\eta_{i}-1$ in $U(\g)$.
\end{rem}
 Given $I=i_{1}\cdots i_{k} \in \mathcal I_{\neq}$, we will use the notation  $\eta_{I}$ to denote the element  $\eta_{i_{1}}  \cdots \eta_{i_{k}} \in  U(\g_{<0})$ and we will denote  $|\eta_{I}|=|I|=k$.
\begin{rem}
 Since $C$ is central, by Schur's lemma, $C$ acts as a scalar on $F$.
\end{rem}
We will write the weights $\mu=(m,n,\mu_{t},\mu_{C})$ of weight vectors of $\g_{0}-$modules with respect to the action of the vectors $h_{x},h_{y},{ t } $ and $ C$.
Motivated by Lemma \ref{appoggiog-1}, we will use the notation 
\begin{align}
\label{notazionewcaselli}
w_{11}=\eta_{2}+i\eta_{1}, \,\, w_{22}=\eta_{2}-i\eta_{1}, \,\, w_{12}= -\eta_{4}+i\eta_{3}, \,\, w_{21}=\eta_{4}+i\eta_{3}.
\end{align}
We point out that
\begin{align}
\label{bracketwcaselli}
[w_{11},w_{22}]=4 \Theta, \quad [w_{12},w_{21}]=-4 \Theta
\end{align}
 and all other brackets between the $w'$s are 0. Moreover in $U(\g_{<0})$ we have:
\begin{align}
\label{quadratideiwcaselli}
w_{11}^{2}=w_{22}^{2}=w_{12}^{2}=w_{21}^{2}=0.
\end{align}
 Indeed for example $w_{11}^{2}=(\eta_{2}+i\eta_{1})(\eta_{2}+i\eta_{1})=\Theta+i \eta_{21}+i\eta_{12}-\Theta=0$.

In \cite{K4} it is presented the classification of all highest weight singular vectors (i.e. singular vectors $\vec{m}$ such that $e_1.\vec{m}=e_2.\vec{m}=0$). We recall the following remark from \cite{K4}. 
\begin{rem}
		\label{costruzionemorfismi}
		We recall that highest weight singular vectors allow to construct $\g-$morphisms between finite Verma modules. Indeed, let us call $M(\mu_{1},\mu_{2},\mu_{3},\mu_{4})$ the Verma module $\Ind (F (\mu_{1},\mu_{2},\mu_{3},\mu_{4}))$, where $F (\mu_{1},\mu_{2},\mu_{3},\mu_{4})$ is the irreducible $\g_{0}-$module with highest weight $(\mu_{1},\mu_{2},\mu_{3},\mu_{4})$. We call a Verma module \textit{degenerate} if it is not irreducible.
We point out that, given $M(\mu_{1},\mu_{2},\mu_{3},\mu_{4})$ and $M(\widetilde{\mu}_{1},\widetilde{\mu}_{2},\widetilde{\mu}_{3},\widetilde{\mu}_{4})$ finite Verma modules, we can construct a non trivial morphism of $\g-$modules from the former to the latter if and only if there exists a highest weight singular vector $\vec{m}$ in $M(\widetilde{\mu}_{1},\widetilde{\mu}_{2},\widetilde{\mu}_{3},\widetilde{\mu}_{4})$ of highest weight $(\mu_{1},\mu_{2},\mu_{3},\mu_{4})$. The map is uniquely determined by:
 	  \begin{align*}
		\nabla: \, M(\mu_{1},\mu_{2},\mu_{3},\mu_{4})&\longrightarrow M(\widetilde{\mu}_{1},\widetilde{\mu}_{2},\widetilde{\mu}_{3},\widetilde{\mu}_{4})\\
		               v_{\mu} &\longmapsto \vec{m},
		\end{align*}
 where $v_{\mu}$ is a highest weight vector of $F(\mu_{1},\mu_{2},\mu_{3},\mu_{4})$.
If $\vec{m}$ is a singular vector of degree $d$, we say that $\nabla$ is a morphism of degree $d$.
		\end{rem}
		Remark \ref{costruzionemorfismi} is used in \cite{K4} to construct the maps in Figure \ref{figura} of all possible morphisms between finite Verma modules in the case of $K'_{4}$. The maps will be described explicitly in section \ref{sezmorfismi}. 
		\begin{figure}[h!]
		\centering
		\caption{}
		 \label{figura}
		\begin{tikzpicture}
 
\draw[->,black] (0,0) -- (6,0) node[anchor=north west]{};
\draw[->,black] (0,0) -- (0,6) node[anchor=south east]{};
\node[black] at (3,6.5) {$(m,n,-\frac{m+n}{2},\frac{m-n}{2})$};
\node[black] at (6,6.5) {\textbf{A}};
\node[black] at (6.5,0) {$m$};
\node[black] at (0,6.5) {$n$};
\draw[black] (0,0) -- (5.5,5.5);
\draw[black] (0,1) -- (4.5,5.5);
\draw[black] (0,2) -- (3.5,5.5);
\draw[black] (0,3) -- (2.5,5.5);
\draw[black] (0,4) -- (1.5,5.5);
\draw[black] (0,5) -- (0.5,5.5);
\draw[black] (1,0) -- (5.5,4.5);
\draw[black] (2,0) -- (5.5,3.5);
\draw[black] (3,0) -- (5.5,2.5);
\draw[black] (4,0) -- (5.5,1.5);
\draw[black] (5,0) -- (5.5,0.5);

\draw[-latex] (1,0) to[out=200,in=70] (-0.9,-1.9);
\draw[-latex] (0,1) to[out=-110,in=20] (-1.9,-0.9);

\draw[-latex,black] (0,1.9) -- (-0.9,0.1);
\draw[-latex,black] (0,2.9) -- (-0.9,1.1);
\draw[-latex,black] (0,3.9) -- (-0.9,2.1);
\draw[-latex,black] (0,4.9) -- (-0.9,3.1);
\draw[-latex,black] (0,5.9) -- (-0.9,4.1);
\draw[-latex,black] (-0.5,5.9) -- (-0.9,5.1);

\draw[-latex,black] (2,0) -- (0.1,-0.9);
\draw[-latex,black] (3,0) -- (1.1,-0.9);
\draw[-latex,black] (4,0) -- (2.1,-0.9);
\draw[-latex,black] (5,0) -- (3.1,-0.9);
\draw[-latex,black] (6,0) -- (4.1,-0.9);
\draw[-latex,black] (6,-0.5) -- (5.1,-0.9);

\draw[-latex,black] (0,-1) -- (-0.9,-2.9);
\draw[-latex,black] (0,-2) -- (-0.9,-3.9);
\draw[-latex,black] (0,-3) -- (-0.9,-4.9);
\draw[-latex,black] (0,-4) -- (-0.9,-5.9);
\draw[-latex,black] (0,-5) -- (-0.9,-6.9);
\draw[-latex,black] (0,-6) -- (-0.5,-7.1);

\draw[-latex,black] (-1,0) -- (-2.9,-0.9);
\draw[-latex,black] (-2,0) -- (-3.9,-0.9);
\draw[-latex,black] (-3,0) -- (-4.9,-0.9);
\draw[-latex,black] (-4,0) -- (-5.9,-0.9);
\draw[-latex,black] (-5,0) -- (-6.9,-0.9);
\draw[-latex,black] (-6,0) -- (-7.1,-0.5);

\draw[-latex,black] (5.5,5.5) -- (5.1,5.1);
\draw[-latex,black] (5,5) -- (4.1,4.1);
\draw[-latex,black] (4,4) -- (3.1,3.1);
\draw[-latex,black] (3,3) -- (2.1,2.1);
\draw[-latex,black] (2,2) -- (1.1,1.1);
\draw[-latex,black] (1,1) -- (0.1,0.1);

\draw[-latex,black] (5.5,4.5) -- (5.1,4.1);
\draw[-latex,black] (5,4) -- (4.1,3.1);
\draw[-latex,black] (4,3) -- (3.1,2.1);
\draw[-latex,black] (3,2) -- (2.1,1.1);
\draw[-latex,black] (2,1) -- (1.1,0.1);

\draw[-latex,black] (5.5,3.5) -- (5.1,3.1);
\draw[-latex,black] (5,3) -- (4.1,2.1);
\draw[-latex,black] (4,2) -- (3.1,1.1);
\draw[-latex,black] (3,1) -- (2.1,0.1);

\draw[-latex,black] (5.5,2.5) -- (5.1,2.1);
\draw[-latex,black] (5,2) -- (4.1,1.1);
\draw[-latex,black] (4,1) -- (3.1,0.1);

\draw[-latex,black] (5.5,1.5) -- (5.1,1.1);
\draw[-latex,black] (5,1) -- (4.1,0.1);

\draw[-latex,black] (5.5,0.5) -- (5.1,0.1);

\draw[-latex,black] (4.5,5.5) -- (4.1,5.1);
\draw[-latex,black] (4,5) -- (3.1,4.1);
\draw[-latex,black] (3,4) -- (2.1,3.1);
\draw[-latex,black] (2,3) -- (1.1,2.1);
\draw[-latex,black] (1,2) -- (0.1,1.1);

\draw[-latex,black] (3.5,5.5) -- (3.1,5.1);
\draw[-latex,black] (3,5) -- (2.1,4.1);
\draw[-latex,black] (2,4) -- (1.1,3.1);
\draw[-latex,black] (1,3) -- (0.1,2.1);

\draw[-latex,black] (2.5,5.5) -- (2.1,5.1);
\draw[-latex,black] (2,5) -- (1.1,4.1);
\draw[-latex,black] (1,4) -- (0.1,3.1);

\draw[-latex,black] (1.5,5.5) -- (1.1,5.1);
\draw[-latex,black] (1,5) -- (0.1,4.1);

\draw[-latex,black] (0.5,5.5) -- (0.1,5.1);

\coordinate (center) at (0,0);
\fill[white] (center) + (0, 0.1) arc (90:270:0.1);
\fill[white] (center)+ (0, -0.1) arc (270:450:0.1);
\draw[black] (center)+ (0, -0.1) arc (270:450:0.1);
\draw[black] (center)+ (0, 0.1) arc (90:270:0.1);
\coordinate (center) at (0,1);
\fill[white] (center) + (0, 0.1) arc (90:270:0.1);
\fill[white] (center)+ (0, -0.1) arc (270:450:0.1);
\draw[black] (center)+ (0, -0.1) arc (270:450:0.1);
\draw[black] (center)+ (0, 0.1) arc (90:270:0.1);
\coordinate (center) at (0,2);
\fill[white] (center) + (0, 0.1) arc (90:270:0.1);
\fill[white] (center)+ (0, -0.1) arc (270:450:0.1);
\draw[black] (center)+ (0, -0.1) arc (270:450:0.1);
\draw[black] (center)+ (0, 0.1) arc (90:270:0.1);
\coordinate (center) at (0,3);
\fill[white] (center) + (0, 0.1) arc (90:270:0.1);
\fill[white] (center)+ (0, -0.1) arc (270:450:0.1);
\draw[black] (center)+ (0, -0.1) arc (270:450:0.1);
\draw[black] (center)+ (0, 0.1) arc (90:270:0.1);
\coordinate (center) at (0,4);
\fill[white] (center) + (0, 0.1) arc (90:270:0.1);
\fill[white] (center)+ (0, -0.1) arc (270:450:0.1);
\draw[black] (center)+ (0, -0.1) arc (270:450:0.1);
\draw[black] (center)+ (0, 0.1) arc (90:270:0.1);
\coordinate (center) at (0,5);
\fill[white] (center) + (0, 0.1) arc (90:270:0.1);
\fill[white] (center)+ (0, -0.1) arc (270:450:0.1);
\draw[black] (center)+ (0, -0.1) arc (270:450:0.1);
\draw[black] (center)+ (0, 0.1) arc (90:270:0.1);
\coordinate (center) at (1,0);
\fill[white] (center) + (0, 0.1) arc (90:270:0.1);
\fill[white] (center)+ (0, -0.1) arc (270:450:0.1);
\draw[black] (center)+ (0, -0.1) arc (270:450:0.1);
\draw[black] (center)+ (0, 0.1) arc (90:270:0.1);
\coordinate (center) at (2,0);
\fill[white] (center) + (0, 0.1) arc (90:270:0.1);
\fill[white] (center)+ (0, -0.1) arc (270:450:0.1);
\draw[black] (center)+ (0, -0.1) arc (270:450:0.1);
\draw[black] (center)+ (0, 0.1) arc (90:270:0.1);
\coordinate (center) at (3,0);
\fill[white] (center) + (0, 0.1) arc (90:270:0.1);
\fill[white] (center)+ (0, -0.1) arc (270:450:0.1);
\draw[black] (center)+ (0, -0.1) arc (270:450:0.1);
\draw[black] (center)+ (0, 0.1) arc (90:270:0.1);
\coordinate (center) at (4,0);
\fill[white] (center) + (0, 0.1) arc (90:270:0.1);
\fill[white] (center)+ (0, -0.1) arc (270:450:0.1);
\draw[black] (center)+ (0, -0.1) arc (270:450:0.1);
\draw[black] (center)+ (0, 0.1) arc (90:270:0.1);
\coordinate (center) at (5,0);
\fill[white] (center) + (0, 0.1) arc (90:270:0.1);
\fill[white] (center)+ (0, -0.1) arc (270:450:0.1);
\draw[black] (center)+ (0, -0.1) arc (270:450:0.1);
\draw[black] (center)+ (0, 0.1) arc (90:270:0.1);

\coordinate (center) at (1,1);
\fill[white] (center) + (0, 0.1) arc (90:270:0.1);
\fill[white] (center)+ (0, -0.1) arc (270:450:0.1);
\draw[black] (center)+ (0, -0.1) arc (270:450:0.1);
\draw[black] (center)+ (0, 0.1) arc (90:270:0.1);
\coordinate (center) at (1,2);
\fill[white] (center) + (0, 0.1) arc (90:270:0.1);
\fill[white] (center)+ (0, -0.1) arc (270:450:0.1);
\draw[black] (center)+ (0, -0.1) arc (270:450:0.1);
\draw[black] (center)+ (0, 0.1) arc (90:270:0.1);
\coordinate (center) at (1,3);
\fill[white] (center) + (0, 0.1) arc (90:270:0.1);
\fill[white] (center)+ (0, -0.1) arc (270:450:0.1);
\draw[black] (center)+ (0, -0.1) arc (270:450:0.1);
\draw[black] (center)+ (0, 0.1) arc (90:270:0.1);
\coordinate (center) at (1,4);
\fill[white] (center) + (0, 0.1) arc (90:270:0.1);
\fill[white] (center)+ (0, -0.1) arc (270:450:0.1);
\draw[black] (center)+ (0, -0.1) arc (270:450:0.1);
\draw[black] (center)+ (0, 0.1) arc (90:270:0.1);
\coordinate (center) at (1,5);
\fill[white] (center) + (0, 0.1) arc (90:270:0.1);
\fill[white] (center)+ (0, -0.1) arc (270:450:0.1);
\draw[black] (center)+ (0, -0.1) arc (270:450:0.1);
\draw[black] (center)+ (0, 0.1) arc (90:270:0.1);
\coordinate (center) at (2,1);
\fill[white] (center) + (0, 0.1) arc (90:270:0.1);
\fill[white] (center)+ (0, -0.1) arc (270:450:0.1);
\draw[black] (center)+ (0, -0.1) arc (270:450:0.1);
\draw[black] (center)+ (0, 0.1) arc (90:270:0.1);
\coordinate (center) at (2,2);
\fill[white] (center) + (0, 0.1) arc (90:270:0.1);
\fill[white] (center)+ (0, -0.1) arc (270:450:0.1);
\draw[black] (center)+ (0, -0.1) arc (270:450:0.1);
\draw[black] (center)+ (0, 0.1) arc (90:270:0.1);
\coordinate (center) at (2,3);
\fill[white] (center) + (0, 0.1) arc (90:270:0.1);
\fill[white] (center)+ (0, -0.1) arc (270:450:0.1);
\draw[black] (center)+ (0, -0.1) arc (270:450:0.1);
\draw[black] (center)+ (0, 0.1) arc (90:270:0.1);
\coordinate (center) at (2,4);
\fill[white] (center) + (0, 0.1) arc (90:270:0.1);
\fill[white] (center)+ (0, -0.1) arc (270:450:0.1);
\draw[black] (center)+ (0, -0.1) arc (270:450:0.1);
\draw[black] (center)+ (0, 0.1) arc (90:270:0.1);
\coordinate (center) at (2,5);
\fill[white] (center) + (0, 0.1) arc (90:270:0.1);
\fill[white] (center)+ (0, -0.1) arc (270:450:0.1);
\draw[black] (center)+ (0, -0.1) arc (270:450:0.1);
\draw[black] (center)+ (0, 0.1) arc (90:270:0.1);
\coordinate (center) at (3,1);
\fill[white] (center) + (0, 0.1) arc (90:270:0.1);
\fill[white] (center)+ (0, -0.1) arc (270:450:0.1);
\draw[black] (center)+ (0, -0.1) arc (270:450:0.1);
\draw[black] (center)+ (0, 0.1) arc (90:270:0.1);
\coordinate (center) at (3,2);
\fill[white] (center) + (0, 0.1) arc (90:270:0.1);
\fill[white] (center)+ (0, -0.1) arc (270:450:0.1);
\draw[black] (center)+ (0, -0.1) arc (270:450:0.1);
\draw[black] (center)+ (0, 0.1) arc (90:270:0.1);
\coordinate (center) at (3,3);
\fill[white] (center) + (0, 0.1) arc (90:270:0.1);
\fill[white] (center)+ (0, -0.1) arc (270:450:0.1);
\draw[black] (center)+ (0, -0.1) arc (270:450:0.1);
\draw[black] (center)+ (0, 0.1) arc (90:270:0.1);
\coordinate (center) at (3,4);
\fill[white] (center) + (0, 0.1) arc (90:270:0.1);
\fill[white] (center)+ (0, -0.1) arc (270:450:0.1);
\draw[black] (center)+ (0, -0.1) arc (270:450:0.1);
\draw[black] (center)+ (0, 0.1) arc (90:270:0.1);
\coordinate (center) at (3,5);
\fill[white] (center) + (0, 0.1) arc (90:270:0.1);
\fill[white] (center)+ (0, -0.1) arc (270:450:0.1);
\draw[black] (center)+ (0, -0.1) arc (270:450:0.1);
\draw[black] (center)+ (0, 0.1) arc (90:270:0.1);
\coordinate (center) at (4,1);
\fill[white] (center) + (0, 0.1) arc (90:270:0.1);
\fill[white] (center)+ (0, -0.1) arc (270:450:0.1);
\draw[black] (center)+ (0, -0.1) arc (270:450:0.1);
\draw[black] (center)+ (0, 0.1) arc (90:270:0.1);
\coordinate (center) at (4,2);
\fill[white] (center) + (0, 0.1) arc (90:270:0.1);
\fill[white] (center)+ (0, -0.1) arc (270:450:0.1);
\draw[black] (center)+ (0, -0.1) arc (270:450:0.1);
\draw[black] (center)+ (0, 0.1) arc (90:270:0.1);
\coordinate (center) at (4,3);
\fill[white] (center) + (0, 0.1) arc (90:270:0.1);
\fill[white] (center)+ (0, -0.1) arc (270:450:0.1);
\draw[black] (center)+ (0, -0.1) arc (270:450:0.1);
\draw[black] (center)+ (0, 0.1) arc (90:270:0.1);
\coordinate (center) at (4,4);
\fill[white] (center) + (0, 0.1) arc (90:270:0.1);
\fill[white] (center)+ (0, -0.1) arc (270:450:0.1);
\draw[black] (center)+ (0, -0.1) arc (270:450:0.1);
\draw[black] (center)+ (0, 0.1) arc (90:270:0.1);
\coordinate (center) at (4,5);
\fill[white] (center) + (0, 0.1) arc (90:270:0.1);
\fill[white] (center)+ (0, -0.1) arc (270:450:0.1);
\draw[black] (center)+ (0, -0.1) arc (270:450:0.1);
\draw[black] (center)+ (0, 0.1) arc (90:270:0.1);
\coordinate (center) at (5,1);
\fill[white] (center) + (0, 0.1) arc (90:270:0.1);
\fill[white] (center)+ (0, -0.1) arc (270:450:0.1);
\draw[black] (center)+ (0, -0.1) arc (270:450:0.1);
\draw[black] (center)+ (0, 0.1) arc (90:270:0.1);
\coordinate (center) at (5,2);
\fill[white] (center) + (0, 0.1) arc (90:270:0.1);
\fill[white] (center)+ (0, -0.1) arc (270:450:0.1);
\draw[black] (center)+ (0, -0.1) arc (270:450:0.1);
\draw[black] (center)+ (0, 0.1) arc (90:270:0.1);
\coordinate (center) at (5,3);
\fill[white] (center) + (0, 0.1) arc (90:270:0.1);
\fill[white] (center)+ (0, -0.1) arc (270:450:0.1);
\draw[black] (center)+ (0, -0.1) arc (270:450:0.1);
\draw[black] (center)+ (0, 0.1) arc (90:270:0.1);
\coordinate (center) at (5,4);
\fill[white] (center) + (0, 0.1) arc (90:270:0.1);
\fill[white] (center)+ (0, -0.1) arc (270:450:0.1);
\draw[black] (center)+ (0, -0.1) arc (270:450:0.1);
\draw[black] (center)+ (0, 0.1) arc (90:270:0.1);
\coordinate (center) at (5,5);
\fill[white] (center) + (0, 0.1) arc (90:270:0.1);
\fill[white] (center)+ (0, -0.1) arc (270:450:0.1);
\draw[black] (center)+ (0, -0.1) arc (270:450:0.1);
\draw[black] (center)+ (0, 0.1) arc (90:270:0.1);

\draw[->,black] (-1,-1) -- (-7,-1) node[anchor=north west]{};
\draw[->,black] (-1,-1) -- (-1,-7) node[anchor=south east]{};
\node[black] at (-1,-7.5) {$n$};
\node[black] at (-7.5,-1) {$m$};
 \node[black] at (-4,-7) {($m,n,\frac{m+n}{2}+2,\frac{n-m}{2}$)};
\node[black] at (-7,-7) {\textbf{C}};
\draw[black] (-1,-1) -- (-6.5,-6.5);
\draw[black] (-2,-1) -- (-6.5,-5.5);
\draw[black] (-3,-1) -- (-6.5,-4.5);
\draw[black] (-4,-1) -- (-6.5,-3.5);
\draw[black] (-5,-1) -- (-6.5,-2.5);
\draw[black] (-6,-1) -- (-6.5,-1.5);
\draw[black] (-1,-2) -- (-5.5,-6.5);
\draw[black] (-1,-3) -- (-4.5,-6.5);
\draw[black] (-1,-4) -- (-3.5,-6.5);
\draw[black] (-1,-5) -- (-2.5,-6.5);
\draw[black] (-1,-6) -- (-1.5,-6.5);

\draw[-latex,black] (-1,-1) -- (-1.9,-1.9);
\draw[-latex,black] (-2,-2) -- (-2.9,-2.9);
\draw[-latex,black] (-3,-3) -- (-3.9,-3.9);
\draw[-latex,black] (-4,-4) -- (-4.9,-4.9);
\draw[-latex,black] (-5,-5) -- (-5.9,-5.9);
\draw[-latex,black] (-6,-6) -- (-6.5,-6.5);

\draw[-latex,black] (-1,-2) -- (-1.9,-2.9);
\draw[-latex,black] (-2,-3) -- (-2.9,-3.9);
\draw[-latex,black] (-3,-4) -- (-3.9,-4.9);
\draw[-latex,black] (-4,-5) -- (-4.9,-5.9);
\draw[-latex,black] (-5,-6) -- (-5.5,-6.5);

\draw[-latex,black] (-1,-3) -- (-1.9,-3.9);
\draw[-latex,black] (-2,-4) -- (-2.9,-4.9);
\draw[-latex,black] (-3,-5) -- (-3.9,-5.9);
\draw[-latex,black] (-4,-6) -- (-4.5,-6.5);

\draw[-latex,black] (-1,-4) -- (-1.9,-4.9);
\draw[-latex,black] (-2,-5) -- (-2.9,-5.9);
\draw[-latex,black] (-3,-6) -- (-3.5,-6.5);

\draw[-latex,black] (-1,-5) -- (-1.9,-5.9);
\draw[-latex,black] (-2,-6) -- (-2.5,-6.5);

\draw[-latex,black] (-1,-6) -- (-1.5,-6.5);

\draw[-latex,black] (-2,-1) -- (-2.9,-1.9);
\draw[-latex,black] (-3,-2) -- (-3.9,-2.9);
\draw[-latex,black] (-4,-3) -- (-4.9,-3.9);
\draw[-latex,black] (-5,-4) -- (-5.9,-4.9);
\draw[-latex,black] (-6,-5) -- (-6.5,-5.5);

\draw[-latex,black] (-3,-1) -- (-3.9,-1.9);
\draw[-latex,black] (-4,-2) -- (-4.9,-2.9);
\draw[-latex,black] (-5,-3) -- (-5.9,-3.9);
\draw[-latex,black] (-6,-4) -- (-6.5,-4.5);

\draw[-latex,black] (-4,-1) -- (-4.9,-1.9);
\draw[-latex,black] (-5,-2) -- (-5.9,-2.9);
\draw[-latex,black] (-6,-3) -- (-6.5,-3.5);

\draw[-latex,black] (-5,-1) -- (-5.9,-1.9);
\draw[-latex,black] (-6,-2) -- (-6.5,-2.5);

\draw[-latex,black] (-6,-1) -- (-6.5,-1.5);

\coordinate (center) at (-1,-1);
\fill[white] (center) + (0, 0.1) arc (90:270:0.1);
\fill[white] (center)+ (0, -0.1) arc (270:450:0.1);
\draw[black] (center)+ (0, -0.1) arc (270:450:0.1);
\draw[black] (center)+ (0, 0.1) arc (90:270:0.1);
\coordinate (center) at (-1,-2);
\fill[white] (center) + (0, 0.1) arc (90:270:0.1);
\fill[white] (center)+ (0, -0.1) arc (270:450:0.1);
\draw[black] (center)+ (0, -0.1) arc (270:450:0.1);
\draw[black] (center)+ (0, 0.1) arc (90:270:0.1);
\coordinate (center) at (-1,-3);
\fill[white] (center) + (0, 0.1) arc (90:270:0.1);
\fill[white] (center)+ (0, -0.1) arc (270:450:0.1);
\draw[black] (center)+ (0, -0.1) arc (270:450:0.1);
\draw[black] (center)+ (0, 0.1) arc (90:270:0.1);
\coordinate (center) at (-1,-4);
\fill[white] (center) + (0, 0.1) arc (90:270:0.1);
\fill[white] (center)+ (0, -0.1) arc (270:450:0.1);
\draw[black] (center)+ (0, -0.1) arc (270:450:0.1);
\draw[black] (center)+ (0, 0.1) arc (90:270:0.1);
\coordinate (center) at (-1,-5);
\fill[white] (center) + (0, 0.1) arc (90:270:0.1);
\fill[white] (center)+ (0, -0.1) arc (270:450:0.1);
\draw[black] (center)+ (0, -0.1) arc (270:450:0.1);
\draw[black] (center)+ (0, 0.1) arc (90:270:0.1);
\coordinate (center) at (-1,-6);
\fill[white] (center) + (0, 0.1) arc (90:270:0.1);
\fill[white] (center)+ (0, -0.1) arc (270:450:0.1);
\draw[black] (center)+ (0, -0.1) arc (270:450:0.1);
\draw[black] (center)+ (0, 0.1) arc (90:270:0.1);
\coordinate (center) at (-2,-1);
\fill[white] (center) + (0, 0.1) arc (90:270:0.1);
\fill[white] (center)+ (0, -0.1) arc (270:450:0.1);
\draw[black] (center)+ (0, -0.1) arc (270:450:0.1);
\draw[black] (center)+ (0, 0.1) arc (90:270:0.1);
\coordinate (center) at (-3,-1);
\fill[white] (center) + (0, 0.1) arc (90:270:0.1);
\fill[white] (center)+ (0, -0.1) arc (270:450:0.1);
\draw[black] (center)+ (0, -0.1) arc (270:450:0.1);
\draw[black] (center)+ (0, 0.1) arc (90:270:0.1);
\coordinate (center) at (-4,-1);
\fill[white] (center) + (0, 0.1) arc (90:270:0.1);
\fill[white] (center)+ (0, -0.1) arc (270:450:0.1);
\draw[black] (center)+ (0, -0.1) arc (270:450:0.1);
\draw[black] (center)+ (0, 0.1) arc (90:270:0.1);
\coordinate (center) at (-5,-1);
\fill[white] (center) + (0, 0.1) arc (90:270:0.1);
\fill[white] (center)+ (0, -0.1) arc (270:450:0.1);
\draw[black] (center)+ (0, -0.1) arc (270:450:0.1);
\draw[black] (center)+ (0, 0.1) arc (90:270:0.1);
\coordinate (center) at (-6,-1);
\fill[white] (center) + (0, 0.1) arc (90:270:0.1);
\fill[white] (center)+ (0, -0.1) arc (270:450:0.1);
\draw[black] (center)+ (0, -0.1) arc (270:450:0.1);
\draw[black] (center)+ (0, 0.1) arc (90:270:0.1);

\coordinate (center) at (-2,-2);
\fill[white] (center) + (0, 0.1) arc (90:270:0.1);
\fill[white] (center)+ (0, -0.1) arc (270:450:0.1);
\draw[black] (center)+ (0, -0.1) arc (270:450:0.1);
\draw[black] (center)+ (0, 0.1) arc (90:270:0.1);
\coordinate (center) at (-2,-3);
\fill[white] (center) + (0, 0.1) arc (90:270:0.1);
\fill[white] (center)+ (0, -0.1) arc (270:450:0.1);
\draw[black] (center)+ (0, -0.1) arc (270:450:0.1);
\draw[black] (center)+ (0, 0.1) arc (90:270:0.1);

\coordinate (center) at (-2,-4);
\fill[white] (center) + (0, 0.1) arc (90:270:0.1);
\fill[white] (center)+ (0, -0.1) arc (270:450:0.1);
\draw[black] (center)+ (0, -0.1) arc (270:450:0.1);
\draw[black] (center)+ (0, 0.1) arc (90:270:0.1);
\coordinate (center) at (-2,-5);
\fill[white] (center) + (0, 0.1) arc (90:270:0.1);
\fill[white] (center)+ (0, -0.1) arc (270:450:0.1);
\draw[black] (center)+ (0, -0.1) arc (270:450:0.1);
\draw[black] (center)+ (0, 0.1) arc (90:270:0.1);
\coordinate (center) at (-2,-6);
\fill[white] (center) + (0, 0.1) arc (90:270:0.1);
\fill[white] (center)+ (0, -0.1) arc (270:450:0.1);
\draw[black] (center)+ (0, -0.1) arc (270:450:0.1);
\draw[black] (center)+ (0, 0.1) arc (90:270:0.1);
\coordinate (center) at (-3,-2);
\fill[white] (center) + (0, 0.1) arc (90:270:0.1);
\fill[white] (center)+ (0, -0.1) arc (270:450:0.1);
\draw[black] (center)+ (0, -0.1) arc (270:450:0.1);
\draw[black] (center)+ (0, 0.1) arc (90:270:0.1);
\coordinate (center) at (-3,-3);
\fill[white] (center) + (0, 0.1) arc (90:270:0.1);
\fill[white] (center)+ (0, -0.1) arc (270:450:0.1);
\draw[black] (center)+ (0, -0.1) arc (270:450:0.1);
\draw[black] (center)+ (0, 0.1) arc (90:270:0.1);
\coordinate (center) at (-3,-4);
\fill[white] (center) + (0, 0.1) arc (90:270:0.1);
\fill[white] (center)+ (0, -0.1) arc (270:450:0.1);
\draw[black] (center)+ (0, -0.1) arc (270:450:0.1);
\draw[black] (center)+ (0, 0.1) arc (90:270:0.1);
\coordinate (center) at (-3,-5);
\fill[white] (center) + (0, 0.1) arc (90:270:0.1);
\fill[white] (center)+ (0, -0.1) arc (270:450:0.1);
\draw[black] (center)+ (0, -0.1) arc (270:450:0.1);
\draw[black] (center)+ (0, 0.1) arc (90:270:0.1);
\coordinate (center) at (-3,-6);
\fill[white] (center) + (0, 0.1) arc (90:270:0.1);
\fill[white] (center)+ (0, -0.1) arc (270:450:0.1);
\draw[black] (center)+ (0, -0.1) arc (270:450:0.1);
\draw[black] (center)+ (0, 0.1) arc (90:270:0.1);
\coordinate (center) at (-4,-2);
\fill[white] (center) + (0, 0.1) arc (90:270:0.1);
\fill[white] (center)+ (0, -0.1) arc (270:450:0.1);
\draw[black] (center)+ (0, -0.1) arc (270:450:0.1);
\draw[black] (center)+ (0, 0.1) arc (90:270:0.1);
\coordinate (center) at (-4,-3);
\fill[white] (center) + (0, 0.1) arc (90:270:0.1);
\fill[white] (center)+ (0, -0.1) arc (270:450:0.1);
\draw[black] (center)+ (0, -0.1) arc (270:450:0.1);
\draw[black] (center)+ (0, 0.1) arc (90:270:0.1);
\coordinate (center) at (-4,-4);
\fill[white] (center) + (0, 0.1) arc (90:270:0.1);
\fill[white] (center)+ (0, -0.1) arc (270:450:0.1);
\draw[black] (center)+ (0, -0.1) arc (270:450:0.1);
\draw[black] (center)+ (0, 0.1) arc (90:270:0.1);
\coordinate (center) at (-4,-5);
\fill[white] (center) + (0, 0.1) arc (90:270:0.1);
\fill[white] (center)+ (0, -0.1) arc (270:450:0.1);
\draw[black] (center)+ (0, -0.1) arc (270:450:0.1);
\draw[black] (center)+ (0, 0.1) arc (90:270:0.1);
\coordinate (center) at (-4,-6);
\fill[white] (center) + (0, 0.1) arc (90:270:0.1);
\fill[white] (center)+ (0, -0.1) arc (270:450:0.1);
\draw[black] (center)+ (0, -0.1) arc (270:450:0.1);
\draw[black] (center)+ (0, 0.1) arc (90:270:0.1);
\coordinate (center) at (-5,-2);
\fill[white] (center) + (0, 0.1) arc (90:270:0.1);
\fill[white] (center)+ (0, -0.1) arc (270:450:0.1);
\draw[black] (center)+ (0, -0.1) arc (270:450:0.1);
\draw[black] (center)+ (0, 0.1) arc (90:270:0.1);
\coordinate (center) at (-5,-3);
\fill[white] (center) + (0, 0.1) arc (90:270:0.1);
\fill[white] (center)+ (0, -0.1) arc (270:450:0.1);
\draw[black] (center)+ (0, -0.1) arc (270:450:0.1);
\draw[black] (center)+ (0, 0.1) arc (90:270:0.1);
\coordinate (center) at (-5,-4);
\fill[white] (center) + (0, 0.1) arc (90:270:0.1);
\fill[white] (center)+ (0, -0.1) arc (270:450:0.1);
\draw[black] (center)+ (0, -0.1) arc (270:450:0.1);
\draw[black] (center)+ (0, 0.1) arc (90:270:0.1);
\coordinate (center) at (-5,-5);
\fill[white] (center) + (0, 0.1) arc (90:270:0.1);
\fill[white] (center)+ (0, -0.1) arc (270:450:0.1);
\draw[black] (center)+ (0, -0.1) arc (270:450:0.1);
\draw[black] (center)+ (0, 0.1) arc (90:270:0.1);

\coordinate (center) at (-5,-6);
\fill[white] (center) + (0, 0.1) arc (90:270:0.1);
\fill[white] (center)+ (0, -0.1) arc (270:450:0.1);
\draw[black] (center)+ (0, -0.1) arc (270:450:0.1);
\draw[black] (center)+ (0, 0.1) arc (90:270:0.1);
\coordinate (center) at (-6,-2);
\fill[white] (center) + (0, 0.1) arc (90:270:0.1);
\fill[white] (center)+ (0, -0.1) arc (270:450:0.1);
\draw[black] (center)+ (0, -0.1) arc (270:450:0.1);
\draw[black] (center)+ (0, 0.1) arc (90:270:0.1);
\coordinate (center) at (-6,-3);
\fill[white] (center) + (0, 0.1) arc (90:270:0.1);
\fill[white] (center)+ (0, -0.1) arc (270:450:0.1);
\draw[black] (center)+ (0, -0.1) arc (270:450:0.1);
\draw[black] (center)+ (0, 0.1) arc (90:270:0.1);
\coordinate (center) at (-6,-4);
\fill[white] (center) + (0, 0.1) arc (90:270:0.1);
\fill[white] (center)+ (0, -0.1) arc (270:450:0.1);
\draw[black] (center)+ (0, -0.1) arc (270:450:0.1);
\draw[black] (center)+ (0, 0.1) arc (90:270:0.1);
\coordinate (center) at (-6,-5);
\fill[white] (center) + (0, 0.1) arc (90:270:0.1);
\fill[white] (center)+ (0, -0.1) arc (270:450:0.1);
\draw[black] (center)+ (0, -0.1) arc (270:450:0.1);
\draw[black] (center)+ (0, 0.1) arc (90:270:0.1);

\coordinate (center) at (-6,-6);
\fill[white] (center) + (0, 0.1) arc (90:270:0.1);
\fill[white] (center)+ (0, -0.1) arc (270:450:0.1);
\draw[black] (center)+ (0, -0.1) arc (270:450:0.1);
\draw[black] (center)+ (0, 0.1) arc (90:270:0.1);

\draw[->,black] (0,-1) -- (6,-1) node[anchor=north west]{};
\draw[->,black] (0,-1) -- (0,-7) node[anchor=south east]{};
\node[black] at (0,-7.5) {$n$};
\node[black] at (6.5,-1) {$m$};
\node[black] at (3,-7) {$(m,n,1+\frac{n-m}{2},1+\frac{n+m}{2})$};
\node[black] at (6,-7) {\textbf{D}};
\draw[black] (1,-1) -- (0,-2);
\draw[black] (2,-1) -- (0,-3);
\draw[black] (3,-1) -- (0,-4);
\draw[black] (4,-1) -- (0,-5);
\draw[black] (5,-1) -- (0,-6);
\draw[black] (5.5,-1.5) -- (0.5,-6.5);
\draw[black] (5.5,-2.5) -- (1.5,-6.5);
\draw[black] (5.5,-3.5) -- (2.5,-6.5);
\draw[black] (5.5,-4.5) -- (3.5,-6.5);
\draw[black] (5.5,-5.5) -- (4.5,-6.5);

\draw[-latex,black] (1,-1) -- (0.1,-1.9);

\draw[-latex,black] (2,-1) -- (1.1,-1.9);
\draw[-latex,black] (1,-2) -- (0.1,-2.9);

\draw[-latex,black] (3,-1) -- (2.1,-1.9);
\draw[-latex,black] (2,-2) -- (1.1,-2.9);
\draw[-latex,black] (1,-3) -- (0.1,-3.9);

\draw[-latex,black] (4,-1) -- (3.1,-1.9);
\draw[-latex,black] (3,-2) -- (2.1,-2.9);
\draw[-latex,black] (2,-3) -- (1.1,-3.9);
\draw[-latex,black] (1,-4) -- (0.1,-4.9);

\draw[-latex,black] (5,-1) -- (4.1,-1.9);
\draw[-latex,black] (4,-2) -- (3.1,-2.9);
\draw[-latex,black] (3,-3) -- (2.1,-3.9);
\draw[-latex,black] (2,-4) -- (1.1,-4.9);
\draw[-latex,black] (1,-5) -- (0.1,-5.9);

\draw[-latex,black] (5.5,-1.5) -- (5.1,-1.9);
\draw[-latex,black] (5,-2) -- (4.1,-2.9);
\draw[-latex,black] (4,-3) -- (3.1,-3.9);
\draw[-latex,black] (3,-4) -- (2.1,-4.9);
\draw[-latex,black] (2,-5) -- (1.1,-5.9);
\draw[-latex,black] (1,-6) -- (0.5,-6.5);

\draw[-latex,black] (5.5,-2.5) -- (5.1,-2.9);
\draw[-latex,black] (5,-3) -- (4.1,-3.9);
\draw[-latex,black] (4,-4) -- (3.1,-4.9);
\draw[-latex,black] (3,-5) -- (2.1,-5.9);
\draw[-latex,black] (2,-6) -- (1.5,-6.5);

\draw[-latex,black] (5.5,-3.5) -- (5.1,-3.9);
\draw[-latex,black] (5,-4) -- (4.1,-4.9);
\draw[-latex,black] (4,-5) -- (3.1,-5.9);
\draw[-latex,black] (3,-6) -- (2.5,-6.5);

\draw[-latex,black] (5.5,-4.5) -- (5.1,-4.9);
\draw[-latex,black] (5,-5) -- (4.1,-5.9);
\draw[-latex,black] (4,-6) -- (3.5,-6.5);

\draw[-latex,black] (5.5,-5.5) -- (5.1,-5.9);
\draw[-latex,black] (5,-6) -- (4.5,-6.5);

\coordinate (center) at (0,-2);
\fill[white] (center) + (0, 0.1) arc (90:270:0.1);
\fill[white] (center)+ (0, -0.1) arc (270:450:0.1);
\draw[black] (center)+ (0, -0.1) arc (270:450:0.1);
\draw[black] (center)+ (0, 0.1) arc (90:270:0.1);
\coordinate (center) at (0,-3);
\fill[white] (center) + (0, 0.1) arc (90:270:0.1);
\fill[white] (center)+ (0, -0.1) arc (270:450:0.1);
\draw[black] (center)+ (0, -0.1) arc (270:450:0.1);
\draw[black] (center)+ (0, 0.1) arc (90:270:0.1);
\coordinate (center) at (0,-4);
\fill[white] (center) + (0, 0.1) arc (90:270:0.1);
\fill[white] (center)+ (0, -0.1) arc (270:450:0.1);
\draw[black] (center)+ (0, -0.1) arc (270:450:0.1);
\draw[black] (center)+ (0, 0.1) arc (90:270:0.1);
\coordinate (center) at (0,-5);
\fill[white] (center) + (0, 0.1) arc (90:270:0.1);
\fill[white] (center)+ (0, -0.1) arc (270:450:0.1);
\draw[black] (center)+ (0, -0.1) arc (270:450:0.1);
\draw[black] (center)+ (0, 0.1) arc (90:270:0.1);
\coordinate (center) at (0,-6);
\fill[white] (center) + (0, 0.1) arc (90:270:0.1);
\fill[white] (center)+ (0, -0.1) arc (270:450:0.1);
\draw[black] (center)+ (0, -0.1) arc (270:450:0.1);
\draw[black] (center)+ (0, 0.1) arc (90:270:0.1);
\coordinate (center) at (1,-1);
\fill[white] (center) + (0, 0.1) arc (90:270:0.1);
\fill[white] (center)+ (0, -0.1) arc (270:450:0.1);
\draw[black] (center)+ (0, -0.1) arc (270:450:0.1);
\draw[black] (center)+ (0, 0.1) arc (90:270:0.1);
\coordinate (center) at (2,-1);
\fill[white] (center) + (0, 0.1) arc (90:270:0.1);
\fill[white] (center)+ (0, -0.1) arc (270:450:0.1);
\draw[black] (center)+ (0, -0.1) arc (270:450:0.1);
\draw[black] (center)+ (0, 0.1) arc (90:270:0.1);
\coordinate (center) at (3,-1);
\fill[white] (center) + (0, 0.1) arc (90:270:0.1);
\fill[white] (center)+ (0, -0.1) arc (270:450:0.1);
\draw[black] (center)+ (0, -0.1) arc (270:450:0.1);
\draw[black] (center)+ (0, 0.1) arc (90:270:0.1);
\coordinate (center) at (4,-1);
\fill[white] (center) + (0, 0.1) arc (90:270:0.1);
\fill[white] (center)+ (0, -0.1) arc (270:450:0.1);
\draw[black] (center)+ (0, -0.1) arc (270:450:0.1);
\draw[black] (center)+ (0, 0.1) arc (90:270:0.1);
\coordinate (center) at (5,-1);
\fill[white] (center) + (0, 0.1) arc (90:270:0.1);
\fill[white] (center)+ (0, -0.1) arc (270:450:0.1);
\draw[black] (center)+ (0, -0.1) arc (270:450:0.1);
\draw[black] (center)+ (0, 0.1) arc (90:270:0.1);

\coordinate (center) at (1,-2);
\fill[white] (center) + (0, 0.1) arc (90:270:0.1);
\fill[white] (center)+ (0, -0.1) arc (270:450:0.1);
\draw[black] (center)+ (0, -0.1) arc (270:450:0.1);
\draw[black] (center)+ (0, 0.1) arc (90:270:0.1);
\coordinate (center) at (1,-3);
\fill[white] (center) + (0, 0.1) arc (90:270:0.1);
\fill[white] (center)+ (0, -0.1) arc (270:450:0.1);
\draw[black] (center)+ (0, -0.1) arc (270:450:0.1);
\draw[black] (center)+ (0, 0.1) arc (90:270:0.1);

\coordinate (center) at (1,-4);
\fill[white] (center) + (0, 0.1) arc (90:270:0.1);
\fill[white] (center)+ (0, -0.1) arc (270:450:0.1);
\draw[black] (center)+ (0, -0.1) arc (270:450:0.1);
\draw[black] (center)+ (0, 0.1) arc (90:270:0.1);
\coordinate (center) at (1,-5);
\fill[white] (center) + (0, 0.1) arc (90:270:0.1);
\fill[white] (center)+ (0, -0.1) arc (270:450:0.1);
\draw[black] (center)+ (0, -0.1) arc (270:450:0.1);
\draw[black] (center)+ (0, 0.1) arc (90:270:0.1);
\coordinate (center) at (1,-6);
\fill[white] (center) + (0, 0.1) arc (90:270:0.1);
\fill[white] (center)+ (0, -0.1) arc (270:450:0.1);
\draw[black] (center)+ (0, -0.1) arc (270:450:0.1);
\draw[black] (center)+ (0, 0.1) arc (90:270:0.1);
\coordinate (center) at (2,-2);
\fill[white] (center) + (0, 0.1) arc (90:270:0.1);
\fill[white] (center)+ (0, -0.1) arc (270:450:0.1);
\draw[black] (center)+ (0, -0.1) arc (270:450:0.1);
\draw[black] (center)+ (0, 0.1) arc (90:270:0.1);
\coordinate (center) at (2,-3);
\fill[white] (center) + (0, 0.1) arc (90:270:0.1);
\fill[white] (center)+ (0, -0.1) arc (270:450:0.1);
\draw[black] (center)+ (0, -0.1) arc (270:450:0.1);
\draw[black] (center)+ (0, 0.1) arc (90:270:0.1);
\coordinate (center) at (2,-4);
\fill[white] (center) + (0, 0.1) arc (90:270:0.1);
\fill[white] (center)+ (0, -0.1) arc (270:450:0.1);
\draw[black] (center)+ (0, -0.1) arc (270:450:0.1);
\draw[black] (center)+ (0, 0.1) arc (90:270:0.1);
\coordinate (center) at (2,-5);
\fill[white] (center) + (0, 0.1) arc (90:270:0.1);
\fill[white] (center)+ (0, -0.1) arc (270:450:0.1);
\draw[black] (center)+ (0, -0.1) arc (270:450:0.1);
\draw[black] (center)+ (0, 0.1) arc (90:270:0.1);
\coordinate (center) at (2,-6);
\fill[white] (center) + (0, 0.1) arc (90:270:0.1);
\fill[white] (center)+ (0, -0.1) arc (270:450:0.1);
\draw[black] (center)+ (0, -0.1) arc (270:450:0.1);
\draw[black] (center)+ (0, 0.1) arc (90:270:0.1);
\coordinate (center) at (3,-2);
\fill[white] (center) + (0, 0.1) arc (90:270:0.1);
\fill[white] (center)+ (0, -0.1) arc (270:450:0.1);
\draw[black] (center)+ (0, -0.1) arc (270:450:0.1);
\draw[black] (center)+ (0, 0.1) arc (90:270:0.1);
\coordinate (center) at (3,-3);
\fill[white] (center) + (0, 0.1) arc (90:270:0.1);
\fill[white] (center)+ (0, -0.1) arc (270:450:0.1);
\draw[black] (center)+ (0, -0.1) arc (270:450:0.1);
\draw[black] (center)+ (0, 0.1) arc (90:270:0.1);
\coordinate (center) at (3,-4);
\fill[white] (center) + (0, 0.1) arc (90:270:0.1);
\fill[white] (center)+ (0, -0.1) arc (270:450:0.1);
\draw[black] (center)+ (0, -0.1) arc (270:450:0.1);
\draw[black] (center)+ (0, 0.1) arc (90:270:0.1);
\coordinate (center) at (3,-5);
\fill[white] (center) + (0, 0.1) arc (90:270:0.1);
\fill[white] (center)+ (0, -0.1) arc (270:450:0.1);
\draw[black] (center)+ (0, -0.1) arc (270:450:0.1);
\draw[black] (center)+ (0, 0.1) arc (90:270:0.1);
\coordinate (center) at (3,-6);
\fill[white] (center) + (0, 0.1) arc (90:270:0.1);
\fill[white] (center)+ (0, -0.1) arc (270:450:0.1);
\draw[black] (center)+ (0, -0.1) arc (270:450:0.1);
\draw[black] (center)+ (0, 0.1) arc (90:270:0.1);
\coordinate (center) at (4,-2);
\fill[white] (center) + (0, 0.1) arc (90:270:0.1);
\fill[white] (center)+ (0, -0.1) arc (270:450:0.1);
\draw[black] (center)+ (0, -0.1) arc (270:450:0.1);
\draw[black] (center)+ (0, 0.1) arc (90:270:0.1);
\coordinate (center) at (4,-3);
\fill[white] (center) + (0, 0.1) arc (90:270:0.1);
\fill[white] (center)+ (0, -0.1) arc (270:450:0.1);
\draw[black] (center)+ (0, -0.1) arc (270:450:0.1);
\draw[black] (center)+ (0, 0.1) arc (90:270:0.1);
\coordinate (center) at (4,-4);
\fill[white] (center) + (0, 0.1) arc (90:270:0.1);
\fill[white] (center)+ (0, -0.1) arc (270:450:0.1);
\draw[black] (center)+ (0, -0.1) arc (270:450:0.1);
\draw[black] (center)+ (0, 0.1) arc (90:270:0.1);
\coordinate (center) at (4,-5);
\fill[white] (center) + (0, 0.1) arc (90:270:0.1);
\fill[white] (center)+ (0, -0.1) arc (270:450:0.1);
\draw[black] (center)+ (0, -0.1) arc (270:450:0.1);
\draw[black] (center)+ (0, 0.1) arc (90:270:0.1);

\coordinate (center) at (4,-6);
\fill[white] (center) + (0, 0.1) arc (90:270:0.1);
\fill[white] (center)+ (0, -0.1) arc (270:450:0.1);
\draw[black] (center)+ (0, -0.1) arc (270:450:0.1);
\draw[black] (center)+ (0, 0.1) arc (90:270:0.1);
\coordinate (center) at (5,-2);
\fill[white] (center) + (0, 0.1) arc (90:270:0.1);
\fill[white] (center)+ (0, -0.1) arc (270:450:0.1);
\draw[black] (center)+ (0, -0.1) arc (270:450:0.1);
\draw[black] (center)+ (0, 0.1) arc (90:270:0.1);
\coordinate (center) at (5,-3);
\fill[white] (center) + (0, 0.1) arc (90:270:0.1);
\fill[white] (center)+ (0, -0.1) arc (270:450:0.1);
\draw[black] (center)+ (0, -0.1) arc (270:450:0.1);
\draw[black] (center)+ (0, 0.1) arc (90:270:0.1);
\coordinate (center) at (5,-4);
\fill[white] (center) + (0, 0.1) arc (90:270:0.1);
\fill[white] (center)+ (0, -0.1) arc (270:450:0.1);
\draw[black] (center)+ (0, -0.1) arc (270:450:0.1);
\draw[black] (center)+ (0, 0.1) arc (90:270:0.1);
\coordinate (center) at (5,-5);
\fill[white] (center) + (0, 0.1) arc (90:270:0.1);
\fill[white] (center)+ (0, -0.1) arc (270:450:0.1);
\draw[black] (center)+ (0, -0.1) arc (270:450:0.1);
\draw[black] (center)+ (0, 0.1) arc (90:270:0.1);

\coordinate (center) at (5,-6);
\fill[white] (center) + (0, 0.1) arc (90:270:0.1);
\fill[white] (center)+ (0, -0.1) arc (270:450:0.1);
\draw[black] (center)+ (0, -0.1) arc (270:450:0.1);
\draw[black] (center)+ (0, 0.1) arc (90:270:0.1);

\draw[->,black] (-1,0) -- (-1,6) node[anchor=north west]{};
\draw[->,black] (-1,0) -- (-7,0) node[anchor=south east]{};
\node[black] at (-7.5,0) {$m$};
\node[black] at (-1,6.5) {$n$};
\node[black] at (-4,6.5) {$(m,n,1+\frac{m-n}{2},-1-\frac{m+n}{2})$};
\node[black] at (-7,6.5) {\textbf{B}};
\draw[black] (-2,0) -- (-1,1);
\draw[black] (-3,0) -- (-1,2);
\draw[black] (-4,0) -- (-1,3);
\draw[black] (-5,0) -- (-1,4);
\draw[black] (-6,0) -- (-1,5);
\draw[black] (-6.5,0.5) -- (-1.5,5.5);
\draw[black] (-6.5,1.5) -- (-2.5,5.5);
\draw[black] (-6.5,2.5) -- (-3.5,5.5);
\draw[black] (-6.5,3.5) -- (-4.5,5.5);
\draw[black] (-6.5,4.5) -- (-5.5,5.5);

\draw[-latex,black] (-1,1) -- (-1.9,0.1);

\draw[-latex,black] (-1,2) -- (-1.9,1.1);
\draw[-latex,black] (-2,1) -- (-2.9,0.1);

\draw[-latex,black] (-1,3) -- (-1.9,2.1);
\draw[-latex,black] (-2,2) -- (-2.9,1.1);
\draw[-latex,black] (-3,1) -- (-3.9,0.1);

\draw[-latex,black] (-1,4) -- (-1.9,3.1);
\draw[-latex,black] (-2,3) -- (-2.9,2.1);
\draw[-latex,black] (-3,2) -- (-3.9,1.1);
\draw[-latex,black] (-4,1) -- (-4.9,0.1);

\draw[-latex,black] (-1,5) -- (-1.9,4.1);
\draw[-latex,black] (-2,4) -- (-2.9,3.1);
\draw[-latex,black] (-3,3) -- (-3.9,2.1);
\draw[-latex,black] (-4,2) -- (-4.9,1.1);
\draw[-latex,black] (-5,1) -- (-5.9,0.1);

\draw[-latex,black] (-1.5,5.5) -- (-1.9,5.1);
\draw[-latex,black] (-2,5) -- (-2.9,4.1);
\draw[-latex,black] (-3,4) -- (-3.9,3.1);
\draw[-latex,black] (-4,3) -- (-4.9,2.1);
\draw[-latex,black] (-5,2) -- (-5.9,1.1);
\draw[-latex,black] (-6,1) -- (-6.5,0.5);

\draw[-latex,black] (-2.5,5.5) -- (-2.9,5.1);
\draw[-latex,black] (-3,5) -- (-3.9,4.1);
\draw[-latex,black] (-4,4) -- (-4.9,3.1);
\draw[-latex,black] (-5,3) -- (-5.9,2.1);
\draw[-latex,black] (-6,2) -- (-6.5,1.5);

\draw[-latex,black] (-3.5,5.5) -- (-3.9,5.1);
\draw[-latex,black] (-4,5) -- (-4.9,4.1);
\draw[-latex,black] (-5,4) -- (-5.9,3.1);
\draw[-latex,black] (-6,3) -- (-6.5,2.5);

\draw[-latex,black] (-4.5,5.5) -- (-4.9,5.1);
\draw[-latex,black] (-5,5) -- (-5.9,4.1);
\draw[-latex,black] (-6,4) -- (-6.5,3.5);

\draw[-latex,black] (-5.5,5.5) -- (-5.9,5.1);
\draw[-latex,black] (-6,5) -- (-6.5,4.5);
%

\coordinate (center) at (-1,1);
\fill[white] (center) + (0, 0.1) arc (90:270:0.1);
\fill[white] (center)+ (0, -0.1) arc (270:450:0.1);
\draw[black] (center)+ (0, -0.1) arc (270:450:0.1);
\draw[black] (center)+ (0, 0.1) arc (90:270:0.1);
\coordinate (center) at (-1,2);
\fill[white] (center) + (0, 0.1) arc (90:270:0.1);
\fill[white] (center)+ (0, -0.1) arc (270:450:0.1);
\draw[black] (center)+ (0, -0.1) arc (270:450:0.1);
\draw[black] (center)+ (0, 0.1) arc (90:270:0.1);
\coordinate (center) at (-1,3);
\fill[white] (center) + (0, 0.1) arc (90:270:0.1);
\fill[white] (center)+ (0, -0.1) arc (270:450:0.1);
\draw[black] (center)+ (0, -0.1) arc (270:450:0.1);
\draw[black] (center)+ (0, 0.1) arc (90:270:0.1);
\coordinate (center) at (-1,4);
\fill[white] (center) + (0, 0.1) arc (90:270:0.1);
\fill[white] (center)+ (0, -0.1) arc (270:450:0.1);
\draw[black] (center)+ (0, -0.1) arc (270:450:0.1);
\draw[black] (center)+ (0, 0.1) arc (90:270:0.1);
\coordinate (center) at (-1,5);
\fill[white] (center) + (0, 0.1) arc (90:270:0.1);
\fill[white] (center)+ (0, -0.1) arc (270:450:0.1);
\draw[black] (center)+ (0, -0.1) arc (270:450:0.1);
\draw[black] (center)+ (0, 0.1) arc (90:270:0.1);
\coordinate (center) at (-2,0);
\fill[white] (center) + (0, 0.1) arc (90:270:0.1);
\fill[white] (center)+ (0, -0.1) arc (270:450:0.1);
\draw[black] (center)+ (0, -0.1) arc (270:450:0.1);
\draw[black] (center)+ (0, 0.1) arc (90:270:0.1);
\coordinate (center) at (-3,0);
\fill[white] (center) + (0, 0.1) arc (90:270:0.1);
\fill[white] (center)+ (0, -0.1) arc (270:450:0.1);
\draw[black] (center)+ (0, -0.1) arc (270:450:0.1);
\draw[black] (center)+ (0, 0.1) arc (90:270:0.1);
\coordinate (center) at (-4,0);
\fill[white] (center) + (0, 0.1) arc (90:270:0.1);
\fill[white] (center)+ (0, -0.1) arc (270:450:0.1);
\draw[black] (center)+ (0, -0.1) arc (270:450:0.1);
\draw[black] (center)+ (0, 0.1) arc (90:270:0.1);
\coordinate (center) at (-5,0);
\fill[white] (center) + (0, 0.1) arc (90:270:0.1);
\fill[white] (center)+ (0, -0.1) arc (270:450:0.1);
\draw[black] (center)+ (0, -0.1) arc (270:450:0.1);
\draw[black] (center)+ (0, 0.1) arc (90:270:0.1);
\coordinate (center) at (-6,0);
\fill[white] (center) + (0, 0.1) arc (90:270:0.1);
\fill[white] (center)+ (0, -0.1) arc (270:450:0.1);
\draw[black] (center)+ (0, -0.1) arc (270:450:0.1);
\draw[black] (center)+ (0, 0.1) arc (90:270:0.1);

\coordinate (center) at (-2,1);
\fill[white] (center) + (0, 0.1) arc (90:270:0.1);
\fill[white] (center)+ (0, -0.1) arc (270:450:0.1);
\draw[black] (center)+ (0, -0.1) arc (270:450:0.1);
\draw[black] (center)+ (0, 0.1) arc (90:270:0.1);
\coordinate (center) at (-2,2);
\fill[white] (center) + (0, 0.1) arc (90:270:0.1);
\fill[white] (center)+ (0, -0.1) arc (270:450:0.1);
\draw[black] (center)+ (0, -0.1) arc (270:450:0.1);
\draw[black] (center)+ (0, 0.1) arc (90:270:0.1);
\coordinate (center) at (-2,3);
\fill[white] (center) + (0, 0.1) arc (90:270:0.1);
\fill[white] (center)+ (0, -0.1) arc (270:450:0.1);
\draw[black] (center)+ (0, -0.1) arc (270:450:0.1);
\draw[black] (center)+ (0, 0.1) arc (90:270:0.1);
\coordinate (center) at (-2,4);
\fill[white] (center) + (0, 0.1) arc (90:270:0.1);
\fill[white] (center)+ (0, -0.1) arc (270:450:0.1);
\draw[black] (center)+ (0, -0.1) arc (270:450:0.1);
\draw[black] (center)+ (0, 0.1) arc (90:270:0.1);
\coordinate (center) at (-2,5);
\fill[white] (center) + (0, 0.1) arc (90:270:0.1);
\fill[white] (center)+ (0, -0.1) arc (270:450:0.1);
\draw[black] (center)+ (0, -0.1) arc (270:450:0.1);
\draw[black] (center)+ (0, 0.1) arc (90:270:0.1);
\coordinate (center) at (-3,1);
\fill[white] (center) + (0, 0.1) arc (90:270:0.1);
\fill[white] (center)+ (0, -0.1) arc (270:450:0.1);
\draw[black] (center)+ (0, -0.1) arc (270:450:0.1);
\draw[black] (center)+ (0, 0.1) arc (90:270:0.1);
\coordinate (center) at (-3,2);
\fill[white] (center) + (0, 0.1) arc (90:270:0.1);
\fill[white] (center)+ (0, -0.1) arc (270:450:0.1);
\draw[black] (center)+ (0, -0.1) arc (270:450:0.1);
\draw[black] (center)+ (0, 0.1) arc (90:270:0.1);
\coordinate (center) at (-3,3);
\fill[white] (center) + (0, 0.1) arc (90:270:0.1);
\fill[white] (center)+ (0, -0.1) arc (270:450:0.1);
\draw[black] (center)+ (0, -0.1) arc (270:450:0.1);
\draw[black] (center)+ (0, 0.1) arc (90:270:0.1);
\coordinate (center) at (-3,4);
\fill[white] (center) + (0, 0.1) arc (90:270:0.1);
\fill[white] (center)+ (0, -0.1) arc (270:450:0.1);
\draw[black] (center)+ (0, -0.1) arc (270:450:0.1);
\draw[black] (center)+ (0, 0.1) arc (90:270:0.1);
\coordinate (center) at (-3,5);
\fill[white] (center) + (0, 0.1) arc (90:270:0.1);
\fill[white] (center)+ (0, -0.1) arc (270:450:0.1);
\draw[black] (center)+ (0, -0.1) arc (270:450:0.1);
\draw[black] (center)+ (0, 0.1) arc (90:270:0.1);
\coordinate (center) at (-4,1);
\fill[white] (center) + (0, 0.1) arc (90:270:0.1);
\fill[white] (center)+ (0, -0.1) arc (270:450:0.1);
\draw[black] (center)+ (0, -0.1) arc (270:450:0.1);
\draw[black] (center)+ (0, 0.1) arc (90:270:0.1);
\coordinate (center) at (-4,2);
\fill[white] (center) + (0, 0.1) arc (90:270:0.1);
\fill[white] (center)+ (0, -0.1) arc (270:450:0.1);
\draw[black] (center)+ (0, -0.1) arc (270:450:0.1);
\draw[black] (center)+ (0, 0.1) arc (90:270:0.1);
\coordinate (center) at (-4,3);
\fill[white] (center) + (0, 0.1) arc (90:270:0.1);
\fill[white] (center)+ (0, -0.1) arc (270:450:0.1);
\draw[black] (center)+ (0, -0.1) arc (270:450:0.1);
\draw[black] (center)+ (0, 0.1) arc (90:270:0.1);
\coordinate (center) at (-4,4);
\fill[white] (center) + (0, 0.1) arc (90:270:0.1);
\fill[white] (center)+ (0, -0.1) arc (270:450:0.1);
\draw[black] (center)+ (0, -0.1) arc (270:450:0.1);
\draw[black] (center)+ (0, 0.1) arc (90:270:0.1);
\coordinate (center) at (-4,5);
\fill[white] (center) + (0, 0.1) arc (90:270:0.1);
\fill[white] (center)+ (0, -0.1) arc (270:450:0.1);
\draw[black] (center)+ (0, -0.1) arc (270:450:0.1);
\draw[black] (center)+ (0, 0.1) arc (90:270:0.1);
\coordinate (center) at (-5,1);
\fill[white] (center) + (0, 0.1) arc (90:270:0.1);
\fill[white] (center)+ (0, -0.1) arc (270:450:0.1);
\draw[black] (center)+ (0, -0.1) arc (270:450:0.1);
\draw[black] (center)+ (0, 0.1) arc (90:270:0.1);
\coordinate (center) at (-5,2);
\fill[white] (center) + (0, 0.1) arc (90:270:0.1);
\fill[white] (center)+ (0, -0.1) arc (270:450:0.1);
\draw[black] (center)+ (0, -0.1) arc (270:450:0.1);
\draw[black] (center)+ (0, 0.1) arc (90:270:0.1);
\coordinate (center) at (-5,3);
\fill[white] (center) + (0, 0.1) arc (90:270:0.1);
\fill[white] (center)+ (0, -0.1) arc (270:450:0.1);
\draw[black] (center)+ (0, -0.1) arc (270:450:0.1);
\draw[black] (center)+ (0, 0.1) arc (90:270:0.1);
\coordinate (center) at (-5,4);
\fill[white] (center) + (0, 0.1) arc (90:270:0.1);
\fill[white] (center)+ (0, -0.1) arc (270:450:0.1);
\draw[black] (center)+ (0, -0.1) arc (270:450:0.1);
\draw[black] (center)+ (0, 0.1) arc (90:270:0.1);
\coordinate (center) at (-5,5);
\fill[white] (center) + (0, 0.1) arc (90:270:0.1);
\fill[white] (center)+ (0, -0.1) arc (270:450:0.1);
\draw[black] (center)+ (0, -0.1) arc (270:450:0.1);
\draw[black] (center)+ (0, 0.1) arc (90:270:0.1);
\coordinate (center) at (-6,1);
\fill[white] (center) + (0, 0.1) arc (90:270:0.1);
\fill[white] (center)+ (0, -0.1) arc (270:450:0.1);
\draw[black] (center)+ (0, -0.1) arc (270:450:0.1);
\draw[black] (center)+ (0, 0.1) arc (90:270:0.1);
\coordinate (center) at (-6,2);
\fill[white] (center) + (0, 0.1) arc (90:270:0.1);
\fill[white] (center)+ (0, -0.1) arc (270:450:0.1);
\draw[black] (center)+ (0, -0.1) arc (270:450:0.1);
\draw[black] (center)+ (0, 0.1) arc (90:270:0.1);
\coordinate (center) at (-6,3);
\fill[white] (center) + (0, 0.1) arc (90:270:0.1);
\fill[white] (center)+ (0, -0.1) arc (270:450:0.1);
\draw[black] (center)+ (0, -0.1) arc (270:450:0.1);
\draw[black] (center)+ (0, 0.1) arc (90:270:0.1);
\coordinate (center) at (-6,4);
\fill[white] (center) + (0, 0.1) arc (90:270:0.1);
\fill[white] (center)+ (0, -0.1) arc (270:450:0.1);
\draw[black] (center)+ (0, -0.1) arc (270:450:0.1);
\draw[black] (center)+ (0, 0.1) arc (90:270:0.1);
\coordinate (center) at (-6,5);
\fill[white] (center) + (0, 0.1) arc (90:270:0.1);
\fill[white] (center)+ (0, -0.1) arc (270:450:0.1);
\draw[black] (center)+ (0, -0.1) arc (270:450:0.1);
\draw[black] (center)+ (0, 0.1) arc (90:270:0.1);

\coordinate (center) at (0,-1);
\fill[white] (center) + (0, 0.1) arc (90:270:0.1);
\fill[white] (center)+ (0, -0.1) arc (270:450:0.1);
\draw[black] (center)+ (0, -0.1) arc (270:450:0.1);
\draw[black] (center)+ (0, 0.1) arc (90:270:0.1);
\coordinate (center) at (-1,0);
\fill[white] (center) + (0, 0.1) arc (90:270:0.1);
\fill[white] (center)+ (0, -0.1) arc (270:450:0.1);
\draw[black] (center)+ (0, -0.1) arc (270:450:0.1);
\draw[black] (center)+ (0, 0.1) arc (90:270:0.1);

\end{tikzpicture}
\end{figure}
We now recall from \cite{K4} the following Remark about conformal duality.
\begin{rem}
\label{cantacasellikacremarksize}
By the main result in \cite{cantacasellikac}, the conformal dual of a Verma module $M(m,n,\mu_{t},\mu_{C})$ is $M(m,n,-\mu_{t}+a,-\mu_{C}+b)$, with
\[
a=\text{str}(\text{ad}(t)_{|\g_{<0}})=2
\] 
and 
\[
b=\text{str}(\text{ad}(C)_{|\g_{<0}})=0,
\]
where $\g=\mathcal A(K'_4)$, 'str' denotes supertrace, and 'ad' denotes the adjoint representation. In particular in Figure \ref{figura} the duality is obtained with the rotation by 180 degrees of the whole picture.
\end{rem}
We introduce the following $\g_{0}-$modules:
\begin{align*}
V_{A}&=\C \left[x_{1},x_{2},y_{1},y_{2}\right], &&V_{B}=\C \left[\partial_{x_{1}},\partial_{x_{2}},y_{1},y_{2}\right]_{\left[1,-1\right]},\\
V_{C}&=\C \left[\partial_{x_{1}},\partial_{x_{2}},\partial_{y_{1}},\partial_{y_{2}}\right]_{\left[2,0\right]}, &&V_{D}=\C \left[x_{1},x_{2},\partial_{y_{1}},\partial_{y_{2}}\right]_{\left[1,1\right]}.
\end{align*}
The subscripts $[i,j]$ mean that $t$ acts on $V_{X}$, for $X=A,B,C,D$, as $-\frac{1}{2}(x_{1} \partial_{x_{1}}+x_{2}\partial_{x_{2}}+y_{1} \partial_{y_{1}}+y_{2}\partial_{y_{2}})$ plus $i \Id$  and $C$ acts on $V_{X}$, for $X=A,B,C,D$, as $\frac{1}{2}(x_{1} \partial_{x_{1}}+x_{2}\partial_{x_{2}}) -\frac{1}{2}(y_{1} \partial_{y_{1}}+y_{2}\partial_{y_{2}})$ plus $j \Id$; the subscript $[i,j]$ is assumed to be $[0,0]$ when it is omitted, i.e. for $X=A$.\\
The elements of $\g_{0}^{ss}$ act on $V_{X}$, for $X=A,B,C,D$, in the standard way: 
\begin{align*}
&x_{i}\partial_{x_{j}}x_{k}=\bigchi_{j=k}x_{i}, \quad &&x_{i}\partial_{x_{j}}.\partial_{x_{k}}=-\bigchi_{i=k}\partial_{x_{j}},\quad  &&x_{i}\partial_{x_{j}}y_{k}=0, \quad  &&x_{i}\partial_{x_{j}}.\partial_{y_{k}}=0; \\
&y_{i}\partial_{y_{j}}y_{k}=\bigchi_{j=k}y_{i}, \quad &&y_{i}\partial_{y_{j}}.\partial_{y_{k}}=-\bigchi_{i=k}\partial_{y_{j}},\quad  &&y_{i}\partial_{y_{j}}x_{k}=0, \quad  &&y_{i}\partial_{y_{j}}.\partial_{x_{k}}=0. 
\end{align*}
We introduce the following bigrading:
\begin{align}
\label{bigrading}
V_{X}^{m,n}:=\left\{f \in V_{X} \, : \, (x_{1} \partial_{x_{1}}+x_{2} \partial_{x_{2}}).f=mf \, \, \text{and} \, \, (y_{1} \partial_{y_{1}}+y_{2} \partial_{y_{2}}).f=nf \right\}.
\end{align}
The $V_{X}^{m,n}$'s are irreducible $\g_{0}-$modules. 
We point out that  for $m,n \in \Z_{\geq 0}$:
\begin{align}
\label{identificazioneVX}
V_{A}^{m,n} &\cong F \Big(m,n, -\frac{m+n}{2},\frac{m-n}{2} \Big),  &&V_{B}^{-m,n} \cong F \Big(m,n, 1+\frac{m-n}{2},-\frac{m+n}{2}-1 \Big),\\ \nonumber
V_{C}^{-m,-n} &\cong F \Big(m,n, \frac{m+n}{2}+2,\frac{n-m}{2} \Big), &&V_{D}^{m,-n} \cong F \Big(m,n, 1+\frac{n-m}{2},\frac{m+n}{2}+1 \Big).
\end{align}
Hence for $m,n \in \Z_{\geq 0}$, $V_{A}^{m,n}$ is the irreducible $\g_{0}-$module determined by coordinates $(m,n)$ in quadrant $\textbf{A}$ of Figure \ref{figura}, $V_{B}^{-m,n}$ is the irreducible $\g_{0}-$module determined by coordinates $(m,n)$ in quadrant $\textbf{B}$, $V_{C}^{-m,-n}$ is the irreducible $\g_{0}-$module determined by coordinates $(m,n)$ in quadrant $\textbf{C}$ and $V_{D}^{m,-n}$ is the irreducible $\g_{0}-$module determined by coordinates $(m,n)$ in quadrant $\textbf{D}$.
We have that $V_{X}=\oplus_{m,n}V_{X}^{m,n}$ is the direct sum of all the irreducible $\g_{0}-$modules in quadrant $\textbf{X}$. \\
We denote by $M_{X}^{m,n}=U(\g_{<0}) \otimes V_{X}^{m,n}$; we point out that, for $m,n \in \Z_{\geq 0}$, $M_{A}^{m,n}$ is the finite Verma module represented in Figure \ref{figura} in quadrant $\textbf{A}$ with coordinates $(m,n)$, $M_{B}^{-m,n}$ is the finite Verma module represented in quadrant $\textbf{B}$ with coordinates $(m,n)$, $M_{C}^{-m,-n}$ is the finite Verma module represented in quadrant $\textbf{C}$ with coordinates $(m,n)$, $M_{D}^{m,-n}$ is the finite Verma module represented in quadrant $\textbf{D}$ with coordinates $(m,n)$. We call $M_{X} =\oplus_{m,n \in \Z} M_{X}^{m,n}$ the direct sum of all finite Verma modules in the quadrant $\textbf{X}$ of Figure \ref{figura}.
We now recall the classification of highest weight singular vectors found in \cite{K4}, using the notation of the $V_{X}$'s for $X=A,B,C,D$ and \eqref{identificazioneVX}.
\begin{thm}[\cite{K4}]
		\label{sing1}
		Let $F$ be an irreducible finite$-$dimensional $\g_{0}-$module, with highest weight $\mu$. A vector $ \vec{m} \in \Ind (F) $ is a non trivial highest weight singular vector of degree 1 if and only if $\vec{m}$ is (up to a scalar) one of the following vectors:
		
		\begin{description}
			\item [a] $\mu=(m,n,-\frac{m+n}{2},\frac{m-n}{2}) $ with $m,n \in \Z_{\geq 0}$,
			$$\vec{m}_{1a}=w_{11}\otimes x_{1}^{m}y_{1}^{n};$$
			\item [b] $\mu=(m,n,1+\frac{m-n}{2},-1-\frac{m+n}{2})$, with $m  \in \Z_{>0}$, $n \in \Z_{\geq 0}$,
			$$\vec{m}_{1b}=w_{21}\otimes   \partial_{x_{2}}^{m}y_{1}^{n}+w_{11}\otimes  \partial_{x_{1}} \partial_{x_{2}}^{m-1}y_{1}^{n};$$
			\item [c] $\mu=(m,n,2+\frac{m+n}{2},\frac{n-m}{2})$, with $m,n \in \Z_{>0}$,
			$$\vec{m}_{1c}=w_{22}\otimes \partial_{x_{2}}^{m}\partial_{y_{2}}^{n}+w_{12}\otimes \partial_{x_{1}}\partial_{x_{2}}^{m-1}\partial_{y_{2}}^{n}+w_{21}\otimes \partial_{x_{2}}^{m}\partial_{y_{1}}\partial_{y_{2}}^{n-1}+w_{11}\otimes \partial_{x_{1}}\partial_{x_{2}}^{m-1}\partial_{y_{1}}\partial_{y_{2}}^{n-1};$$
			\item [d] $\mu=(m,n,1+\frac{n-m}{2},1+\frac{m+n}{2})$, with $m \in \Z_{\geq 0}$, $n \in \Z_{>0}$,
			$$\vec{m}_{1d}=w_{12}\otimes x_{1}^{m}\partial_{y_{2}}^{n}+w_{11} \otimes  x_{1}^{m} \partial_{y_{1}}\partial_{y_{2}}^{n-1}.$$
		 \end{description}
		\end{thm}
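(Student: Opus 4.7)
The plan is to classify all degree $1$ highest weight singular vectors by writing a generic ansatz and extracting the linear constraints imposed by the two conditions. A degree $1$ element of $\Ind(F)\cong U(\g_{<0})\otimes F$ lies in $\g_{-1}\otimes F$; using the basis $\{w_{11},w_{12},w_{21},w_{22}\}$ of $\g_{-1}$ introduced in (\ref{notazionewcaselli}), we write
\begin{equation*}
\vec{m}=\sum_{k,l\in\{1,2\}} w_{kl}\otimes v_{kl},\qquad v_{kl}\in F.
\end{equation*}

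First, I reduce the singular condition $\g_{>0}.\vec{m}=0$. By Lemma \ref{g1}, $\g_{>0}$ is generated as a Lie superalgebra by $\g_1$, which decomposes as a $\g_0$-module into two irreducible components whose lowest weight vectors are $X_1:=t(\xi_1+i\xi_2)$ and $X_2:=(\xi_1+i\xi_2)\xi_3\xi_4$. Since $\vec{m}$ is a highest weight vector, the identity $[e_\alpha,u].\vec{m}=e_\alpha(u.\vec{m})$ for $e_\alpha\in\{e_x,e_y\}$ propagates annihilation from $X_1,X_2$ to all of $\g_1$, and then, via $\g_i=\g_1^i$, to all of $\g_{>0}$. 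Moreover, by Theorem \ref{keythmsingular}(i), $\g_{>0}$ acts trivially on $F$; since $[X_j,w_{kl}]\in\g_0$, the PBW identification yields
\begin{equation*}
X_j\cdot(w_{kl}\otimes v_{kl})=1\otimes [X_j,w_{kl}].v_{kl},
\end{equation*}
so the singular condition collapses to the pair of linear relations $\sum_{k,l}[X_j,w_{kl}].v_{kl}=0$ in $F$ for $j=1,2$.

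Second, I compute the eight brackets $[X_j,w_{kl}]$ explicitly in $\g_0\cong\so(4)\oplus\C t\oplus\C C$ using the Lie bracket formula (\ref{bracketlie}) together with the $2$-cocycle $\psi$ of Proposition \ref{anniliK4}; the $\psi$-contribution is the only source of the $\mu_C$-dependence. After expressing each bracket in the basis $\{e_x,f_x,h_x,e_y,f_y,h_y,t,C\}$, the singular equations become explicit linear equations on the $v_{kl}$ parametrized by $\mu=(m,n,\mu_t,\mu_C)$. Combining these with the highest weight conditions $e_x.\vec{m}=e_y.\vec{m}=0$ and the weight decomposition of $F$, and then identifying $F(\mu)$ with one of the polynomial models $V_X^{i,j}$ via (\ref{identificazioneVX}), each $v_{kl}$ is forced, up to scalar, to be a specific monomial in the generators of the corresponding $V_X$.

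Solving the resulting system shows that nontrivial solutions exist precisely when $\mu$ takes one of the four prescribed forms (a)--(d), with the four families corresponding to the four quadrants \textbf{A}, \textbf{B}, \textbf{C}, \textbf{D} of Figure \ref{figura}; in each case the solution space is one-dimensional and is generated by the stated $\vec{m}_{1a},\ldots,\vec{m}_{1d}$, and a direct check using the same bracket computations verifies the converse implication. The main obstacle is the bookkeeping: one must treat the four candidate weight regions uniformly rather than separately, and verify that the boundary integrality constraints ($m>0$ in (b); $m,n>0$ in (c); $n>0$ in (d)) arise naturally from the requirement that the corresponding differential monomials in $V_B,V_C,V_D$ be nonzero, rather than as extra hypotheses.
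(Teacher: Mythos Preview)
The paper does not prove this theorem: it is quoted from \cite{K4} and restated in the $V_X$ notation (see Remark \ref{cambio notazione}), so there is no ``paper's own proof'' to compare against. Your proposal is the standard approach and matches the strategy indicated in the paper's surrounding text (the sentence after Lemma \ref{g1} explicitly reduces the highest weight singular condition to annihilation by $e_1,e_2,t(\xi_1+i\xi_2),(\xi_1+i\xi_2)\xi_3\xi_4$), and this is indeed how the result is obtained in \cite{K4}; the sketch you give is correct in outline and the bookkeeping you flag is the genuine content of the computation.
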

		\begin{thm}[\cite{K4}]
		\label{sing2}
		Let $F$ be an irreducible finite$-$dimensional $\g_{0}-$module, with highest weight $\mu$. A vector $\vec{m} \in \Ind (F) $ is a non trivial highest weight singular vector of degree 2 if and only if $\vec{m}$ is (up to a scalar) one of the following vectors:
		
		\begin{description}
			\item [a] $\mu=(0,n,1-\frac{n}{2},-1-\frac{n}{2}) $ with $n \in \Z_{\geq 0}$,
			$$\vec{m}_{2a}=w_{11}w_{21}\otimes y_{1}^{n};$$
			\item [b] $\mu=(m,0,1-\frac{m}{2},1+\frac{m}{2}) $ with $m \in \Z_{\geq 0}$,
			$$\vec{m}_{2b}=w_{11}w_{12}\otimes  x_{1}^{m};$$
			\item [c] $\mu=(m,0,2+\frac{m}{2},-\frac{m}{2}) $ with $m \in \Z_{> 1}$,
			$$\vec{m}_{2c}=w_{21}w_{22}\otimes \partial_{x_{2}}^{m}+(w_{11}w_{22}+w_{21}w_{12})\otimes \partial_{x_{1}}\partial_{x_{2}}^{m-1}+w_{11}w_{12}\otimes \partial_{x_{1}}^{2}\partial_{x_{2}}^{m-2};$$
			\item [d] $\mu=(0,n,2+\frac{n}{2},\frac{n}{2}) $ with $n \in \Z_{> 1}$,
			$$\vec{m}_{2d}=w_{12}w_{22}\otimes \partial_{y_{2}}^{n}+(w_{11}w_{22}+w_{12}w_{21})\otimes \partial_{y_{1}}\partial_{y_{2}}^{n-1}+w_{11}w_{21}\otimes \partial_{y_{1}}^{2}\partial_{y_{2}}^{n-2}.$$
		 \end{description}
		\end{thm}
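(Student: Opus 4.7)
My plan is to reduce to a finite-dimensional linear algebra problem on each weight space and then impose the singular condition directly. First I would write down a general ansatz for a degree $2$ homogeneous element of $\Ind(F)$. By the Poincaré--Birkhoff--Witt theorem applied to $\g_{<0}=\g_{-2}\oplus\g_{-1}$, together with the relations \eqref{bracketwcaselli} and \eqref{quadratideiwcaselli}, a basis of the degree $2$ component $U(\g_{<0})_2$ is given by $\Theta$ and the six products $w_{11}w_{12},\ w_{11}w_{21},\ w_{11}w_{22},\ w_{12}w_{21},\ w_{12}w_{22},\ w_{21}w_{22}$. Hence every degree $2$ element has the form $\vec{m}=\Theta\otimes v_0+\sum_{|I|=2}w_I\otimes v_I$ with $v_I\in F$.

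Next I would impose the highest weight condition $e_x.\vec{m}=e_y.\vec{m}=0$. Under Lemma \ref{appoggiog-1} the $w_{ij}$'s are identified with $x_iy_j$ in the $\g_0^{ss}$-bimodule $\langle x_1,x_2\rangle \otimes \langle y_1,y_2\rangle$, so $\wedge^2\g_{-1}$ decomposes as $\Sym^2\langle x_1,x_2\rangle\otimes\wedge^2\langle y_1,y_2\rangle\oplus\wedge^2\langle x_1,x_2\rangle\otimes\Sym^2\langle y_1,y_2\rangle$, with respective $\g_0^{ss}$-highest weight vectors $w_{11}w_{12}$ of weight $(2,0)$ and $w_{11}w_{21}$ of weight $(0,2)$. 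Combining with the trivial component $\C\Theta$ and with Clebsch--Gordan branching against $F$, this singles out, in each weight $(M,N)$, a finite-dimensional space of candidate highest weight vectors parameterized by a small collection of scalar coefficients depending on $(M,N,\mu_t,\mu_C)$.

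The core of the proof is then to impose $\g_{>0}.\vec{m}=0$. By Lemma \ref{g1} it suffices to check annihilation by the two $\g_0$-lowest weight vectors $t(\xi_1+i\xi_2)$ and $(\xi_1+i\xi_2)\xi_3\xi_4$ of $\g_1$. For each ansatz I would compute these actions in $U(\g)$ by commuting the generators of $\g_1$ through the $\eta$'s; the brackets $[t(\xi_1+i\xi_2),w_{ij}]$ and $[(\xi_1+i\xi_2)\xi_3\xi_4,w_{ij}]$ land in $\g_0$, while $[(\xi_1+i\xi_2)\xi_3\xi_4,\Theta]$ lies in $\g_{-1}$, up to central contributions from the $2$-cocycle $\psi$ described in Proposition \ref{anniliK4}. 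Since $\g_{>0}$ acts trivially on $F$ by Theorem \ref{keythmsingular}(i), the vanishing conditions translate into linear relations among the coefficients $v_I$ and the numerical parameters of the highest weight. Solving these relations case by case produces exactly the four families (a)--(d), together with the corresponding constraints on $(M,N)$ (the restrictions $m\in\Z_{>1}$ in (c) and $n\in\Z_{>1}$ in (d) in particular appear because the would-be solutions on the boundary degenerate or are already covered by degree $1$ singular vectors).

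The main obstacle I foresee is the bookkeeping: one has to organise the actions of $t(\xi_1+i\xi_2)$ and $(\xi_1+i\xi_2)\xi_3\xi_4$ on each of the seven basis elements $\Theta,\ w_Iw_J$, keeping careful track of the central terms produced by $\psi$ and of the second-order contributions arising when a generator of $\g_1$ is commuted past two $\eta$'s. I would first tabulate the single commutators $[t(\xi_1+i\xi_2),w_{ij}]$ and $[(\xi_1+i\xi_2)\xi_3\xi_4,w_{ij}]$ modulo elements acting trivially on $F$, then apply the graded Leibniz rule to deduce the full action on the degree $2$ basis; once these tables are in place, verifying the four displayed vectors and ruling out further solutions reduces to routine linear algebra on each weight space.
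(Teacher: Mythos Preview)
This theorem is not proved in the present paper: it is one of the classification results recalled from \cite{K4} (see the paragraph preceding Theorem~\ref{sing1} and Remark~\ref{cambio notazione}), so there is no proof here against which to compare your proposal.

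That said, your outline is the standard and correct strategy for such a classification, and it matches the method used in \cite{K4}: parametrize the degree $2$ component of $\Ind(F)$ via the PBW basis $\{\Theta,\ w_Iw_J\}$, impose the highest weight condition using the $\g_0^{ss}$-module structure of $\wedge^2\g_{-1}$, and then reduce the singularity condition to annihilation by the two lowest weight generators $t(\xi_1+i\xi_2)$ and $(\xi_1+i\xi_2)\xi_3\xi_4$ of $\g_1$ via Lemma~\ref{g1}. Your identification of the bookkeeping obstacle (tracking the central contributions from the cocycle $\psi$ and the second-order commutator terms) is exactly right; this is where the values of $\mu_t$ and $\mu_C$ get pinned down. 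One minor point: in this paper the highest weight condition is phrased as annihilation by $e_1=e_x+e_y$ and $e_2=e_x-e_y$ rather than $e_x,e_y$ separately, but the two formulations are of course equivalent.
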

\begin{thm}[\cite{K4}]
		\label{sing3}
		Let $F$ be an irreducible finite$-$dimensional $\g_{0}-$module, with highest weight $\mu$. A vector $\vec{m} \in \Ind (F) $ is a non trivial highest weight singular vector of degree 3 if and only if $\vec{m}$ is (up to a scalar) one of the following vectors:
		
		\begin{description}
			\item [a] $\mu=(1,0,\frac{5}{2},-\frac{1}{2})$,
			$$\vec{m}_{3a}=w_{11}w_{22}w_{21}\otimes \partial_{x_{2}}-w_{21}w_{12}w_{11}\otimes \partial_{x_{1}};$$
			\item [b] $\mu=(0,1,\frac{5}{2},\frac{1}{2})$,
			$$\vec{m}_{3b}=w_{11}w_{22}w_{12}\otimes \partial_{y_{2}}-w_{12}w_{21}w_{11}\otimes \partial_{y_{1}}.$$
		 \end{description}
		\end{thm}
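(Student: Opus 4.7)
The goal is to classify all nontrivial highest weight singular vectors of degree $3$ in $\Ind(F)$. By Lemma \ref{g1}, the subalgebra $\g_{>0}$ is generated by $\g_{1}$, and as a $\g_{0}$-module $\g_{1}$ decomposes into two irreducibles whose lowest weight vectors are $t(\xi_{1}+i\xi_{2})$ and $(\xi_{1}+i\xi_{2})\xi_{3}\xi_{4}$. Consequently, a homogeneous vector $\vec{m}\in\Ind(F)$ is a highest weight singular vector if and only if it is annihilated by the four operators $e_{1}$, $e_{2}$, $t(\xi_{1}+i\xi_{2})$, $(\xi_{1}+i\xi_{2})\xi_{3}\xi_{4}$. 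I would take this as the basic criterion throughout.

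The first step is to write a general degree-$3$ candidate in a chosen PBW basis. By \eqref{quadratideiwcaselli} a basis of $U(\g_{<0})_{3}$ consists of the four square-free cubic monomials in the $w_{ij}$'s, namely $w_{11}w_{12}w_{21}$, $w_{11}w_{12}w_{22}$, $w_{11}w_{21}w_{22}$, $w_{12}w_{21}w_{22}$, together with the four products $\Theta w_{ij}$, $(i,j)\in\{11,12,21,22\}$. Thus any degree-$3$ vector has the form
\[
\vec{m}=\sum_{I}w_{I}\otimes v_{I}+\Theta\sum_{ij}w_{ij}\otimes u_{ij},\qquad v_{I},u_{ij}\in F.
\]
The easy direction is then immediate: by a direct calculation using the bracket formulas in $\g$ and the formulas for the action of $e_{1},e_{2},t(\xi_{1}+i\xi_{2}),(\xi_{1}+i\xi_{2})\xi_{3}\xi_{4}$ on tensor products, one checks that $\vec{m}_{3a}$ and $\vec{m}_{3b}$ are annihilated by all four generators; one verifies at the same time that the tensor-factors $\partial_{x_{i}}$ and $\partial_{y_{i}}$ appearing in them are indeed weight vectors of the appropriate weights inside $V_{C}^{-1,0}$ and $V_{C}^{0,-1}$, matching the announced weights of $F$ via \eqref{identificazioneVX}.

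The converse is the heart of the argument. First I would impose the highest weight conditions $e_{1}.\vec{m}=e_{2}.\vec{m}=0$: since $e_{1},e_{2}\in\g_{0}^{ss}$ act both on the $w$-factors (via the adjoint action, using the commutation relations deduced from those in \cite{K4}) and on $F$, this already cuts the space of candidates down to a small finite-dimensional set parametrized by highest weight vectors in $F$. Next I would impose $t(\xi_{1}+i\xi_{2}).\vec{m}=0$, whose output lives in degree $2$: equating the coefficients of the degree-$2$ PBW basis gives a linear system on the $v_{I}$ and $u_{ij}$, coupled with the weight $\mu=(m,n,\mu_{t},\mu_{C})$ of $F$ through the scalar actions of $h_{x},h_{y},t,C$. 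Finally, imposing $(\xi_{1}+i\xi_{2})\xi_{3}\xi_{4}.\vec{m}=0$ sends the remaining terms to degree $0$, forcing further equations that couple the same data. The exact quadruple $(\mu_{t},\mu_{C})$ is forced to take one of only two values, yielding after a short determination the unique solutions $\vec{m}_{3a}$ and $\vec{m}_{3b}$.

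The principal obstacle is purely combinatorial/bookkeeping: writing down the actions of the four generators on each of the eight basis elements $w_{I}$ and $\Theta w_{ij}$ tensored with a generic weight vector of $F$, and then systematically collecting coefficients in a PBW basis of $\Ind(F)_{2}$ and $\Ind(F)_{0}$. Two savings help control the work: first, the weight of $\vec{m}$ under $(h_{x},h_{y},t,C)$ is determined by the cubic $w$-monomial on each summand, so one can treat each weight stratum separately and rule out most of them immediately; second, the duality of Remark \ref{cantacasellikacremarksize} together with the evident $x\leftrightarrow y$ symmetry of the set-up reduces the classification essentially to one of the two cases, the other being obtained by symmetry. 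This reduces the remaining verification to a finite, if tedious, linear-algebraic check that yields exactly the two vectors $\vec{m}_{3a}$ and $\vec{m}_{3b}$ with the stated weights.
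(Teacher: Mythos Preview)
The paper does not actually prove this theorem: it is quoted from \cite{K4} as part of the background material recalled in Section~\ref{section$K'_{4}$}, alongside Theorems~\ref{sing1}, \ref{sing2} and \ref{greaterthan3}. There is therefore no proof in the present paper to compare your proposal against.

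That said, your outline is a reasonable sketch of how such a classification is carried out, and is in the spirit of the approach in \cite{K4}: reduce the singularity condition to annihilation by the four generators $e_{1},e_{2},t(\xi_{1}+i\xi_{2}),(\xi_{1}+i\xi_{2})\xi_{3}\xi_{4}$ via Lemma~\ref{g1}, expand a general degree-$3$ element in the PBW basis $\{w_{I},\Theta w_{ij}\}$, and solve the resulting linear system in the coefficients and in the weight $\mu$. One caution: your appeal to conformal duality (Remark~\ref{cantacasellikacremarksize}) to pass from case~(a) to case~(b) is not quite the right mechanism here, since duality relates Verma modules at weights $(m,n,\mu_{t},\mu_{C})$ and $(m,n,2-\mu_{t},-\mu_{C})$, not the two weights in the statement; the relevant symmetry is rather the $x\leftrightarrow y$ involution you also mention, which swaps $(h_{x},h_{y})$ and changes the sign of $C$, taking $(1,0,\tfrac{5}{2},-\tfrac{1}{2})$ to $(0,1,\tfrac{5}{2},\tfrac{1}{2})$ and $\vec{m}_{3a}$ to $\vec{m}_{3b}$.
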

		\begin{thm}[\cite{K4}]
		\label{greaterthan3}
		There are no singular vectors of degree greater than 3. 
		\end{thm}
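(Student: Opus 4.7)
The plan is to follow the strategy of the proofs of Theorems \ref{sing1}, \ref{sing2}, \ref{sing3}. Since the homogeneous $\g_{0}$-weight components of a singular vector are themselves singular, it suffices to classify highest-weight singular vectors, and by Lemma \ref{g1} such a vector $\vec{m}\in\Ind(F)$ is singular if and only if it is annihilated by $e_{1},e_{2}\in\g_{0}^{ss}$ together with $t(\xi_{1}+i\xi_{2})$ and $(\xi_{1}+i\xi_{2})\xi_{3}\xi_{4}\in\g_{1}$. Using \eqref{bracketwcaselli} and \eqref{quadratideiwcaselli} we identify $U(\g_{<0})\cong\C[\Theta]\otimes\inlinewedge\langle w_{11},w_{12},w_{21},w_{22}\rangle$, so a homogeneous element of degree $d$ writes uniquely as
\[
\vec{m}=\sum_{2a+|I|=d}\Theta^{a}\,w_{I}\otimes v_{a,I},\qquad v_{a,I}\in F.
\]
Since $\inlinewedge\langle w_{11},w_{12},w_{21},w_{22}\rangle$ vanishes in degrees greater than $4$, for every $d\ge 4$ the space $U(\g_{<0})_{d}$ is spanned by at most eight monomials.

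First, I would impose $e_{1}\vec{m}=e_{2}\vec{m}=0$; combined with the weight conditions relative to $h_{x},h_{y},t,C$, this forces each $v_{a,I}$ to be proportional to a specific highest-weight vector of $F$, leaving only a small finite number of scalar unknowns in each degree. Next, using \eqref{bracketlie}, one computes the commutators of $t(\xi_{1}+i\xi_{2})$ and $(\xi_{1}+i\xi_{2})\xi_{3}\xi_{4}$ with the generators $w_{kl}$ and $\Theta$ of $U(\g_{<0})$ (for instance $[t\xi_{i},\xi_{j}]=-\xi_{i}\xi_{j}-t\delta_{ij}$ and $[t\xi_{i},1]=-2\xi_{i}$), and applying the super-Leibniz rule monomial by monomial translates the two remaining annihilation conditions into an explicit linear system in the scalars $v_{a,I}$.

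The base cases are $d=4$ and $d=5$, with $U(\g_{<0})_{d}$ spanned respectively by $\{\Theta^{2},\,\Theta w_{I}\text{ with }|I|=2,\,w_{11}w_{12}w_{21}w_{22}\}$ and by $\{\Theta^{2}w_{i},\,\Theta w_{I}\text{ with }|I|=3\}$; in each case a direct case-by-case analysis entirely parallel to the proofs of Theorems \ref{sing1}, \ref{sing2}, \ref{sing3} shows that the only solution is $\vec{m}=0$. For $d\ge 6$ every monomial of $\vec{m}$ must carry a positive power of $\Theta$, so writing
\[
\vec{m}=\Theta^{a_{0}}\vec{m}_{a_{0}}+\Theta^{a_{0}+1}\vec{m}_{a_{0}+1}+\cdots,\qquad \vec{m}_{a}\in\inlinewedge\langle w\rangle_{d-2a}\otimes F,\quad \vec{m}_{a_{0}}\ne 0,
\]
and repeatedly using $X\Theta=[X,\Theta]+\Theta X$ with $[X,\Theta]\in\g_{-1}$ for $X\in\g_{1}$, one isolates from the lowest $\Theta$-order component of $X\vec{m}$ a singular-vector-type equation on $\vec{m}_{a_{0}}$ in a Verma module of strictly smaller degree $d-2a_{0}$. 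Combined with the inductive hypothesis together with Theorems \ref{sing1}--\ref{sing3}, this forces $\vec{m}_{a_{0}}=0$, a contradiction.

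The main obstacle is expected to be the bookkeeping in the base cases $d=4$ and $d=5$: the action of $t(\xi_{1}+i\xi_{2})$ on a monomial $\Theta^{a}w_{I}$ produces, through the formulas above, contributions mixing different $\Theta$-orders with different $|I|$-orders, so organising the resulting linear system so that its kernel becomes visibly trivial requires careful tracking of how terms align in the codomain basis $U(\g_{<0})_{d-1}\otimes F$. Once the base cases are settled, the inductive step for $d\ge 6$ is conceptual and follows from the boundedness of the exterior algebra generated by the $w_{ij}$'s.
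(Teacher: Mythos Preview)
The paper does not contain a proof of this theorem: the statement is quoted from \cite{K4} as a classification result, with no argument given here. There is therefore no in-paper proof to compare your proposal against.

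As for your sketch itself: the plan for the base cases $d=4,5$ is sound in principle---writing a generic degree-$d$ highest-weight element in the basis $\Theta^{a}w_{I}\otimes v$, imposing annihilation by $e_{1},e_{2}$ and by the two lowest-weight generators of $\g_{1}$, and checking that the resulting linear system has only the trivial solution is a finite (if tedious) computation. The inductive step for $d\ge 6$, however, has a genuine gap. Writing $\vec{m}=\Theta\,\vec{n}$ (legitimate since every monomial carries a positive power of $\Theta$), you want to reduce to a singular-vector condition on $\vec{n}$ in degree $d-2$. For $X=(\xi_{1}+i\xi_{2})\xi_{3}\xi_{4}$ this works: one checks $[X,\Theta]=0$, and since $\Theta$ acts injectively on $U(\g_{<0})\otimes F$, the condition $X\vec{m}=0$ yields $X\vec{n}=0$. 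But for $X=t(\xi_{1}+i\xi_{2})$ one computes $[X,\Theta]=i\,w_{22}\neq 0$, so $X\vec{m}=0$ gives only
\[
\Theta\,X\vec{n}+i\,w_{22}\vec{n}=0,
\]
which is \emph{not} the singular-vector equation for $\vec{n}$. Isolating the lowest $\Theta$-order, as you propose, merely says that $w_{22}\wedge\vec{m}_{a_{0}}=0$ in $\Gr U(\g_{<0})$, i.e.\ that $\vec{m}_{a_{0}}$ is divisible by $w_{22}$ in the exterior algebra---a useful constraint, but not the inductive hypothesis you invoke. The sentence ``combined with the inductive hypothesis together with Theorems \ref{sing1}--\ref{sing3}, this forces $\vec{m}_{a_{0}}=0$'' therefore does not follow as written; you would need to supply an additional argument (for instance, by bringing in the full $\g_{0}$-orbit of $t(\xi_{1}+i\xi_{2})$ to produce divisibility by all four $w_{ij}$, or by a more careful filtration analysis) before the reduction goes through.
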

		\begin{rem}
		\label{cambio notazione}
		We point out that the highest weight singular vectors of Theorems \ref{sing1}, \ref{sing2} and \ref{sing1} are written differently from \cite{K4}. Indeed in \cite{K4} the irreducible $\g_{0}^{ss}-$module of highest weight $(m,n)$ with respect to $h_{x},h_{y}$ is identified  with the space of bihomogeneous polynomials in the four variables $x_1,x_2,y_1,y_2$ of degree $m$ in the variables $x_{1},x_{2}$, and of degree $n$ in the variables $y_{1},y_{2}$. We use instead the notation of the $V_{X}$'s and \eqref{identificazioneVX} because it is convenient for the explicit description of the morphisms in Figure \ref{figura}. 
		\end{rem}
From Theorems \ref{sing1}, \ref{sing2}, \ref{sing3} and \ref{greaterthan3} it follows that the module $M(0,0,2,0)$ does not contain non trivial singular vectors, hence it is irreducible due to Theorem \ref{keythmsingular}.
\begin{prop}[\cite{K4}]
\label{M(0,0,2,0)}
The module $M(0,0,2,0)$ is irreducible and it is isomorphic to the coadjoint representation of $K(1,4)_{+}$, where we consider the restricted dual, i.e. $K(1,4)_{+}^{*}=\oplus_{j \in \Z}({K(1,4)_{+}}_{j})^{*}$.
\end{prop}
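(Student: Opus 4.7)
The plan is to prove irreducibility as an immediate consequence of the singular vector classification, then exhibit an explicit $\g$-module map $\Phi \colon M(0,0,2,0) \to K(1,4)_+^*$ using the coadjoint structure and conclude bijectivity by matching graded dimensions.

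First I would verify that the highest weight $(0,0,2,0)$ never appears among the weights listed in Theorems \ref{sing1}, \ref{sing2}, \ref{sing3}. A quick case analysis: in Theorem \ref{sing1}(a) one has $\mu_t = -(m+n)/2 \leq 0$; in \ref{sing1}(b), (c), (d) the constraint $m>0$ or $n>0$ forces some weight component to be nonzero; in Theorem \ref{sing2} the $\mu_C$ coordinate is always nonzero when $\mu = (0,0,\cdot,\cdot)$; and the weights in Theorem \ref{sing3} have $\mu_t = 5/2$. Hence $M(0,0,2,0)$ admits no nontrivial highest weight singular vectors, and by Theorem \ref{keythmsingular}(iii) it is irreducible.

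Next I would extend the coadjoint action of $K(1,4)_+$ on $K(1,4)_+^*$ to a $\g$-action by declaring $C$ to act as zero; this is compatible with the Lie superalgebra structure because the cocycle $\psi$ takes values in $\C C$ and is therefore absorbed. Let $1^* \in (\g_{-2})^*$ denote the functional dual to $1 \in \g_{-2}$. A direct computation with the contact bracket \eqref{bracketlie} gives $[t,1] = -2$ and $[\xi_{ij},1] = 0$ for all $i<j$, so in the coadjoint action one has $t.1^* = 2\cdot 1^*$, $h_x.1^* = h_y.1^* = 0$, and $e_x.1^* = f_x.1^* = e_y.1^* = f_y.1^* = 0$. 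Moreover, for $g \in \g_j$ with $j>0$ and any $y \in K(1,4)_+$, the bracket $[g,y]$ lies in $\g_{j+\deg y}$, which is in $\g_{-2}$ only when $\deg y = -2-j < -2$, which is impossible; hence $g.1^* = 0$ on $\g_{>0}$. Thus $1^*$ is a highest weight vector of weight $(0,0,2,0)$ annihilated by $\g_{>0}$, and the universal property of the induced module yields a $\g$-module morphism
\[
\Phi \colon M(0,0,2,0) \longrightarrow K(1,4)_+^*, \qquad v_{(0,0,2,0)} \longmapsto 1^*.
\]

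Since $M(0,0,2,0)$ is irreducible and $\Phi$ is nonzero, $\Phi$ is injective. To promote this to an isomorphism I would compare Hilbert series with respect to the $t$-eigenvalue. Using $U(\g_{<0}) \cong \C[\Theta]\otimes \inlinewedge(4)$ with $\Theta$ of $t$-weight $-2$ and each $\eta_i$ of $t$-weight $-1$, a basis element $\Theta^k \eta_I \otimes v$ has $t$-weight $2 - 2k - |I|$; on the other side, $(t^m \xi_J)^* \in K(1,4)_+^*$ has $t$-weight $-(2m + |J| - 2) = 2 - 2m - |J|$. The tautological bijection $(k,I) \leftrightarrow (m,J) = (k,I)$ identifies the two graded vector spaces, so $\Phi$ is a graded bijection, hence an isomorphism.

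The main obstacle is not any single deep step but the bookkeeping: one must check that the extension of the coadjoint by $C \mapsto 0$ is consistent with the central extension of Proposition \ref{anniliK4}, verify the $\g_0$-weight of $1^*$ using the precise normalizations of $h_x,h_y,t$ in \eqref{hx}, and confirm that the grading identifications on both sides agree (the shift by $2$ coming from the $t$-weight of the highest weight vector must match the fact that $K(1,4)_+$ starts in degree $-2$). Once these conventions are aligned, all three steps are short.
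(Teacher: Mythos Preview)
Your argument is correct. The irreducibility portion matches the paper exactly: the text immediately preceding the proposition observes that $(0,0,2,0)$ is absent from the lists in Theorems \ref{sing1}--\ref{sing3}, so Theorem \ref{keythmsingular}(iii) applies. The paper does not prove the isomorphism with the coadjoint representation at all---it simply cites \cite{K4}---so your explicit construction (send the highest weight vector to $1^*$, use irreducibility for injectivity, and compare $t$-graded dimensions for surjectivity) is additional content rather than a different route to the same proof.

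Two small points of phrasing worth tightening. First, ``the cocycle takes values in $\C C$ and is therefore absorbed'' is correct but terse: the cleanest way to say it is that $K(1,4)_+^*$, viewed as the functionals on $\g$ vanishing on $C$, is a $\g$-submodule of the full coadjoint $\g^*$ (since $C$ is central), and on this submodule the $\g$-action agrees with the $K(1,4)_+$-coadjoint action with $C$ acting by zero. Second, in the last step you write ``the tautological bijection \ldots\ identifies the two graded vector spaces, so $\Phi$ is a graded bijection''; to avoid any ambiguity, note that $\Phi$ is $t$-equivariant (it is a $\g$-map), hence preserves each finite-dimensional $t$-eigenspace, and injectivity plus equal dimensions then forces bijectivity on each piece. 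The bijection of index sets is only used to check the dimension equality, not to describe $\Phi$ itself.
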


\section{The morphisms}
\label{sezmorfismi}
In this section we find an explicit form for the morphisms that occur in Figure \ref{figura}. We follow the notation in \cite{kacrudakovE36} and define, for every $u \in U(\g_{<0}) $ and $\phi \in \Hom(V_{X},V_{Y})$, the map $u \otimes \phi :  M_{X}\longrightarrow M_{Y}$ by:
\begin{align}
\label{uuprimo}
(u \otimes \phi)(u' \otimes v)= u' \,u \otimes \phi(v),
\end{align}
for every $ u' \otimes v \in U(\g_{<0}) \otimes V_{X}$. From this definition it is clear that the map $u \otimes \phi$ commutes with the action of $\g_{<0}$. The following is straightforward.
\begin{lem}
\label{commutacong0basiduali}
Let $u \otimes \phi$ be a map as in \eqref{uuprimo}. Let us suppose that $u \otimes \phi=\sum_{i}u_{i} \otimes \phi_{i}$ where $\left\{u_{i}\right\}_{i}$ and $\left\{\phi_{i}\right\}_{i}$ are bases of dual $\g_{0}-$modules and $u_{i}$ is the dual of $\phi_{i}$ for all $i$. Then $u \otimes \phi$ commutes with $\g_{0}$. 
\end{lem}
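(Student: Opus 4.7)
The plan is to reduce the commutativity with $\g_0$ to the standard fact that the element $\sum_i u_i \otimes \phi_i$ is $\g_0$-invariant whenever $\{u_i\}$ and $\{\phi_i\}$ are dual bases of dual $\g_0$-modules.

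First I would compute the commutator $[g, u\otimes\phi]$ for $g \in \g_0$ applied to a generator $u' \otimes v \in M_X = U(\g_{<0})\otimes V_X$. Since $\g_{>0}$ acts trivially on $V_X$ and $\g_0$ preserves the decomposition given by PBW, one has $g\cdot(u'\otimes v) = [g,u']\otimes v + u' \otimes g.v$. Using the Leibniz rule $[g, u'u] = [g,u']u + u'[g,u]$, a short direct computation gives
\[
\bigl[g,\,u\otimes\phi\bigr](u'\otimes v) \;=\; u'\,[g,u]\otimes \phi(v) \;+\; u'u \otimes \bigl(g.\phi(v) - \phi(g.v)\bigr).
\]
Substituting $u\otimes\phi = \sum_i u_i\otimes\phi_i$, and recognising that $[g,u_i]$ is the adjoint action $g\cdot u_i$ on $U(\g_{<0})$ while $(g.\phi_i)(v) := g.\phi_i(v) - \phi_i(g.v)$ is the natural $\g_0$-action on $\Hom(V_X,V_Y)$, the right-hand side becomes
\[
\sum_i u' (g\cdot u_i) \otimes \phi_i(v) \;+\; \sum_i u' u_i \otimes (g\cdot \phi_i)(v),
\]
which is exactly the image of the diagonal action of $g$ on the tensor $\sum_i u_i \otimes \phi_i \in U(\g_{<0})\otimes \Hom(V_X,V_Y)$, evaluated at $u' \otimes v$.

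Hence it suffices to show $\sum_i u_i\otimes\phi_i$ is a $\g_0$-invariant element. By hypothesis the spans $U := \langle u_i\rangle$ and $\Phi := \langle \phi_i\rangle$ are dual $\g_0$-modules with $\phi_i$ the basis of $U^*$ dual to $\{u_i\}$. Under the canonical $\g_0$-equivariant isomorphism $U \otimes U^* \cong \End(U)$, the tensor $\sum_i u_i\otimes\phi_i$ corresponds to $\Id_U$, which is manifestly $\g_0$-equivariant; therefore it is $\g_0$-invariant. This forces $[g, u\otimes\phi] = 0$ for all $g \in \g_0$, which is the desired conclusion.

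The only subtle step is the correct identification of the two $\g_0$-actions (the adjoint action on $U(\g_{<0})$ and the standard action on $\Hom(V_X,V_Y)$) as the factors of the tensor-product action; everything else is bookkeeping and the elementary invariance of the identity endomorphism under conjugation.
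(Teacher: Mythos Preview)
Your proof is correct and is exactly the argument the paper has in mind: the paper simply declares the lemma ``straightforward'' and omits the proof, and your reduction to the $\g_0$-invariance of the canonical tensor $\sum_i u_i\otimes\phi_i \leftrightarrow \Id_U \in \End(U)$ is the natural way to fill in the details. Your commutator computation is in fact the same one the paper carries out explicitly in the proof of the next lemma (Lemma~\ref{g0commuta}), so there is no methodological difference to report.
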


\begin{lem}
\label{g0commuta}
Let us consider a map $u \otimes \phi \in U(\g_{<0}) \otimes \Hom(V_{X},V_{Y}) $. In order to show that $u \otimes \phi$ commutes with $\g_{0}$, it is sufficient to show that $w u \otimes \phi(v)=u \otimes \phi(w.v)$ for all $v \in V_{X}$, $w \in \g_{0}$.
\end{lem}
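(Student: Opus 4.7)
The plan is to use the fact that, by its very definition, the map $u \otimes \phi$ automatically commutes with the left action of $\g_{<0}$ on $U(\g_{<0}) \otimes V_X$, and then combine this with the hypothesis on $V_X$ via a PBW-style reduction.

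First, let me introduce
\begin{equation*}
S = \{ m \in M_X \mid w \cdot (u \otimes \phi)(m) = (u \otimes \phi)(w \cdot m) \text{ for all } w \in \g_0 \}.
\end{equation*}
This is a linear subspace of $M_X$. I want to prove $S = M_X$. Unwinding the hypothesis gives $1 \otimes v \in S$ for every $v \in V_X$, since applying $w \in \g_0$ to $1 \otimes v$ yields $1 \otimes w.v$ (the action of $\g_0$ on $V_X$ coincides with the induced action on $M_X$ at the bottom component), and the hypothesis $w u \otimes \phi(v) = u \otimes \phi(w.v)$ is exactly the commutation relation evaluated on $1 \otimes v$.

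The key step is then to show that $S$ is stable under the left action of $\g_{<0}$. Fix $m \in S$, $a \in \g_{<0}$ and $w \in \g_0$. In $U(\g)$ one has $w \cdot a = a \cdot w + [w,a]$, with $[w,a] \in \g_{<0}$ because the $\Z$-grading on $\g$ is preserved by $\mathrm{ad}(\g_0)$. Using that $u \otimes \phi$ commutes with $\g_{<0}$ (including with $[w,a]$), together with $m \in S$, one computes
\begin{align*}
w \cdot (u \otimes \phi)(a \cdot m) &= w \cdot a \cdot (u \otimes \phi)(m) = a \cdot w \cdot (u \otimes \phi)(m) + [w,a] \cdot (u \otimes \phi)(m)\\
&= a \cdot (u \otimes \phi)(w \cdot m) + (u \otimes \phi)([w,a] \cdot m)\\
&= (u \otimes \phi)(a \cdot w \cdot m + [w,a] \cdot m) = (u \otimes \phi)(w \cdot a \cdot m),
\end{align*}
so $a \cdot m \in S$. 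Hence $S$ is $\g_{<0}$-stable.

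Finally, since $M_X = U(\g_{<0}) \otimes V_X$ is generated by $1 \otimes V_X$ under the action of $U(\g_{<0})$ (this is just the PBW identification $\Ind(F) \cong U(\g_{<0}) \otimes F$ specialized to $F = V_X$), the $\g_{<0}$-stable subspace $S$ containing $1 \otimes V_X$ must coincide with all of $M_X$. This gives commutation of $u \otimes \phi$ with $\g_0$ on all of $M_X$, which together with the automatic commutation with $\g_{<0}$ shows that $u \otimes \phi$ commutes with $\g_{\leq 0}$. The main (and essentially only) technical point is the derivation identity $[w,a] \in \g_{<0}$ for $w \in \g_0$, $a \in \g_{<0}$, used in the displayed computation above; everything else is formal manipulation in $U(\g)$.
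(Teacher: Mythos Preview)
Your proof is correct and follows essentially the same approach as the paper. Both arguments rest on the same two ingredients: $[\g_0,\g_{<0}]\subset\g_{<0}$ and the automatic commutation of $u\otimes\phi$ with $\g_{<0}$. The paper carries out a direct computation on PBW monomials $u_{i_1}\cdots u_{i_k}\otimes v$, observing that the correction terms from commuting $w$ past the $u_{i_j}$'s are identical on both sides, while you package the same computation as an inductive closure argument via the $\g_{<0}$-stable subspace $S$; the displayed calculation in your proof is exactly the inductive step corresponding to one layer of the paper's PBW expansion.
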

\begin{proof}
Let $w \in \g_{0}$. For every $u_{i_{1}}u_{i_{2}} \dots u_{i_{k}} \otimes v \in U(\g_{<0}) \otimes V_{X}$: 
\begin{align*}
w.(u_{i_{1}}u_{i_{2}} \dots u_{i_{k}} \otimes v)=u_{i_{1}}u_{i_{2}} \dots u_{i_{k}} \otimes w.v+\sum \widetilde{u}_{i_{1}}\widetilde{u}_{i_{2}} \dots \widetilde{u}_{i_{k}} \otimes v.
\end{align*}
Hence for a map $u \otimes \phi \in U(\g_{<0}) \otimes  \Hom(V_{X},V_{Y})$:
\begin{align*}
(u \otimes \phi)(w.(u_{i_{1}}u_{i_{2}} \dots u_{i_{k}} \otimes v))=u_{i_{1}}u_{i_{2}} \dots u_{i_{k}}u \otimes \phi(w.v)+\sum \widetilde{u}_{i_{1}}\widetilde{u}_{i_{2}} \dots \widetilde{u}_{i_{k}} u \otimes \phi( v).
\end{align*}
On the other hand we have:
\begin{align*}
w.(u \otimes \phi)(u_{i_{1}}u_{i_{2}} \dots u_{i_{k}} \otimes v)&=w.(u_{i_{1}}u_{i_{2}} \dots u_{i_{k}} u\otimes \phi( v))\\
&=u_{i_{1}}u_{i_{2}} \dots u_{i_{k}}w u \otimes \phi(v)+\sum \widetilde{u}_{i_{1}}\widetilde{u}_{i_{2}} \dots \widetilde{u}_{i_{k}} u \otimes \phi( v).
\end{align*}
Therefore, in order to show that $u \otimes \phi$ commutes with $\g_{0}$, it is sufficient to show that $w u \otimes \phi(v)=u \otimes \phi(w.v)$ for all $v \in V_{X}$, $w \in \g_{0}$.
\end{proof}
\begin{lem}
\label{nablag0g+}
Let $\Phi:M_{X}\rightarrow M_{Y}$ be a linear map. Let us suppose that $\Phi$ commutes with $\g_{\leq 0}$ and that $\Phi(v)$ is a singular vector for every $v$ highest weight vector in $V_{X}^{m,n}$ and for all $m,n \in \Z$. Then $\Phi$ is a morphism of $\g-$modules.
\end{lem}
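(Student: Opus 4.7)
The plan is to show that $\Phi$ commutes with the remaining part $\g_{>0}$. Since by Lemma \ref{g1} the subalgebra $\g_{>0}$ is generated by $\g_{1}$, and a direct check shows that if $\Phi$ commutes with $\g_1$ then by iteration $\Phi$ commutes with $\g_i = [\g_1, \g_{i-1}]$ for every $i \geq 2$ (write $[g,g'].w = g.g'.w - g'.g.w$ and pull each $g,g' \in \g_1$ across $\Phi$ separately), it is enough to prove that $\Phi(g.w) = g.\Phi(w)$ for every $g \in \g_1$ and every $w \in M_X$.

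I would therefore introduce
\[
N = \{\, w \in M_X \mid \Phi(g.w) = g.\Phi(w) \text{ for all } g \in \g_1\,\},
\]
and try to show $N = M_X$. The central computation is that $N$ is stable under $\g_{\leq 0}$. Indeed, for $w \in N$, $h \in \g_{\leq 0}$ and $g \in \g_1$, a short manipulation using that $\Phi$ commutes with $h$ gives
\[
\Phi(g.h.w) - g.\Phi(h.w) = \Phi([g,h].w) - [g,h].\Phi(w).
\]
Now $[g,h] \in \g_{\leq 1}$: if $[g,h] \in \g_{\leq 0}$ (the cases $h \in \g_{-1} \cup \g_{-2}$) the right-hand side vanishes by hypothesis on $\Phi$; if $[g,h] \in \g_1$ (the case $h \in \g_0$) it vanishes because $w \in N$ commutes with every element of $\g_1$. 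Hence $h.w \in N$.

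Next I would observe that every highest weight vector $v$ of $V_X^{m,n}$ lies in $N$: by Theorem \ref{keythmsingular}(i) one has $\g_{>0}.v = 0$, and by hypothesis $\Phi(v)$ is singular, so $g.\Phi(v)=0$ for $g \in \g_1$, giving $\Phi(g.v) = 0 = g.\Phi(v)$. Since $V_X^{m,n}$ is irreducible as a $\g_0$-module and $N$ is $\g_0$-stable, the whole of $V_X^{m,n}$ lies in $N$. Then $M_X^{m,n} = U(\g_{<0}) . V_X^{m,n}$ together with $\g_{<0}$-stability of $N$ forces $M_X^{m,n} \subset N$, and taking the direct sum over $m,n$ yields $M_X \subset N$.

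The argument is essentially formal once one writes down the commutator identity, so I don't anticipate serious obstacles. The one point requiring care is the case $h \in \g_0$ in the stability computation, where $[g,h]$ falls back into $\g_1$ rather than into $\g_{\leq 0}$; this is exactly why $N$ must be defined using \emph{all} $g \in \g_1$ simultaneously, so that the condition $w \in N$ is strong enough to absorb the bracket $[g,h]$ and close the induction.
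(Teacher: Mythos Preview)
Your proof is correct and self-contained, but it takes a different route from the paper's. The paper invokes Lemma~2.3 of \cite{kacrudakovE36}, which reduces the problem to checking that $\g_{>0}.\Phi(w)=0$ for every $w\in V_{X}$; it then verifies this last condition by starting from highest weight vectors and applying the lowering operators $f_x,f_y$, using that $[\g_{>0},\g_0]\subset \g_{>0}$ kills the singular vector $\Phi(v)$. Your argument instead unpacks what that cited lemma is doing: you define the ``commutant'' set $N$ directly, prove it is $\g_{\leq 0}$-stable via the bracket identity, and then fill it up from highest weight vectors through $V_X^{m,n}$ to all of $M_X$. What your approach buys is independence from the external reference and a transparent reason why the case $h\in\g_0$ (where $[g,h]$ lands back in $\g_1$) closes up; what the paper's approach buys is brevity. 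Two small remarks: your commutator formula $[g,g'].w=g.g'.w-g'.g.w$ should carry the super-sign $(-1)^{p(g)p(g')}$ since $\g_1$ is odd, though this does not affect the logic; and the vanishing $\g_{>0}.v=0$ for $v\in V_X^{m,n}$ is really built into the definition of the induced module rather than a consequence of Theorem~\ref{keythmsingular}(i).
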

\begin{proof}
Due to Lemma 2.3 in \cite{kacrudakovE36}, it is sufficient to show that $\g_{>0}\Phi(w)=0$ for every $w\in V_{X}$, in order to prove that $\Phi$ commutes with $\g_{>0}$. \\
We know that $\g_{>0}\Phi(v)=0$ for every $v$ highest weight vector in $V_{X}^{m,n}$ for all $m,n \in \Z$. Let $v$ be the highest weight vector in $V_{X}^{m,n}$, $f$ one among $f_{x},f_{y}$ and $g_{+}\in \g_{>0}$.
We have that:
\begin{align*}
g_{+}. \Phi(f.v)=g_{+}.(f. \Phi(v))=f.(g_{+}. \Phi(v))+[g_{+},f]. \Phi(v)=0.  
\end{align*}
This can be iterated and we obtain that $\g_{>0}.\Phi(w)=0$ for all $ w \in V_{X}^{m,n}$. Hence $\g_{>0}.\Phi(w)=0$ for all $ w \in V_{X}$.
\end{proof}

We consider, for $j=1,2$, the map $\partial_{x_{j}}: V_{X}\longrightarrow V_{X}$ that is the derivation by $x_{j}$ for $X=A,D$ and the multiplication by $\partial_{x_{j}}$ for $X=B,C$. We define analogously, for $j=1,2$, the map $\partial_{y_{j}}: V_{X}\longrightarrow V_{X}$, that is the derivation by $y_{j}$ for $X=A,B$ and the multiplication by $\partial_{y_{j}}$ for $X=C,D$. We will often write, by abuse of notation, $\partial_{x_{j}}$ instead of $1\otimes \partial_{x_{j}}: M_{X}\longrightarrow M_{X}$.\\
We define the maps $\Delta^{+}:M_{X} \longrightarrow M_{X}$, $\Delta^{-}:M_{X} \longrightarrow M_{X}$, $\nabla:M_{X} \longrightarrow M_{X}$ as follows:
\begin{gather}
\Delta^{+}=w_{11}\otimes \partial_{x_{1}}+w_{21}\otimes \partial_{x_{2}}, \label{nabla+-} \quad
\Delta^{-}=w_{12}\otimes \partial_{x_{1}}+w_{22}\otimes \partial_{x_{2}}, \\
\nabla= \Delta^{+}\partial_{y_{1}}+\Delta^{-}\partial_{y_{2}}=w_{11}\otimes \partial_{x_{1}}\partial_{y_{1}}+w_{21}\otimes \partial_{x_{2}}\partial_{y_{1}}+   w_{12}\otimes \partial_{x_{1}}\partial_{y_{2}}+w_{22}\otimes \partial_{x_{2}}\partial_{y_{2}}. \label{nabla}
\end{gather}
We point out that $\nabla_{|M_{X}^{m,n} }: M_{X}^{m,n} \longrightarrow M_{X}^{m-1,n-1}$ for $X=A,B,C,D$; by abuse of notation we will write $\nabla$ instead of $\nabla_{|M_{X}^{m,n} }$. 
\begin{rem}
\label{nabla+nabla-}
By \eqref{bracketwcaselli} and \eqref{quadratideiwcaselli} it is straightforward that $(\Delta^{+})^{2}=0$, $(\Delta^{-})^{2}=0$ and $\Delta^{+}\Delta^{-}+\Delta^{-}\Delta^{+}=0$.
\end{rem}
\begin{prop}
\label{osser}
 The map $\nabla$ is the explicit expression of the $\g-$morphisms of degree 1 in Figure \ref{figura} and $\nabla^{2}=0$.
\end{prop}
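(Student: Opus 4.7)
The plan is to verify the two assertions separately. For $\nabla^{2}=0$, I expand the square using $\nabla = \Delta^{+}\partial_{y_{1}} + \Delta^{-}\partial_{y_{2}}$ and exploit that $\partial_{y_{1}}, \partial_{y_{2}}$ act only on the $V_{X}$-factor while each $\Delta^{\pm}$ combines right multiplication on $U(\g_{<0})$ with $\partial_{x_{j}}$-operators on $V_{X}$. Hence $\partial_{y_{j}}$ commutes with $\Delta^{\pm}$, and
\[
\nabla^{2} = (\Delta^{+})^{2}\partial_{y_{1}}^{2} + (\Delta^{+}\Delta^{-} + \Delta^{-}\Delta^{+})\partial_{y_{1}}\partial_{y_{2}} + (\Delta^{-})^{2}\partial_{y_{2}}^{2},
\]
which vanishes by Remark \ref{nabla+nabla-}.

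To show $\nabla$ realises the degree-$1$ morphisms in Figure \ref{figura}, I would apply Lemma \ref{nablag0g+}. Commutation of $\nabla$ with $\g_{<0}$ is immediate from \eqref{uuprimo}: for $g\in\g_{<0}$, left multiplication by $g$ on the first tensor factor commutes with right multiplication by the $w_{ab}$'s. For $\g_{0}$, I would invoke Lemma \ref{commutacong0basiduali} with $\{w_{ab}\}$ as a basis of $\g_{-1}\subset U(\g_{<0})$ and $\{\partial_{x_{a}}\partial_{y_{b}}\}$ as the dual basis in $\End(V_{X})$; this duality reduces to a weight check, verifying that $w_{ab}$ carries the opposite $(h_{x},h_{y},t)$-weight and the same $C$-weight (namely $0$) as $\partial_{x_{a}}\partial_{y_{b}}$ in each quadrant. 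The opposite $h_{x}, h_{y}$-weights follow from $w_{ab}\leftrightarrow x_{a}y_{b}$ via Lemma \ref{appoggiog-1}, while the opposite $t$-weight follows from $w_{ab}\in\g_{-1}$ together with the action of $t$ on $V_{X}$ described via the shifts $[i,j]$.

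Finally, I would verify that $\nabla$ maps the highest weight vector of each $V_{X}^{m,n}$ to the singular vectors of Theorem \ref{sing1}. For $X=A$, applying $\nabla$ to $1\otimes x_{1}^{m}y_{1}^{n}$ leaves only the $(a,b)=(1,1)$ term and yields $mn\cdot w_{11}\otimes x_{1}^{m-1}y_{1}^{n-1}$, a scalar multiple of $\vec m_{1a}$ in $M_{A}^{m-1,n-1}$. The analogous computations for $X=B,C,D$ (applied to the highest weight vectors $\partial_{x_{2}}^{m}y_{1}^{n}$, $\partial_{x_{2}}^{m}\partial_{y_{2}}^{n}$, $x_{1}^{m}\partial_{y_{2}}^{n}$ respectively) produce, by direct expansion using the definitions of $\partial_{x_{a}},\partial_{y_{b}}$ as derivation or multiplication operators, scalar multiples of $\vec m_{1b},\vec m_{1c},\vec m_{1d}$. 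Lemma \ref{nablag0g+} then yields that $\nabla$ is a $\g$-morphism, and by Remark \ref{costruzionemorfismi} its restriction to each $M_{X}^{m,n}$ coincides up to a nonzero scalar with the unique degree-$1$ morphism associated with the corresponding singular vector.

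The main obstacle will be the $\g_{0}$-equivariance argument: one must confirm that the $t$- and $C$-weights pair correctly in all four modules $V_{A},V_{B},V_{C},V_{D}$, since the grading shifts $[i,j]$ differ between quadrants and the roles of $\partial_{x_{a}},\partial_{y_{b}}$ switch between derivation and multiplication. Once this weight bookkeeping is settled, the remaining per-quadrant verifications are short polynomial identities that plug directly into the formulas for $\vec m_{1a},\dots,\vec m_{1d}$.
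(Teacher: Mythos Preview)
Your argument is correct and, for the morphism part, follows the same route as the paper: verify by hand that $\nabla$ sends each highest weight vector of $V_{X}^{m,n}$ to (a nonzero multiple of) the degree-$1$ singular vector of Theorem \ref{sing1}, then invoke \eqref{uuprimo}, Lemma \ref{commutacong0basiduali} and Lemma \ref{nablag0g+}.

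The one genuine difference is how you handle $\nabla^{2}=0$. You give the direct algebraic computation via $\nabla=\Delta^{+}\partial_{y_{1}}+\Delta^{-}\partial_{y_{2}}$ and Remark \ref{nabla+nabla-}; the paper instead argues structurally: once $\nabla$ is known to be a $\g$-morphism, $\nabla^{2}:M_{X}^{m,n}\to M_{X}^{m-2,n-2}$ would be a degree-$2$ morphism, but by Theorems \ref{sing1}--\ref{sing3} the target contains only a degree-$1$ singular vector, so the composite must vanish. Your approach is more elementary and self-contained (it needs nothing beyond \eqref{bracketwcaselli}, \eqref{quadratideiwcaselli}), while the paper's approach leverages the singular-vector classification that is already in hand and is the same device used later to show $\nabla_{2}\nabla=\nabla\nabla_{2}=0$ and $\nabla_{3}\nabla=\nabla\nabla_{3}=0$. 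Either argument is perfectly adequate here.
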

\begin{proof}
It is a straightforward verification that $\nabla: M_{X}^{m,n} \longrightarrow M_{X}^{m-1,n-1}$ is constructed so that $\nabla(v)$, for $v$ highest weight vector in $V_{X}^{m,n}$, is the highest weight singular vector of degree 1 in $M_{X}^{m-1,n-1}$, classified in Theorem \ref{sing1}. Indeed for $m,n \geq 0$:
\begin{itemize}
	\item [\textbf{a:}] let $\nabla: M_{A}^{m,n} \longrightarrow M_{A}^{m-1,n-1}$. The highest weight vector in $V_{A}^{m,n}$ is $x_{1}^{m} y_{1}^{n}$. By direct computation, $\nabla(x_{1}^{m}y_{1}^{n})=mn \, \vec{m}_{1a}$, where $\vec{m}_{1a}$ is the highest weight singular vector of $M(m-1,n-1, -\frac{m+n-2}{2},\frac{m-n}{2} )$ found in Theorem \ref{sing1}.
		\item [\textbf{b:}] Let $\nabla: M_{B}^{-m,n} \longrightarrow M_{B}^{-m-1,n-1}$. The highest weight vector in $V_{B}^{-m,n}$ is $\partial_{x_{2}}^{m} y_{1}^{n}$. By direct computation, $\nabla(\partial_{x_{2}}^{m}y_{1}^{n})=n \, \vec{m}_{1b}$, where $\vec{m}_{1b}$ is the highest weight singular vector of $M(m+1,n-1, 1+\frac{m-n+2}{2},-\frac{m+n}{2}-1 )$ found in Theorem \ref{sing1}.
		\item [\textbf{c:}] Let $\nabla: M_{C}^{-m,-n} \longrightarrow M_{C}^{-m-1,-n-1}$. The highest weight vector in $V_{C}^{-m,-n}$ is $\partial_{x_{2}}^{m} \partial_{y_{2}}^{n}$. By direct computation, $\nabla(\partial_{x_{2}}^{m} \partial_{y_{2}}^{n})=\vec{m}_{1c}$, where $\vec{m}_{1c}$ is the highest weight singular vector of $M(m+1,n+1, \frac{m+n+2}{2}+2,\frac{n-m}{2} )$ found in Theorem \ref{sing1}.
		%
		\item [\textbf{d:}] Let $\nabla: M_{D}^{m,-n} \longrightarrow M_{D}^{m-1,-n-1}$. The highest weight vector in $V_{D}^{m,-n}$ is $x_{1}^{m} \partial_{y_{2}}^{n}$. 
		By direct computation, $\nabla(x_{1}^{m} \partial_{y_{2}}^{n})=m \, \vec{m}_{1d}$, where $\vec{m}_{1d}$ is the highest weight singular vector of $M(m-1,n+1, 1+\frac{n-m+2}{2},\frac{m+n}{2}+1  )$ found in Theorem \ref{sing1}.
\end{itemize}
The map $\nabla: M_{X} \rightarrow M_{X}$ commutes with $\g_{<0}$ by \eqref{uuprimo}.
By Lemmas \ref{commutacong0basiduali}, \ref{nablag0g+} it follows that $\nabla$ is a morphism of $\g-$modules. The property $\nabla^{2}=0$ follows from the fact that $\nabla$ is a map between Verma modules that contain only highest weight singular vectors of degree 1, by Theorems \ref{sing1}, \ref{sing2}, \ref{sing3}.
\end{proof}
By Proposition \ref{osser}, it follows that for all $m,n \in\Z_{\geq 0}$:
\begin{enumerate}
	\item[i:] the maps $\nabla: M_{A}^{m,n} \longrightarrow M_{A}^{m-1,n-1}$ are the morphisms represented in Figure \ref{figura} in quadrant \textbf{A};
	\item[ii:] the maps $\nabla: M_{B}^{-m,n} \longrightarrow M_{B}^{-m-1,n-1}$ are the morphisms  represented in Figure \ref{figura} in quadrant \textbf{B};
	\item[iii:] the maps $\nabla: M_{C}^{-m,-n} \longrightarrow M_{C}^{-m-1,-n-1}$ are the morphisms  represented in Figure \ref{figura} in quadrant \textbf{C};
	\item[iv:] the maps $\nabla: M_{D}^{m,-n} \longrightarrow M_{D}^{m-1,-n-1}$ are the morphisms represented in Figure \ref{figura} in quadrant \textbf{D}.
\end{enumerate}
We introduce the following notation:
\begin{align*}
V_{A^{'}}&=\oplus_{m \in \Z}V^{m,0}_{A}=\C \left[x_{1},x_{2} \right], &&V_{B^{'}}=\oplus_{m \in \Z}V^{m,0}_{B}=\C \left[\partial_{x_{1}},\partial_{x_{2}}\right]_{\left[1,-1\right]},\\
V_{C^{'}}&=\oplus_{m \in \Z}V^{m,0}_{C}=\C \left[\partial_{x_{1}},\partial_{x_{2}} \right]_{\left[2,0\right]}, &&V_{D^{'}}=\oplus_{m\in \Z}V^{m,0}_{D}=\C \left[x_{1},x_{2}\right]_{\left[1,1\right]}.
\end{align*}
We denote $M_{X^{'}}=  U(\g_{<0}) \otimes V_{X^{'}}$. We point out that $M_{X^{'}}$ is the direct sum of Verma modules of Figure \ref{figura} in quadrant $\textbf{X}$ that lie on the axis $n=0$.
We consider the map $\tau_{1}:M_{A^{'}}\longrightarrow M_{D^{'}}$ that is the identity. We have that:
\begin{align}
\label{tau1}
&[t, \tau_{1}]=\tau_{1}, \quad[C, \tau_{1}]=\tau_{1}.
\end{align}
We call $\nabla_{2}:M_{A^{'}}\longrightarrow M_{D^{'}}$ the map $$\Delta^{-}\Delta^{+}\tau_{1}=w_{11}w_{12}\otimes \partial_{x_{1}}^{2}+w_{11}w_{22}\otimes \partial_{x_{1}}\partial_{x_{2}}+w_{21}w_{12}\otimes \partial_{x_{1}}\partial_{x_{2}}+w_{21}w_{22}\otimes \partial_{x_{2}}^{2}.$$
We consider the map $\tau_{2}:M_{B^{'}}\longrightarrow M_{C^{'}}$ that is the identity. We have that:
\begin{align}
\label{tau2}
&[t, \tau_{2}]=\tau_{2},\quad [C, \tau_{2}]=\tau_{2}.
\end{align}
By abuse of notation, we also call $\nabla_{2}:M_{B^{'}}\longrightarrow M_{C^{'}}$ the map $$\Delta^{-}\Delta^{+}\tau_{2}=w_{11}w_{12}\otimes \partial_{x_{1}}^{2}+w_{11}w_{22}\otimes \partial_{x_{1}}\partial_{x_{2}}+w_{21}w_{12}\otimes \partial_{x_{1}}\partial_{x_{2}}+w_{21}w_{22}\otimes \partial_{x_{2}}^{2}.$$
We observe that $M_{X^{'}}=\oplus_{m \in \Z} M_{X}^{m,0}$ for $X=A,B,C,D$. We will denote $M_{X^{'}}^{m} =M_{X}^{m,0}$.\\
We point out that ${\nabla_{2}}_{|M_{A^{'}}^{m}}:M_{A^{'}}^{m}\longrightarrow M_{D^{'}}^{m-2}$ for every $m\geq 2$ and ${\nabla_{2}}_{|M_{B^{'}}^{-m}}:M_{B^{'}}^{-m}\longrightarrow M_{C^{'}}^{-m-2}$ for every $m\geq 0$. By abuse of notation we will also write $\nabla_{2}$ instead of ${\nabla_{2}}_{|M_{A^{'}}^{m}}$ and ${\nabla_{2}}_{|M_{B^{'}}^{-m}}$.
\begin{prop}
\label{morfismonabla2}
The map $\nabla_{2}$ is the explicit expression of the morphisms of degree 2 in Figure \ref{figura} from the quadrant $\textbf{A}$ to the quadrant $\textbf{D}$ and from the quadrant $\textbf{B}$ to the quadrant $\textbf{C}$; $\nabla_{2}\nabla=\nabla \nabla_{2}=0$.
\end{prop}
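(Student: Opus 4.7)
The plan is to follow the same scheme as Proposition \ref{osser}: verify that $\nabla_{2}$ commutes with $\g_{\leq 0}$ and sends highest weight vectors to nontrivial singular vectors, then invoke Lemma \ref{nablag0g+} to conclude that $\nabla_{2}$ is a morphism of $\g$-modules, and finally invoke Theorem \ref{sing2} together with Remark \ref{costruzionemorfismi} to identify $\nabla_{2}$ with the degree-$2$ arrows in Figure \ref{figura}. Commutation with $\g_{<0}$ is automatic from the form \eqref{uuprimo}, since $\nabla_{2}=\sum_{i}u_{i}\otimes\phi_{i}$ has every $u_{i}\in U(\g_{<0})$.

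For the $\g_{0}$-equivariance, I would apply Lemma \ref{commutacong0basiduali}. The coefficients $\{w_{11}w_{12},\,w_{11}w_{22}+w_{21}w_{12},\,w_{21}w_{22}\}\subset U(\g_{<0})$ span a copy of the self-dual irreducible $\g_{0}^{ss}$-module of highest weight $(2,0)$, all carrying $t$-weight $-2$ and $C$-weight $0$. Viewed as operators $V_{A'}\to V_{D'}$ (respectively $V_{B'}\to V_{C'}$), the symbols $\{\partial_{x_{1}}^{2},\,\partial_{x_{1}}\partial_{x_{2}},\,\partial_{x_{2}}^{2}\}$ span another copy of the same $\g_{0}^{ss}$-module with opposite $t,C$-weights, as a direct check of the $t,C$-actions on the $V_{X}$'s shows; hence the two bases are genuinely dual bases of dual $\g_{0}$-modules in both cases. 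Evaluating $\nabla_{2}$ on a highest weight vector gives
\[
\nabla_{2}(1\otimes x_{1}^{m})=m(m-1)\,w_{11}w_{12}\otimes x_{1}^{m-2},
\]
which for $m\geq 2$ is a nonzero scalar multiple of the singular vector $\vec{m}_{2b}$ of Theorem \ref{sing2} with parameter $m-2$, living in $M_{D'}^{m-2}$; an analogous computation shows $\nabla_{2}(1\otimes\partial_{x_{2}}^{m})=\vec{m}_{2c}$ with parameter $m+2$ in $M_{C'}^{-m-2}$. Together with Remark \ref{costruzionemorfismi}, this identifies $\nabla_{2}$ with the asserted morphisms of Figure \ref{figura}.

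Finally, $\nabla_{2}\nabla$ and $\nabla\nabla_{2}$ are $\g$-morphisms, so by Lemma \ref{nablag0g+} it suffices to check that they vanish on highest weight vectors. For $\nabla_{2}\nabla$ applied to $x_{1}^{m}y_{1}\in V_{A}^{m,1}$, Proposition \ref{osser} gives $\nabla(x_{1}^{m}y_{1})=m\,w_{11}\otimes x_{1}^{m-1}$, and each summand of $\nabla_{2}(w_{11}\otimes x_{1}^{m-1})$ vanishes either by the relation $w_{11}^{2}=0$ from \eqref{quadratideiwcaselli} or because a factor $\partial_{x_{2}}$ annihilates $x_{1}^{m-1}$. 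For $\partial_{x_{2}}^{m}y_{1}\in V_{B}^{-m,1}$, one expands $\nabla(\partial_{x_{2}}^{m}y_{1})=w_{11}\otimes\partial_{x_{1}}\partial_{x_{2}}^{m}+w_{21}\otimes\partial_{x_{2}}^{m+1}$, and after applying $\nabla_{2}$ the terms that survive \eqref{quadratideiwcaselli} cancel in pairs via the anticommutation $w_{11}w_{21}=-w_{21}w_{11}$; the verification of $\nabla\nabla_{2}$ is symmetric. The main technical obstacle will be the second paragraph, namely checking the dual-basis hypothesis of Lemma \ref{commutacong0basiduali} in both cases and including the abelian part $\C t\oplus\C C$ of $\g_{0}$; once this bookkeeping is in place the remaining verifications reduce to short calculations.
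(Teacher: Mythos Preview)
Your proof that $\nabla_{2}$ is a $\g$-morphism realizing the degree-$2$ arrows follows exactly the paper's argument: commutation with $\g_{<0}$ is automatic from \eqref{uuprimo}, commutation with $\g_{0}$ comes from Lemma \ref{commutacong0basiduali}, and one checks directly that highest weight vectors are sent to the singular vectors $\vec m_{2b}$, $\vec m_{2c}$ of Theorem \ref{sing2}.

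For $\nabla_{2}\nabla=\nabla\nabla_{2}=0$ you take a different route. The paper simply notes that these compositions are $\g$-morphisms of degree $3$, and by Theorem \ref{sing3} the target Verma modules (which lie in quadrant \textbf{D}, respectively at $n=-1$ in quadrant \textbf{C}) contain no highest weight singular vector of degree $3$; hence the compositions vanish. Your explicit cancellation on highest weight vectors is also correct, but the reduction to highest weight vectors is not really Lemma \ref{nablag0g+}: it follows instead from the fact that $\nabla_{2}\nabla$ and $\nabla\nabla_{2}$, being compositions of $\g$-morphisms already established, are themselves $\g$-morphisms, and a $\g$-morphism out of a finite Verma module is determined by its value on the generating highest weight vector. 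The paper's argument is shorter and uniform; yours is more hands-on and avoids invoking the degree-$3$ classification.
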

\begin{proof}
We first point out that:
\begin{itemize}
	\item[i:] for $m\geq 2$, the map $\nabla_{2}:M_{A^{'}}^{m}\longrightarrow M_{D^{'}}^{m-2}$ is constructed so that $\nabla_{2}(v)$, for $v$ highest weight vector in $V_{A^{'}}^{m}$, is  the highest weight singular vector of degree 2 in $M_{D^{'}}^{m-2}$, classified in Theorem \ref{sing2}. Indeed, the highest weight vector in $V_{A^{'}}^{m}$ is $x_{1}^{m}$ and, by direct computation, $\nabla_{2}(x_{1}^{m})= m(m-1) \,\vec{m}_{2b}$, where $\vec{m}_{2b}$ is the highest weight singular vector of $M(m-2,0, 1-\frac{m-2}{2},1+\frac{m-2}{2} )$ found in Theorem \ref{sing2}.
	\item[ii:] For $m\geq 0$, the map $\nabla_{2}:M_{B^{'}}^{-m}\longrightarrow M_{C^{'}}^{-m-2}$ is constructed so that $\nabla_{2}(v)$, for $v$ highest weight vector in $V_{B^{'}}^{-m}$, is  the highest weight singular vector of degree 2 in $M_{C^{'}}^{-m-2}$, classified in Theorem \ref{sing2}. Indeed, the highest weight vector in $V_{B^{'}}^{-m}$ is $\partial_{x_{2}}^{m}$ and, by direct computation, $\nabla_{2}(\partial_{x_{2}}^{m})=\vec{m}_{2c}$, where $\vec{m}_{2c}$ is the  highest weight singular vector of $M(m+2,0, 2+\frac{m+2}{2},-\frac{m+2}{2} )$ found in Theorem \ref{sing2}.
		\end{itemize}
 The map $\nabla_{2}$ commutes with $\g_{<0}$ by \eqref{uuprimo}.
	By Lemmas \ref{commutacong0basiduali}, \ref{nablag0g+} it follows that $\nabla_{2}$ is a morphism of $\g-$modules. Finally,  $\nabla_{2}\nabla=\nabla \nabla_{2}=0$ follows from the fact that due to Theorem \ref{sing3}, there are no highest weight singular vectors of degree 3 in the codomain of $\nabla_{2}\nabla$ and $\nabla \nabla_{2}$.
\end{proof}
By Proposition \ref{morfismonabla2}, it follows that, for every $m\geq 2$, the maps $\nabla_{2}:M_{A^{'}}^{m}\longrightarrow M_{D^{'}}^{m-2}$ are the morphisms represented in Figure \ref{figura} from the quadrant \textbf{A} to the quadrant \textbf{D} and, for every $m\geq 0$, the maps $\nabla_{2}:M_{B^{'}}^{-m}\longrightarrow M_{C^{'}}^{-m-2}$ are the morphisms from the quadrant \textbf{B} to the quadrant \textbf{C}.\\
We now define the map $\tau_{3}:V_{A}^{0,0}\longrightarrow V_{C}^{0,0}$ that is the identity. We have that:
\begin{align}
\label{tau3}
&[t,\tau_{3}]=2\tau_{3}, \quad [C, \tau_{3}]=0.
\end{align}
We define the map $\nabla_{3}:M^{0,1}_{A}\longrightarrow M^{-1,0}_{C}$ as follows, using definition \eqref{uuprimo}, for every $m \in M^{0,1}_{A}$:
\begin{align*}
\nabla_{3}(m)=\Delta^{-}   \circ (w_{11}w_{21}\otimes \tau_{3})\circ(1 \otimes  \partial_{y_{1}}) (m)
+\Delta^{-}   \circ ((w_{12}w_{21}+w_{11}w_{22})\otimes \tau_{3})\circ (1 \otimes  \partial_{y_{2}})(m).
\end{align*}
\begin{prop}
\label{morfismonabla3}
The map $\nabla_{3}:M^{0,1}_{A}\longrightarrow M^{-1,0}_{C}$ is the explicit form for the morphism of $\g-$modules of degree 3 from quadrant \textbf{A} to quadrant \textbf{C} and $\nabla_{3}\nabla=\nabla \nabla_{3}=0$.
\end{prop}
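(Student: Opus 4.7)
The plan is to follow the same pattern as the proofs of Propositions \ref{osser} and \ref{morfismonabla2}, namely to apply Lemma \ref{nablag0g+}: it suffices to check that $\nabla_{3}$ commutes with $\g_{\leq 0}$ and that it sends the highest weight vector of $V_{A}^{0,1}$ to a singular vector of $M_{C}^{-1,0}$.

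First, I would carry out the computation on the highest weight vector $1\otimes y_{1}$. Since $\partial_{y_{2}}(y_{1})=0$, only the first summand of $\nabla_{3}$ contributes, giving
\[
\nabla_{3}(1\otimes y_{1})=\Delta^{-}(w_{11}w_{21}\otimes 1)=w_{11}w_{21}w_{12}\otimes\partial_{x_{1}}+w_{11}w_{21}w_{22}\otimes\partial_{x_{2}}.
\]
Rearranging using the anticommutativity among the $w_{ij}$'s provided by \eqref{bracketwcaselli} and \eqref{quadratideiwcaselli} (specifically $w_{11}w_{21}=-w_{21}w_{11}$, $w_{11}w_{12}=-w_{12}w_{11}$, and $w_{21}w_{22}=-w_{22}w_{21}$), this expression becomes $w_{21}w_{12}w_{11}\otimes\partial_{x_{1}}-w_{11}w_{22}w_{21}\otimes\partial_{x_{2}}=-\vec{m}_{3a}$, the highest weight singular vector of degree $3$ from Theorem \ref{sing3}.

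Commutation of $\nabla_{3}$ with $\g_{<0}$ is immediate from \eqref{uuprimo}, since every factor in the composition defining $\nabla_{3}$ is of the form $u\otimes\phi$. The main obstacle is commutation with $\g_{0}$: unlike $\nabla$ and $\nabla_{2}$, the map $\nabla_{3}$ is not itself of the form $u\otimes\phi$ with $\{u_{i}\}$ and $\{\phi_{i}\}$ bases of dual $\g_{0}$-modules, so Lemma \ref{commutacong0basiduali} does not apply directly. I would instead apply Lemma \ref{g0commuta} and verify the condition $wu\otimes\phi(v)=u\otimes\phi(w.v)$ by a direct computation on $v=y_{1},y_{2}\in V_{A}^{0,1}$ as $w$ ranges over the generators $e_{x},e_{y},f_{x},f_{y}$ of $\g_{0}^{ss}$; the $t$- and $C$-equivariance is built into the shifts prescribed by \eqref{tau3} and reduces to weight bookkeeping. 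Once $\g_{0}$-equivariance is in hand, Lemma \ref{nablag0g+} yields that $\nabla_{3}$ is a morphism of $\g$-modules, and the computation above identifies it, up to sign, with the degree-$3$ morphism from $M_{A}^{0,1}$ to $M_{C}^{-1,0}$ displayed in Figure \ref{figura}.

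Finally, for $\nabla_{3}\nabla=\nabla\nabla_{3}=0$ I would mimic the abstract arguments in the preceding propositions: both $\nabla_{3}\nabla:M_{A}^{1,2}\to M_{C}^{-1,0}$ and $\nabla\nabla_{3}:M_{A}^{0,1}\to M_{C}^{-2,-1}$ are $\g$-morphisms of total degree $4$, so by Remark \ref{costruzionemorfismi} any nonzero value would correspond to a highest weight singular vector of degree $4$ in the codomain. Since no such singular vectors exist by Theorem \ref{greaterthan3}, both compositions must vanish.
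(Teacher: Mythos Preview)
Your proof is correct and follows essentially the same approach as the paper: compute $\nabla_{3}(y_{1})=-\vec{m}_{3a}$, invoke \eqref{uuprimo} for $\g_{<0}$-equivariance, use Lemma \ref{g0commuta} (together with \eqref{tau3}) and a direct check for $\g_{0}$-equivariance, then apply Lemma \ref{nablag0g+}, and finally appeal to Theorem \ref{greaterthan3} for the vanishing of the degree-4 compositions. Your added remark that Lemma \ref{commutacong0basiduali} does not apply directly, forcing the use of Lemma \ref{g0commuta}, is exactly the point the paper leaves implicit.
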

\begin{proof}
The map $\nabla_{3}:M^{0,1}_{A}\longrightarrow M^{-1,0}_{C}$ is constructed so that $\nabla_{3}(v)$, for $v$ highest weight vector in $V_{A}^{0,1}$, is  the highest weight singular vector of degree 3 in $M_{C}^{-1,0}$, classified in Theorem \ref{sing3}. Indeed, the highest weight vector in $V_{A}^{0,1}$ is $y_{1}$ and, by direct computation, $\nabla_{3}(y_{1})=-\vec{m}_{3a}$, that is the highest weight singular vector of $M(1,0,\frac{5}{2},-\frac{1}{2} )$ found in Theorem \ref{sing3}. $\nabla_{3}$ commutes with $\g_{<0}$ due to \eqref{uuprimo}.
 By a straightforward computation and \eqref{tau3}, $w u \otimes \nabla_{3}(v)=u \otimes \nabla_{3}(w.v)$ for every $w \in \g_{0}$, $v \in V_{A}^{0,1}$. Therefore it is a morphism of $\g-$modules due to Lemmas \ref{g0commuta} and \ref{nablag0g+}. Finally $\nabla_{3}\nabla=\nabla \nabla_{3}=0$ since there are no singular vectors of degree 4 due to Theorem \ref{greaterthan3}.
\end{proof}
Let us define the maps $\widetilde{\Delta}^{+}:M_{X}\longrightarrow M_{X}$ and $\widetilde{\Delta}^{-}:M_{X}\longrightarrow M_{X}$ as follows:
\begin{align}
\label{widetildenabla+widetildenabla-}
\widetilde{\Delta}^{+}&=w_{11}\otimes \partial_{y_{1}}+w_{12}\otimes \partial_{y_{2}}, \quad \widetilde{\Delta}^{-}=w_{21}\otimes \partial_{y_{1}}+w_{22}\otimes \partial_{y_{2}}.
\end{align}
We point out that the morphism $\nabla$, defined in \eqref{nabla}, can be expressed also by $\nabla= \widetilde{\Delta}^{+}\partial_{x_{1}}+\widetilde{\Delta}^{-}\partial_{x_{2}}$.
\begin{rem}
\label{widetildenabla+widetildenabla-}
By \eqref{bracketwcaselli} and \eqref{quadratideiwcaselli} it is straightforward that $(\widetilde{\Delta}^{+})^{2}=0$, $(\widetilde{\Delta}^{-})^{2}=0$ and $\widetilde{\Delta}^{+}\widetilde{\Delta}^{-}+\widetilde{\Delta}^{-}\widetilde{\Delta}^{+}=0$.
\end{rem}
We introduce the following notation:
\begin{align*}
V_{A^{''}}&=\oplus_{n \in \Z} V_{A}^{0,n}=\C \left[y_{1},y_{2} \right],&&V_{B^{''}}=\oplus_{n \in \Z} V_{B}^{0,n}=\C \left[y_{1},y_{2} \right]_{\left[1,-1\right]},\\
V_{C^{''}}&=\oplus_{n \in \Z} V_{C}^{0,n}=\C \left[\partial_{y_{1}},\partial_{y_{2}} \right]_{\left[2,0\right]}, &&V_{D^{''}}=\oplus_{n \in \Z} V_{D}^{0,n}=\C \left[\partial_{y_{1}},\partial_{y_{2}} \right]_{\left[1,1\right]}.
\end{align*}
We denote $M_{X^{''}}=  U(\g_{<0}) \otimes V_{X^{''}}$. We point out that $M_{X^{''}}$ is the direct sum of Verma modules of Figure \ref{figura} in quadrant $\textbf{X}$ that lie on the axis $m=0$.
We consider the map $\widetilde{\tau}_{1}:M_{A^{''}}\longrightarrow M_{B^{''}}$ that is the identity. We have that:
\begin{align}
\label{widetildetau1}
&[t, \widetilde{\tau}_{1}]=\widetilde{\tau}_{1},\quad [C, \widetilde{\tau}_{1}]=-\widetilde{\tau}_{1}.
\end{align}
We call $\widetilde{\nabla}_{2}:M_{A^{''}}\longrightarrow M_{B^{''}}$ the map $$\widetilde{\Delta}^{-}\widetilde{\Delta}^{+}\widetilde{\tau}_{1}=w_{11}w_{21}\otimes\partial_{y_{1}}^{2}+w_{12}w_{21}\otimes\partial_{y_{1}}\partial_{y_{2}}+w_{11}w_{22}\otimes\partial_{y_{1}}\partial_{y_{2}}+w_{12}w_{22}\otimes\partial_{y_{2}}^{2}.$$
We consider the map $\widetilde{\tau}_{2}:M_{D^{''}}\longrightarrow M_{C^{''}}$ that is the identity. We have that:
\begin{align}
\label{widetildetau2}
&[t, \widetilde{\tau}_{2}]=\widetilde{\tau}_{2},\quad [C, \widetilde{\tau}_{2}]=-\widetilde{\tau}_{2}.
\end{align}
By abuse of notation, we also call $\widetilde{\nabla}_{2}:M_{D^{''}}\longrightarrow M_{C^{''}}$ the map $$\widetilde{\Delta}^{-}\widetilde{\Delta}^{+}\widetilde{\tau}_{2}=w_{11}w_{21}\otimes\partial_{y_{1}}^{2}+w_{12}w_{21}\otimes\partial_{y_{1}}\partial_{y_{2}}+w_{11}w_{22}\otimes\partial_{y_{1}}\partial_{y_{2}}+w_{12}w_{22}\otimes\partial_{y_{2}}^{2}.$$
We point out that $\widetilde{\nabla}_{2} \,_{|M_{A^{''}}^{n}}:M_{A^{''}}^{n}\longrightarrow M_{B^{''}}^{n-2}$ for every $n\geq 2$ and $\widetilde{\nabla}_{2} \,_{|M_{D^{''}}^{-n}}:M_{D^{''}}^{-n}\longrightarrow M_{C^{''}}^{-n-2}$ for every $n\geq 0$. By abuse of notation we will also write $\widetilde{\nabla}_{2}$ instead of $\widetilde{\nabla}_{2} \,_{|M_{A^{''}}^{n}}$ and $\widetilde{\nabla}_{2} \,_{|M_{D^{''}}^{-n}}$.
\begin{prop}
\label{morfismowidetildenabla2}
The map $\widetilde{\nabla}_{2}$ is the explicit expression of the morphisms of $\g-$modules of degree 2 in Figure \ref{figura} from the quadrant $\textbf{A}$ to the quadrant $\textbf{B}$ and from the quadrant $\textbf{D}$ to the quadrant $\textbf{C}$ and $\widetilde{\nabla}_{2}\nabla=\nabla \widetilde{\nabla}_{2}=0$.
\end{prop}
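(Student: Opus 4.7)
The plan is to mimic the proof of Proposition \ref{morfismonabla2} verbatim, with the roles of the $x$-variables and $y$-variables swapped, and with the singular vectors of Theorem \ref{sing2}\textbf{a}, \ref{sing2}\textbf{d} in place of those of Theorem \ref{sing2}\textbf{b}, \ref{sing2}\textbf{c}. Concretely, I would first verify that $\widetilde{\nabla}_{2}$ carries highest weight vectors to the singular vectors listed in Theorem \ref{sing2}. For $n\geq 2$ the highest weight vector of $V_{A^{''}}^{n}$ is $y_{1}^{n}$; applying $\widetilde{\Delta}^{-}\widetilde{\Delta}^{+}\widetilde{\tau}_{1}$ and using that $\partial_{y_{2}} y_{1}^{n}=0$, only the first summand survives and I obtain $\widetilde{\nabla}_{2}(y_{1}^{n})= n(n-1)\,\vec{m}_{2a}$ (with the degree/weight of the image matching the data of Theorem \ref{sing2}\textbf{a}). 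For $n\geq 0$ the highest weight vector of $V_{D^{''}}^{-n}$ is $\partial_{y_{2}}^{n}$, and a direct expansion gives $\widetilde{\nabla}_{2}(\partial_{y_{2}}^{n})=\vec{m}_{2d}$ as in Theorem \ref{sing2}\textbf{d}.

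Next, $\widetilde{\nabla}_{2}$ commutes with $\g_{<0}$ automatically from the definition \eqref{uuprimo}. To see it commutes with $\g_{0}$, I would invoke Lemma \ref{commutacong0basiduali}: the expression $w_{11}w_{21}\otimes\partial_{y_{1}}^{2}+w_{12}w_{21}\otimes\partial_{y_{1}}\partial_{y_{2}}+w_{11}w_{22}\otimes\partial_{y_{1}}\partial_{y_{2}}+w_{12}w_{22}\otimes\partial_{y_{2}}^{2}$ pairs a basis of $U(\g_{<0})_{2}$-elements with the corresponding dual basis of degree-$2$ polynomials in $\partial_{y_{1}},\partial_{y_{2}}$ under the $\g_{0}^{ss}$-action of Lemma \ref{appoggiog-1}; the $t$- and $C$-weights match thanks to \eqref{widetildetau1} and \eqref{widetildetau2}. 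Combining the previous two steps, Lemma \ref{nablag0g+} then upgrades commutation with $\g_{\leq 0}$ plus the singular-vector property on highest weight vectors to a full morphism of $\g$-modules.

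Finally, the vanishing $\widetilde{\nabla}_{2}\nabla=\nabla\widetilde{\nabla}_{2}=0$ follows, as in Proposition \ref{morfismonabla2}, from Theorem \ref{sing3} together with Theorem \ref{greaterthan3}: the compositions produce $\g$-morphisms of degree $3$ whose domains are Verma modules whose images by $\nabla$ or $\widetilde{\nabla}_{2}$ land in modules having no highest weight singular vectors of degree $3$ of the appropriate weights, so the image of the composition is forced to be zero. The main potential obstacle is purely bookkeeping: making sure the indices in $V_{A^{''}}^{n}\to V_{B^{''}}^{n-2}$ and $V_{D^{''}}^{-n}\to V_{C^{''}}^{-n-2}$ match the locations of the morphisms in Figure \ref{figura}, and that the weight shifts coming from the commutators \eqref{widetildetau1}, \eqref{widetildetau2} with $t$ and $C$ agree with the $\mu_{t}$, $\mu_{C}$ coordinates prescribed in \eqref{identificazioneVX} for the corresponding quadrants; this is the same compatibility check done for $\nabla_{2}$ and should follow by the identical verification.
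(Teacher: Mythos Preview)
Your proposal is correct and follows essentially the same argument as the paper: verify that $\widetilde{\nabla}_{2}$ sends the highest weight vectors $y_{1}^{n}$ and $\partial_{y_{2}}^{n}$ to $n(n-1)\vec{m}_{2a}$ and $\vec{m}_{2d}$ respectively, invoke \eqref{uuprimo} for $\g_{<0}$-equivariance, Lemmas \ref{commutacong0basiduali} and \ref{nablag0g+} for the rest, and deduce the vanishing of the compositions from the absence of degree-$3$ singular vectors in the relevant codomains (Theorem \ref{sing3}). Your extra remarks on the dual-basis structure and the $t,C$ weight shifts via \eqref{widetildetau1}, \eqref{widetildetau2} make the $\g_{0}$-equivariance step slightly more explicit than the paper, but the logic is identical.
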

\begin{proof}
We first point out that:
\begin{itemize}
	\item[i:] for $n\geq 2$, the map $\widetilde{\nabla}_{2}:M_{A^{''}}^{n}\longrightarrow M_{B^{''}}^{n-2}$ is constructed so that $\widetilde{\nabla}_{2}(v)$, for $v$ highest weight vector in $V_{A^{''}}^{n}$, is  the highest weight singular vector of degree 2 in $M_{B^{''}}^{n-2}$, classified in Theorem \ref{sing2}. Indeed, the highest weight vector in $V_{A^{''}}^{n}$ is $y_{1}^{n}$ and, by direct computation, $\widetilde{\nabla}_{2}(y_{1}^{n})= n(n-1) \, \vec{m}_{2a}$, where $\vec{m}_{2a}$ is the highest weight singular vector of $M(0,n-2, 1-\frac{n-2}{2},-1-\frac{n-2}{2} )$ found in Theorem \ref{sing2}.
	\item[ii:] For $n\geq 0$, the map $\widetilde{\nabla}_{2}:M_{D^{''}}^{-n}\longrightarrow M_{C^{''}}^{-n-2}$ is constructed so that $\widetilde{\nabla}_{2}(v)$, for $v$ highest weight vector in $V_{D^{''}}^{-n}$, is  the highest weight singular vector of degree 2 in $M_{C^{''}}^{-n-2}$, classified in Theorem \ref{sing2}. Indeed, the highest weight vector in $V_{D^{''}}^{-n}$ is $\partial_{y_{2}}^{n}$ and, by direct computation, $\widetilde{\nabla}_{2}(\partial_{y_{2}}^{n})=\vec{m}_{2d}$, where $\vec{m}_{2d}$ is the  highest weight singular vector of $M(0,n+2, 2+\frac{n+2}{2},-\frac{n+2}{2} )$ found in Theorem \ref{sing2}.
		\end{itemize}
 The map $\widetilde{\nabla}_{2}$ commutes with $\g_{<0}$ by \eqref{uuprimo}.
	By Lemmas \ref{commutacong0basiduali}, \ref{nablag0g+} it follows that $\widetilde{\nabla}_{2}$ is a morphism of $\g-$modules. Finally,  $\widetilde{\nabla}_{2}\nabla=\nabla \widetilde{\nabla}_{2}=0$ follows from the fact that due to Theorem \ref{sing3}, there are no highest weight singular vectors of degree 3 in the codomain of $\widetilde{\nabla}_{2}\nabla$ and $\nabla \widetilde{\nabla}_{2}$.
\end{proof}
By Proposition \ref{morfismowidetildenabla2}, it follows that, for every for every $n\geq 2$, the maps $\widetilde{\nabla}_{2}:M_{A^{''}}^{n}\longrightarrow M_{B^{''}}^{n-2}$ are the morphisms represented in Figure \ref{figura} from the quadrant \textbf{A} to the quadrant \textbf{B} and, for every $n\geq 0$, the maps $\widetilde{\nabla}_{2} \,_{|M_{D^{''}}^{-n}}:M_{D^{''}}^{-n}\longrightarrow M_{C^{''}}^{-n-2}$ are the morphisms from the quadrant \textbf{D} to the quadrant \textbf{C}.\\
We define the map $\widetilde{\nabla}_{3}:M^{1,0}_{A}\longrightarrow M^{0,-1}_{C}$ as follows, using definition \eqref{uuprimo}, for every $m \in M^{1,0}_{A}$:
\begin{align*}
\widetilde{\nabla}_{3}(m)=\widetilde{\Delta}^{-}   \circ (w_{11}w_{12}\otimes \tau_{3})\circ(1 \otimes  \partial_{x_{1}}) (m)
+\widetilde{\Delta}^{-}     \circ ((w_{21}w_{12}+w_{11}w_{22})\otimes \tau_{3})\circ (1 \otimes  \partial_{x_{2}})(m).
\end{align*}
\begin{prop}
\label{morfismonabla3}
The map $\widetilde{\nabla}_{3}:M^{1,0}_{A}\longrightarrow M^{0,-1}_{C}$ is the explicit form of the morphism of $\g-$modules of degree 3 represented in figure \ref{figura} from quadrant \textbf{A} to quadrant \textbf{C} and $\widetilde{\nabla}_{3}\nabla=\nabla \widetilde{\nabla}_{3}=0$.
\end{prop}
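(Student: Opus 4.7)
The plan is to adapt the argument used for $\nabla_3$ in Proposition \ref{morfismonabla3} (the $M^{0,1}_A \to M^{-1,0}_C$ version), since $\widetilde{\nabla}_3$ is defined as the natural ``swap'' of $\nabla_3$ obtained by exchanging the roles of the $x$ and $y$ variables and of $\Delta^{\pm}$ with $\widetilde{\Delta}^{\pm}$. I will verify the three standard requirements: that $\widetilde{\nabla}_3$ sends a highest weight vector to the singular vector predicted by Theorem \ref{sing3}, that it commutes with $\g_{<0}$, $\g_0$ and $\g_{>0}$, and finally that it anticommutes (in the sense of composition) with $\nabla$.

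First I would identify the highest weight vector of $V_A^{1,0}$ as $x_1$ and compute $\widetilde{\nabla}_3(1\otimes x_1)$ directly from the definition. Applying $1\otimes\partial_{x_1}$ yields $1\otimes 1\in M_A^{0,0}$, applying $1\otimes\partial_{x_2}$ yields $0$, so only the first summand contributes. Using \eqref{uuprimo} together with $\tau_3=\Id$ and $\widetilde{\Delta}^-=w_{21}\otimes\partial_{y_1}+w_{22}\otimes\partial_{y_2}$, I get
\[
\widetilde{\nabla}_3(1\otimes x_1)=w_{11}w_{12}w_{21}\otimes\partial_{y_1}+w_{11}w_{12}w_{22}\otimes\partial_{y_2}.
\]
Using the anticommutation relations \eqref{bracketwcaselli} and the nilpotency \eqref{quadratideiwcaselli}, I rewrite this sum by moving the $w_{11}$'s and $w_{12}$'s past the trailing $w_{21},w_{22}$, producing (up to a nonzero scalar) the vector $\vec m_{3b}=w_{11}w_{22}w_{12}\otimes\partial_{y_2}-w_{12}w_{21}w_{11}\otimes\partial_{y_1}$ from Theorem \ref{sing3}.

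Next, that $\widetilde{\nabla}_3$ commutes with $\g_{<0}$ is immediate from \eqref{uuprimo}, since each constituent factor is of the form $u\otimes\phi$ and composition of such maps still acts by left multiplication on the $U(\g_{<0})$-factor. For commutation with $\g_0$, I invoke Lemma \ref{g0commuta}: it suffices to verify $w\,u\otimes\widetilde{\nabla}_3(v)=u\otimes\widetilde{\nabla}_3(w.v)$ for $w\in\g_0$ and $v\in V_A^{1,0}$. The three building blocks $\widetilde{\Delta}^-$, $w_{11}w_{12}\otimes\tau_3$ (resp.\ $(w_{21}w_{12}+w_{11}w_{22})\otimes\tau_3$), and $1\otimes\partial_{x_j}$ are each of the form needed by Lemma \ref{commutacong0basiduali}, where the weights balance thanks to \eqref{tau3} (i.e.\ $\tau_3$ carries the correct shift in $t$- and $C$-weight to match the grading of its $U(\g_{<0})$-coefficient). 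Then Lemma \ref{nablag0g+}, combined with the fact that $\widetilde{\nabla}_3$ lands on a singular vector when applied to a highest weight vector, gives commutation with $\g_{>0}$ and hence $\widetilde{\nabla}_3$ is a $\g$-morphism.

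Finally, both $\widetilde{\nabla}_3\nabla$ and $\nabla\widetilde{\nabla}_3$ are $\g$-morphisms of degree $4$ between Verma modules in Figure \ref{figura}; by Remark \ref{costruzionemorfismi} any such nonzero morphism would produce a highest weight singular vector of degree $4$ in the target, contradicting Theorem \ref{greaterthan3}. I anticipate the main obstacle to be the first step: verifying that the rearrangement of $w_{11}w_{12}w_{21}\otimes\partial_{y_1}+w_{11}w_{12}w_{22}\otimes\partial_{y_2}$ really coincides with $-\vec m_{3b}$ requires careful bookkeeping of signs from the super-anticommutations and of the $\Theta$-contributions coming from $[w_{12},w_{21}]=-4\Theta$ and $[w_{11},w_{22}]=4\Theta$, which must ultimately cancel so that only the two advertised monomials survive.
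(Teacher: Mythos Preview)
Your approach is essentially the same as the paper's: compute $\widetilde{\nabla}_3(x_1)$ explicitly, check it equals $-\vec m_{3b}$, use \eqref{uuprimo} for $\g_{<0}$, verify $\g_0$-commutation, then apply Lemma \ref{nablag0g+}, and finally invoke Theorem \ref{greaterthan3} for the vanishing of the degree-4 compositions.

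Two small corrections. First, your justification of $\g_0$-commutation via Lemma \ref{commutacong0basiduali} applied to the \emph{individual} building blocks does not work: neither $\widetilde{\Delta}^-$ nor $w_{11}w_{12}\otimes\tau_3$ alone commutes with $\g_0^{ss}$ (for instance $x_1\partial_{x_2}.w_{21}=w_{11}$, so $\langle w_{21},w_{22}\rangle$ is not $\g_0^{ss}$-stable). Only the full expression $\widetilde{\nabla}_3$ is $\g_0$-equivariant; the paper simply checks the identity $w\,u\otimes\widetilde{\nabla}_3(v)=u\otimes\widetilde{\nabla}_3(w.v)$ by direct computation using \eqref{tau3} and Lemma \ref{g0commuta}. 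Second, your anticipated obstacle does not actually arise: in rewriting $w_{11}w_{12}w_{21}$ and $w_{11}w_{12}w_{22}$ to match the form of $\vec m_{3b}$, the only brackets involved are $[w_{11},w_{12}]$, $[w_{11},w_{21}]$ and $[w_{12},w_{22}]$, all of which vanish by \eqref{bracketwcaselli}, so no $\Theta$-terms appear and the identification with $-\vec m_{3b}$ is just a matter of signs.
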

\begin{proof}
The map $\widetilde{\nabla}_{3}:M^{1,0}_{A}\longrightarrow M^{0,-1}_{C}$ is constructed so that $\widetilde{\nabla}_{3}(v)$, for $v$ highest weight vector in $V_{A}^{1,0}$, is  the highest weight singular vector of degree 3 in $M_{C}^{0,-1}$, classified in Theorem \ref{sing3}. Indeed, the highest weight vector in $V_{A}^{1,0}$ is $x_{1}$ and, by direct computation, $\widetilde{\nabla}_{3}(x_{1})=-\vec{m}_{3b}$, that is the highest weight singular vector of $M(0,1,\frac{5}{2},\frac{1}{2} )$ found in Theorem \ref{sing3}. $\widetilde{\nabla}_{3}$ commutes with $\g_{<0}$ due to \eqref{uuprimo}. By a straightforward computation and \eqref{tau3}, $w u \otimes \widetilde{\nabla}_{3}(v)=u \otimes \widetilde{\nabla}_{3}(w.v)$ for every $w \in \g_{0}$, $v \in V_{A}^{1,0}$. Therefore it is a morphism of $\g-$modules by Lemmas \ref{g0commuta} and \ref{nablag0g+}. Finally $\widetilde{\nabla}_{3}\nabla=\nabla \widetilde{\nabla}_{3}=0$ since there are no singular vectors of degree 4 due to Theorem \ref{greaterthan3}.
\end{proof}
The following sections are dedicated to the computation of the homology of complexes in Figure \ref{figura}. We will use techniques of spectral sequences, that we briefly recall for the reader's convenience in the next section.
\section{Preliminaries on spectral sequences}
In this section we recall some notions about spectral sequences; for further details see \cite[Appendix]{kacrudakovE36} and \cite[Chapter XI]{maclane}. We follow the notation used in \cite{kacrudakovE36}.\\
Let $A$ be a module with a filtration:
\begin{align}
\label{filtration}
...\subset F_{p-1}A \subset F_{p}A \subset F_{p+1}A\subset...  ,
\end{align}
where $p \in \Z$. A filtration is called \textit{convergent above} if $A=\cup_{p}\, F_{p}A$. 
We suppose that $A$ is endowed with a differential $d:A\longrightarrow A$ such that:
\begin{align}
\label{differential}
 d^{2}=0 \quad \text{and} \quad d( F_{p}A)\subset F_{p-s+1}A,
\end{align}
 for fixed $s$ and all $p$ in $\Z$. The classical case studied in \cite[Chapter XI, Section 3]{maclane} corresponds to $s=1$. We will need the case $s=0$. \\
The filtration \eqref{filtration} induces a filtration on the module $H(A)$ of the homology spaces of $A$: indeed, for every $p \in \Z$, $F_{p}H(A) $ is defined as the image of $H(F_{p}A) $ under the injection $F_{p}A \longrightarrow A$.
\begin{defi}
\label{famigliacondiff}
Let $E=\left\{E_{p}\right\}_{p\in \Z}$ be a family of modules. A differential $d:E\longrightarrow E$ of degree $-r\in \Z$ is a family of homorphisms $\left\{d_{p}: E_{p}\longrightarrow E_{p-r}\right\}_{p \in \Z}$ such that $d_{p}\circ d_{p+r}=0$ for all $p \in \Z$. We denote by $H(E)=H(E,d)$ the homology of $E$ under the differential $d$ that is the family $\left\{H_{p}(E,d)\right\}_{p\in \Z}$, where:
\begin{align*}
H_{p}(E,d)=\frac{\Ker(d_{p}: E_{p}\longrightarrow E_{p-r})}{\Ima (d_{p+r}: E_{p+r}\longrightarrow E_{p})}.
\end{align*}
\end{defi}
\begin{defi}[Spectral sequence]
\label{defispec}
A \textit{spectral sequence} $E=\left\{(E^{r},d^{r})\right\}_{r\in \Z}$ is a sequence of families of modules with differential $(E^{r},d^{r})$ as in definition \ref{famigliacondiff}, such that, for all $r$, $d^{r}$ has degree $-r$ and:
\begin{align*}
H(E^{r},d^{r}) \cong E^{r+1}.
\end{align*}
\end{defi}
\begin{prop}
\label{spectral1}
Let $A$ be a module with a filtration as in \eqref{filtration} and differential as in \eqref{differential}. Therefore it is uniquely determined a spectral sequence, as in definition \ref{defispec}, $E=\left\{(E^{r},d^{r})\right\}_{r \in \Z}$ such that:
\begin{gather}
 H(E^{r},d^{r}) \cong E^{r+1}, \label{spectral1prima} \\
E^{r}_{p} \cong F_{p}A / F_{p-1}A \quad \text{for}  \quad r\leq s-1, \label{spectral1seconda} \\
d^{r}=0 \quad \text{for}  \quad r <s-1, \label{spectral1terza}\\
d^{s-1}=\Gr d, \label{spectral1quarta}\\
E^{s}_{p} \cong H(F_{p}A / F_{p-1}A) . \label{grado0graded} 
\end{gather}
\end{prop}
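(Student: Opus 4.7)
The plan is to adapt the classical construction of Mac Lane (Chapter XI, Section 3), which treats the case $s=1$, to general $s$ via a straightforward reindexing $r \mapsto r-(s-1)$. For each $r \geq s-1$ and each $p\in\Z$, I would introduce nested submodules $B^r_p \subset Z^r_p$ of $F_pA$ representing approximate $r$-boundaries and approximate $r$-cycles, with
\begin{align*}
Z^r_p := F_pA \cap d^{-1}(F_{p-r}A)
\end{align*}
and $B^r_p$ defined as the intersection of $F_pA$ with the $d$-image of a suitably shifted piece of the filtration, and set $E^r_p := Z^r_p/(Z^{r-1}_{p-1}+B^{r-1}_p)$. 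For $r<s-1$ I simply declare $E^r_p := F_pA/F_{p-1}A$ with $d^r := 0$; this forces \eqref{spectral1terza} and makes \eqref{spectral1prima} trivial in that range. The shift hypothesis $d(F_pA) \subset F_{p-s+1}A$ then gives $Z^{s-1}_p = F_pA$, and, with the correct definition of $B^{s-2}_p$, yields $Z^{s-2}_{p-1} + B^{s-2}_p = F_{p-1}A$, producing $E^{s-1}_p \cong F_pA/F_{p-1}A$. The induced map $d^{s-1}$ is then tautologically $\Gr d$, establishing \eqref{spectral1seconda}, \eqref{spectral1quarta}, and \eqref{grado0graded}.

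Next I verify that $d$ descends to a well-defined map $d^r : E^r_p \to E^r_{p-r}$ of degree $-r$ for every $r \geq s-1$, by showing $d(Z^r_p) \subset Z^r_{p-r}$ (via $d^2 = 0$) and $d(Z^{r-1}_{p-1} + B^{r-1}_p) \subset Z^{r-1}_{p-r-1} + B^{r-1}_{p-r}$ (a routine index check using $d^2=0$ and the shift hypothesis). The central identification $H(E^r, d^r) \cong E^{r+1}$ of \eqref{spectral1prima} then follows from a standard diagram chase: a class $[a] \in E^r_p$ lies in $\Ker d^r$ precisely when the representative can be adjusted modulo the denominator so that $da \in F_{p-r-1}A$, i.e.\ $a$ may be chosen in $Z^{r+1}_p$, giving $\Ker d^r = (Z^{r+1}_p + Z^{r-1}_{p-1})/(Z^{r-1}_{p-1}+B^{r-1}_p)$; meanwhile $\Ima d^r$ into $E^r_p$ coming from $E^r_{p+r}$ equals $(B^r_p + Z^{r-1}_{p-1})/(Z^{r-1}_{p-1}+B^{r-1}_p)$. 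Taking the quotient simplifies to $Z^{r+1}_p/(Z^r_{p-1}+B^r_p) = E^{r+1}_p$.

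Uniqueness follows immediately from the recursive nature of \eqref{spectral1prima} together with the initial conditions \eqref{spectral1seconda}--\eqref{spectral1quarta}. The main obstacle I anticipate is not conceptual but purely one of careful index bookkeeping. For general $s$, and in particular for the $s=0$ case needed in Section 6, one must pin down the right shift in the definition of $B^r_p$ so that all the required inclusions and the initial-page identification hold uniformly. Specifically, for $s=0$ the differential $d^{s-1}=d^{-1}$ has degree $+1$ rather than $-1$, reversing the direction of the arrow at the initial page, and one must verify explicitly that $B^{s-2}_p \subset F_{p-1}A$ by exploiting $d(F_{p-2}A) \subset F_{p-1}A$ (a consequence of $d(F_pA)\subset F_{p+1}A$) in order to recover $E^{-1}_p \cong F_pA/F_{p-1}A$. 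Once these bookkeeping checks are in place, the rest of the argument is identical to the textbook proof for $s=1$.
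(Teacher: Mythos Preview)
Your proposal is correct and in fact supplies more than the paper does: the paper gives no proof at all for this proposition, simply referring the reader to the Appendix of \cite{kacrudakovE36}. Your sketch is precisely the standard filtered-complex construction (approximate cycles $Z^r_p$ and boundaries $B^r_p$) that appears there and in Mac Lane, reindexed by $r\mapsto r-(s-1)$ to accommodate the shift $d(F_pA)\subset F_{p-s+1}A$; in particular your observation that for $s=0$ the initial differential $d^{-1}=\Gr d$ has degree $+1$ is exactly the point that distinguishes the situation used in Section~\ref{omologia} from the textbook case $s=1$.
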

\begin{proof}
For the proof see \cite[Appendix]{kacrudakovE36}.
\end{proof}
We point out that \eqref{grado0graded} states that $E^{s}$ is isomorphic to the homology of the module $\Gr A$ with respect to the differential induced by $d$. 
\begin{rem}
Let $\left\{(E^{r},d^{r})\right\}_{r \in \Z}$ be a spectral sequence as in definition \ref{defispec}. We know that $E^{1}_{p}\cong H_{p}(E^{0},d^{0})$. We denote $E^{1}_{p}\cong C^{0}_{p}/B^{0}_{p}$, where $C^{0}_{p}=\Ker d^{0}_{p}$ and $B^{0}_{p}=\Ima d^{0}_{p+r}$. Analogously $E^{2}_{p}\cong H_{p}(E^{1},d^{1})$ and $E^{2}_{p}\cong C^{1}_{p}/B^{1}_{p}$, where $C^{1}_{p}/B^{0}_{p}=\Ker d^{1}_{p}$, $B^{1}_{p}/B^{0}_{p}=\Ima d^{1}_{p+r}$ and $B^{1}_{p} \subset C^{1}_{p}$. Thus, by iteration we obtain the following inclusions:
\begin{align*}
B^{0}_{p}\subset B^{1}_{p} \subset B^{2}_{p} \subset... \subset C^{2}_{p} \subset C^{1}_{p}\subset C^{0}_{p}.
\end{align*}
\end{rem}
\begin{defi}
Let $A$ be a module with a filtration as in \eqref{filtration} and differential as in \eqref{differential}. Let $\left\{(E^{r},d^{r})\right\}_{r \in \Z}$ be the spectral sequence determined by Proposition \ref{spectral1}.
We define $E^{\infty}_{p}$ as:
\begin{align*}
E^{\infty}_{p}=\frac{\bigcap_{r}C^{r}_{p}}{\bigcup _{r}B^{r}_{p}}.
\end{align*}
Let $B$ be a module with a filtration as in \eqref{filtration}. We say that the spectral sequence converges to $B$ if, for all $p$:
\begin{align*}
E^{\infty}_{p}\cong F_{p}B/F_{p-1}B.
\end{align*}
\end{defi}
\begin{prop}
\label{spectral2}
Let $A$ be a module with a filtration as in \eqref{filtration} and differential as in \eqref{differential}. Let us suppose that the filtration is convergent above and, for some $N$, $F_{-N}A=0$. Then the spectral sequence converges to the homology of $A$, that is:
$$E^{\infty}_{p} \cong F_{p} H(A) / F_{p-1}H(A).$$
\end{prop}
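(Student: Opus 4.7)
The plan is to identify $\bigcap_r C^r_p$ and $\bigcup_r B^r_p$ explicitly in terms of cycles and boundaries of $A$, using the two finiteness hypotheses, and then recognize the quotient as $F_p H(A)/F_{p-1} H(A)$.

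First, I would unpack the construction behind Proposition \ref{spectral1}. From the inductive definition one has explicit representatives
\begin{align*}
Z^r_p &= \bigl\{\, a \in F_p A \,:\, d a \in F_{p-r-s+1} A \,\bigr\}, \\
B^r_p &= d(Z^{r-1}_{p+r-1}) \cap F_p A,
\end{align*}
in the sense that the $r$-th page admits the description
\begin{equation*}
E^r_p \;\cong\; \frac{Z^r_p + F_{p-1} A}{B^r_p + F_{p-1} A},
\end{equation*}
and $C^r_p$, $B^r_p$ in the definition of $E^{\infty}_p$ are the images of $Z^r_p$ and $B^r_p$ in $E^{s-1}_p = F_p A/F_{p-1} A$. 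This identification is the content of the construction recalled in \cite[Appendix]{kacrudakovE36}; I would begin by stating it and verifying it matches the bookkeeping used in Proposition \ref{spectral1}.

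Second, I would use the two hypotheses to pass to the limit in $r$. The assumption $F_{-N}A = 0$ guarantees that for $r$ large enough (depending on $p$), $F_{p-r-s+1}A = 0$, so the condition $da \in F_{p-r-s+1}A$ becomes $da = 0$. Hence
\begin{equation*}
\bigcap_{r} Z^r_p \;=\; F_p A \cap \Ker d.
\end{equation*}
The assumption $A = \bigcup_p F_p A$ guarantees that every element of $A$ lies in some $F_q A$, so every boundary $da'$ that happens to land in $F_p A$ is captured by some $B^r_p$ with $r \ge q - p$; therefore
\begin{equation*}
\bigcup_{r} B^r_p \;=\; d(A) \cap F_p A.
\end{equation*}
These two computations are the crux of the argument, and the main place where both finiteness hypotheses are used in an essential way.

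Finally, I would assemble the two limits. Taking intersection of numerators and union of denominators in the description of $E^r_p$ yields
\begin{equation*}
E^{\infty}_p \;\cong\; \frac{\bigl(F_p A \cap \Ker d\bigr) + F_{p-1} A}{\bigl(d(A) \cap F_p A\bigr) + F_{p-1} A}.
\end{equation*}
On the other hand, by definition $F_p H(A)$ is the image of $H(F_p A)$ in $H(A)$, i.e.\ the submodule of classes $[a]$ with $a \in F_p A \cap \Ker d$, modulo the ones that are boundaries in $A$. I would then check that the natural map
\begin{equation*}
F_p A \cap \Ker d \;\longrightarrow\; F_p H(A)/F_{p-1} H(A)
\end{equation*}
is surjective with kernel exactly $(d(A) \cap F_p A) + (F_{p-1}A \cap \Ker d)$, which after adding $F_{p-1}A$ coincides with the denominator above. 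Combining gives the desired isomorphism $E^{\infty}_p \cong F_p H(A)/F_{p-1} H(A)$. The main obstacle is purely bookkeeping: making the identification of $C^r_p$, $B^r_p$ with $Z^r_p, B^r_p$ precise, and keeping careful track of the shift $s$ so that the stabilization indices come out correctly; once these are set up, both limiting statements and the final quotient identification are formal.
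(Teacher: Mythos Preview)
Your argument is the standard convergence proof for the spectral sequence of a filtered complex, and it is correct in outline; the paper itself does not supply a proof but simply refers the reader to \cite[Appendix]{kacrudakovE36}, where exactly this argument is carried out. So there is nothing to compare: you have filled in what the paper leaves to the reference.
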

\begin{proof}
For the proof see \cite[Appendix]{kacrudakovE36}.
\end{proof}
\begin{rem}
\label{spectralgraduato}
Let $A$ be a module with a filtration as in \eqref{filtration} and differential as in \eqref{differential}. We moreover suppose that $A=\oplus_{n\in \Z}A_{n}$ is a $\Z-$graded module and $d:A_{n}\longrightarrow A_{n-1}$ for all $n\in \Z$. Therefore the filtration \eqref{filtration} induces a filtration on each $A_{n}$. The family $\left\{F_{p}A_{n}\right\}_{p,n \in \Z}$ is indexed by $(p,n)$. It is customary to write the indices as $(p,q)$, where $p$ is the degree of the filtration and $q=n-p$ is the complementary degree. The filtration is called \textit{bounded below} if, for all $n\in \Z$, there exists a $s=s(n)$ such that $F_{s}A_{n}=0$.\\
In this case the spectral sequence $E=\left\{(E^{r},d^{r})\right\}_{r \in \Z}$, determined as in Proposition \ref{spectral1}, is a family of modules $E^{r}=\left\{E^{r}_{p,q}\right\}_{p ,q\in \Z}$ indexed by $(p,q)$, where $E^{r}_{p}=\sum_{p,q \in \Z}E^{r}_{p,q}$, with the differential $d^{r}=\left\{d^{r}_{p,q}: E_{p,q}\longrightarrow E_{p-r,q+r-1}\right\}_{p ,q\in \Z}$ of bidegree $(-r,r-1)$ such that $d_{p,q}\circ d_{p+r,q-r+1}=0$ for all $p,q \in \Z$.
 Equations \eqref{spectral1prima}, \eqref{spectral1seconda} ,\eqref{spectral1terza}, \eqref{spectral1quarta} and \eqref{grado0graded} can be written so that the role of $q$ is explicit. For instance, Equation \eqref{spectral1prima} can be written as:
\begin{align*}
H_{p,q}(E^{r},d^{r})=\frac{\Ker(d^{r}_{p,q}: E^{r}_{p,q}\longrightarrow E^{r}_{p-r,q+r-1})}{\Ima (d^{r}_{p+r,q-r+1}: E^{r}_{p+r,q-r+1}\longrightarrow E^{r}_{p,q})}\cong E^{r+1}_{p,q}.
\end{align*}
for all $p,q \in \Z$.
Equation \eqref{grado0graded} can be written as $E^{s}_{p,q} \cong H(F_{p}A_{p+q} / F_{p-1}A_{p+q})$ for all $p,q \in \Z$.
\end{rem}
We now recall some results on spectral sequences of bicomplexes; for further details see \cite{kacrudakovE36} and \cite[Chapter XI, Section 6]{maclane}.
\begin{defi}[Bicomplex]
\label{definizionebicomplesso}
A bicomplex $K$ is a family $\left\{K_{p,q}\right\}_{p,q \in \Z}$ of modules endowed with two families of differentials, defined for all integers $p,q$, $d'$ and $d''$ such that
\begin{align*}
d':K_{p,q}\longrightarrow K_{p-1,q} , \quad d'':K_{p,q}\longrightarrow K_{p,q-1}
\end{align*}
and ${d'}^{2}={d''}^{2}=d'd''+d''d'=0$.
\end{defi}
We can also  think $K$ as a $\Z-$bigraded module where $K=\sum_{p,q \in \Z}K_{p,q}$.
A bicomplex $K$ as in Definition \ref{definizionebicomplesso} can be represented by the following commutative diagram:
\begin{align}
 \label{diagrammabicomplesso}
\xymatrix{
&                          &  \ar[d]^{d''}   &  \ar[d]^{d''}   & \ar[d]^{d''}  & \\
 &\quad \cdots  \ar[r]^{d'} &K_{p+1,q+1} \ar[d]^{d''} \ar[r]^{d'} &K_{p,q+1}\ar[d]^{d''} \ar[r]^{d'}  &K_{p-1,q+1} \ar[d]^{d''} \ar[r]^{d'} & \cdots \quad   \\
 &\quad \cdots  \ar[r]^{d'} &K_{p+1,q} \ar[d]^{d''} \ar[r]^{d'} &K_{p,q}\ar[d]^{d''} \ar[r]^{d'}  &K_{p-1,q} \ar[d]^{d''} \ar[r]^{d'} &\cdots \quad   \\
 &\quad \cdots  \ar[r]^{d'} &K_{p+1,q-1}\ar[d]^{d''}  \ar[r]^{d'} &K_{p,q-1} \ar[d]^{d''} \ar[r]^{d'}  &K_{p-1,q-1} \ar[d]^{d''} \ar[r]^{d'} &\cdots \quad \\ 
 &                          &                         &                       &                         & .}
\end{align}
\begin{defi}[Second homology]
Let $K$ be a bicomplex. The \textit{second homology} of $K$ is the homology computed with respect to $d''$, i.e.:
\begin{align*}
H''_{p,q}(K)=\frac{\Ker(d'':K_{p,q}\longrightarrow K_{p,q-1})}{d''(K_{p,q+1})}.
\end{align*}
The second homology of $K$ is a bigraded complex with differential $d':H''_{p,q}(K)\longrightarrow H''_{p-1,q}(K)$ induced by the original $d'$. Its homology is defined as:
\begin{align*}
H'_{p}H''_{q}(K)=\frac{\Ker(d':H''_{p,q}(K)\longrightarrow H''_{p-1,q})}{d'(H''_{p+1,q}(K))},
\end{align*}
and it is a bigraded module.
\end{defi}
\begin{defi}[First homology]
Let $K$ be a bicomplex. The \textit{first homology} of $K$ is the homology computed with respect to $d'$, i.e.:
\begin{align*}
H'_{p,q}(K)=\frac{\Ker(d':K_{p,q}\longrightarrow K_{p-1,q})}{d'(K_{p+1,q})}.
\end{align*}
The first homology of $K$ is a bigraded complex with differential $d'':H'_{p,q}(K)\longrightarrow H'_{p,q-1}(K)$ induced by the original $d''$. Its homology is defined as:
\begin{align*}
H''_{q}H'_{p}(K)=\frac{\Ker(d'':H'_{p,q}(K)\longrightarrow H'_{p,q-1})}{d''(H'_{p,q+1}(K))},
\end{align*}
and it is a bigraded module. 
\end{defi}
\begin{defi}[Total complex]
A bicomplex $K$ defines a single complex $T=Tot(K)$:
\begin{align*}
T_{n}=\sum_{p+q=n}K_{p,q}, \quad d=d'+d'':T_{n}\longrightarrow T_{n-1}.
\end{align*}
From the properties of $d'$ and $d''$, it follows that $d^{2}=0$. 
\end{defi}
We point out that $T_{n}$ is the sum of the modules of the secondary diagonal in diagram \eqref{diagrammabicomplesso}. We have that:
\begin{align*}
\cdots \xrightarrow[]{d} T_{n+1}\xrightarrow[]{d} T_{n}\xrightarrow[]{d} T_{n-1} \xrightarrow[]{d} \cdots.
\end{align*}
The first filtration $F'$ of $T=Tot(K)$ is defined as:
\begin{align*}
(F'_{p}T)_{n}=\sum_{h\leq p}K_{h,n-h}.
\end{align*}
The associated spectral sequence $E'$ is called \textit{first spectral sequence}. Analogously we can define the second filtration and the \textit{second spectral sequence}.
\begin{prop}
\label{spectralbicomplex}
Let $(K,d',d'')$ be a bicomplex with total differential $d$. The first spectral sequence $E'=\left\{(E'^{r},d^{r})\right\}$, $E'^{r}=\sum_{p,q}E'^{r}_{p,q}$ has the property:
\begin{align*}
(E'^{0},d^{0})\cong (K,d'') , \quad (E'^{1},d^{1})\cong (H(K,d''),d'), \quad E'^{2}_{p,q}\cong H'_{p}H''_{q}(K).
\end{align*}
The second spectral sequence $E''=\left\{(E''^{r},\delta^{r})\right\}$, $E''^{r}=\sum_{p,q}E''^{r}_{p,q}$ has the property:
\begin{align*}
(E''^{0},\delta^{0})\cong (K,d') , \quad (E''^{1},\delta^{1})\cong (H(K,d'),d''), \quad E''^{2}_{p,q}\cong H''_{q}H'_{p}(K).
\end{align*}
If the first filtration is bounded below and convergent above, then the first spectral sequence converges to the homology of $T$ with respect to the total differential $d$.\\
If the second filtration is bounded below and convergent above, then the second spectral sequence converges to the homology of $T$ with respect to the total differential $d$.
\end{prop}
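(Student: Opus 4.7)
The plan is to derive everything from the general Proposition \ref{spectral1} and \ref{spectral2} applied to the total complex $T = \text{Tot}(K)$ equipped with its first (resp. second) filtration. First I would observe that since $d'$ lowers $p$ by one and $d''$ preserves $p$, the total differential $d = d' + d''$ satisfies $d(F'_p T) \subset F'_p T$. This matches condition \eqref{differential} with $s = 1$, so Proposition \ref{spectral1} applies with $s = 1$.

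With $s = 1$, equation \eqref{spectral1seconda} gives $E'^0_p \cong F'_p T / F'_{p-1} T$; respecting the bigrading, this decomposes as $E'^0_{p,q} \cong K_{p,q}$. By \eqref{spectral1quarta} the differential $d^0$ is the associated graded of $d$. Since $d'$ strictly decreases the filtration degree, it is annihilated in the passage to the graded quotient, while $d''$ preserves it and so descends to itself. Therefore $(E'^0, d^0) \cong (K, d'')$ as bigraded complexes. Taking homology yields $E'^1_{p,q} \cong H''_{p,q}(K)$ by \eqref{spectral1prima}, and the induced differential $d^1$, of bidegree $(-1,0)$, is the map on $H''(K)$ induced by $d'$ — this requires a routine check that $d'$ sends $d''$-cycles to $d''$-cycles and $d''$-boundaries to $d''$-boundaries, which follows directly from $d' d'' + d'' d' = 0$. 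Hence $(E'^1, d^1) \cong (H(K, d''), d')$, and one more application of \eqref{spectral1prima} yields $E'^2_{p,q} \cong H'_p H''_q(K)$.

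The claim for the second spectral sequence $E''$ is proved by the completely symmetric argument, interchanging the roles of $d'$ and $d''$ and of the two filtration indices. For convergence, I would simply invoke Proposition \ref{spectral2}: under the stated hypotheses that the first filtration is bounded below and convergent above, the spectral sequence $E'$ converges to the homology $H(T, d)$ of the total complex, and analogously for $E''$ when its filtration satisfies the same conditions.

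The main obstacle is not conceptual but bookkeeping: one must carefully track the bidegrees as described in Remark \ref{spectralgraduato} (where $E^r_p = \sum_q E^r_{p,q}$ and $d^r$ has bidegree $(-r, r-1)$) so that the identifications of $d^0$ with $d''$ and of $d^1$ with the map induced by $d'$ are not merely up to abstract isomorphism of complexes but fully compatible with the bigrading. Once this is verified on one column of $K$, the conclusions for $E'^0$, $E'^1$, $E'^2$ follow mechanically from Proposition \ref{spectral1} and the definitions of $H''(K)$, $H'H''(K)$.
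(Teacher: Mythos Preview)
Your proposal is correct and follows the standard route: the paper itself does not give a proof but simply refers to \cite[Chapter XI]{maclane}, and what you have written is precisely the argument found there, specialized to the framework of Propositions~\ref{spectral1} and~\ref{spectral2} with $s=1$. One minor point of bookkeeping: Proposition~\ref{spectral2} as stated in the paper assumes a uniform bound $F_{-N}A=0$, whereas the hypothesis here is the weaker graded notion of ``bounded below'' from Remark~\ref{spectralgraduato}; the convergence still holds in this generality (as in Mac Lane), but if you want to stay strictly within the paper's internal statements you should note that the argument of Proposition~\ref{spectral2} goes through degree by degree under the weaker hypothesis.
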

\begin{proof}
See \cite[Chapter XI]{maclane}.
\end{proof}
\section{Homology}
\label{omologia}
In this section we compute the homology of the complexes in Figure \ref{figura}. The main result is the following theorem.
\begin{thm}
\label{thmomologia}
The complexes in Figure \ref{figura} are exact in each module except for $M(0,0,0,0)$ and $M(1,1,3,0)$. The homology spaces in $M(0,0,0,0)$ and $M(1,1,3,0)$ are isomorphic to the trivial representation.
\end{thm}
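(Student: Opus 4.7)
The plan is to mimic the bicomplex strategy of \cite{kacrudakovE36}. The starting observation is that the differential $\nabla$ splits as $\nabla = d' + d''$ with $d' = \Delta^{+}\partial_{y_{1}}$ and $d'' = \Delta^{-}\partial_{y_{2}}$. Since $\partial_{y_{1}}$ and $\partial_{y_{2}}$ are commuting derivations that commute with the $w_{ij}$, Remark \ref{nabla+nabla-} (which gives $(\Delta^{+})^{2} = (\Delta^{-})^{2} = \Delta^{+}\Delta^{-} + \Delta^{-}\Delta^{+} = 0$) forces $(d')^{2} = (d'')^{2} = d'd'' + d''d' = 0$. Bigrading each $M_{X}$ by $(\deg_{y_{1}}, \deg_{y_{2}})$ (with the appropriate sign conventions in quadrants $\mathbf{B}, \mathbf{C}, \mathbf{D}$, where $\partial_{y_{j}}$ is multiplication) turns the direct sum $\bigoplus_{X} M_{X}$ into a bicomplex as in Definition \ref{definizionebicomplesso}. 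The connecting morphisms $\nabla_{2}, \widetilde{\nabla}_{2}, \nabla_{3}, \widetilde{\nabla}_{3}$ only act on the axes $m=0$ or $n=0$, and Propositions \ref{morfismonabla2}, \ref{morfismowidetildenabla2}, \ref{morfismonabla3} guarantee they square to zero against $\nabla$, so they fit into the total differential.

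Next I apply Proposition \ref{spectralbicomplex}. The filtrations involved are bounded below and convergent above on each fixed bigraded piece (each $M_{X}^{m,n}$ is a finitely generated $\C[\Theta]$-module), so the first spectral sequence converges to the total homology. Its $E^{1}$-page is $H(M, d'')$ equipped with the induced differential $d'$. To compute $E^{1}$, note that $d'' = \Delta^{-}\partial_{y_{2}}$ is a tensor of a Koszul-type operator $\Delta^{-} = w_{12}\otimes\partial_{x_{1}} + w_{22}\otimes\partial_{x_{2}}$ on $U(\g_{<0}) \otimes \C[x_{1},x_{2}]$ with the de Rham-like operator $\partial_{y_{2}}$ on the $y_{2}$-polynomial factor. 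Since $w_{12}$ and $w_{22}$ anticommute in $U(\g_{<0})$ and have zero brackets with one another, a Künneth/Koszul argument shows that $H(\Delta^{-})$ is concentrated in the minimal bidegree and is free over $\C[\Theta]\otimes \inlinewedge(w_{11},w_{21})$; similarly $H(\partial_{y_{2}})$ is supported on $\ker \partial_{y_{2}}$. Taking $d'$-homology of the result then collapses the remaining $\C[y_{1}]$ and $\inlinewedge(w_{11},w_{21})$ factors via the analogous Koszul/de Rham argument, yielding an $E^{2}$ (equivalently $E^{\infty}$) that is $0$ away from the corners corresponding to $M(0,0,0,0)$ and, by conformal duality (Remark \ref{cantacasellikacremarksize} and Proposition \ref{esattezzafuntoreduale}), its $180^{\circ}$ rotation $M(1,1,3,0)$ in quadrant $\mathbf{C}$.

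To identify the surviving classes as the trivial representation, I use Proposition \ref{M(0,0,2,0)} together with conformal duality. Since $M(0,0,0,0)$ sits at the top of the first-quadrant complex and admits no nontrivial singular vector into it, its contribution to $H_{0}$ is exactly $\C$. Dually, the dual of the irreducible $M(0,0,2,0)$ (Proposition \ref{M(0,0,2,0)}) carries the surviving class at $M(1,1,3,0)$, again isomorphic to the trivial representation. The running of the spectral sequence through $d^{r}$ for $r\geq 2$ preserves these two classes because no other entry on the $E^{2}$-page can be connected to them by a morphism that is both degree-preserving and consistent with the already-known sparseness of morphisms between Verma modules (Theorems \ref{sing1}--\ref{greaterthan3}).

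The main obstacle is Step 2: carrying out the $E^{1}$-computation rigorously. Although $\Delta^{\pm}$ looks like a Koszul differential, the identification $U(\g_{<0}) \cong \C[\Theta]\otimes \inlinewedge(w_{11},w_{12},w_{21},w_{22})$ is only at the level of vector spaces, and the PBW reordering of $w_{12}w_{21}$ and $w_{11}w_{22}$ via \eqref{bracketwcaselli} produces lower-order $\Theta$-terms. I expect to resolve this by introducing an auxiliary filtration by total $w$-degree (or $\Theta$-degree) whose associated graded is a genuine exterior-algebra Koszul complex; one then lifts the vanishing back to the PBW setting using a standard comparison argument. A secondary technical point is the treatment of the boundary morphisms $\nabla_{2}, \widetilde{\nabla}_{2}, \nabla_{3}, \widetilde{\nabla}_{3}$: they contribute only along the axes and at isolated corners, but they must be verified not to cancel or create any new $E^{\infty}$-classes beyond the two predicted above.
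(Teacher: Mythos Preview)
Your overall strategy matches the paper's: pass to an associated graded via a PBW-type filtration, exploit the bicomplex splitting $\nabla = \Delta^{+}\partial_{y_{1}} + \Delta^{-}\partial_{y_{2}}$, run a Koszul/de~Rham argument on the pieces, and invoke conformal duality for quadrant~$\mathbf{B}$. The paper reverses your order of operations---it filters $M_X$ \emph{first} (by number of $\g_{<0}$-factors), obtaining $\Gr M_X \cong S(\g_{-2}) \otimes G_X$ with $G_X = \inlinewedge(\g_{-1}) \otimes V_X$, and only then runs the $(d',d'')$-bicomplex on $G_X$, where the exterior algebra is genuine and no PBW corrections arise. Your ``auxiliary filtration by total $w$-degree'' is precisely this step, so the two routes are not really different in kind.

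The genuine gap is your claim that the $E^{2}$-page collapses to the two corner points. It does not. The Koszul computation on the $\Delta^{\pm}$-columns leaves a nonzero $H^{0,n}(G_X)$ along the \emph{entire} axis $m=0$ of each quadrant (and additional contributions at $m=\pm 1$ near the origin); this is the content of Proposition~\ref{Proppesi}. These axis classes are not killed by any higher differential in the $(d',d'')$-spectral sequence, because that spectral sequence sees only $\nabla$. The paper disposes of them in a separate argument (Proposition~\ref{omologiafinale}): it verifies by hand that the degree-$2$ and degree-$3$ maps $\widetilde{\nabla}_{2},\nabla_{3},\widetilde{\nabla}_{3}$ induce isomorphisms $H^{0,n}(G_{D^{\circ}})\to H^{0,n-2}(G_{C^{\circ}})$, $H^{0,1}(G_{A^{\circ}})\to H^{-1,0}(G_{C^{\circ}})$, $H^{1,0}(G_{A^{\circ}})\to H^{0,-1}(G_{C^{\circ}})$, by evaluating them on explicit highest-weight representatives. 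You relegate this to a ``secondary technical point,'' but it is in fact the heart of the computation and cannot be absorbed into the bicomplex machinery for $\nabla$ alone. Finally, even after the axis contributions cancel, the two surviving classes at $M_A^{0,0}$ and $M_C^{-1,-1}$ are each $S(\g_{-2})$-free of rank one at the $\Gr$ level, not one-dimensional; the paper runs one more step of the outer spectral sequence (Lemmas~\ref{lemmaxi}, \ref{lemlambda}, \ref{lemmaesse} and Propositions~\ref{omologiaC}, \ref{omologiaA}), exhibiting explicit elements $z,k,s$ whose $\nabla$-images realise the multiplication by $\Theta$ needed to cut each surviving space down to~$\C$.
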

\begin{lem}
\label{lemmarealize}
Let $\nabla :M(\mu_{1},\mu_{2},\mu_{3},\mu_{4})\longrightarrow M(\widetilde{\mu}_{1},\widetilde{\mu}_{2},\widetilde{\mu}_{3},\widetilde{\mu}_{4})$ be a morphism represented in Figure \ref{figura} and constructed as in Remark \ref{costruzionemorfismi}. Then $\Ima \nabla$ is an irreducible $\g-$submodule of $M(\widetilde{\mu}_{1},\widetilde{\mu}_{2},\widetilde{\mu}_{3},\widetilde{\mu}_{4})$.
\end{lem}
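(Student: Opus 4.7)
The plan is to identify $\Ima \nabla$ with the unique irreducible quotient $\I(F(\mu_{1},\mu_{2},\mu_{3},\mu_{4}))$ of the source Verma module $M_{1}:=M(\mu_{1},\mu_{2},\mu_{3},\mu_{4})$ and then invoke Theorem \ref{keythmsingular}. Since $\nabla(v_{\mu})=\vec{m}\neq 0$, the image is a non-zero quotient of $M_{1}$, so it suffices to show that $\Ker \nabla$ contains the unique maximal submodule $N\subset M_{1}$. Because $N$ is $\g$-generated by the $\g_{0}$-orbits of the non-trivial highest weight singular vectors of $M_{1}$, and $\nabla$ is a $\g$-morphism, the problem reduces to verifying that $\nabla(v)=0$ for every non-trivial highest weight singular vector $v\in M_{1}$.

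Next I would invoke the classification of singular vectors in Theorems \ref{sing1}, \ref{sing2}, \ref{sing3} together with Theorem \ref{greaterthan3}: every non-trivial highest weight singular vector $v\in M_{1}$ has the form $v=\nabla'(v_{\mu'})$, where $v_{\mu'}$ is the highest weight vector of some other Verma module $M(\mu')$ and $\nabla'$ is one of the morphisms $\nabla,\nabla_{2},\widetilde{\nabla}_{2},\nabla_{3},\widetilde{\nabla}_{3}$ terminating at $M_{1}$ in Figure \ref{figura}. Hence $\nabla(v)=(\nabla\circ\nabla')(v_{\mu'})$, and it is enough to show that every composition of two composable arrows in Figure \ref{figura} is zero.

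The required vanishings are already recorded in Section \ref{sezmorfismi}: Proposition \ref{osser} gives $\nabla^{2}=0$; Propositions \ref{morfismonabla2} and \ref{morfismowidetildenabla2} give $\nabla\nabla_{2}=\nabla_{2}\nabla=0$ and $\nabla\widetilde{\nabla}_{2}=\widetilde{\nabla}_{2}\nabla=0$; the two propositions describing $\nabla_{3}$ and $\widetilde{\nabla}_{3}$ give $\nabla\nabla_{3}=\nabla_{3}\nabla=\nabla\widetilde{\nabla}_{3}=\widetilde{\nabla}_{3}\nabla=0$. Any further candidate composition, such as $\nabla_{2}\nabla_{2}$, $\nabla_{3}\nabla_{2}$ or $\widetilde{\nabla}_{2}\nabla_{2}$, would produce a singular vector of degree $\geq 4$ in the target, which is impossible by Theorem \ref{greaterthan3}; hence these compositions also vanish, for degree reasons. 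Combining these facts gives $\Ker \nabla \supseteq N$, and therefore $\Ima \nabla \cong M_{1}/N=\I(F(\mu_{1},\mu_{2},\mu_{3},\mu_{4}))$ is irreducible.

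The only step that warrants care is confirming that Figure \ref{figura} indeed enumerates every morphism arriving at each $M_{1}$---equivalently, that each singular vector from Theorems \ref{sing1}--\ref{sing3} is realised as $\nabla'(v_{\mu'})$ for some arrow in the diagram, and not by some morphism outside it. This is precisely the content of Remark \ref{costruzionemorfismi} together with the construction of Figure \ref{figura} in \cite{K4}, so I do not anticipate a genuine obstacle here; once this bookkeeping is in place, the rest of the argument is purely formal.
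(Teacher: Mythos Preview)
Your argument has a genuine gap at the assertion that the maximal submodule $N\subset M_1$ is $\g$-generated by the non-trivial highest weight singular vectors of $M_1$. This is not proved anywhere in the paper, and in fact it is \emph{false} in the present setting. Take $M_1=M(1,1,3,0)=M_C^{-1,-1}$. By Theorems \ref{sing1}--\ref{sing3} this module has a unique non-trivial highest weight singular vector, so the submodule $N'$ generated by all such vectors equals $\Ima(\nabla:M_C^{0,0}\to M_C^{-1,-1})$. But Proposition \ref{omologiaC} (whose proof is independent of Lemma \ref{lemmarealize}) gives $H^{-1,-1}(M_C)\cong\C$, hence $\Ker(\nabla:M_C^{-1,-1}\to M_C^{-2,-2})\supsetneq N'$. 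Since this kernel is a proper submodule, the maximal submodule satisfies $N\supseteq\Ker\nabla\supsetneq N'$: $N$ is \emph{not} generated by singular vectors. Your composition-vanishing argument thus only yields $N'\subseteq\Ker\nabla$, which does not suffice to conclude $N\subseteq\Ker\nabla$.

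The paper avoids this difficulty by working in the \emph{target} rather than the source. It uses that $M(\widetilde\mu_1,\widetilde\mu_2,\widetilde\mu_3,\widetilde\mu_4)$ contains a unique non-trivial highest weight singular vector $\vec m$ (again by Theorems \ref{sing1}--\ref{sing3}) and that $\Ima\nabla$ is the $\g$-submodule generated by $\vec m$. Any non-zero $\g$-submodule $W\subseteq\Ima\nabla$ is a submodule of $M(\widetilde\mu_1,\widetilde\mu_2,\widetilde\mu_3,\widetilde\mu_4)$, so the minimal-degree part of $W$ produces a highest weight singular vector of the ambient Verma module; as $\Ima\nabla$ lives in strictly positive degree, this singular vector must lie in $\g_0\vec m$, hence $\vec m\in W$ and $W=\Ima\nabla$. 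No information about the maximal submodule of the source is needed.
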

\begin{proof}
By Theorems \ref{sing1}, \ref{sing2}, \ref{sing3} and Remark \ref{costruzionemorfismi}, we know that $M(\widetilde{\mu}_{1},\widetilde{\mu}_{2},\widetilde{\mu}_{3},\widetilde{\mu}_{4})$ contains a unique, up to scalars, highest weight nontrivial singular vector, that we call $\vec{m}$. By construction of $\nabla$, $\Ima \nabla$ is the $\g-$submodule of $M(\widetilde{\mu}_{1},\widetilde{\mu}_{2},\widetilde{\mu}_{3},\widetilde{\mu}_{4})$ generated by $\vec{m}$. In particular it is straightforward that $\g_{0}\vec{m}$ is an irreducible finite$-$dimensional $\g_{0}-$module on which $\g_{>0}$ acts trivially, since $\vec{m}$ is singular. The $\g-$module $\Ima \nabla=\g\vec{m}$ is therefore isomorphic to $\Ind(\g_{0}\vec{m})$. Hence, due to Theorem  \ref{keythmsingular}, $\Ima \nabla$ is an irreducible $\g-$module since in $M(\widetilde{\mu}_{1},\widetilde{\mu}_{2},\widetilde{\mu}_{3},\widetilde{\mu}_{4})$ there is only the highest weight nontrivial singular vector $\vec{m}$ that is trivial for $\Ind(\g_{0}\vec{m})$.
\end{proof}
\begin{rem}
We are now able to show, using Theorem \ref{thmomologia}, that all the irreducible quotients of finite degenerate Verma modules occur among cokernels, kernels and images of the complexes in Figure \ref{figura}. 
Indeed if $(\mu_{1},\mu_{2},\mu_{3},\mu_{4})$ is not among the weights that occur in Theorems \ref{sing1}, \ref{sing2}, \ref{sing3}, then $M(\mu_{1},\mu_{2},\mu_{3},\mu_{4})$ is irreducible, due to Theorem \ref{keythmsingular}, since it does not contain nontrivial singular vectors. Let us now suppose that $(\mu_{1},\mu_{2},\mu_{3},\mu_{4})$ is among the weights that occur in Theorems \ref{sing1}, \ref{sing2}, \ref{sing3}. By Theorem \ref{keythmsingular}, $M(\mu_{1},\mu_{2},\mu_{3},\mu_{4})$ is degenerate. We denote its irreducible quotient by $I(\mu_{1},\mu_{2},\mu_{3},\mu_{4})$. By Figure \ref{figura} we know that if $(\mu_{1},\mu_{2},\mu_{3},\mu_{4}) \neq (0,0,0,0)$ there exist two morphisms $\nabla :M(\mu_{1},\mu_{2},\mu_{3},\mu_{4})\rightarrow M(\widetilde{\mu}_{1},\widetilde{\mu}_{2},\widetilde{\mu}_{3},\widetilde{\mu}_{4})$ and $\widehat{\nabla} :M(\hat{\mu}_{1},\hat{\mu}_{2},\hat{\mu}_{3},\hat{\mu}_{4})\rightarrow M(\mu_{1},\mu_{2},\mu_{3},\mu_{4})$ constructed as in Remark \ref{costruzionemorfismi}. Due to Lemma \ref{lemmarealize}, $\Ker \nabla$ is the maximal submodule of $M(\mu_{1},\mu_{2},\mu_{3},\mu_{4})$ because $M(\mu_{1},\mu_{2},\mu_{3},\mu_{4})/\Ker \nabla \cong \Ima\nabla $ is irreducible and $\Ima\nabla \cong I(\mu_{1},\mu_{2},\mu_{3},\mu_{4})$. Moreover if $(\mu_{1},\mu_{2},\mu_{3},\mu_{4}) \neq (1,1,3,0)$, by Theorem \ref{thmomologia}, then $\Ker \nabla=\Ima \widehat{\nabla}$ is irreducible and it is the unique nontrivial submodule of $M(\mu_{1},\mu_{2},\mu_{3},\mu_{4})$; in this case $I(\mu_{1},\mu_{2},\mu_{3},\mu_{4})$ can be realized also as the cokernel of the map $\widehat{\nabla}$. We point out that $M(1,1,3,0)$ can be realized as $M(1,1,3,0)/\Ker \nabla \cong \Ima\nabla$.
	Finally, in the case of $M(0,0,0,0)$, by Remark \ref{costruzionemorfismi}, we have a morphism $\nabla:M(1,1,-1,0) \rightarrow M(0,0,0,0)$. By Theorem \ref{thmomologia}, $ M(0,0,0,0)/ \Ima \nabla$ is irreducible and therefore $I(0,0,0,0)\cong  M(0,0,0,0)/ \Ima \nabla$.
\end{rem}
The aim of this section is to prove Theorem \ref{thmomologia}.
Following \cite{kacrudakovE36}, we consider the following filtration on $U(\g_{<0})$: for all $i\geq 0$, $ F_{i}U(\g_{<0})$ is defined as the subspace of $U(\g_{<0})$ spanned by elements with at most $i$ terms of $\g_{<0}$. Namely:
\begin{align*}
\C=F_{0}U(\g_{<0}) \subset F_{1}U(\g_{<0})\subset ...\subset F_{i-1}U(\g_{<0})\subset F_{i}U(\g_{<0})\subset... \, ,
\end{align*}
where $F_{i}U(\g_{<0})=\g_{<0}F_{i-1}U(\g_{<0})+F_{i-1}U(\g_{<0})$.
We call $F_{i}M_{X}=F_{i}U(\g_{<0})\otimes V_{X}$. By \eqref{nabla}, it follows that $\nabla F_{i}M_{X} \subset F_{i+1}M_{X} $. Therefore $M_{X}$ is a filtered complex with the bigrading induced by \eqref{bigrading} and differential $\nabla$; moreover $M_{X}=\cup_{i }F_{i}M_{X}$ and $F_{-1}M_{X}=0$. Hence we can apply Propositions \ref{spectral1} and \ref{spectral2} to our complex $(M_{X}, \nabla)$ and obtain a spectral sequence $\left\{(E^{i}, \nabla^{i})\right\}$ such that $E^{0}=H(\Gr M_{X})$, $E^{i+1} \cong H(E^{i}, \nabla^{i})$ and $E^{\infty} \cong \Gr H(M_{X}) $. Thus we start by studying the homology of $\Gr M_{X}$.
\begin{rem}
We observe that $\g$ contains a copy of $W(1,0)=\langle p(t) \partial_{t}\rangle$ via the injective Lie superalgebras morphism:
\begin{gather*}
W(1,0)\longrightarrow \g \\
p(t) \partial_{t}\longrightarrow \frac{p(t)}{2} .
\end{gather*}
In particular, we point out that $\g_{-2}$ is contained in this copy of $W(1,0)$. 
\end{rem}
We consider the standard filtration on $W(1,0)=L_{-1}^{W}\supset L_{0}^{W} \supset L_{1}^{W} ...\,$. 
\begin{lem}
\label{filtrazioneWlemma}
For all $i\geq 0$ and $j\geq -1$: 
\begin{align}
\label{filtrazioneW}
L_{j}^{W} F_{i}M_{X} \subset F_{i-j}M_{X}.
\end{align}
\end{lem}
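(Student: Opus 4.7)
The plan is to proceed by induction on $i$. The base case $i=0$ uses Theorem~\ref{keythmsingular}: since $\g_{>0}$ annihilates $V_X$ and the embedding $p(t)\partial_t\mapsto p(t)/2$ places $L_j^W$ inside $\g_{\geq 2j}$, we get $L_j^W V_X = 0 \subset F_{-j}M_X$ for $j\geq 1$. The boundary cases $j=0$ and $j=-1$ are handled using $L_0^W\cap\g_0 = \langle t/2\rangle$ together with the decomposition $L_{-1}^W = L_0^W \oplus \C\cdot\tfrac{1}{2}$, where $\tfrac{1}{2}\in\g_{-2}$ raises the filtration by one. For the inductive step, let $w=\eta u'\otimes v\in F_iM_X$ with $\eta\in\g_{<0}$ and $u'\otimes v\in F_{i-1}M_X$, and $\xi\in L_j^W$. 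We expand
\[
\xi\cdot w = \eta\cdot\xi(u'\otimes v) + [\xi,\eta]\cdot(u'\otimes v),
\]
and the inductive hypothesis gives $\eta\cdot\xi(u'\otimes v)\in\eta\cdot F_{i-1-j}M_X\subset F_{i-j}M_X$, so the real content is to bound the commutator term.

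When $\eta=1\in\g_{-2}$, the contact bracket computation yields $[\xi,1]=-p'(t)$ for $\xi=p(t)/2\in L_j^W$; since $p(t)\in t^{j+1}\C[t]$ implies $p'(t)\in t^j\C[t]$, one gets $[\xi,1]\in L_{j-1}^W$, and a second invocation of the inductive hypothesis at level $j-1$ yields $[\xi,1](u'\otimes v)\in F_{i-j}M_X$. The hard case is $\eta=\xi_a\in\g_{-1}$: here $[\xi,\xi_a]=-\tfrac{1}{2}p'(t)\xi_a\in\g_{\geq 2j-1}$ lies in $\g$ but \emph{not} in the embedded copy of $W(1,0)$, so the main inductive hypothesis does not apply directly. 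The plan is to establish in parallel an auxiliary filtration-lowering claim for the monomials $t^m\xi_a/2$, asserting $(t^m\xi_a/2)F_iM_X\subset F_{i-m+1}M_X$; expanding $p'(t)=\sum_{n\geq j}(n+1)c_{n+1}t^n$ and applying the auxiliary claim to each summand then bounds $[\xi,\xi_a](u'\otimes v)$ by $F_{i-j}M_X$.

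The auxiliary claim is itself proved by an analogous induction on $i$, using the contact bracket identities $[t^m\xi_a/2,1]=-2m(t^{m-1}\xi_a/2)$ and $[t^m\xi_a/2,\xi_b]=-m\,t^{m-1}\xi_a\xi_b/2-\tfrac{1}{2}\delta_{ab}t^m$. The first is self-referential with smaller $m$; the second forces us to also control elements of the form $t^{m-1}\xi_{ab}/2$, requiring a further parallel claim. Iterating, one obtains a cascade of auxiliary statements indexed by monomials $t^n\xi_I/2$ with $|I|\in\{0,1,2,3,4\}$, each carrying its own filtration-lowering exponent (equal, heuristically, to the minimum number of $\g_{<0}$-factors that must be absorbed through iterated commutators to bring the element into $\g_{\leq 0}$). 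The main obstacle of the proof is the bookkeeping: one has to assign the correct exponent to each monomial type and verify that the contact bracket formulas feed the various auxiliary hypotheses into one another coherently. Once this scaffolding is set up, every inductive step reduces to a direct contact bracket calculation, and the conjunction of all the auxiliary claims yields the desired inclusion $L_j^W F_iM_X\subset F_{i-j}M_X$.
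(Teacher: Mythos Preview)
Your cascade approach is correct in principle and would terminate: the monomials $t^m\xi_I$ with $|I|\in\{0,\ldots,4\}$ can each be assigned a filtration-lowering exponent (namely $i-\lfloor d/2\rfloor$ where $d=2m+|I|-2$ is the standard degree), and all the contact-bracket identities feed into one another consistently provided you run a single joint induction on $i$ over all the auxiliary claims simultaneously. So there is no gap.

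However, the paper avoids the entire cascade with one observation. Instead of taking an arbitrary leading factor $\eta\in\g_{<0}$, the paper orders the PBW monomial $u_1u_2\cdots u_r\otimes v$ so that every $\Theta$-factor precedes every $\g_{-1}$-factor. Then in the inductive step there are still two cases. If $u_1=\Theta$, one uses $[w_j,\Theta]\in L_{j-1}^W$ exactly as you do. If $u_1\in\g_{-1}$, the ordering forces \emph{all} of $u_2,\ldots,u_r\in\g_{-1}$. Now $[w_j,u_1]\in\bigoplus_{k\geq j}\g_{2k-1}$, and one checks directly (by iteratively commuting an element of $\g_d$ past factors from $\g_{-1}$, using that $\g_{>0}$ kills $V_X$) that any $g\in\g_d$ satisfies $g\,u_2\cdots u_r\otimes v\in F_{r-1-d}M_X$. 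Substituting $d\geq 2j-1$ gives $F_{r-2j}\subset F_{i+1-j}M_X$, and no auxiliary statements about $t^m\xi_I$ are needed.

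What the paper's ordering buys is brevity: one degree argument for products of $\g_{-1}$-elements replaces your five interlocking inductions. Your approach has the minor conceptual advantage of treating all $\eta\in\g_{<0}$ uniformly, but at significant bookkeeping cost; you correctly identified this as the main obstacle, whereas the paper simply sidesteps it.
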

\begin{proof}
We point out that $L_{j}^{W} \subseteq\bigoplus_{k\geq j} \g_{2k}$, since $p(t)\partial_{t} \in L^{W}_{\degr(p(t))-1}$ corresponds to $\frac{p(t)}{2} \in \g$ and  $\degr(\frac{p(t)}{2})=2\degr(p(t))-2$.\\
Let us fix $j$ and show the thesis by induction on $i$. It is clear that $L_{j}^{W} F_{0}M_{X} \subset F_{-j}M_{X}$. Indeed let $w_{j} \in L_{j}^{W} $, $v \in F_{0}M_{X}$, then:
\begin{align*}
w_{j}.v \in
\begin{cases}
 F_{0}M_{X} \quad \text{if} \quad  j\geq0;\\
F_{1}M_{X} \quad \text{if} \quad  j=-1.
\end{cases}
\end{align*}
We now suppose that the thesis holds for $i$. Let $w_{j} \in L_{j}^{W} $ and $u_{1}u_{2}...u_{r} \otimes v \in F_{i+1}M_{X}$, with $r\leq i+1$ and $u_{1},u_{2},...,u_{r} \in \g_{<0}$. We moreover suppose that, for some $N$, $u_{s}= \Theta$ for all $s\leq N$ and $u_{s} \in \g_{-1}$ for all $s> N$ . We obtain that:
\begin{align*}
w_{j}u_{1}u_{2}...u_{r} \otimes v =(-1)^{p(w_{j})p(u_{1})}u_{1}w_{j}u_{2}...u_{r} \otimes v+[w_{j},u_{1}]u_{2}...u_{r} \otimes v.
\end{align*}
By hypothesis of induction, $u_{1}w_{j}u_{2}...u_{r} \otimes v \in F_{r-j}M_{X}\subset F_{i+1-j}M_{X}$. Let us focus on $[w_{j},u_{1}]u_{2}...u_{r} \otimes v$. We have two possibilities: $[w_{j},u_{1}] \in \oplus_{k\geq j} \g_{2k-2}$ if $u_{1}=\Theta$ or $[w_{j},u_{1}] \in \oplus_{k\geq j} \g_{2k-1}$ if $u_{1} \in \g_{-1}$.\\
If $u_{1}=\Theta$, then $[w_{j},u_{1}] \in L^{W}_{j-1}$ and, by hypothesis of induction, $[w_{j},u_{1}]u_{2}...u_{r} \otimes v \in  F_{r-j}M_{X}\subset F_{i+1-j}M_{X}$. If $u_{1} \in \g_{-1}$, then $\degr([w_{j},u_{1}]u_{2}...u_{r})\geq 2j-1-r+1$ and, by our assumption, $u_{2},...,u_{r} \in \g_{-1}$. Therefore $[w_{j},u_{1}]u_{2}...u_{r} \otimes v \in F_{r-2j}M_{X} \subset F_{i+1-j}M_{X}$.
\end{proof}
By \eqref{filtrazioneW} and the fact that $W(1,0) \cong \Gr W(1,0)$, we have that the action of $W(1,0)$ on $M_{X}$ descends on $\Gr M_{X}$.\\
 By the Poincar\'e$-$Birkhoff$-$Witt Theorem, $\Gr U(\g_{<0}) \cong S(\g_{-2}) \otimes \inlinewedge(\g_{-1})$; indeed, in $U(\g_{<0})$, $\eta^{2}_{i}=\Theta$ for all $i \in \left\{1,2,3,4\right\}$. 
Therefore:
\begin{align*}
\Gr M_{X}=\Gr U(\g_{<0}) \otimes V_{X}\cong S(\g_{-2}) \otimes \displaywedge(\g_{-1})\otimes V_{X}.
\end{align*}
We define:
\begin{align*}
\mathcal{W}=W(1,0)+\g_{0}=W(1,0)\oplus \g_{0}^{ss} \oplus \C C,
\end{align*}
that is a Lie subalgebra of $\g$. On $\mathcal{W}$ we consider the filtration $\mathcal{W}=L_{-1}^{\mathcal{W}}\supset L_{0}^{\mathcal{W}} \supset L_{1}^{\mathcal{W}} ...\,$, where $L_{0}^{\mathcal{W}}=L_{0}^{W} \oplus \g_{0}^{ss} \oplus \C C$ and $L_{k}^{\mathcal{W}}=L_{k}^{W}$ for all $k>0$.
From \eqref{filtrazioneW}, it follows that $L_{1}^{\mathcal{W}}=L_{1}^{W}$ annihilates $G_{X}:=\inlinewedge(\g_{-1}) \otimes V_{X}$. Therefore, as $\mathcal{W}-$modules:
\begin{align*}
\Gr M_{X} \cong S(\g_{-2}) \otimes (\displaywedge(\g_{-1}) \otimes V_{X}) \cong \Ind ^{\mathcal{W}}_{L^{\mathcal{W}}_{0}}(\displaywedge(\g_{-1}) \otimes V_{X}).
\end{align*}
We observe that $\Gr M_{X}$ is a complex with the morphism induced by $\nabla$, that we still call $\nabla$.
Indeed $\nabla F_{i}M_{X} \subset F_{i+1}M_{X} $ for all $i$ and therefore it is well defined the induced morphism
\begin{align*}
\nabla: \Gr_{i }M_{X}= F_{i}M_{X}/F_{i-1}M_{X}   \longrightarrow \Gr_{i+1 }M_{X}= F_{i+1}M_{X}/F_{i}M_{X},
\end{align*}
that has the same expression as $\nabla$ defined in \eqref{nabla}, except that the multiplication by the $w$'s must be seen as multiplication in $\Gr U(\g_{<0})$ instead of $ U(\g_{<0})$. Thus $(G_{X}, \nabla)$ is a subcomplex of $(\Gr M_{X}, \nabla)$: indeed it is sufficient to restrict $\nabla$ to $G_{X}$; the complex $(\Gr M_{X}, \nabla)$ is obtained from $(G_{X}, \nabla)$ extending the coefficients to $S(\g_{-2})$.\\
We point out that also the homology spaces $H^{m,n}(G_{X})$ are annihilated by $L_{1}^{\mathcal{W}}$. Therefore, as $\mathcal{W}-$modules:
\begin{align}
\label{keyhomology}
H^{m,n}(\Gr M_{X}) \cong S(\g_{-2}) \otimes H^{m,n}(G_{X}) \cong \Ind ^{\mathcal{W}}_{L^{\mathcal{W}}_{0}}(H^{m,n}(G_{X}) ).
\end{align}
By \eqref{keyhomology} and Propositions \ref{spectral1}, \ref{spectral2}, we obtain the following result.
\begin{prop}
\label{keyhomologyspectral}
If $H^{m,n}(G_{X})=0$, then $H^{m,n}(\Gr M_{X})=0$ and therefore $H^{m,n}(M_{X})=0$.
\end{prop}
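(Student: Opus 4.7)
The plan is to deduce both implications from the identification $\Gr M_X \cong S(\g_{-2}) \otimes G_X$ of complexes over $\C$, together with the convergence theorem for the filtered complex $(M_X^{m,n}, F_{\bullet})$ supplied by Propositions \ref{spectral1} and \ref{spectral2}.

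For the first implication, I would observe that the induced differential $\Gr\nabla$ on $\Gr M_X \cong S(\g_{-2})\otimes \inlinewedge(\g_{-1})\otimes V_X$ leaves the symmetric factor $S(\g_{-2})$ untouched: this is essentially the content of \eqref{keyhomology}, and follows because the $w_{ab}$'s appearing in the expression \eqref{nabla} of $\nabla$ land in the exterior factor of $\Gr U(\g_{<0})$ by the Poincar\'e--Birkhoff--Witt theorem. Since $S(\g_{-2})$ is flat (a polynomial algebra in the single variable $\Theta$), taking homology commutes with tensoring with it, so that
\[
H^{m,n}(\Gr M_X) \cong S(\g_{-2}) \otimes H^{m,n}(G_X),
\]
and the vanishing of $H^{m,n}(G_X)$ forces the vanishing of $H^{m,n}(\Gr M_X)$.

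For the second implication, I would apply Proposition \ref{spectral2} to the filtered complex obtained by taking the diagonal in Figure \ref{figura} that passes through $M_X^{m,n}$. The filtration $F_i M_X^{m,n} = F_i U(\g_{<0}) \otimes V_X^{m,n}$ satisfies $F_{-1} M_X^{m,n} = 0$ (bounded below) and $\bigcup_i F_i M_X^{m,n} = M_X^{m,n}$ (convergent above), and by $\nabla F_i M_X \subset F_{i+1} M_X$ the differential is of type $s=0$ in the terminology of \eqref{differential}. Proposition \ref{spectral1} then yields a spectral sequence $\{(E^r, \nabla^r)\}$ with $E^0 \cong H(\Gr M_X, \Gr\nabla)$ split according to the bigrading inherited from $V_X$, while Proposition \ref{spectral2} guarantees its convergence, $E^{\infty} \cong \Gr H(M_X^{m,n})$.

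If $H^{m,n}(\Gr M_X) = 0$, then $E^0_p = 0$ at bidegree $(m,n)$ for every filtration index $p$. Since each successor $E^{r+1}$ is a subquotient of $E^r$, this vanishing propagates through the whole sequence, so $E^{\infty}_p = 0$ at bidegree $(m,n)$ for all $p$. Convergence then gives $F_p H^{m,n}(M_X) / F_{p-1} H^{m,n}(M_X) = 0$ for every $p$, which together with $F_{-1} H^{m,n}(M_X) = 0$ and $H^{m,n}(M_X) = \bigcup_p F_p H^{m,n}(M_X)$ forces $H^{m,n}(M_X) = 0$. I do not foresee a serious obstacle: the proposition is essentially a packaging of the spectral-sequence formalism of Section 5 together with the identification \eqref{keyhomology}, and the only point requiring genuine care is tracking the bigrading $(m,n)$ correctly through the pages of the spectral sequence.
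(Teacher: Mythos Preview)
Your proposal is correct and follows essentially the same approach as the paper: the first implication is precisely the content of \eqref{keyhomology}, and the second is the standard spectral-sequence argument from Propositions~\ref{spectral1} and~\ref{spectral2} (the paper's own proof consists of nothing more than citing these three inputs). Your write-up simply spells out the details that the paper leaves implicit, and the only cosmetic difference is that you phrase the Künneth-type step via flatness of $S(\g_{-2})$ rather than via the $\mathcal{W}$-module structure, which is equally valid for the vanishing statement at hand.
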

\subsection{Homology of the complexes $G_{X}$}
Motivated by Proposition \ref{keyhomologyspectral}, in this subsection we study the homology of the complexes $G_{X}$'s. We will use arguments similar to the one used in \cite{kacrudakovE36} for $E(3,6)$. We point out that, in order to compute the homology of the $M_{X}$'s, we will compute the homology of the $G_{X}$'s for $X=A,C,D$ and we will use arguments of conformal duality, in particular Remark \ref{cantacasellikacremarksize} and Proposition \ref{esattezzafuntoreduale}, for the case $X=B$.\\
 We denote by $G_{X'}:=\inlinewedge(\g_{-1}) \otimes V_{X'}$.
 Let us consider the evaluation maps from $V_{X}$ to $V_{X^{'}}$ that map $y_{1},y_{2},\partial_{y_{1}},\partial_{y_{2}}$ to zero and are the identity on all the other elements. We consider the corresponding evaluation maps from $G_{X}$ to $G_{X^{'}}$. We can compose these maps with $\nabla_{2}$ when $X=A,B$ and obtain new maps, that we still call $\nabla_{2}$, from $G_{A}$ to $G_{D^{'}}$ and from $G_{B}$ to $G_{C^{'}}$ respectively. \\
We consider also the map from $G_{A^{'}}$ to $G_{D}$ (resp. from $G_{B^{'}}$ to $G_{C}$) that is the composition of $\nabla_{2}:G_{A^{'}} \longrightarrow G_{D^{'}}$ (resp. $\nabla_{2}:G_{B^{'}} \longrightarrow G_{C^{'}}$) and the inclusion of $G_{D^{'}}$ into $G_{D}$ (resp. $G_{C^{'}}$ into $G_{C}$); we will call also this composition $\nabla_{2}$.
 We let:
\begin{align*}
&G_{A^{\circ}}=\Ker (\nabla_{2}:G_{A} \longrightarrow G_{D^{'}} ) , \quad G_{D^{\circ}}=\CoKer (\nabla_{2}:G_{A^{'}} \longrightarrow G_{D} ),\\
&G_{B^{\circ}}=\Ker (\nabla_{2}:G_{B} \longrightarrow G_{C^{'}} ) ,\quad G_{C^{\circ}}=\CoKer (\nabla_{2}:G_{B^{'}} \longrightarrow G_{C} ).
\end{align*}
\begin{rem}
\label{rem1Gcirc}
The map $\nabla$ is still defined on $G_{X^{\circ}}$ since $\nabla \nabla_{2}= \nabla_{2}\nabla=0$.\\
The bigrading \eqref{bigrading} induces a bigrading also on the $G_{X^{\circ}}$'s. We point out that $G_{A}^{m,n}=G^{m,n}_{A^{\circ}} $ for $n>0$, $G_{D}^{m,n}=G^{m,n}_{D^{\circ}} $ for $n<0$, $G_{B}^{m,n}=G^{m,n}_{B^{\circ} }$ for $n>0$ and $G_{C}^{m,n}=G^{m,n}_{C^{\circ}} $ for $n<0$.\\
The complexes $(G_{X^{\circ}},\nabla)$ start or end at the axes of Figure \ref{figura}; thus for $m,n \in \Z$:
\begin{align*}
H^{m,n}(G_{A^{\circ}})&=\frac{G^{m,n}_{A^{\circ}}}{\Ima (\nabla:G^{m+1,n+1}_{A^{\circ}}\longrightarrow  G^{m,n}_{A^{\circ}})}             \quad  \text{for} \,\,\, m=0 \,\, \text{or} \,\, n=0;\\
H^{m,n}(G_{D^{\circ}})&=
\begin{cases}
\frac{G^{m,n}_{D^{\circ}}}{\Ima (\nabla:G^{m+1,n+1}_{D^{\circ}}\longrightarrow  G^{m,n}_{D^{\circ}})}             \quad  \text{for} \,\,\, m=0;\\
\Ker (\nabla:G^{m,n}_{D^{\circ}}\longrightarrow  G^{m-1,n-1}_{D^{\circ}})            \quad  \text{for} \,\,\, n=0;
\end{cases}\\
H^{m,n}(G_{B^{\circ}})&=
\begin{cases}
\Ker (\nabla:G^{m,n}_{B^{\circ}}\longrightarrow  G^{m-1,n-1}_{B^{\circ}})            \quad  \text{for} \,\,\, m=0;\\
\frac{G^{m,n}_{B^{\circ}}}{\Ima (\nabla:G^{m+1,n+1}_{B^{\circ}}\longrightarrow  G^{m,n}_{B^{\circ}})} \quad  \text{for} \,\,\, n=0;
\end{cases}\\
H^{m,n}(G_{C^{\circ}})&=\Ker (\nabla:G^{m,n}_{C^{\circ}}\longrightarrow  G^{m-1,n-1}_{C^{\circ}})   \quad  \text{for} \,\,\, m=0 \,\, \text{or} \,\, n=0.
\end{align*}
\end{rem}
\begin{rem}
\label{rem2Gcirc}
The following relations are a direct consequence of the definition of the $G_{X^{\circ}}$'s and Remark \ref{rem1Gcirc}:
\begin{align*}
H^{m,n}(G_{A})&=H^{m,n}(G_{A^{\circ}}) \quad \text{for} \,\,\, m>0,n\geq0;\\
H^{m,n}(G_{D})&=H^{m,n}(G_{D^{\circ}}) \quad \text{for} \,\,\, m>0,n \leq 0;\\
H^{m,n}(G_{B})&=H^{m,n}(G_{B^{\circ}})\quad \text{for} \,\,\, m<0,n \geq 0;\\
H^{m,n}(G_{C})&=H^{m,n}(G_{C^{\circ}}) \quad \text{for} \,\,\, m<0,n \leq 0.
\end{align*}
\end{rem}
Motivated by Remark \ref{rem2Gcirc} and Proposition \ref{keyhomologyspectral}, we study the homology of the complexes $G_{X^{\circ}}$'s.\\
We therefore introduce an additional bigrading as follows:
\begin{align}
\label{bigradingpq}
&(V_{X})_{[p,q]}=\left\{f \in V_{X} \, : \, y_{1}\partial_{y_{1}}f=pf \, \, \text{and} \, \, y_{2}\partial_{y_{2}}f=qf\right\},\\ \nonumber
&(G_{X})_{[p,q]}=\displaywedge(\g_{-1}) \otimes (V_{X})_{[p,q]}.
\end{align}
The definition can be extended also to $G_{X^{\circ}}$. We point out that this new bigrading is related to the bigrading \eqref{bigrading} by the equation $p+q=n$.\\
We have that $d':=\Delta^{+}\partial_{y_{1}}:(G_{X})_{[p,q]}\longrightarrow  (G_{X})_{[p-1,q]}$, $d'':=\Delta^{-}\partial_{y_{2}}:(G_{X})_{[p,q]}\longrightarrow  (G_{X})_{[p,q-1]}$ and $d=d'+d''=\nabla:\oplus_{m}G^{m,n}_{X} \longrightarrow \oplus_{m}G^{m,n-1}_{X}$. \\
We know, by Remark \ref{nabla+nabla-}, that ${d'}^{2}={d''}^{2}=d'd''+d''d'=0$. Therefore $\oplus_{m}G^{m}_{X}$ and $\oplus_{m}G^{m}_{X^{\circ}}$, with the bigrading \eqref{bigradingpq}, are bicomplexes with differentials  $d'$, $d''$ and total differential $\nabla=d'+d''$.\\
Now let:
\begin{align*}
\displaywedge_{+}^{i}=\displaywedge^{i}\langle w_{11},w_{21} \rangle \quad \text{and} \quad  \displaywedge_{-}^{i}=\displaywedge^{i}\langle w_{12},,w_{22}\rangle.
\end{align*}
We point out that $\inlinewedge_{+}^{i}$ and $\inlinewedge_{-}^{i}$ are isomorphic as $\langle x_{1}\partial_{x_{1}}-x_{2}\partial_{x_{2}}, x_{1}\partial_{x_{2}},x_{2}\partial_{x_{1}}\rangle-$modules; therefore, in the following, we will often write $\inlinewedge^{i}$ when we are speaking of the $\langle x_{1}\partial_{x_{1}}-x_{2}\partial_{x_{2}}, x_{1}\partial_{x_{2}},x_{2}\partial_{x_{1}}\rangle-$ module isomorphic to $\inlinewedge_{+}^{i}$ and $\inlinewedge_{-}^{i}$.\\
We introduce the following notation, for all $a,b \in \Z$ and $p,q\geq 0$:
\begin{align*}
&G_{A}(a,b)_{[p,q]}=\displaywedge_{+}^{a-p}\displaywedge_{-}^{b-q} \otimes \C \left[x_{1},x_{2}\right]y_{1}^{p}y_{2}^{q}, \quad \quad \,\,\,\, &&G_{B}(a,b)_{[p,q]}=\displaywedge_{+}^{a-p}\displaywedge_{-}^{b-q} \otimes \C \left[\partial_{x_{1}},\partial_{x_{2}}\right]y_{1}^{p}y_{2}^{q},
\end{align*}
and for all $a,b \in \Z$, $p,q\leq 0$:
\begin{align*}
&G_{D}(a,b)_{[p,q]}=\displaywedge_{+}^{a-p}\displaywedge_{-}^{b-q} \otimes \C \left[x_{1},x_{2}\right]\partial_{y_{1}}^{-p}\partial_{y_{2}}^{-q}, \quad &&G_{C}(a,b)_{[p,q]}=\displaywedge_{+}^{a-p}\displaywedge_{-}^{b-q}\otimes \C \left[\partial_{x_{1}},\partial_{x_{2}}\right]\partial_{y_{1}}^{-p}\partial_{y_{2}}^{-q} .
\end{align*}
From now on we will use the notation $\inlinewedge_{\pm}^{i}[x_{1},x_{2}]$ (resp. $\inlinewedge_{\pm}^{i}[\partial_{x_{1}},\partial_{x_{2}}]$) for $\inlinewedge_{\pm}^{i} \otimes \C [x_{1},x_{2}]$ (resp. $\inlinewedge_{\pm}^{i} \otimes \C [\partial_{x_{1}},\partial_{x_{2}}]$).\\
We point out that, as $\langle x_{1}\partial_{x_{1}}-x_{2}\partial_{x_{2}}, x_{1}\partial_{x_{2}},x_{2}\partial_{x_{1}}\rangle-$modules, $G_{X}=\oplus_{a,b}G_{X}(a,b)$, where $G_{X}(a,b)=\oplus _{p,q}G_{X}(a,b)_{[p,q]}$.\\ Similarly we define $G_{X^{\circ}}(a,b)_{[p,q]}$ and, as $\langle x_{1}\partial_{x_{1}}-x_{2}\partial_{x_{2}}, x_{1}\partial_{x_{2}},x_{2}\partial_{x_{1}}\rangle-$modules, $G_{X^{\circ}}=\oplus_{a,b}G_{X^{\circ}}(a,b)$, where $G_{X^{\circ}}(a,b)=\oplus _{p,q}G_{X^{\circ}}(a,b)_{[p,q]}$. \\
We observe that $\nabla: G_{X}(a,b)\rightarrow G_{X}(a,b)$ (resp. $\nabla: G_{X^{\circ}}(a,b) \rightarrow G_{X^{\circ}}(a,b)$) and therefore $G_{X}(a,b)$ (resp. $G_{X^{\circ}}(a,b)$) is a subcomplex of $G_{X}$ (resp. $G_{X^{\circ}}$); the $G_{X}(a,b)$'s and $G_{X^{\circ}}(a,b)$'s are bicomplexes, with the bigrading induced by \eqref{bigradingpq} and differentials $d'=\Delta^{+}\partial_{y_{1}}$ and $d''=\Delta^{-}\partial_{y_{2}}$. The computation of homology spaces of $G_{X}$ and $G_{X^{\circ}}$ can be reduced to the computation for $G_{X}(a,b)$ and $G_{X^{\circ}}(a,b)$.
In the following lemmas we compute the homology of the $G_{X^{\circ}}(a,b)$'s. We start with the homology of the $G_{X^{\circ}}(a,b)$'s when either $a$ or $b$ do not lie in $\left\{0,1,2\right\}$. To prove the following results we will use Proposition \ref{spectralbicomplex}.
\begin{lem} 
\label{lemmi 4.3 4.4 4.5}
Let us suppose that $a>2$ or $b>2$. Let $k=max(a,b)$.\\
Then as $\langle x_{1}\partial_{x_{1}}-x_{2}\partial_{x_{2}}, x_{1}\partial_{x_{2}},x_{2}\partial_{x_{1}}\rangle-$modules:
\begin{align*}
H^{m,n}(G_{A^{\circ}}(a,b))&=H^{m,n}(G_{A}(a,b))\cong
\begin{cases}
0 \, \,\,\,\, &\text{if} \, \, \, m>0   \, \,\, \text{or} \, \,\, (m=0 ,\, \,\, n< k), \\
\inlinewedge^{a+b-n}  \,\, \,\,\,  &\text{if} \, \,\, m=0 ,\, \,\, n\geq k .
\end{cases}
\end{align*}
Let us suppose that $a<0$ or $b<0$. Let $k=min(a,b)$.\\
Then, as $\langle x_{1}\partial_{x_{1}}-x_{2}\partial_{x_{2}}, x_{1}\partial_{x_{2}},x_{2}\partial_{x_{1}} \rangle-$modules:
\begin{align*}
H^{m,n}(G_{D^{\circ}}(a,b))&=H^{m,n}(G_{D}(a,b))\cong
\begin{cases}
0 \, \,\,\, \,&\text{if} \, \, \, m>0 \, \,\, \text{or} \, \,\, (m=0, \, \, \, n> k),\\
\inlinewedge^{a+b-n}  \, \,\,\,\,  &\text{if} \, \,\, m=0, \, \, \, n\leq k;
\end{cases}\\
H^{m,n}(G_{C^{\circ}}(a,b))&=H^{m,n}(G_{C}(a,b))\cong
\begin{cases}
0 \, \, &\text{if} \, \, \, m<0 \, \,\,\text{or} \, \,\, (m=0, \, \,\, n> k -2),\\
\inlinewedge^{a+b-n-2}  \, \,  &\text{if} \, \,\, m=0, \, \,\, n\leq k -2.
\end{cases}
\end{align*}
\begin{proof}
We first observe that if $a>2$ or $b>2$ (resp. $ a<0$ or $ b<0$), then $G_{X^{\circ}}(a,b)=G_{X}(a,b)$ for $X=A$ (resp. $X=C,D$); indeed they differ only when $p+q=0$, that does not occur in this case. We prove the thesis in the case $b>2$ for $X=A$ and $b<0$ for $X=C,D$; the case $a>2$ for $X=A$ and $a<0$ for $X=C,D$ can be proved analogously using the second spectral sequence instead of the first one.\\

\textbf{Case A)} Let us consider $G_{A^{\circ}}(a,b)$ with the differential $d''=\Delta^{-}\partial_{y_{2}}$:
\begin{align*}
...\xleftarrow[]{\Delta^{-}\partial_{y_{2}}}\displaywedge_{+}^{a-p}\displaywedge_{-}^{b-q+1}\left[x_{1},x_{2}\right]y_{1}^{p}y_{2}^{q-1} \xleftarrow[]{\Delta^{-}\partial_{y_{2}}} &\displaywedge_{+}^{a-p}\displaywedge_{-}^{b-q}\left[x_{1},x_{2}\right]y_{1}^{p}y_{2}^{q} \\
&\xleftarrow[]{\Delta^{-}\partial_{y_{2}}} \displaywedge_{+}^{a-p}\displaywedge_{-}^{b-q-1}\left[x_{1},x_{2}\right]y_{1}^{p}y_{2}^{q+1}  \xleftarrow[]{\Delta^{-}\partial_{y_{2}}}... \,.
\end{align*}
This complex is the tensor product of $\inlinewedge_{+}^{a-p}y_{1}^{p}$ and the following complex, since $\inlinewedge_{+}^{a-p}y_{1}^{p}$ is not involved by $d''$:
\begin{align*}
0 \xleftarrow[]{\Delta^{-}\partial_{y_{2}}}\displaywedge_{-}^{2}\left[x_{1},x_{2}\right]y_{2}^{b -2} \xleftarrow[]{\Delta^{-}\partial_{y_{2}}} \displaywedge_{-}^{1}\left[x_{1},x_{2}\right]y_{2}^{b-1} \xleftarrow[]{\Delta^{-}\partial_{y_{2}}} \displaywedge_{-}^{0}\left[x_{1},x_{2}\right]y_{2}^{b}  \xleftarrow[]{\Delta^{-}\partial_{y_{2}}} 0.
\end{align*}
We now show that this complex is exact except for the right end, in which the homology space is $\C y_{2}^{b}$. Indeed:
\begin{description}[leftmargin=0cm]
	\item[i] let us consider the map $\Delta^{-}\partial_{y_{2}}:\inlinewedge_{-}^{0}\left[x_{1},x_{2}\right]y_{2}^{b}  \longrightarrow \inlinewedge_{-}^{1}\left[x_{1},x_{2}\right]y_{2}^{b-1}  $. We compute the kernel. Let $p(x_{1},x_{2})y_{2}^{b} \in \inlinewedge_{-}^{0}\left[x_{1},x_{2}\right]y_{2}^{b}  $. Then
\begin{align*}
\Delta^{-}\partial_{y_{2}}(p(x_{1},x_{2})y_{2}^{b})=w_{12}\otimes \partial_{x_{1}}p(x_{1},x_{2})b y_{2}^{b-1}+w_{22}\otimes \partial_{x_{2}}p(x_{1},x_{2})b y_{2}^{b-1}
\end{align*}
is zero if and only if $p$ is constant. Hence the kernel is $\C y_{2}^{b} $.
\item[ii] Let us consider the map $\Delta^{-}\partial_{y_{2}}:\inlinewedge_{-}^{1}\left[x_{1},x_{2}\right]y_{2}^{b-1}  \longrightarrow \inlinewedge_{-}^{2}\left[x_{1},x_{2}\right]y_{2}^{b-2}$. We compute the kernel. Let $w_{12}\otimes p_{1}(x_{1},x_{2})y_{2}^{b-1}+w_{22}\otimes p_{2}(x_{1},x_{2})y_{2}^{b-1}\in \inlinewedge_{-}^{1}\left[x_{1},x_{2}\right]y_{2}^{b-1}  $. Then
\begin{align*}
&\Delta^{-}\partial_{y_{2}}(w_{12}\otimes p_{1}(x_{1},x_{2})y_{2}^{b-1}+w_{22}\otimes p_{2}(x_{1},x_{2})y_{2}^{b-1})\\
&\quad =w_{12}w_{22}\otimes \partial_{x_{2}}p_{1}(x_{1},x_{2})(b-1) y_{2}^{b-2}+w_{22}w_{12}\otimes \partial_{x_{1}}p_{2}(x_{1},x_{2})(b-1) y_{2}^{b-2}
\end{align*}
is zero if and only if $\partial_{x_{2}}p_{1}(x_{1},x_{2})=\partial_{x_{1}}p_{2}(x_{1},x_{2})$, that implies $p_{1}(x_{1},x_{2})=\int \partial_{x_{1}}p_{2}(x_{1},x_{2})dx_{2}$. Hence an element of the kernel is such that:
\begin{align*}
w_{12}\otimes \Big(\int\partial_{x_{1}}p_{2}(x_{1},x_{2})dx_{2}\Big)y_{2}^{b-1}+w_{22}\otimes p_{2}(x_{1},x_{2})y_{2}^{b-1} =\Delta^{-}\partial_{y_{2}}\Big(\Big(\int p_{2}(x_{1},x_{2})dx_{2}\Big)\frac{y_{2}^{b}}{b}\Big).
\end{align*}
Thus at this point the sequence is exact.
\item[iii] We consider the map $\Delta^{-}\partial_{y_{2}}:\inlinewedge_{-}^{2}\left[x_{1},x_{2}\right]y_{2}^{b-2}  \longrightarrow 0$.
We have that:
\begin{align*}
w_{12}w_{22}\otimes p(x_{1},x_{2})y_{2}^{b-2}= \Delta^{-}\partial_{y_{2}} \Big(w_{12}\otimes \Big(\int p(x_{1},x_{2})dx_{2} \Big)\frac{y_{2}^{b-1}}{b-1} \Big).
\end{align*}
Thus at this point the sequence is exact.
\end{description}
Since the original complex was the tensor product with $\inlinewedge_{+}^{a-p}y_{1}^{p}$, we have that the non zero homology space is $\inlinewedge_{+}^{a-p}y_{1}^{p}y_{2}^{b}$ and $E^{'1}_{p,q}(G_{A^{\circ}}(a,b))$ survives only for $q=b$. Now we should compute its homology with respect to $d'$, but the $E^{'1}_{p,q}(G_{A^{\circ}}(a,b))$'s do not involve $x_{1},x_{2}$, so the differentials $d'$'s are zero and we have $E^{'2}=E^{'1}$. Moreover, for a one$-$row spectral sequence, we know that $E^{'2}=...=E^{'\infty}$ since, for all $r\geq 2$ and all $p\in \Z$, $d^{r}_{p,b}$ has bidegree $(-r,r-1)$, i.e. $d^{r}_{p,b}:E^{r}_{p,b}\longrightarrow E^{r}_{p-r,b+r-1}=0$, $d^{r}_{p+r,b-r+1}:E^{r}_{p+r,b-r+1}=0\longrightarrow E^{r}_{p,b}$. Therefore:
\begin{align*}
E^{'\infty}_{p,q}(G_{A}(a,b))=
\begin{cases}
0 \quad & \text{if} \,\,\, q \neq b,\\
\inlinewedge_{+}^{a-p}y_{1}^{p}y_{2}^{b} \,\,\,\, & \text{if} \,\,\, q=b.
\end{cases}
\end{align*}
We observe that the first filtration $(F'_{p} (G_{A}(a,b)))_{n}=\sum_{h\leq p}(G_{A}(a,b))_{[h,n-h]}$ is bounded below, since $F'_{-1}=0$, and it is convergent above. Therefore by Proposition \ref{spectralbicomplex}:
\begin{align*}
\sum_{m}H^{m,n}(G_{A}(a,b))\cong \sum_{p+q=n}E^{'\infty}_{p,q}(G_{A}(a,b))=E^{'\infty}_{n-b,b}(G_{A}(a,b))\cong \displaywedge_{+}^{a+b-n}y_{1}^{n-b}y_{2}^{b}.  
\end{align*}
Since there are no $x_{1}$'s and $x_{2}$'s involved, this means that $H^{m,n}(G_{A}(a,b))=0$ if $m \neq 0$ and $H^{0,n}(G_{A}(a,b))=\inlinewedge_{+}^{a+b-n}y_{1}^{n-b}y_{2}^{b} \cong\inlinewedge^{a+b-n} $ as $\langle x_{1}\partial_{x_{1}}-x_{2}\partial_{x_{2}}, x_{1}\partial_{x_{2}},x_{2}\partial_{x_{1}}\rangle-$modules.\\

\textbf{Case D)} In the case of $G_{D}(a,b)$, using the same argument, when $ b <0$ we obtain:
\begin{align*}
E^{'\infty}_{p,q}(G_{D}(a,b))=
\begin{cases}
0 \quad & \text{if} \,\,\, q \neq b,\\
\inlinewedge_{+}^{a-p}\partial_{y_{1}}^{-p}\partial_{y_{2}}^{-b} \,\,\,\, & \text{if} \,\,\, q=b.
\end{cases}
\end{align*}
Therefore:
\begin{align*}
\sum_{m}H^{m,n}(G_{D}(a,b))\cong \sum_{p+q=n}E^{'\infty}_{p,q}(G_{D}(a,b))=E^{'\infty}_{n-b,b}(G_{D}(a,b))\cong \displaywedge_{+}^{a+b-n}\partial_{y_{1}}^{-n+b}\partial_{y_{2}}^{-b} . 
\end{align*}
Since there are no $x_{1}$'s and $x_{2}$'s involved, this means that $H^{m,n}(G_{D}(a,b))=0$ if $m \neq 0$ and $H^{0,n}(G_{D}(a,b))=\inlinewedge_{+}^{a+b-n}\partial_{y_{1}}^{-n+b}\partial_{y_{2}}^{-b}    \cong\inlinewedge^{a+b-n} $ as $\langle x_{1}\partial_{x_{1}}-x_{2}\partial_{x_{2}}, x_{1}\partial_{x_{2}},x_{2}\partial_{x_{1}}\rangle-$modules.\\
\textbf{Case C)} 
 In the case of $G_{C}(a,b)$ when $ b <0$ we have the following complex with the differential $d''=\Delta^{-}\partial_{y_{2}}$:
\begin{align*}
\leftarrow \displaywedge_{+}^{a-p}\displaywedge_{-}^{b-q+1}\left[\partial_{x_{1}},\partial_{x_{2}}\right]\partial_{y_{1}}^{-p}\partial_{y_{2}}^{-q+1} \xleftarrow[]{\Delta^{-}\partial_{y_{2}}}& \displaywedge_{+}^{a-p}\displaywedge_{-}^{b-q}\left[\partial_{x_{1}},\partial_{x_{2}}\right]\partial_{y_{1}}^{-p}\partial_{y_{2}}^{-q}\\
& \xleftarrow[]{\Delta^{-}\partial_{y_{2}}} \displaywedge_{+}^{a-p} \displaywedge_{-}^{b-q-1}\left[\partial_{x_{1}},\partial_{x_{2}}\right]\partial_{y_{1}}^{-p}\partial_{y_{2}}^{-q-1}\leftarrow  .
\end{align*}
This complex is the tensor product of $\inlinewedge_{+}^{a-p}\partial_{y_{1}}^{-p}$ and the following complex, since $\inlinewedge_{+}^{a-p}\partial_{y_{1}}^{-p}$ is not involved by $d''$:
\begin{align*}
0 \xleftarrow[]{\Delta^{-}\partial_{y_{2}}}\displaywedge_{-}^{2}\left[\partial_{x_{1}},\partial_{x_{2}}\right]\partial_{y_{2}}^{-b +2} \xleftarrow[]{\Delta^{-}\partial_{y_{2}}} \displaywedge_{-}^{1}\left[\partial_{x_{1}},\partial_{x_{2}}\right]\partial_{y_{2}}^{-b+1} \xleftarrow[]{\Delta^{-}\partial_{y_{2}}} \displaywedge_{-}^{0}\left[\partial_{x_{1}},\partial_{x_{2}}\right]\partial_{y_{2}}^{-b}  \xleftarrow[]{\Delta^{-}\partial_{y_{2}}} 0.
\end{align*}
We now show that this complex is exact except for the left end, in which the homology space is $\C \inlinewedge_{-}^{2} \partial_{y_{2}}^{-b+2}$. Indeed:
\begin{description}[leftmargin=0cm]
	\item[i] let us consider the map $\Delta^{-}\partial_{y_{2}}:\inlinewedge_{-}^{0}\left[\partial_{x_{1}},\partial_{x_{2}}\right]\partial_{y_{2}}^{-b}  \longrightarrow \inlinewedge_{-}^{1}\left[\partial_{x_{1}},\partial_{x_{2}}\right]\partial_{y_{2}}^{-b+1}  $. We compute the kernel. Let $p(\partial_{x_{1}},\partial_{x_{2}})\partial_{y_{2}}^{-b} \in \inlinewedge_{-}^{0}\left[\partial_{x_{1}},\partial_{x_{2}}\right]\partial_{y_{2}}^{-b}  $. Then
\begin{align*}
\Delta^{-}\partial_{y_{2}}(p(\partial_{x_{1}},\partial_{x_{2}})\partial_{y_{2}}^{-b})=w_{12}\otimes \partial_{x_{1}}p(\partial_{x_{1}},\partial_{x_{2}})\partial_{y_{2}}^{-b+1}+w_{22}\otimes \partial_{x_{2}}p(\partial_{x_{1}},\partial_{x_{2}}) \partial_{y_{2}}^{-b+1}
\end{align*}
is zero if and only if $\partial_{x_{1}}p(\partial_{x_{1}},\partial_{x_{2}})=\partial_{x_{2}}p(\partial_{x_{1}},\partial_{x_{2}})=0$, that is $p=0$. Hence the kernel is $0 $.
\item[ii] Let us consider the map $\Delta^{-}\partial_{y_{2}}:\inlinewedge_{-}^{1}\left[\partial_{x_{1}},\partial_{x_{2}}\right]\partial_{y_{2}}^{-b+1}  \longrightarrow \inlinewedge_{-}^{2}\left[\partial_{x_{1}},\partial_{x_{2}}\right]\partial_{y_{2}}^{-b+2}$. We compute the kernel. Let $w_{12}\otimes p_{1}(\partial_{x_{1}},\partial_{x_{2}})\partial_{y_{2}}^{-b+1}+w_{22}\otimes p_{2}(\partial_{x_{1}},\partial_{x_{2}})\partial_{y_{2}}^{-b+1}\in \inlinewedge_{-}^{1}\left[\partial_{x_{1}},\partial_{x_{2}}\right]\partial_{y_{2}}^{-b+1}  $. Then
\begin{align*}
\Delta^{-}\partial_{y_{2}}(w_{12}\otimes p_{1}\partial_{y_{2}}^{-b+1}+&w_{22}\otimes p_{2}\partial_{y_{2}}^{-b+1})=w_{12}w_{22}\otimes \partial_{x_{2}}p_{1} \partial_{y_{2}}^{-b+2}+w_{22}w_{12}\otimes \partial_{x_{1}}p_{2} \partial_{y_{2}}^{-b+2}
\end{align*}
is zero if and only if $\partial_{x_{2}}p_{1}(\partial_{x_{1}},\partial_{x_{2}})=\partial_{x_{1}}p_{2}(\partial_{x_{1}},\partial_{x_{2}})$, that is $p_{1}(\partial_{x_{1}},\partial_{x_{2}})=\frac{\partial_{x_{1}}p_{2}(\partial_{x_{1}},\partial_{x_{2}})}{\partial_{x_{2}}}$ (in particular $p_{2}$ has positive degree in $\partial_{x_{2}}$). Therefore an element of the kernel is such that:
\begin{align*}
w_{12}\otimes \frac{\partial_{x_{1}}p_{2}(\partial_{x_{1}},\partial_{x_{2}})}{\partial_{x_{2}}} \partial_{y_{2}}^{-b+1}+w_{22}\otimes p_{2}(\partial_{x_{1}},\partial_{x_{2}})\partial_{y_{2}}^{-b+1}=\Delta^{-}\partial_{y_{2}}\Big(\frac{p_{2}(\partial_{x_{1}},\partial_{x_{2}})}{\partial_{x_{2}}}\partial_{y_{2}}^{-b} \Big).
\end{align*}
Thus at this point the sequence is exact.
\item[iii] Let us consider the map $\Delta^{-}\partial_{y_{2}}:\inlinewedge_{-}^{2}\left[\partial_{x_{1}},\partial_{x_{2}}\right]\partial_{y_{2}}^{-b+2}  \longrightarrow 0$. Let $w_{12}w_{22}\otimes p(\partial_{x_{1}},\partial_{x_{2}})\partial_{y_{2}}^{-b+2}\in \inlinewedge_{-}^{2}\left[\partial_{x_{1}},\partial_{x_{2}}\right]\partial_{y_{2}}^{-b+2}  $. If $p$ has positive degree in $\partial_{x_{1}}$, then:
\begin{align*}
w_{12}w_{22}\otimes p(\partial_{x_{1}},\partial_{x_{2}})\partial_{y_{2}}^{-b+2}=\Delta^{-}\partial_{y_{2}}\Big(-w_{22}\otimes\frac{ p(\partial_{x_{1}},\partial_{x_{2}})}{\partial_{x_{1}}}\partial_{y_{2}}^{-b+1}\Big).
\end{align*}
If $p$ has positive degree in $\partial_{x_{2}}$, then:
\begin{align*}
w_{12}w_{22}\otimes p(\partial_{x_{1}},\partial_{x_{2}})\partial_{y_{2}}^{-b+2}=\Delta^{-}\partial_{y_{2}}\Big(w_{12}\otimes\frac{ p(\partial_{x_{1}},\partial_{x_{2}})}{\partial_{x_{2}}}\partial_{y_{2}}^{-b+1}\Big).
\end{align*}
If $p$ is constant, it does not belong to the image of $\Delta^{-}\partial_{y_{2}}$. Therefore the homology space is isomorphic to $\C \inlinewedge_{-}^{2}\partial_{y_{2}}^{-b+2}$.
\end{description}
Since the original complex was the tensor product with $\inlinewedge_{+}^{a-p}\partial_{y_{1}}^{-p}$, then the non zero homology space is $\inlinewedge_{+}^{a-p}\inlinewedge_{-}^{2}\partial_{y_{1}}^{-p}\partial_{y_{2}}^{-b+2}$ and $E^{'1}_{p,q}(G_{C}(a,b))$ survives only for $q=b-2$. 
Moreover $E^{'1}_{p,q}\cong E^{'2}_{p,q}$ because the map $d'$ is 0 on  the $E^{'1}_{p,q}$'s (the image of the map $d'$ always involves elements of positive degree in $\partial_{x_{1}}$ or $\partial_{x_{2}}$ that are 0 in $E^{'1}_{p,q}$ for the previous computations).\\
Since we have a one row spectral sequence, then $E^{'2}=...=E^{'\infty}$. Therefore:
\begin{align*}
E^{'\infty}_{p,q}(G_{C}(a,b))=
\begin{cases}
0 \quad & \text{if} \,\,\, q \neq b-2,\\
\inlinewedge_{+}^{a-p}\inlinewedge_{-}^{2}\partial_{y_{1}}^{-p}\partial_{y_{2}}^{-b+2} \,\,\,\, & \text{if} \,\,\, q=b-2.
\end{cases}
\end{align*}
We observe that the first filtration $(F'_{p} (G_{C}(a,b)))_{n}=\sum_{h\leq p}(G_{C}(a,b))_{[h,n-h]}$ is bounded below, since $F'_{n-1}=0$, and it is convergent above. Therefore by Proposition \ref{spectralbicomplex}:
\begin{align*}
\sum_{m}H^{m,n}(G_{C}(a,b))\cong \sum_{p+q=n}E^{'\infty}_{p,q}(G_{C}(a,b))=E^{'\infty}_{n-b+2,b-2}(G_{C}(a,b))\cong \displaywedge_{+}^{a+b-n-2} \displaywedge_{-}^{2}\partial_{y_{1}}^{-n+b-2}\partial_{y_{2}}^{-b+2} . 
\end{align*}
Since there are no $\partial_{x_{1}}$'s and $\partial_{x_{2}}$'s involved, this means that $H^{m,n}(G_{C}(a,b))=0$ if $m \neq 0$ and $H^{0,n}(G_{C}(a,b))=\inlinewedge_{+}^{a+b-n-2}\inlinewedge_{-}^{2}\partial_{y_{1}}^{-n+b-2}\partial_{y_{2}}^{-b+2} \cong  \inlinewedge^{a+b-n-2} $ as $\langle x_{1}\partial_{x_{1}}-x_{2}\partial_{x_{2}}, x_{1}\partial_{x_{2}},x_{2}\partial_{x_{1}}\rangle-$modules.
\end{proof}
\end{lem}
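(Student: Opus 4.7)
The plan is to reduce the computation of $H^{m,n}(G_{X^{\circ}}(a,b))$ to a manageable Koszul-type complex by exploiting the bicomplex structure and the spectral sequence machinery developed in Section~5.

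First I would observe that, under the hypotheses in each case, the evaluation map used to define $G_{X^{\circ}}$ does not affect the relevant bidegrees: for $X=A$ with $b>2$ (or $a>2$), and for $X=C,D$ with $b<0$ (or $a<0$), the submodule of $G_X(a,b)$ in bidegree $p=q=0$ is empty, so $G_{X^{\circ}}(a,b)=G_X(a,b)$. Thus it suffices to compute $H^{m,n}(G_X(a,b))$.

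Next I would exploit the bicomplex structure. By Remark~\ref{nabla+nabla-}, the differentials $d'=\Delta^{+}\partial_{y_{1}}$ and $d''=\Delta^{-}\partial_{y_{2}}$ satisfy $(d')^2=(d'')^2=d'd''+d''d'=0$, with total differential $\nabla=d'+d''$. I would apply the first spectral sequence (Proposition~\ref{spectralbicomplex}), which converges to $H(G_X(a,b),\nabla)$ because the first filtration $(F'_p(G_X(a,b)))_n=\sum_{h\le p}(G_X(a,b))_{[h,n-h]}$ is bounded below (it vanishes for $p\ll 0$) and convergent above. The key reduction is that, for each fixed $p$, the columns of the bicomplex have the form
\[
\textrm{\raisebox{0.6mm}{\footnotesize $\bigwedge$}}_{+}^{a-p}\,y_{1}^{p}\;\otimes\;\Bigl(\textrm{\raisebox{0.6mm}{\footnotesize $\bigwedge$}}_{-}^{b-q}[x_{1},x_{2}]\,y_{2}^{q}\Bigr)
\]
(and analogously with $\partial_{y_i}$, $\partial_{x_i}$ in the other quadrants), and $d''$ acts only on the second factor. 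Hence $d''$-cohomology factors out.

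The heart of the argument is the direct computation of this short Koszul-like complex. For case A, the complex in the $\textrm{\raisebox{0.6mm}{\footnotesize $\bigwedge$}}_{-}$-direction has only three nonzero terms ($q=b,b-1,b-2$), and a case-by-case verification (checking kernel in degree $b$ is $\mathbb{C} y_{2}^{b}$, exactness at degree $b-1$ via antiderivatives in $x_{2}$, and exactness at the top) yields $H(d'')\cong \textrm{\raisebox{0.6mm}{\footnotesize $\bigwedge$}}_{+}^{a-p}y_{1}^{p}y_{2}^{b}$, concentrated at $q=b$. For case D the same computation applies with $\partial_{y_2}$ in place of $y_2$. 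For case C, the roles of kernel and image flip because the relevant polynomial ring is $\mathbb{C}[\partial_{x_1},\partial_{x_2}]$, which is not closed under formal integration; here the unique cohomology class is $\textrm{\raisebox{0.6mm}{\footnotesize $\bigwedge$}}_{-}^{2}\partial_{y_2}^{-b+2}$, concentrated at $q=b-2$.

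Finally I would run the rest of the spectral sequence: since $E^{1}$ is concentrated on a single horizontal row, the differentials $d^{r}$ for $r\ge 1$ either land in zero or come from zero, so $E^{1}=E^{2}=\cdots=E^{\infty}$. The convergence statement gives $\sum_{m}H^{m,n}(G_X(a,b))\cong E^{\infty}_{n-b,b}$ (or $E^{\infty}_{n-b+2,b-2}$ for $C$). Since the surviving cohomology classes involve no $x_{i}$ or $\partial_{x_{i}}$, all cohomology is concentrated in $m=0$, and the resulting $\textrm{\raisebox{0.6mm}{\footnotesize $\bigwedge$}}_{+}^{a+b-n}$ (or $\textrm{\raisebox{0.6mm}{\footnotesize $\bigwedge$}}_{+}^{a+b-n-2}$) is isomorphic to $\textrm{\raisebox{0.6mm}{\footnotesize $\bigwedge$}}^{a+b-n}$ (resp.\ $\textrm{\raisebox{0.6mm}{\footnotesize $\bigwedge$}}^{a+b-n-2}$) as a module over $\langle x_{1}\partial_{x_{1}}-x_{2}\partial_{x_{2}},x_{1}\partial_{x_{2}},x_{2}\partial_{x_{1}}\rangle$. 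The condition $n\ge k=\max(a,b)$ (resp.\ $n\le k=\min(a,b)$, etc.) arises because otherwise the exterior degree $a+b-n$ exceeds $2$ or is negative and forces the wedge power to be $0$. The main obstacle is not conceptual but bookkeeping: keeping track of the two independent copies $\textrm{\raisebox{0.6mm}{\footnotesize $\bigwedge$}}_{+}$ and $\textrm{\raisebox{0.6mm}{\footnotesize $\bigwedge$}}_{-}$ in the tensor decomposition, together with the shift in the $C$ case caused by the absence of constants in the image of $d''$, and arguing that $d'$ vanishes on $E^{1}$ (immediate in cases A, D, and requiring a short verification in case C because the surviving representatives have no positive degree in $\partial_{x_{i}}$).
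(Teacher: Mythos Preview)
Your proposal is correct and follows essentially the same approach as the paper: reduce to $G_X(a,b)$ since the $(p,q)=(0,0)$ piece is absent under the hypotheses, compute $E'^{1}$ via the short three-term Koszul complex in the $\textrm{\raisebox{0.6mm}{\footnotesize $\bigwedge$}}_{-}$-direction (with the factor $\textrm{\raisebox{0.6mm}{\footnotesize $\bigwedge$}}_{+}^{a-p}y_{1}^{p}$ split off), observe that $d'$ vanishes on $E'^{1}$ because the surviving classes have no $x_i$ (or only degree-zero $\partial_{x_i}$) content, and then use single-row degeneration to reach $E'^{\infty}$. One small wording issue: the claim that ``$d^{r}$ for $r\ge 1$ land in or come from zero'' is not literally true for $r=1$, since $d^{1}$ moves along the row; but you correctly identify the real reason ($d'$ annihilates the survivors) a few lines later, exactly as the paper does.
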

In Lemma \ref{lemmi 4.3 4.4 4.5} we computed the homology of the $G_{X^{\circ}}(a,b)$'s in the case that either $a$ or $b$ do not belong to $\left\{0,1,2\right\}$. In order to compute the homology of the $G_{X^{\circ}}(a,b)$'s in the case that both $a$ and $b$ belong to $\left\{0,1,2\right\}$, we need the following remark and lemmas.
\begin{rem}
\label{appoggio_lemma4.7}
We introduce some notation that will be used in the following lemmas. Let $0<b \leq2$.
Let us define:
\begin{align*}
\widetilde{G}_{A}(a, b)_{[p,q]}=
\begin{cases}
\inlinewedge_{+}^{a-p}\inlinewedge_{-}^{b-q}[x_{1},x_{2}] \,\,\,&\text{if} \,\,\, p\geq 0, q\geq 0,\\
0 \,\,\, &\text{otherwise}.
\end{cases}
\end{align*}
We have an isomorphism of bicomplexes $\gamma: G_{A}(a, b)_{[p,q]} \longrightarrow \widetilde{G}_{A}(a, b)_{[p,q]} $ which is the valuating map that values $y_{1}$ and $y_{2}$ in 1 and is the identity on all the other elements.
We consider on $\widetilde{G}_{A}(a, b)$ the differentials $d'=\Delta^{+}$ and $d''=\Delta^{-}$ induced by $\Delta^{+}\partial_{y_{1}}$ and $\Delta^{-}\partial_{y_{2}}$ for $G_{A}(a, b)$.
We also define:
\begin{align*}
G_{D^{'}}(a, b)_{[p,q]}=
\begin{cases}
\inlinewedge_{+}^{a+1}\inlinewedge_{-}^{b+1}[x_{1},x_{2}] \,\,\,&\text{if} \,\,\, p=q= 0,\\
0 \,\,\,&\text{otherwise}.
\end{cases}
\end{align*}
The following is a commutative diagram:\\
\begin{center}
\begin{tikzpicture}
\node[black] at (-1,0) {$G_{A}(a, b)$};
\draw[->,black] (0,0) -- (2,0); 
\node[black] at (3,0) {$G_{D^{'}}(a, b)$};
\draw[->,black] (-1,-0.5) -- (-1,-1.5);
\node[black] at (-1.5,-1) {$\gamma$};
\draw[->,black](3,-0.5) -- (3,-1.5);
\draw[->,black] (0,-2) -- (2,-2);
\node[black] at (3.5,-1) {$id$};
\node[black] at (1,0.5) {$\nabla_{2}$};
\node[black] at (1,-1.5) {$\Delta^{-}\Delta^{+}\tau_{1}$};
\node[black] at (-1,-2) {$\widetilde{G}_{A}(a, b)$};
\node[black] at (3,-2) {$G_{D^{'}}(a, b)$.};
\end{tikzpicture}
\end{center}
We have that $\widetilde{G}_{A^{\circ}}(a, b):=\Ker(\Delta^{-}\Delta^{+}\tau_{1}:\widetilde{G}_{A}(a, b) \longrightarrow G_{D^{'}}(a, b))$ is isomorphic, as a bicomplex, to $G_{A^{\circ}}(a, b)$. Its diagram is the same of $\widetilde{G}_{A}(a, b)$ except for $p=q=0$. The diagram of $\widetilde{G}_{A}(a, b)$ is the following, respectively for $a=0$, $a=1$, $a\geq 2$:
\begin{center}
\begin{tikzpicture}
\node[black] at (-7.5,0.5) {$a=0$};
\node[black] at (-7.5,0) {$\inlinewedge_{+}^{0}\inlinewedge_{-}^{0}[x_{1},x_{2}] $};
\node[black] at (-7.5,-2) {$\inlinewedge_{+}^{0}\inlinewedge_{-}^{b}[x_{1},x_{2}] $,};
\node[black] at (-7.5,-1){$\cdots$};
\draw[->,black] (-7.5,-0.4) -- (-7.5,-0.6);
\draw[->,black] (-7.5,-1.4) -- (-7.5,-1.6);
\node[black] at (-4.5,0) {$\inlinewedge_{+}^{1}\inlinewedge_{-}^{0}[x_{1},x_{2}] $};
\node[black] at (-2,0) {$\inlinewedge_{+}^{0}\inlinewedge_{-}^{0}[x_{1},x_{2}] $};
\draw[->,black] (-3.15,0) -- (-3.35,0);
\node[black] at (-4.5,-1){$\cdots$};
\node[black] at (-2,-1){$\cdots$};
\draw[->,black] (-4.5,-0.4) -- (-4.5,-0.6);
\draw[->,black] (-4.5,-1.4) -- (-4.5,-1.6);
\draw[->,black] (-2,-0.4) -- (-2,-0.6);
\draw[->,black] (-2,-1.4) -- (-2,-1.6);
\node[black] at (-3.25,0.5) {$a=1$};
\node[black] at (-4.5,-2) {$\inlinewedge_{+}^{1}\inlinewedge_{-}^{b}[x_{1},x_{2}] $};
\node[black] at (-2,-2) {$\inlinewedge_{+}^{0}\inlinewedge_{-}^{b}[x_{1},x_{2}] $,};
\draw[->,black] (-3.15,-2) -- (-3.35,-2);
\node[black] at (3.5,0.5) {$a \geq 2$};
\node[black] at (1,0) {$\inlinewedge_{+}^{2}\inlinewedge_{-}^{0}[x_{1},x_{2}] $};
\node[black] at (3.5,0){$\inlinewedge_{+}^{1}\inlinewedge_{-}^{0}[x_{1},x_{2}] $};
\node[black] at (6,0) {$\inlinewedge_{+}^{0}\inlinewedge_{-}^{0}[x_{1},x_{2}] $};
\draw[->,black] (2.35,0) -- (2.15,0);
\draw[->,black] (4.85,0) -- (4.65,0);
\node[black] at (1,-1){$\cdots$};
\node[black] at (3.5,-1){$\cdots$};
\node[black] at (6,-1){$\cdots$};
\draw[->,black] (1,-0.4) -- (1,-0.6);
\draw[->,black] (1,-1.4) -- (1,-1.6);
\draw[->,black] (3.5,-0.4) -- (3.5,-0.6);
\draw[->,black] (3.5,-1.4) -- (3.5,-1.6);
\draw[->,black] (6,-0.4) -- (6,-0.6);
\draw[->,black] (6,-1.4) -- (6,-1.6);
\node[black] at (1,-2) {$\inlinewedge_{+}^{2}\inlinewedge_{-}^{b}[x_{1},x_{2}] $};
\node[black] at (3.5,-2){$\inlinewedge_{+}^{1}\inlinewedge_{-}^{b}[x_{1},x_{2}] $};
\node[black] at (6,-2) {$\inlinewedge_{+}^{0}\inlinewedge_{-}^{b}[x_{1},x_{2}] $,};
\draw[->,black] (2.35,-2) -- (2.15,-2);
\draw[->,black] (4.85,-2) -- (4.65,-2);
\end{tikzpicture}
\end{center}
where the horizontal maps are $d'$ and the vertical maps are $d''$. The diagram of $\widetilde{G}_{A^{\circ}}(a, b)$ is analogous to this, except for $p=q=0$, where $\inlinewedge_{+}^{a}\inlinewedge_{-}^{b}[x_{1},x_{2}] $ is substituted by $\Ker(\Delta^{-}\Delta^{+}:\inlinewedge_{+}^{a}\inlinewedge_{-}^{b}[x_{1},x_{2}]  \longrightarrow \inlinewedge_{+}^{a+1}\inlinewedge_{-}^{b+1}[x_{1},x_{2}] )$, that we shortly call $\Ker(\Delta^{-}\Delta^{+})$ in the next diagram.\\ 
The $E'^{1}$ spectral sequence of $\widetilde{G}_{A^{\circ}}(a, b)$, i.e. the homology with respect to $\Delta^{-}$, is the following, respectively for $a=0$, $a=1$, $a\geq 2$, $b=1$ and $a=0$, $a=1$, $a\geq 2$, $b=2$ (the computation is analogous to Lemma \ref{lemmi 4.3 4.4 4.5}):
\begin{center}
\begin{tikzpicture}
\node[black] at (-7.5,0.7) {$a=0, b=1$};
\node[black] at (-7.5,0) {$\inlinewedge_{+}^{0}$};
\node[black] at (-7.5,-1) {$\frac{\Ker(\Delta^{-}\Delta^{+}) }{\Ima (\Delta^{-})} $,};
\draw[->,black] (-7.5,-0.3) -- (-7.5,-0.5);
\node[black] at (-3.25,0.7) {$a=1, b=1$};
\node[black] at (-4.5,0) {$\inlinewedge_{+}^{1}$};
\node[black] at (-2,0) {$\inlinewedge_{+}^{0} $};
\node[black] at (-4.5,-1) {$\frac{\Ker(\Delta^{-}\Delta^{+}) }{\Ima (\Delta^{-})} $};
\node[black] at (-2,-1) {$\frac{\inlinewedge_{+}^{0}\inlinewedge_{-}^{1}[x_{1},x_{2}]}{\Ima (\Delta^{-})},$};
\draw[->,black] (-3.15,0) -- (-3.35,0);
\draw[->,black] (-3.15,-1) -- (-3.35,-1);
\draw[->,black] (-4.5,-0.3) -- (-4.5,-0.5);
\draw[->,black] (-2,-0.3) -- (-2,-0.5);
\node[black] at (3.5,0.7) {$a \geq 2, b=1 $};
\node[black] at (1,0) {$\inlinewedge_{+}^{2}$};
\node[black] at (3.5,0) {$\inlinewedge_{+}^{1} $};
\node[black] at (6,0) {$\inlinewedge_{+}^{0} $};
\draw[->,black] (2.35,0) -- (2.15,0);
\draw[->,black] (4.85,0) -- (4.65,0);
\draw[->,black] (2.35,-1) -- (2.15,-1);
\draw[->,black] (4.85,-1) -- (4.65,-1);
\draw[->,black] (1,-0.3) -- (1,-0.5);
\draw[->,black] (3.5,-0.3) -- (3.5,-0.5);
\draw[->,black] (6,-0.3) -- (6,-0.5);
\node[black] at (1,-1) {$\frac{\inlinewedge_{+}^{2}\inlinewedge_{-}^{1}[x_{1},x_{2}] }{\Ima (\Delta^{-})} $};
\node[black] at (3.5,-1) {$\frac{\inlinewedge_{+}^{1}\inlinewedge_{-}^{1}[x_{1},x_{2}]}{\Ima (\Delta^{-})}$};
\node[black] at (6,-1) {$\frac{\inlinewedge_{+}^{0}\inlinewedge_{-}^{1}[x_{1},x_{2}]}{\Ima (\Delta^{-})}.$};
\end{tikzpicture}
\end{center}
\begin{center}
\begin{tikzpicture}
\node[black] at (-7.5,0.7) {$a=0, b=2$};
\node[black] at (-7.5,0) {$\inlinewedge_{+}^{0}$};
\node[black] at (-7.5,-2) {$\frac{\Ker(\Delta^{-}\Delta^{+}) }{\Ima (\Delta^{-})} $,};
\node[black] at (-7.5,-1){$0$};
\draw[->,black] (-7.5,-0.4) -- (-7.5,-0.6);
\draw[->,black] (-7.5,-1.3) -- (-7.5,-1.5);
\node[black] at (-3.25,0.7) {$a=1, b=2$};
\node[black] at (-4.5,0) {$\inlinewedge_{+}^{1}$};
\node[black] at (-2,0) {$\inlinewedge_{+}^{0} $};
\node[black] at (-4.5,-2) {$\frac{\Ker(\Delta^{-}\Delta^{+}) }{\Ima (\Delta^{-})} $};
\node[black] at (-2,-2) {$\frac{\inlinewedge_{+}^{0}\inlinewedge_{-}^{2}[x_{1},x_{2}]}{\Ima (\Delta^{-})},$};
\draw[->,black] (-3.15,0) -- (-3.35,0);
\draw[->,black] (-3.15,-2) -- (-3.35,-2);
\draw[->,black] (-3.15,-1) -- (-3.35,-1);
\node[black] at (-4.5,-1){$0$};
\node[black] at (-2,-1){$0$};
\draw[->,black] (-4.5,-0.4) -- (-4.5,-0.6);
\draw[->,black] (-4.5,-1.3) -- (-4.5,-1.5);
\draw[->,black] (-2,-0.4) -- (-2,-0.6);
\draw[->,black] (-2,-1.3) -- (-2,-1.5);
\node[black] at (3.5,0.7) {$a \geq 2, b=2 $};
\node[black] at (1,0) {$\inlinewedge_{+}^{2}$};
\node[black] at (3.5,0) {$\inlinewedge_{+}^{1} $};
\node[black] at (6,0) {$\inlinewedge_{+}^{0} $};
\draw[->,black] (2.35,0) -- (2.15,0);
\draw[->,black] (4.85,0) -- (4.65,0);
\node[black] at (1,-1){$0$};
\node[black] at (3.5,-1){$0$};
\node[black] at (6,-1){$0$};
\draw[->,black] (2.35,-1) -- (2.15,-1);
\draw[->,black] (4.85,-1) -- (4.65,-1);
\draw[->,black] (1,-0.4) -- (1,-0.6);
\draw[->,black] (1,-1.3) -- (1,-1.5);
\draw[->,black] (3.5,-0.4) -- (3.5,-0.6);
\draw[->,black] (3.5,-1.3) -- (3.5,-1.5);
\draw[->,black] (6,-0.4) -- (6,-0.6);
\draw[->,black] (6,-1.3) -- (6,-1.5);
\node[black] at (1,-2) {$\frac{\inlinewedge_{+}^{2}\inlinewedge_{-}^{2}[x_{1},x_{2}] }{\Ima (\Delta^{-})} $};
\node[black] at (3.5,-2) {$\frac{\inlinewedge_{+}^{1}\inlinewedge_{-}^{2}[x_{1},x_{2}]}{\Ima (\Delta^{-})}$};
\node[black] at (6,-2) {$\frac{\inlinewedge_{+}^{0}\inlinewedge_{-}^{2}[x_{1},x_{2}]}{\Ima (\Delta^{-})}.$};
\draw[->,black] (2.35,-2) -- (2.15,-2);
\draw[->,black] (4.85,-2) -- (4.65,-2);
\end{tikzpicture}
\end{center}
We have that, in the diagram of the $E'^{1}$ spectral sequence, only the rows for $q=0$ and $q=b$ are different from 0. The previous diagram will be the first step in Lemma \ref{4.6} for the computation of the homology of the $\widetilde{G}_{A^{\circ}}(a, b)$'s when $a,b \in \left\{0,1,2\right\}$.\\ 
Analogously we define,  for $0 \leq b <2$:
\begin{align*}
\widetilde{G}_{C}(a, b)_{[p,q]}=
\begin{cases}
\inlinewedge_{+}^{a-p}\inlinewedge_{-}^{b-q}[\partial_{x_{1}},\partial_{x_{2}}] \,\,\, &\text{if} \,\,\, p\leq 0, q\leq 0,\\
0 \,\,\, &\text{otherwise}.
\end{cases}
\end{align*}
We have an isomorphism of bicomplexes $\gamma: G_{C}(a, b)_{[p,q]} \longrightarrow \widetilde{G}_{C}(a, b)_{[p,q]} $ which is the valuating map that values $\partial_{y_{1}}$ and $\partial_{y_{2}}$ in 1 and is the identity on all the other elements.
We consider on $\widetilde{G}_{C}(a, b)$ the differentials $d'=\Delta^{+}$ and $d''=\Delta^{-}$ induced by $\Delta^{+}\partial_{y_{1}}$ and $\Delta^{-}\partial_{y_{2}}$ for $G_{C}(a, b)$.
We also define:
\begin{align*}
G_{B^{'}}(a, b)_{[p,q]}=
\begin{cases}
\inlinewedge_{+}^{a-1}\inlinewedge_{-}^{b-1}[\partial_{x_{1}},\partial_{x_{2}}] \,\,\, &\text{if} \,\,\, p=q= 0,\\
0 \,\,\, &\text{otherwise}.
\end{cases}
\end{align*}
We have the following commutative diagram:\\
\begin{center}
\begin{tikzpicture}
\node[black] at (-1,0) {$G_{B^{'}}(a, b)$};
\draw[->,black] (0,0) -- (2,0); 
\node[black] at (3,0) {$G_{C}(a, b)$};
\draw[->,black] (-1,-0.5) -- (-1,-1.5);
\node[black] at (-1.5,-1) {$id$};
\draw[->,black](3,-0.5) -- (3,-1.5);
\draw[->,black] (0,-2) -- (2,-2);
\node[black] at (1,0.5) {$\nabla_{2}$};
\node[black] at (3.5,-1) {$\gamma$};
\node[black] at (1,-1.5) {$\Delta^{-}\Delta^{+}\tau_{2}$};
\node[black] at (-1,-2) {$G_{B^{'}}(a, b)$};
\node[black] at (3,-2) {$\widetilde{G}_{C}(a, b)$.};
\end{tikzpicture}
\end{center}

We have that $\widetilde{G}_{C^{\circ}}(a, b):=\CoKer(\Delta^{-}\Delta^{+}\tau_{2}:G_{B^{'}}(a, b) \longrightarrow \widetilde{G}_{C}(a, b))$ is isomorphic, as a bicomplex, to $G_{C^{\circ}}(a, b)$. Its diagram is the same of $\widetilde{G}_{C}(a, b)$ except for $p=q=0$. In the following diagram we shortly write $\CoKer(\Delta^{-}\Delta^{+})$ for:
\begin{align*}
\CoKer(\Delta^{-}\Delta^{+}:\displaywedge_{+}^{a-1}\displaywedge_{-}^{b-1}[\partial_{x_{1}},\partial_{x_{2}}]\longrightarrow \displaywedge_{+}^{a}\displaywedge_{-}^{b}[\partial_{x_{1}},\partial_{x_{2}}]).
\end{align*}
The diagram of the bicomplex $\widetilde{G}_{C^{\circ}}(a, b)$ is the following, respectively for $a=2$, $a=1$ and $a \leq 0$:
\begin{center}
\begin{tikzpicture}
\node[black] at (-8,0.5) {$a = 2$};
\node[black] at (-8,0) {$\CoKer(\Delta^{-}\Delta^{+}),$};
\node[black] at (-8,-1){$\cdots$};
\draw[->,black] (-8,-0.4) -- (-8,-0.6);
\draw[->,black] (-8,-1.4) -- (-8,-1.6);
\node[black] at (-8,-2) {$\inlinewedge_{+}^{2}\inlinewedge_{-}^{2}[\partial_{x_{1}},\partial_{x_{2}}],$};
\node[black] at (-3.5,0.5) {$a = 1$};
\node[black] at (-5,0) {$\inlinewedge_{+}^{2}\inlinewedge_{-}^{b}[\partial_{x_{1}},\partial_{x_{2}}]$};
\node[black] at (-2,0) {$\CoKer(\Delta^{-}\Delta^{+}) ,$};
\draw[->,black] (-3.4,0) -- (-3.6,0);
\draw[->,black] (-3.4,-2) -- (-3.6,-2);
\node[black] at (-2,-1){$\cdots$};
\node[black] at (-5,-1){$\cdots$};
\draw[->,black] (-2,-0.4) -- (-2,-0.6);
\draw[->,black] (-2,-1.4) -- (-2,-1.6);
\draw[->,black] (-5,-0.4) -- (-5,-0.6);
\draw[->,black] (-5,-1.4) -- (-5,-1.6);
\node[black] at (-5,-2) {$\inlinewedge_{+}^{2}\inlinewedge_{-}^{2}[\partial_{x_{1}},\partial_{x_{2}}]$};
\node[black] at (-2,-2) {$\inlinewedge_{+}^{1}\inlinewedge_{-}^{2}[\partial_{x_{1}},\partial_{x_{2}}],$};

\node[black] at (4,0.5) {$a \leq 0$};
\node[black] at (1,0) {$\inlinewedge_{+}^{2}\inlinewedge_{-}^{b}[\partial_{x_{1}},\partial_{x_{2}}]$};
\node[black] at (4,0) {$\inlinewedge_{+}^{1}\inlinewedge_{-}^{b}[\partial_{x_{1}},\partial_{x_{2}}]$};
\node[black] at (7,0) {$\inlinewedge_{+}^{0}\inlinewedge_{-}^{b}[\partial_{x_{1}},\partial_{x_{2}}] $};
\draw[->,black] (2.6,0) -- (2.4,0);
\draw[->,black] (5.6,0) -- (5.4,0);
\node[black] at (1,-1){$\cdots$};
\node[black] at (4,-1){$\cdots$};
\node[black] at (7,-1){$\cdots$};
\draw[->,black] (1,-0.4) -- (1,-0.6);
\draw[->,black] (1,-1.4) -- (1,-1.6);
\draw[->,black] (4,-0.4) -- (4,-0.6);
\draw[->,black] (4,-1.4) -- (4,-1.6);
\draw[->,black] (7,-0.4) -- (7,-0.6);
\draw[->,black] (7,-1.4) -- (7,-1.6);
\node[black] at (1,-2) {$\inlinewedge_{+}^{2}\inlinewedge_{-}^{2}[\partial_{x_{1}},\partial_{x_{2}}]$};
\node[black] at (4,-2) {$\inlinewedge_{+}^{1}\inlinewedge_{-}^{2}[\partial_{x_{1}},\partial_{x_{2}}],$};
\node[black] at (7,-2) {$\inlinewedge_{+}^{0}\inlinewedge_{-}^{2}[\partial_{x_{1}},\partial_{x_{2}}],$};
\draw[->,black] (2.6,-2) -- (2.4,-2);
\draw[->,black] (5.6,-2) -- (5.4,-2);
\end{tikzpicture}
\end{center}
where the horizontal maps are $d'$ and the vertical maps are $d''$.\\
In the following diagram we shortly write $\Ker(\Delta^{-})_{i,j}$ for:
\begin{align*}
\Ker(\Delta^{-}:\displaywedge_{+}^{i}\displaywedge_{-}^{j}[\partial_{x_{1}},\partial_{x_{2}}] \longrightarrow \displaywedge_{+}^{i}\displaywedge_{-}^{j+1}[\partial_{x_{1}},\partial_{x_{2}}] ),
\end{align*}
and we shortly write $\frac{\Ker(\Delta^{-}) }{\Ima (\Delta^{-}\Delta^{+})}$ for:
\begin{align*}
\frac{\Ker(\Delta^{-}:\inlinewedge_{+}^{a}\inlinewedge_{-}^{b}[\partial_{x_{1}},\partial_{x_{2}}]\longrightarrow   \inlinewedge_{+}^{a}\inlinewedge_{-}^{b+1}[\partial_{x_{1}},\partial_{x_{2}}])}{\Delta^{-}\Delta^{+}:\inlinewedge_{+}^{a-1}\inlinewedge_{-}^{b-1}[\partial_{x_{1}},\partial_{x_{2}}]\longrightarrow \inlinewedge_{+}^{a}\inlinewedge_{-}^{b}[\partial_{x_{1}},\partial_{x_{2}}]}.
\end{align*}
The $E'^{1}$ spectral sequence of $\widetilde{G}_{C^{\circ}}(a, b)$ is the following, respectively for $a=2$, $a=1$, $a \leq 0$, $b=1$ and $a=2$, $a=1$, $a \leq 0$, $b=0$ (the computation is analogous to Lemma \ref{lemmi 4.3 4.4 4.5}):
\begin{center}
\begin{tikzpicture}
\node[black] at (-8,0.7) {$a =2, b=1$};
\node[black] at (-8,0) {$\frac{\Ker(\Delta^{-}) }{\Ima (\Delta^{-}\Delta^{+})} $};
\draw[->,black] (-8,-0.4) -- (-8,-0.6);
\node[black] at (-8,-1) {$\inlinewedge_{+}^{2}\inlinewedge_{-}^{2}$,};
\node[black] at (-3.5,0.7) {$a =1, b=1$};
\node[black] at (-5,0) {$\Ker(\Delta^{-})_{2,1}$};
\node[black] at (-2,0) {$\frac{\Ker(\Delta^{-}) }{\Ima (\Delta^{-}\Delta^{+})} $};
\draw[->,black] (-3.4,0) -- (-3.6,0);
\draw[->,black] (-3.4,-1) -- (-3.6,-1);
\draw[->,black] (-2,-0.4) -- (-2,-0.6);
\draw[->,black] (-5,-0.4) -- (-5,-0.6);
\node[black] at (-5,-1) {$\inlinewedge_{+}^{2}\inlinewedge_{-}^{2}$};
\node[black] at (-2,-1) {$\inlinewedge_{+}^{1}\inlinewedge_{-}^{2},$};
\node[black] at (4,0.7) {$a \leq 0, b=1$};
\node[black] at (1,0) {$\Ker(\Delta^{-})_{2,1}$};
\node[black] at (4,0) {$\Ker(\Delta^{-})_{1,1} $};
\node[black] at (7,0) {$\Ker(\Delta^{-})_{0,1} $};
\draw[->,black] (2.6,0) -- (2.4,0);
\draw[->,black] (5.6,0) -- (5.4,0);
\draw[->,black] (2.6,-1) -- (2.4,-1);
\draw[->,black] (5.6,-1) -- (5.4,-1);
\draw[->,black] (1,-0.4) -- (1,-0.6);
\draw[->,black] (4,-0.4) -- (4,-0.6);
\draw[->,black] (7,-0.4) -- (7,-0.6);
\node[black] at (1,-1) {$\inlinewedge_{+}^{2}\inlinewedge_{-}^{2}$};
\node[black] at (4,-1) {$\inlinewedge_{+}^{1}\inlinewedge_{-}^{2}$};
\node[black] at (7,-1) {$\inlinewedge_{+}^{0}\inlinewedge_{-}^{2}$.};
\end{tikzpicture}
\end{center}
\begin{center}
\begin{tikzpicture}
\node[black] at (-8,0.7) {$a =2, b=0$};
\node[black] at (-8,0) {$\frac{\Ker(\Delta^{-}) }{\Ima (\Delta^{-}\Delta^{+})} $};
\node[black] at (-8,-1){$0$};
\draw[->,black] (-8,-0.4) -- (-8,-0.6);
\draw[->,black] (-8,-1.4) -- (-8,-1.6);
\node[black] at (-8,-2) {$\inlinewedge_{+}^{2}\inlinewedge_{-}^{2}$};
\node[black] at (-3.5,0.7) {$a =1, b=0$};
\node[black] at (-5,0) {$\Ker(\Delta^{-})_{2,0}$};
\node[black] at (-2,0) {$\frac{\Ker(\Delta^{-}) }{\Ima (\Delta^{-}\Delta^{+})} $};
\draw[->,black] (-3.4,0) -- (-3.6,0);
\draw[->,black] (-3.4,-1) -- (-3.6,-1);
\draw[->,black] (-3.4,-2) -- (-3.6,-2);
\node[black] at (-2,-1){$0$};
\node[black] at (-5,-1){$0$};
\draw[->,black] (-2,-0.4) -- (-2,-0.6);
\draw[->,black] (-2,-1.4) -- (-2,-1.6);
\draw[->,black] (-5,-0.4) -- (-5,-0.6);
\draw[->,black] (-5,-1.4) -- (-5,-1.6);
\node[black] at (-5,-2) {$\inlinewedge_{+}^{2}\inlinewedge_{-}^{2}$};
\node[black] at (-2,-2) {$\inlinewedge_{+}^{1}\inlinewedge_{-}^{2},$};
\node[black] at (4,0.7) {$a \leq 0, b=0$};
\node[black] at (1,0) {$\Ker(\Delta^{-})_{2,0}$};
\node[black] at (4,0) {$\Ker(\Delta^{-})_{1,0} $};
\node[black] at (7,0) {$\Ker(\Delta^{-})_{0,0} $};
\draw[->,black] (2.6,0) -- (2.4,0);
\draw[->,black] (5.6,0) -- (5.4,0);
\draw[->,black] (2.6,-1) -- (2.4,-1);
\draw[->,black] (5.6,-1) -- (5.4,-1);
\draw[->,black] (2.6,-2) -- (2.4,-2);
\draw[->,black] (5.6,-2) -- (5.4,-2);
\node[black] at (1,-1){$0$};
\node[black] at (4,-1){$0$};
\node[black] at (7,-1){$0$};
\draw[->,black] (1,-0.4) -- (1,-0.6);
\draw[->,black] (1,-1.4) -- (1,-1.6);
\draw[->,black] (4,-0.4) -- (4,-0.6);
\draw[->,black] (4,-1.4) -- (4,-1.6);
\draw[->,black] (7,-0.4) -- (7,-0.6);
\draw[->,black] (7,-1.4) -- (7,-1.6);
\node[black] at (1,-2) {$\inlinewedge_{+}^{2}\inlinewedge_{-}^{2}$};
\node[black] at (4,-2) {$\inlinewedge_{+}^{1}\inlinewedge_{-}^{2}$};
\node[black] at (7,-2) {$\inlinewedge_{+}^{0}\inlinewedge_{-}^{2}$.};
\end{tikzpicture}
\end{center}
We have that only the rows $q=0$ and $q=b-2$ are different from 0. We point out that, since $b<2$:
\begin{align*}
 \frac{\Ker(\Delta^{-})}{ \Ima (\Delta^{-}\Delta^{+})} \cong  \frac{\Delta^{-}(\inlinewedge_{+}^{a}\inlinewedge_{-}^{b-1}[\partial_{x_{1}},\partial_{x_{2}}])}{\Delta^{-}\Delta^{+}(\inlinewedge_{+}^{a-1}\inlinewedge_{-}^{b-1}[\partial_{x_{1}},\partial_{x_{2}}])}\cong \CoKer(\Delta^{-}( \displaywedge_{+}^{a-1}\displaywedge_{-}^{b-1}[\partial_{x_{1}},\partial_{x_{2}}])\xrightarrow[]{\Delta^{+}} \Delta^{-}(\displaywedge_{+}^{a}\displaywedge_{-}^{b-1}[\partial_{x_{1}},\partial_{x_{2}}]) ).
\end{align*}
The isomorphism holds because $b<2$ and we know, by Lemma \ref{lemmi 4.3 4.4 4.5}, that
\begin{align*}
0 \xrightarrow[]{\Delta^{-}}\displaywedge_{-}^{0}\left[\partial_{x_{1}},\partial_{x_{2}}\right] \xrightarrow[]{\Delta^{-}} \displaywedge_{-}^{1}\left[\partial_{x_{1}},\partial_{x_{2}}\right] \xrightarrow[]{\Delta^{-}} \displaywedge_{-}^{2}\left[\partial_{x_{1}},\partial_{x_{2}}\right]  \xrightarrow[]{\Delta^{-}} 0
\end{align*}
is exact except for the right end.\\
 The previous diagram will be the first step in Lemma \ref{4.6} for the computation of the homology of the $\widetilde{G}_{C^{\circ}}(a, b)$'s when $a,b \in \left\{0,1,2\right\}$.
\end{rem}
The following two technical lemmas will be used in the proof of Lemma \ref{4.6} for the computation of the homology of the $G_{X^{\circ}}(a, b)$'s when $a,b \in \left\{0,1,2\right\}$.
\begin{lem}
\label{4.7}
Let $0\leq b \leq 2$. Let us consider the complex $S(a,b)$ defined as follows:
\begin{align*}
S(a,b)_{a} \xleftarrow[]{\Delta^{+}}... \xleftarrow[]{\Delta^{+}} \Delta^{-}( \displaywedge_{+}^{k}\displaywedge_{-}^{b}[x_{1},x_{2}])\xleftarrow[]{\Delta^{+}}... \xleftarrow[]{\Delta^{+}} \Delta^{-}(\displaywedge_{+}^{0}\displaywedge_{-}^{b}[x_{1},x_{2}] ),
\end{align*}
where $S(a,b)_{a}=\Ker(\Delta^{-}(\inlinewedge_{+}^{a}\inlinewedge_{-}^{b}[x_{1},x_{2}] )\xrightarrow[]{\Delta^{+}} \Delta^{-}(\inlinewedge_{+}^{a+1}\inlinewedge_{-}^{b}[x_{1},x_{2}] ))$.
The homology spaces of the complex $S(a,b)$, from left to right, are respectively isomorphic to:
\begin{align*}
H_{a}(S(a,b))\cong \displaywedge^{a+b+1},\,\,...\,, \,\,  H_{k}(S(a,b))\cong \displaywedge^{k+1+b},\,\,... \, ,\,\,  H_{0}(S(a,b))\cong \displaywedge^{b+1}.
\end{align*}
\end{lem}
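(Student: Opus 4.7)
My plan is to reduce the computation to the homology of the auxiliary cochain complex $C^{*,b} := \{\Delta^-(\inlinewedge_+^k \inlinewedge_-^b [x_1,x_2])\}_{k \geq 0}$ with differential $\Delta^+$, which is well-defined because $\Delta^+\Delta^- + \Delta^-\Delta^+ = 0$ (Remark \ref{nabla+nabla-}). Since $\inlinewedge_+^k = 0$ for $k \geq 3$, this complex is supported only in degrees $0, 1, 2$; moreover for $a \geq 2$ one has $S(a,b)_a = C^{a,b}$ automatically, and a short verification using $(\Delta^+)^2 = 0$ shows that in every case $H_k(S(a,b)) \cong H^k(C^{*,b}, \Delta^+)$ for $0 \leq k \leq a$. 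It therefore suffices to prove $H^k(C^{*,b}, \Delta^+) \cong \inlinewedge^{k+b+1}$ as $\langle x_1\partial_{x_1}-x_2\partial_{x_2}, x_1\partial_{x_2}, x_2\partial_{x_1}\rangle$-modules; this equivariance is automatic since $\Delta^\pm$ and $\partial_{x_i}$ commute with the action.

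Next I would split into three cases according to $b$. The case $b = 2$ is immediate: $\Delta^-$ lands in $\inlinewedge_-^3 = 0$, so $C^{*,2} = 0$, in agreement with $\inlinewedge^{k+3} = 0$. For $b = 1$ I would exploit the exactness above degree $0$ of the vertical Koszul-type complex $(A^{k,*}, \Delta^-)$, where $A^{k,j} := \inlinewedge_+^k \inlinewedge_-^j[x_1,x_2]$ — precisely the computation already carried out in the proof of Lemma \ref{lemmi 4.3 4.4 4.5}. Combined with $A^{k,3} = 0$, this forces $C^{k,1} = \Ker(\Delta^-|_{A^{k,2}}) = A^{k,2}$, so $(C^{*,1}, \Delta^+) = (A^{*,2}, \Delta^+)$, whose homology is concentrated in degree $0$, isomorphic to $\inlinewedge_-^2 \cong \inlinewedge^2$, as desired.

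The principal case is $b = 0$, which I would tackle via the short exact sequence of cochain complexes
\begin{equation*}
0 \longrightarrow (\inlinewedge_+^* \otimes \C,\; 0) \longrightarrow (A^{*,0},\; \Delta^+) \xrightarrow{\ \Delta^-\ } (C^{*,0},\; \Delta^+) \longrightarrow 0,
\end{equation*}
where the left subcomplex consists of the constants in $[x_1,x_2]$ (annihilated by $\Delta^+$), embedded as $\Ker(\Delta^-|_{A^{*,0}})$. The homology of the middle complex is $\C$ in degree $0$ and zero otherwise (the same Koszul calculation on $[x_1,x_2]$), while the left term has homology $\inlinewedge_+^k$ in every degree. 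Running the long exact sequence collapses things: the induced map $\C = H^0(\inlinewedge_+^* \otimes \C) \to H^0(A^{*,0}) = \C$ is the identity, which forces the connecting homomorphisms $H^k(C^{*,0}) \to H^{k+1}(\inlinewedge_+^* \otimes \C) = \inlinewedge_+^{k+1}$ to be isomorphisms. This gives $H^k(C^{*,0}) \cong \inlinewedge^{k+1}$, completing the proof.

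The main obstacle is the careful handling of the top degree $k = a$ of $S(a,b)$, where $S(a,b)_a$ is defined as a kernel inside $C^{a,b}$ rather than $C^{a,b}$ itself; showing that this does not affect the homology computation when $a \leq 2$ is a short bookkeeping argument, but it must be spelled out. Everything else is a fairly direct application of Koszul-type exactness and the long exact sequence of a short exact sequence of complexes, once the right bicomplex perspective is set up.
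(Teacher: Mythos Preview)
Your argument is correct and takes a genuinely different route from the paper's. The paper proceeds indirectly: for $b>0$ it identifies $S(a,b)$ (for $a$ large) with the $q=0$ row of the $E'^{1}$-page of the spectral sequence of $\widetilde{G}_{A^{\circ}}(a,b)$, and then uses the already-known homology of $G_{A^{\circ}}(a,b)$ from Lemma~\ref{lemmi 4.3 4.4 4.5} to force the relevant $d^{2}$-differentials to be isomorphisms, from which the $E'^{2}_{p,0}$ terms --- and hence the $H_{k}(S(a,b))$ --- are read off; for $b=0$ it simply computes $H_{0},H_{1},H_{2}$ of $S(2,0)$ by hand. Your approach is more elementary and self-contained: you reduce everything to the unbounded complex $(C^{*,b},\Delta^{+})$, dispatch $b=2$ trivially, use Koszul surjectivity to identify $(C^{*,1},\Delta^{+})$ with $(A^{*,2},\Delta^{+})$, and for $b=0$ extract the answer from the long exact sequence attached to $0\to \inlinewedge_{+}^{*}\otimes\C \to A^{*,0}\to C^{*,0}\to 0$. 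This avoids both the spectral-sequence bootstrap and the explicit element-level computations, at the cost of losing the direct link to the $E'^{1}$-diagrams that the paper exploits immediately afterwards in Lemma~\ref{4.6}. One small point you glossed over: since $\Delta^{+}\Delta^{-}+\Delta^{-}\Delta^{+}=0$, the map $\Delta^{-}$ is an \emph{anti}-chain map, so strictly speaking you should twist by $(-1)^{k}$ (or equip $C^{*,0}$ with $-\Delta^{+}$) before invoking the long exact sequence; this does not affect the homology or the conclusion. Your ``bookkeeping'' worry at $k=a$ is in fact automatic: since $S(a,b)_{a}=\Ker(\Delta^{+}|_{C^{a,b}})$ by definition, one has $H_{a}(S(a,b))=H^{a}(C^{*,b})$ on the nose.
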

\begin{proof}
We first focus on $0<b \leq 2$. In order to make the proof more clear, we show the statement for $b=1$ that is more significant; the proof for $b=2$ is analogous.
We observe that, due to the definition of $S(a,1)$, $H_{i}(S(a,1))=H_{i}(S(a+1,1))$ for $0 \leq i \leq a$. Hence it is sufficient to compute it for large $a$. We take $a >2$; for sake of simplicity, we choose $a=3$.
The complex $S(3, 1)$ reduces to:
\begin{align*}
0 \xleftarrow[]{\Delta^{+}} \Delta^{-}(\displaywedge_{+}^{2}\displaywedge_{-}^{1}[x_{1},x_{2}] )\xleftarrow[]{\Delta^{+}} \Delta^{-}( \displaywedge_{+}^{1}\displaywedge_{-}^{1}[x_{1},x_{2}]) \xleftarrow[]{\Delta^{+}} \Delta^{-}(\displaywedge_{+}^{0}\displaywedge_{-}^{1}[x_{1},x_{2}] ).
\end{align*}
In this case the thesis reduces to show that:
\begin{align*}
H_{3}(S(3,1))\cong 0, \quad H_{2}(S(3,1))\cong 0, \quad  H_{1}(S(3,1))\cong 0, \quad   H_{0}(S(3,1))  \cong \displaywedge_{+}^{2}.  
\end{align*}
We point out that the complex $S(3, 1)$ is isomorphic, via $\Delta^{-}$, to the complex:
\begin{align*}
0 \xleftarrow[]{\Delta^{+}} \frac{\inlinewedge_{+}^{2}\inlinewedge_{-}^{1}[x_{1},x_{2}] }{\Ima(\Delta^{-})} \xleftarrow[]{\Delta^{+}}  \frac{\inlinewedge_{+}^{1}\inlinewedge_{-}^{1}[x_{1},x_{2}] }{\Ima (\Delta^{-})}\xleftarrow[]{\Delta^{+}}\frac{\inlinewedge_{+}^{0}\inlinewedge_{-}^{1}[x_{1},x_{2}] }{\Ima (\Delta^{-})},
\end{align*}
that is exactly the row for $q=0$ in the diagram of the $E'^{1}$ spectral sequence of $\widetilde{G}_{A^{\circ}}(3,1)$ in Remark \ref{appoggio_lemma4.7}. In particular, since $a=3$, this is the row for $q=0$ and values of $p$ respectively $0,1,2$ and $3$ from the left to the right. The isomorphism of the two complexes follows from $b=1>0$ and the fact that, by Lemma \ref{lemmi 4.3 4.4 4.5}, we know that 
\begin{align*}
0 \xrightarrow[]{\Delta^{-}}\displaywedge_{-}^{0}\left[x_{1},x_{2}\right] \xrightarrow[]{\Delta^{-}} \displaywedge_{-}^{1}\left[x_{1},x_{2}\right] \xrightarrow[]{\Delta^{-}} \displaywedge_{-}^{2}\left[x_{1},x_{2}\right]  \xrightarrow[]{\Delta^{-}} 0
\end{align*}
is exact except for the left end.  \\
Since $E'^{2}(G_{A^{\circ}}(3,1))$ has two nonzero rows for $q=0$ and $q=1$ (see the diagram in Remark \ref{appoggio_lemma4.7}), then the differentials  $d^{r}_{p,q}$ are all zero except for $r=b+1=2$, $q=0$, $1 <p \leq 3$. Indeed $1 <p \leq 3$ follows from the fact that: $$d^{2}_{p,0}:E'^{2}_{p,0}\longrightarrow E'^{2}_{p-2,1}$$ and $E'^{2}_{p-2,1}=0$ if $p-2<0$, $E'^{2}_{p,0}=0$ if $p>3$.\\
From the fact that the homology spaces of $G_{A^{\circ}}(3,1)$ and $\widetilde{G}_{A^{\circ}}(3,1)$ are isomorphic and from Lemma \ref{lemmi 4.3 4.4 4.5}, it follows that:
\begin{align}
\label{appoggiospectral}
&\sum_{p+q=n}E'^{ \infty}_{p,q}(\widetilde{G}_{A^{\circ}}(3,1))=
\begin{cases}
0 \,\,\, &\text{if} \,\,\, n<3, \\
\inlinewedge_{+}^{1} \,\,\, &\text{if} \,\,\, n=3. 
\end{cases}
\end{align}
By \eqref{appoggiospectral} we obtain that $d^{2}_{p,0}$, for $1 <p \leq 3$, must be an isomorphism. Indeed, let us first show that $d^{2}_{p,0}$, for $1 <p \leq 3$, is surjective. We point out that:
\begin{align}
\label{keylemma4.7}
d^{2}_{p,0}: E'^{ 2}_{p,0}(\widetilde{G}_{A^{\circ}}(3,1))\longrightarrow  E'^{ 2}_{p-2,1}(\widetilde{G}_{A^{\circ}}(3,1)).
\end{align}
It is possible to show that $E'^{2}_{p-2,1}(\widetilde{G}_{A^{\circ}}(3,1))\cong \inlinewedge_{+}^{a+b-p+1}=\inlinewedge_{+}^{5-p}$ using an argument similar to Lemma \ref{lemmi 4.3 4.4 4.5}. \\
By \eqref{appoggiospectral}, we know that for $n=p-1<3$: $$\sum_{\widetilde{p}+\widetilde{q}=p-1}E'^{ \infty}_{\widetilde{p},\widetilde{q}}(\widetilde{G}_{A^{\circ}}(3,1))=0 .$$ 
Moreover $d^{r}=0$ for $r>2$ and $d^{2}_{p-2,1}=0$. Therefore $d^{2}_{p,0}$ must be surjective.\\ 
Let us see that $d^{2}_{p,0}$ is injective. If $p<3$, then $E'^{ \infty}_{p,0}(\widetilde{G}_{A^{\circ}}(3,1))=0$ since it appears in the sum $$\sum_{\widetilde{p}+\widetilde{q}=p}E'^{\infty}_{\widetilde{p},\widetilde{q}}(\widetilde{G}_{A^{\circ}}(3,1))=0,$$
by \eqref{appoggiospectral}.
 Moreover 
\begin{align}
\label{appoggiokeylemma4.7}
d^{2}_{p+2,-1}:E'^{2}_{p+2,-1}(\widetilde{G}_{A^{\circ}}(3,1))=0 \longrightarrow E'^{ 2}_{p,0}(\widetilde{G}_{A^{\circ}}(3,1))
\end{align}
 is identically 0. Hence $\Ker(d^{2}_{p,0})=0$. If $p=3$, we know, by \eqref{appoggiospectral}, that 
\begin{align}
\label{appoggiospectral2}
\sum_{\widetilde{p}+\widetilde{q}=p}E'^{ \infty}_{\widetilde{p},\widetilde{q}}(\widetilde{G}_{A^{\circ}}(3,1))\cong \displaywedge_{+}^{1}
\end{align}
and $E'^{ \infty}_{p,0}(\widetilde{G}_{A^{\circ}}(3,1))$ appears in this sum. Moreover we know that $$E'^{2}_{2,1}(\widetilde{G}_{A^{\circ}}(3,1))=E'^{\infty}_{2,1}(\widetilde{G}_{A^{\circ}}(3,1))\cong \displaywedge_{+}^{1},$$ since $d^{r}=0$, when $r>2$, $d^{2}_{4,0}=d^{2}_{2,1}=0$ and $E'^{2}_{2,1}(\widetilde{G}_{A^{\circ}}(3,1))\cong \inlinewedge_{+}^{1}$ due to an argument similar to Lemma \ref{lemmi 4.3 4.4 4.5}. The space $E'^{\infty}_{2,1}(\widetilde{G}_{A^{\circ}}(3,1))$ also appears in the sum \eqref{appoggiospectral2} and therefore we conclude that $E'^{ \infty}_{p,0}(\widetilde{G}_{A^{\circ}}(3,1))=0$. Since $d^{2}_{p+2,-1}$, given by \eqref{appoggiokeylemma4.7}, is identically 0, therefore $\Ker(d^{2}_{p,0})=0$. \\
Thus, by the fact that $d^{2}_{p,0}$ is an isomorphism, we obtain that $E'^{2}_{p,0}(\widetilde{G}_{A^{\circ}}(3,1)) \cong \inlinewedge_{+}^{5-p}$. Hence:
\begin{align*}
H_{3}(S(3,1))\cong 0, \quad H_{2}(S(3,1))\cong 0, \quad  H_{1}(S(3,1))\cong 0, \quad   H_{0}(S(3,1))  \cong \displaywedge_{+}^{2}.  
\end{align*}

We now prove the statement in the case $b=0$. Due to the definition of $S(a,0)$, $H_{i}(S(a,0))=H_{i}(S(a+1,0))$ for $0 \leq i \leq a$. Hence it is sufficient to compute it for $a=2$.
The complex $S(2,0)$ reduces to:
\begin{align*}
\Delta^{-}(\displaywedge_{+}^{2}\displaywedge_{-}^{0}[x_{1},x_{2}] )\xleftarrow[]{\Delta^{+}} \Delta^{-}( \displaywedge_{+}^{1}\displaywedge_{-}^{0}[x_{1},x_{2}]) \xleftarrow[]{\Delta^{+}} \Delta^{-}(\displaywedge_{+}^{0}\displaywedge_{-}^{0}[x_{1},x_{2}] ).
\end{align*}
In this case the thesis reduces to show that:
\begin{align*}
H_{2}(S(2,0))\cong 0, \quad  H_{1}(S(2,0))\cong \displaywedge_{+}^{2}, \quad   H_{0}(S(2,0))  \cong \displaywedge_{+}^{1}.  
\end{align*}
We compute the homology spaces by direct computations.
	Let us compute $H_{0}(S(2,0))$. We take $p(x_{1},x_{2}) \in \inlinewedge_{+}^{0}\inlinewedge_{-}^{0}[x_{1},x_{2}]$; an element in $\Delta^{-}(\inlinewedge_{+}^{0}\inlinewedge_{-}^{0}[x_{1},x_{2}] )$ has the following form:
	\begin{align*}
	P:=w_{12}\otimes \partial_{x_{1}}p+w_{22}\otimes \partial_{x_{2}}p.
	\end{align*}
	Hence:
	\begin{align*}
	\Delta^{+}(P)=w_{12}w_{11}\otimes \partial^{2}_{x_{1}}p+w_{12}w_{21}\otimes \partial_{x_{1}}\partial_{x_{2}}p+w_{22}w_{11}\otimes \partial_{x_{1}}\partial_{x_{2}}p+w_{22}w_{21}\otimes \partial^{2}_{x_{2}}p.
	\end{align*}
	Therefore $P$ lies in the kernel if and only if $\partial^{2}_{x_{1}}p=\partial_{x_{1}}\partial_{x_{2}}p=\partial^{2}_{x_{2}}p=0$, that is $p= \alpha x_{1}+\beta x_{2}$, for $\alpha,\beta \in \C$. Thus $H_{0}(S(2,0))\cong \inlinewedge^{1}$.\\
	Let us now compute $H_{1}(S(2,0))$. We take $w_{11}p(x_{1},x_{2})+w_{21}q(x_{1},x_{2}) \in \inlinewedge_{+}^{1}\inlinewedge_{-}^{0}[x_{1},x_{2}]$; an element in $\Delta^{-}(\inlinewedge_{+}^{1}\inlinewedge_{-}^{0}[x_{1},x_{2}] )$ has the following form:
	\begin{align*}
	P:=w_{11}w_{12}\otimes \partial_{x_{1}}p+w_{11}w_{22}\otimes \partial_{x_{2}}p+w_{21}w_{12}\otimes \partial_{x_{1}}q+w_{21}w_{22}\otimes \partial_{x_{2}}q.
	\end{align*}
	Hence:
	\begin{align*}
	\Delta^{+}(P)=w_{11}w_{12}w_{21}\otimes \partial_{x_{1}}\partial_{x_{2}}p+w_{11}w_{22}w_{21}\otimes \partial^{2}_{x_{2}}p+w_{21}w_{12}w_{11}\otimes \partial^{2}_{x_{1}}q+w_{21}w_{22}w_{11}\otimes  \partial_{x_{1}}\partial_{x_{2}}q.
	\end{align*}
	Therefore $P$ lies in the kernel if and only if:
	\begin{align*}
	\begin{cases}
	\partial_{x_{1}}\partial_{x_{2}}p-\partial^{2}_{x_{1}}q=0,\\
	\partial^{2}_{x_{2}}p-\partial_{x_{1}}\partial_{x_{2}}q=0.
	\end{cases}
	\end{align*}
	We obtain that:
	\begin{align*}
	\begin{cases}
	\partial_{x_{1}}q=\int \partial_{x_{1}}\partial_{x_{2}}p d x_{1}= \partial_{x_{2}}p+Q_{2}(x_{2}),\\
	\partial_{x_{1}}q= \int\partial^{2}_{x_{2}}p d x_{2}=\partial_{x_{2}}p+Q_{1}(x_{1}),
	\end{cases}
	\end{align*}
	where $Q_{1}(x_{1})$ (resp. $Q_{2}(x_{2})$) is a polynomial expression costant in $x_{2}$ (resp. costant in $x_{1}$). Therefore, if $P$ lies in the kernel then $\partial_{x_{1}}q= \partial_{x_{2}}p+\alpha$, with $\alpha \in \C$. 
	Let us consider an element of the kernel, we obtain that:
	\begin{align*}
	P=&w_{11}w_{12}\otimes \partial_{x_{1}}p+w_{11}w_{22}\otimes \partial_{x_{2}}p+w_{21}w_{12}\otimes (\partial_{x_{2}}p+\alpha)+w_{21}w_{22}\otimes \int \partial^{2}_{x_{2}}p dx_{1}\\
	=&\Delta^{+}\big(-w_{12}\otimes p-w_{22}\otimes \int \partial_{x_{2}}p dx_{1} \big)+w_{21}w_{12}\otimes \alpha=\Delta^{+}\big(\Delta^{-}\big(- \int p dx_{1}\big)\big)+w_{21}w_{12}\otimes \alpha.
	\end{align*}
	We point out that $w_{21}w_{12}\otimes \alpha$ does not lie in the image of the map $\Delta^{-}(\inlinewedge_{+}^{0}\inlinewedge_{-}^{0}[x_{1},x_{2}] )\xrightarrow[]{\Delta^{+}} \Delta^{-}( \inlinewedge_{+}^{1}\inlinewedge_{-}^{0}[x_{1},x_{2}]) $, because $w_{21}w_{12}\otimes \alpha=\Delta^{+}(-w_{12}\otimes \alpha x_{2})$ but $-w_{12}\otimes \alpha x_{2} \notin \Delta^{-}(\inlinewedge_{+}^{0}\inlinewedge_{-}^{0}[x_{1},x_{2}] )$. Thus $H_{1}(S(2,0))\cong \inlinewedge^{2}$.\\
	Finally, let us compute $H_{2}(S(2,0))$. We take $w_{11}w_{21}p(x_{1},x_{2}) \in \inlinewedge_{+}^{2}\inlinewedge_{-}^{0}[x_{1},x_{2}]$; an element in $\Delta^{-}(\inlinewedge_{+}^{2}\inlinewedge_{-}^{0}[x_{1},x_{2}] )$ has the following form:
	\begin{align*}
	P:=w_{11}w_{21}w_{12}\otimes \partial_{x_{1}}p+w_{11}w_{21}w_{22}\otimes \partial_{x_{2}}p.
	\end{align*}
	We point out that:
	\begin{align*}
	P=\Delta^{+}(-w_{11}w_{12} \otimes\int \partial_{x_{1}}p dx_{2}-w_{11}w_{22}\otimes p)=\Delta^{+}(\Delta^{-}(-w_{11} \otimes\int p  dx_{2})).
	\end{align*}
	Therefore every element of $\Delta^{-}(\inlinewedge_{+}^{2}\inlinewedge_{-}^{0}[x_{1},x_{2}] )$ lies in the image of the map $\Delta^{-}(\inlinewedge_{+}^{1}\inlinewedge_{-}^{0}[x_{1},x_{2}] )\xrightarrow[]{\Delta^{+}} \Delta^{-}( \inlinewedge_{+}^{2}\inlinewedge_{-}^{0}[x_{1},x_{2}]) $.  Thus $H_{0}(S(2,0))\cong 0$.
\end{proof}
\begin{lem}
\label{4.7T}
Let $0\leq b <2$. Let us consider the complex $T(a, b)$ defined as follows:
\begin{align}
\label{complexapp2}
 \Delta^{-}(\displaywedge_{+}^{2}\displaywedge_{-}^{b-1}[\partial_{x_{1}},\partial_{x_{2}}]) &\xleftarrow[]{\Delta^{+}} ...\xleftarrow[]{\Delta^{+}} \Delta^{-}( \displaywedge_{+}^{k}\displaywedge_{-}^{b-1}[\partial_{x_{1}},\partial_{x_{2}}])\xleftarrow[]{\Delta^{+}}... \xleftarrow[]{\Delta^{+}}\\ \nonumber
&\xleftarrow[]{\Delta^{+}}  \CoKer(\Delta^{-}( \displaywedge_{+}^{a-1}\displaywedge_{-}^{b-1}[\partial_{x_{1}},\partial_{x_{2}}])\xrightarrow[]{\Delta^{+}} \Delta^{-}(\displaywedge_{+}^{a}\displaywedge_{-}^{b-1}[\partial_{x_{1}},\partial_{x_{2}}]) ).
\end{align}
The homology spaces of the complex $T(a, b)$, from left to right, are respectively isomorphic to:
\begin{align*}
H_{2}(T(a, b))\cong\displaywedge_{+}^{b-1},\,\,...\,,\,\, H_{k}(T(a, b))\cong \displaywedge_{+}^{k+b-3} ,\,\,...\,,\,\, H_{a}(T(a, b))\cong\displaywedge_{+}^{-1+a+b-2}.
\end{align*}
\end{lem}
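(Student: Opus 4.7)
The plan is to adapt the spectral-sequence argument used in Lemma \ref{4.7} to the complex $T(a, b)$. First I would note that, by the identifications collected in Remark \ref{appoggio_lemma4.7}, the complex $T(a, b)$ coincides, up to the isomorphism $\Ker(\Delta^{-})_{i,j}\cong \Delta^{-}(\inlinewedge_{+}^{i}\inlinewedge_{-}^{j-1}[\partial_{x_1},\partial_{x_2}])$ which is valid since $b < 2$, with the row $q = 0$ of the $E'^{1}$ page of the first spectral sequence attached to the bicomplex $\widetilde{G}_{C^\circ}(a, b)$, endowed with the induced differential $d^{1} = d' = \Delta^{+}$. Thus $E'^{2}_{p,0}(\widetilde{G}_{C^\circ}(a, b)) \cong H_{a-p}(T(a, b))$, and computing $H_{k}(T(a, b))$ reduces to computing this row of $E'^{2}$.

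Next I would use that, as described in Remark \ref{appoggio_lemma4.7}, only the two rows $q = 0$ and $q = b - 2$ of $E'^{1}$ are nonzero, and that the row $q = b-2$ consists of the finite-dimensional spaces $\inlinewedge_{+}^{a-p}\inlinewedge_{-}^{2}$ on which $d^{1}$ vanishes, because $d^{1} = \Delta^{+}$ multiplies polynomial representatives by $\partial_{x_{i}}$, which lies in the ideal already killed by the quotient defining that row. Hence $E'^{2}_{p, b-2} = \inlinewedge_{+}^{a-p}\inlinewedge_{-}^{2}$ is explicitly known. Since only two rows survive, the only possibly nontrivial higher differential is $d^{3-b}: E'^{3-b}_{p, b-2}\to E'^{3-b}_{p-(3-b), 0}$, after which the spectral sequence collapses. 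The total complex homology is read off from Lemma \ref{lemmi 4.3 4.4 4.5} by taking $a$ sufficiently negative, invoking the stabilization $H_{k}(T(a,b))=H_{k}(T(a-1,b))$ for $k$ fixed and $a$ small, which is immediate from the definition of $T(a,b)$ since only the rightmost $\CoKer$-entry depends on $a$. Propositions \ref{spectral1}, \ref{spectral2} and \ref{spectralbicomplex} guarantee convergence to $\bigoplus_{m}H^{m, p+q}(G_{C}(a,b))$.

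With the explicit $E'^{2}$ on row $q = b-2$, the prescribed total homology from Lemma \ref{lemmi 4.3 4.4 4.5}, and the fact that only $d^{3-b}$ remains to be determined, a diagonal dimension count forces $d^{3-b}$ to be an isomorphism between the appropriate pieces inside the admissible column range $[a-2,a]$. Reading off $E'^{2}_{p,0} \cong \Ima(d^{3-b})$ then yields $H_{k}(T(a,b))\cong \inlinewedge_{+}^{k+b-3}$ as required. For the cokernel position (the case $k = a$ with $a \in \{0,1,2\}$), where the stabilization does not apply, I would handle the three small values of $a$ directly by computing $\CoKer(\Delta^{+}\colon \Delta^{-}(\inlinewedge_{+}^{a-1}\inlinewedge_{-}^{b-1})\to\Delta^{-}(\inlinewedge_{+}^{a}\inlinewedge_{-}^{b-1}))$ and comparing it with the formula $\inlinewedge_{+}^{a+b-3}$; this is a finite check since only $\inlinewedge_{+}^{j}$ with $0\leq j\leq 2$ is involved. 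The main technical obstacle will be the careful bookkeeping between the bicomplex bigrading $[p,q]$, the shift $k = a - p$, and the two sub-cases $b = 0$ (where the jumping differential is $d^{3}$) and $b = 1$ (where it is $d^{2}$), together with verifying that no other nonzero differentials appear between rows $q = 0$ and $q = b - 2$ on any intermediate $E'^{r}$ page.
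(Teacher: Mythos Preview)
Your proposal is correct and follows essentially the same spectral-sequence argument as the paper, which identifies $T(a,b)$ with the $q=0$ row of $E'^{1}(\widetilde G_{C^\circ}(a,b))$, stabilizes to $a<0$ so that Lemma~\ref{lemmi 4.3 4.4 4.5} supplies the total homology, and then forces the single remaining differential between rows $q=0$ and $q=b-2$ to be an isomorphism. Two minor simplifications relative to the paper: the case $b=0$ is disposed of immediately since $\inlinewedge_{-}^{-1}=0$ makes every term of $T(a,0)$ vanish, and the stabilization $H_{k}(T(a,b))=H_{k}(T(a-1,b))$ in fact holds for all $a\leq k\leq 2$ including the $\CoKer$ position $k=a$, so your proposed direct check there is unnecessary.
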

\begin{proof}
We first point out that the statement is obvious for $b=0$ since in this case the complex is trivial and the homology spaces are obviously trivial.\\
We now focus on $b=1$. The complex $T(a, b)$, due to its construction, has the property that $H_{i}(T(a, b))=H_{i}(T(a-1, b))$ for $a \leq i \leq 2$; then we can compute the homology for small $a$. Let us take $a <0$. For sake of simplicity we focus on $a =-1$. The complex $T(-1,1)$ reduces to:
\begin{align}
\label{complexapp2}
 \Delta^{-}(\displaywedge_{+}^{2}\displaywedge_{-}^{0}[\partial_{x_{1}},\partial_{x_{2}}]) \xleftarrow[]{\Delta^{+}} \Delta^{-}( \displaywedge_{+}^{1}\displaywedge_{-}^{0}[\partial_{x_{1}},\partial_{x_{2}}])\xleftarrow[]{\Delta^{+}} \Delta^{-}( \displaywedge_{+}^{0}\displaywedge_{-}^{0}[\partial_{x_{1}},\partial_{x_{2}}])\xleftarrow[]{\Delta^{+}} 0.
\end{align}
The thesis reduces to show that:
\begin{align*}
H_{2}(T(-1,1))\cong\displaywedge_{+}^{0},\,\, H_{1}(T(-1,1))\cong 0,\,\, H_{0}(T(-1,1))\cong 0 ,\,\, H_{-1}(T(-1,1))\cong 0.
\end{align*}
In order to prove the thesis, we use that the complex $T(-1, 1)$ is isomorphic, via $\Delta^{-}$, to the row for $q=0$ in the diagram of the $E'^{1}$ spectral sequence of $\widetilde{G}_{C^{\circ}}(-1, 1)$ in Remark \ref{appoggio_lemma4.7}, that is:
\begin{align*}
\Ker(\Delta^{-})_{2,1}   \xleftarrow[]{\Delta^{+}} \Ker(\Delta^{-})_{1,1} \xleftarrow[]{\Delta^{+}} \Ker(\Delta^{-})_{0,1}  \xleftarrow[]{\Delta^{+}} 0,
\end{align*}
where we shortly write $\Ker(\Delta^{-})_{i,j}$ for:
\begin{align*}
\Ker(\Delta^{-}:\displaywedge_{+}^{i}\displaywedge_{-}^{j}[\partial_{x_{1}},\partial_{x_{2}}] \longrightarrow \displaywedge_{+}^{i}\displaywedge_{-}^{j+1}[\partial_{x_{1}},\partial_{x_{2}}] ).
\end{align*}
We point out that in this case, the spaces $\Ker(\Delta^{-})_{2,1}$, $\Ker(\Delta^{-})_{1,1}$ and $\Ker(\Delta^{-})_{0,1}$ correspond respectively to the valus of $p=-3,-2,-1$ and $q=0$ in the diagram of the $E'^{1}$ spectral sequence of $\widetilde{G}_{C^{\circ}}(-1, 1)$ (see Remark \ref{appoggio_lemma4.7}). The isomorphism between the two complexes follows from $b=1<2$ and the fact that, by Lemma \ref{lemmi 4.3 4.4 4.5}, we know that 
\begin{align*}
0 \xrightarrow[]{\Delta^{-}}\displaywedge_{-}^{0}\left[\partial_{x_{1}},\partial_{x_{2}}\right] \xrightarrow[]{\Delta^{-}} \displaywedge_{-}^{1}\left[\partial_{x_{1}},\partial_{x_{2}}\right] \xrightarrow[]{\Delta^{-}} \displaywedge_{-}^{2}\left[\partial_{x_{1}},\partial_{x_{2}}\right]  \xrightarrow[]{\Delta^{-}} 0
\end{align*}
is exact except for the right end.  In this case the complex $E'^{1}$ of $\widetilde{G}_{C^{\circ}}(-1,1)$ has two nonzero rows, for $q=0$ and $q=b-2=-1$, and therefore the differentials $d^{r}_{p,q}$ are all zero except for $r=2$, $q=-1$ and $-2<p\leq 0$. Indeed:
\begin{align*}
d^{2}_{p,-1}:E'^{2}_{p,-1}\longrightarrow E'^{2}_{p-2,0},
\end{align*}
where $E'^{2}_{p,-1}=0$ if $p> 0$ and $E'^{2}_{p-2,0}=0$ if $p-2<-3$. We know, by Lemma \ref{lemmi 4.3 4.4 4.5}, that:
\begin{align}
\label{appoggiocomplex3}
\sum_{p+q=n}E'^{\infty}_{p,q}(\widetilde{G}_{C^{\circ}}(-1,1))=
\begin{cases}
0 \,\,\, &\text{if} \,\,\, n>-3 \\
\inlinewedge_{+}^{1}  \,\,\, &\text{if} \,\,\, n=-3.
\end{cases}
\end{align}
By \eqref{appoggiocomplex3} we obtain that $d^{r}_{p,q}$ for $r=2$, $q=-1$ and $-2<p\leq 0$ must be an isomorphism. Indeed, let us first show that $d^{2}_{p,q}$ for $q=-1$ and $-2<p\leq 0$ is injective. We point out that:
\begin{align}
\label{key4.7T}
d^{2}_{p,-1}: E'^{2}_{p,-1}(\widetilde{G}_{C^{\circ}}(-1,1))\longrightarrow  E'^{2}_{p-2,0}(\widetilde{G}_{C^{\circ}}(-1,1)).
\end{align}
It is possible to show that $E'^{2}_{p,-1}(\widetilde{G}_{C^{\circ}}(-1,1))\cong \inlinewedge_{+}^{-p-1}$ using an argument similar to Lemma \ref{lemmi 4.3 4.4 4.5}. We know, by \eqref{appoggiocomplex3}, that for $n=p-1>-3$:
\begin{align*}
\sum_{\widetilde{p}+\widetilde{q}=p-1}E'^{ \infty}_{\widetilde{p},\widetilde{q}}(\widetilde{G}_{C^{\circ}}(-1,1))=0.
\end{align*} 
Hence $E'^{\infty}_{p,-1}(\widetilde{G}_{C^{\circ}}(-1,1))=0$. Moreover $d^{r}=0$ for $r> 2$ and $d^{2}_{p+2,-2}=0$ since its domain is 0. Therefore $d^{2}_{p,-1}$ must be injective. \\
Let us show that $d^{2}_{p,-1}$ is surjective. If $p-2>-3$, then $E^{'\infty}_{p-2,0}(\widetilde{G}_{C^{\circ}}(-1,1))$ appears in the sum
\begin{align*} 
\sum_{\tilde{p}+\tilde{q}=p-2}E^{' \infty}_{\tilde{p},\tilde{q}}(\widetilde{G}_{C^{\circ}}(-1,1))=0,
\end{align*} 
by \eqref{appoggiocomplex3}. Therefore $E^{'\infty}_{p-2,0}(\widetilde{G}_{C^{\circ}}(-1,1))=0$. But we know that 
\begin{align}
\label{appkey4.7T}
d^{2}_{p-2,0}:E^{'2}_{p-2,0}(\widetilde{G}_{C^{\circ}}(-1,1))\longrightarrow E^{'2}_{p-4,1}(\widetilde{G}_{C^{\circ}}(-1,1))=0
\end{align}  
is identically 0 because the codomain is 0. Hence $d^{2}_{p,-1}$ must be surjective. If $p-2=-3$, then $E^{'\infty}_{p-2,0}(\widetilde{G}_{C^{\circ}}(-1,1))$ appears in the sum 
\begin{align}
\label{appoggioT4} 
\sum_{\tilde{p}+\tilde{q}=p-2}E^{' \infty}_{\tilde{p},\tilde{q}}(\widetilde{G}_{C^{\circ}}(-1,1))=\displaywedge_{+}^{1},
\end{align} 
by \eqref{appoggiocomplex3}. We know that $E^{'2}_{-2,-1}(\widetilde{G}_{C^{\circ}}(-1,1))=E^{'\infty}_{-2,-1}(\widetilde{G}_{C^{\circ}}(-1,1))\cong \inlinewedge_{+}^{1}$, since $d^{r}=0$, when $r>2$, $d^{2}_{0,-2}=d^{2}_{-2,-1}=0$ and $E^{'2}_{-2,-1}(\widetilde{G}_{C^{\circ}}(-1,1))\cong \inlinewedge_{+}^{1}$ due to an argument similar to Lemma \ref{lemmi 4.3 4.4 4.5}.
 Since $E^{'\infty}_{-2,-1}(\widetilde{G}_{C^{\circ}}(-1,1))$ also appears in the sum \eqref{appoggioT4}, we conclude that $E^{'\infty}_{p-2,0}(\widetilde{G}_{C^{\circ}}(-1,1))=0$.\\
  Since $d^{2}_{p-2,0}$, given by \eqref{appkey4.7T}, is identically 0, therefore $d^{2}_{p,-1}$ must be surjective. Hence, by the fact that $d^{2}_{p,-1}$ is an isomorphism, we obtain that $E^{'2}_{p-2,0}(\widetilde{G}_{C^{\circ}}(-1,1))\cong \inlinewedge_{+}^{-1-p}.$ Thus $E^{'2}_{s,0}(\widetilde{G}_{C^{\circ}}(-1,1)) \cong \inlinewedge_{+}^{-s-3}$ and we obtain that:
	\begin{align*}
H_{2}(T(-1,1))\cong\displaywedge_{+}^{0},\,\, H_{1}(T(-1,1))\cong 0,\,\, H_{0}(T(-1,1))\cong 0,\,\, H_{-1}(T(-1,1))\cong 0.
\end{align*}
\end{proof} 
Now using Remark \ref{appoggio_lemma4.7} and Lemmas \ref{4.7}, \ref{4.7T}, we are able to compute the homology of the $G_{X^{\circ}}(a, b)$'s when $a,b \in \left\{0,1,2\right\}$.
\begin{lem}
\label{4.6} 
If $0 \leq a \leq b \leq 2$ then, as $\langle x_{1}\partial_{x_{1}}-x_{2}\partial_{x_{2}}, x_{1}\partial_{x_{2}},x_{2}\partial_{x_{1}}\rangle-$modules:
\begin{align*}
H^{m,n}(G_{A^{\circ}}(a,b))&\cong
\begin{cases}
\inlinewedge^{a+b-n}  \, \,\,\,  &\text{if} \, \,\, m=0, \, \, \, n \geq b,\\
\inlinewedge^{a+b-n+1}  \, \,\,\,  &\text{if} \, \,\, m=1,  \, \, \, 0 \leq n \leq a,\\
0 \, \,\,\, &\text{otherwise};
\end{cases}\\
H^{m,n}(G_{D^{\circ}}(a,b))&\cong
\begin{cases}
\inlinewedge^{a+b-n}  \, \,\,\,  &\text{if} \, \,\, m=0, \, \, \, n \leq 0,\\
0 \,\,\,\, &\text{otherwise}.
\end{cases}
\end{align*}
If $0 \leq b \leq a \leq 2$ then, as $\langle x_{1}\partial_{x_{1}}-x_{2}\partial_{x_{2}}, x_{1}\partial_{x_{2}},x_{2}\partial_{x_{1}}\rangle -$modules:
\begin{align*}
H^{m,n}(G_{C^{\circ}}(a,b))\cong
\begin{cases}
\inlinewedge^{a+b-n-2}  \, \,\,\,  &\text{if} \, \,\, m=0, \, \, \, n \leq b -2,\\
\inlinewedge^{-1+a+b-n-2}  \, \,\,\,  &\text{if} \, \,\, m=-1,  \, \, \, a-2 \leq n \leq 0,\\
0 \, \,\,\, &\text{otherwise}.
\end{cases}
\end{align*}
Analogously if $0 \leq b \leq a \leq 2$ then, as $\langle x_{1}\partial_{x_{1}}-x_{2}\partial_{x_{2}}, x_{1}\partial_{x_{2}},x_{2}\partial_{x_{1}}\rangle-$modules:
\begin{align*}
H^{m,n}(G_{A^{\circ}}(a,b))&\cong
\begin{cases}
\inlinewedge^{a+b-n}  \, \,\,\,  &\text{if} \, \,\, m=0, \, \, \, n \geq a,\\
\inlinewedge^{a+b-n+1}  \, \,\,\,  &\text{if} \, \,\, m=1,  \, \, \, 0 \leq n \leq b,\\
0 \, \,\,\, &\text{otherwise};
\end{cases}\\
H^{m,n}(G_{D^{\circ}}(a,b))&\cong
\begin{cases}
\inlinewedge^{a+b-n}  \, \,\,\,  &\text{if} \, \,\, m=0, \, \, \, n \leq 0,\\
0 \,\,\,\, &\text{otherwise}.
\end{cases}
\end{align*}
If $0 \leq a \leq b \leq 2$ then, as $\langle x_{1}\partial_{x_{1}}-x_{2}\partial_{x_{2}}, x_{1}\partial_{x_{2}},x_{2}\partial_{x_{1}}\rangle -$modules:
\begin{align*}
H^{m,n}(G_{C^{\circ}}(a,b))\cong
\begin{cases}
\inlinewedge^{a+b-n-2}  \, \,\,\,  &\text{if} \, \,\, m=0, \, \, \, n \leq a -2,\\
\inlinewedge^{-1+a+b-n-2}  \, \,\,\,  &\text{if} \, \,\, m=-1,  \, \, \, b-2 \leq n \leq 0,\\
0 \, \,\,\, &\text{otherwise}.
\end{cases}
\end{align*}
\end{lem}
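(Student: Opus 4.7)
The plan is to mimic the spectral sequence argument already used in Lemma \ref{lemmi 4.3 4.4 4.5}, but now the low-parameter constraint $a, b \in \{0, 1, 2\}$ forces the $E'^{1}$ page to carry two non-trivial rows rather than one, so the sequence does not collapse at $E'^{2}$ for elementary dimensional reasons. The two inputs already in place are: the explicit presentation of $E'^{1}$ given in Remark \ref{appoggio_lemma4.7}, and the computation of the row $q = 0$ given by Lemmas \ref{4.7} and \ref{4.7T}.

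First, I would reduce, via the isomorphisms of bicomplexes from Remark \ref{appoggio_lemma4.7}, the computation for $G_{A^{\circ}}(a,b)$ (resp.\ $G_{C^{\circ}}(a,b)$) to that for the finite bicomplex $\widetilde{G}_{A^{\circ}}(a,b)$ (resp.\ $\widetilde{G}_{C^{\circ}}(a,b)$); this automatically guarantees convergence of both spectral sequences $E'^{r}$ and $E''^{r}$. On the $E'^{1}$ page only two rows are non-zero: the row $q = b$ (resp.\ $q = b - 2$) contains no occurrence of $x_{1},x_{2}$ (resp.\ $\partial_{x_{1}},\partial_{x_{2}}$), hence $d' = \Delta^{+}$ vanishes there and this row survives unaltered to $E'^{2}$; the row $q = 0$ is isomorphic via $\Delta^{-}$ to the complex $S(a,b)$ (resp.\ $T(a,b)$), and Lemma \ref{4.7} (resp.\ Lemma \ref{4.7T}) yields its homology. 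The resulting $E'^{2}$ page matches exactly the proposed formula for $H^{m,n}(G_{X^{\circ}}(a,b))$, provided the spectral sequence degenerates at $E'^{2}$.

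The main obstacle is precisely to establish this degeneration, i.e., to show that the only possibly non-zero higher differential, $d^{b+1}\colon E'^{b+1}_{p,0}\to E'^{b+1}_{p-b-1,b}$ for $X = A$ (and the analogue $d^{|b-2|+1}$ for $X = C$), vanishes. My plan is to argue via a double count using the second spectral sequence $E''^{r}$: by the symmetry that exchanges $d'$ and $d''$, the page $E''^{r}$ also has two surviving rows, at $p = 0$ and $p = a$, each computed by Lemmas \ref{4.7} and \ref{4.7T} with the roles of $a$ and $b$ swapped. Both sequences converge to the same total homology, and the compatibility of the two filtrations they induce on it is enough to force $E'^{\infty} = E'^{2}$; equivalently, one can produce explicit $\nabla$-closed lifts to the total bicomplex of the canonical representatives from the proof of Lemmas \ref{4.7}--\ref{4.7T}, thereby showing directly that no zigzag is needed.

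For $X = D$ the argument is more transparent: the bicomplex $G_{D^{\circ}}(a,b)$ is concentrated in the finite rectangle $\{a-2\leq p\leq 0,\ b-2\leq q\leq 0\}$, only one row contributes at $m = 0$ because the polynomial variables appear with positive degree only in the interior and the unique surviving $d''$-homology is in a single $\inlinewedge_{-}$-degree, and the higher-differential issue does not arise. Finally, the second half of the statement (the case $0 \leq b \leq a \leq 2$) is obtained by running the second spectral sequence $E''^{r}$ in place of the first, which swaps the roles of $d' = \Delta^{+}\partial_{y_{1}}$ and $d'' = \Delta^{-}\partial_{y_{2}}$, and hence of $\inlinewedge_{+}$ and $\inlinewedge_{-}$, producing the symmetric formula with $a$ and $b$ interchanged.
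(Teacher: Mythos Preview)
Your overall architecture matches the paper's: reduce to $\widetilde{G}_{X^{\circ}}(a,b)$, read off the two nonzero rows of $E'^{1}$ from Remark \ref{appoggio_lemma4.7}, compute the $q=0$ row via Lemmas \ref{4.7}--\ref{4.7T}, and invoke the second spectral sequence for the symmetric case. The gap is that you have misidentified the ``main obstacle''. In the paper, degeneration at $E'^{2}$ is \emph{free}: in the regime $0\le a\le b\le 2$ the row $q=0$ is supported only at $p\in\{0,\dots,a\}$ and the row $q=b$ only at $p\in\{0,\dots,a\}$ as well, so the only candidate higher differential $d^{\,b+1}\colon E'^{\,b+1}_{p,0}\to E'^{\,b+1}_{p-b-1,\,b}$ has target index $p-b-1\le a-b-1<0$, hence is zero. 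The analogous bound $b\le a$ kills the higher differentials for $\widetilde{G}_{C^{\circ}}$. This one-line range check replaces your proposed double-count via $E''^{r}$ and the construction of explicit $\nabla$-closed lifts; neither is needed.

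Your treatment of $X=D$ is also too quick. It is not the case that ``only one row contributes'' for transparent reasons: the $E'^{1}$ page of $\widetilde{G}_{D^{\circ}}(a,b)$ has a single nonzero row (at $q=0$), but that row is a nontrivial complex which the paper identifies, via $\Delta^{-}$, with a truncation of $S(2,b-1)$ and then computes using Lemma \ref{4.7}. You should not expect this step to be substantially simpler than Cases A and C. Finally, the boundary cases $a=b=0$ for $A$ and $a=b=2$ for $C$ are handled in the paper by a direct inspection (the bicomplex collapses to a single bidegree), which you should include before assuming $b>0$ (resp.\ $b<2$) in the spectral-sequence argument.
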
 

\begin{proof}
We prove the statement in the case $0 \leq a \leq b \leq 2$ for $X=A,D$ and $0 \leq b \leq a \leq 2$ for $X=C$ using the theory of spectral sequences for bicomplexes; the case $0 \leq b \leq a \leq 2$ for $X=A,D$ and $0 \leq a \leq b \leq 2$ for $X=C$ can be proved analogously using the second spectral sequence instead of the first.\\

\textbf{Case A)} Let us first consider $G_{A^{\circ}}(0,0)=\Ker(\nabla_{2}:\inlinewedge_{+}^{0}\inlinewedge_{-}^{0}[x_{1},x_{2}] \longrightarrow   \inlinewedge_{+}^{1}\inlinewedge_{-}^{1}[x_{1},x_{2}] )$. We have that $G_{A^{\circ}}(0,0)=\C+\langle x_{1},x_{2} \rangle$, since an element $p(x_{1},x_{2}) \in \inlinewedge_{+}^{0}\inlinewedge_{-}^{0}[x_{1},x_{2}] $ lies in the kernel if and only if $\partial_{x_{1}}\partial_{x_{1}}p=\partial_{x_{1}}\partial_{x_{2}}p=\partial_{x_{2}}\partial_{x_{2}}p=0$. In this case the statement is straightforward. Indeed by $a = b=0$ we deduce that $p=q=0$. Therefore $G_{A^{\circ}}^{m,n}(0,0)=0$ when $n \neq 0$, $G_{A^{\circ}}^{1,0}(0,0)=\langle x_{1},x_{2}\rangle $, $G_{A^{\circ}}^{0,0}(0,0)=\C$ and, by the fact that 
\begin{align*}
\xrightarrow[]{\nabla } G_{A^{\circ}}^{2,1}(0,0)=0 \xrightarrow[]{\nabla } G_{A^{\circ}}^{1,0}(0,0)=\langle x_{1},x_{2}\rangle \rightarrow 0,
\end{align*}
we obtain $H^{1,0}(G_{A^{\circ}}(0,0))\cong \inlinewedge^{1}$. By the sequence
\begin{align*}
\xrightarrow[]{\nabla } G_{A^{\circ}}^{1,1}(0,0)=0 \xrightarrow[]{\nabla } G_{A^{\circ}}^{0,0}(0,0)=\C \rightarrow 0,
\end{align*}
we deduce that $H^{0,0}(G_{A^{\circ}}(0,0))\cong \inlinewedge^{0}$. 
We therefore assume $b>0$. As in Remark \ref{appoggio_lemma4.7} we consider:
\begin{align*}
\widetilde{G}_{A}(a, b)_{[p,q]}=
\begin{cases}
\inlinewedge_{+}^{a-p}\inlinewedge_{-}^{b-q}[x_{1},x_{2}] \,\,\,&\text{if} \,\,\, p\geq 0, q\geq 0,\\
0 \,\,\, &\text{otherwise}.
\end{cases}
\end{align*}
We consider on this space the differentials $d'=\Delta^{+}$ and $d''=\Delta^{-}$ induced by $\Delta^{+}\partial_{y_{1}}$ and $\Delta^{-}\partial_{y_{2}}$ for $G_{A}(a, b)$.
As in Remark \ref{appoggio_lemma4.7}, the $E'^{1}$ spectral sequence of $\widetilde{G}_{A^{\circ}}(a, b)$, i.e. the homology with respect to $\Delta^{-}$, is represented in following diagram:
\begin{center}
\begin{tikzpicture}
\node[black] at (0,0) {$\inlinewedge_{+}^{a}$};
\node[black] at (4,0) {$\inlinewedge_{+}^{0} $};
\draw[->,black] (1.45,0) -- (1.2,0);
\node[black] at (2,0){$\cdots$};
\draw[->,black] (2.6,0) -- (2.35,0);
\node[black] at (0,-1){$0$};
\node[black] at (2,-1){$\cdots$};
\node[black] at (4,-1){$0$};
\draw[->,black] (0,-0.4) -- (0,-0.6);
\draw[->,black] (0,-1.4) -- (0,-1.6);
\draw[->,black] (4,-0.4) -- (4,-0.6);
\draw[->,black] (4,-1.4) -- (4,-1.6);
\node[black] at (0,-2) {$\frac{\Ker(\Delta^{-}\Delta^{+}) }{\Ima (\Delta^{-})} $};
\node[black] at (4,-2) {$\frac{\inlinewedge_{+}^{0}\inlinewedge_{-}^{b}[x_{1},x_{2}]}{\Ima (\Delta^{-})}.$};
\draw[->,black] (1.45,-2) -- (1.2,-2);
\node[black] at (2,-2){$\cdots$};
\draw[->,black] (2.6,-2) -- (2.35,-2);
\end{tikzpicture}
\end{center}
We have that only the rows for $q=0$ and $q=b$ are different from 0. We observe that $d'$ is 0 on the row $q=b$. Moreover $d^{r}_{p,q}$ is 0 for $r \geq 2$ because either the domain or the codomain of these maps are 0, since $a \leq b$. Therefore $E'^{2}=...=E'^{\infty}$.\\
We need to compute $E'^{2}$ for the row $q=0$; for this computation we apply Lemma \ref{4.7}. We point out that the isomorphism in \eqref{keylemma4.7} of Lemma \ref{4.7} was induced by $\nabla$, that decreases the degree in $x_{1},x_{2}$ by 1.  Therefore $E'^{2}_{p,0}(\widetilde{G}_{A^{\circ}}(a,b))\cong \inlinewedge_{+}^{a+b-p+1}  $ is formed by elements with representatives of degree 1 in $x_{1},x_{2}$. 
Hence, if $0\leq n \leq a< b$:
\begin{align}
\label{degree1} 
\sum_{p+q=n} E'^{\infty}_{p,q}(\widetilde{G}_{A^{\circ}}(a,b))&=E'^{\infty}_{n, 0}(\widetilde{G}_{A^{\circ}}(a,b))=E'^{2}_{n,0}(\widetilde{G}_{A^{\circ}}(a,b))\cong \displaywedge_{+}^{a+b-n+1} \,\,\, ( degree \,\,1 \, \, in \, \, x_{1},x_{2}).
\end{align}
Hence $H^{1,n}(\widetilde{G}_{A^{\circ}}(a,b))\cong \inlinewedge_{+}^{a+b-n+1}$, if $0\leq n \leq a$. If $n \geq b >a$:
\begin{align*}
\sum_{p+q=n} E'^{\infty}_{p,q}(\widetilde{G}_{A^{\circ}}(a,b))=E'^{\infty}_{n-b, b}(\widetilde{G}_{A^{\circ}}(a,b))=E'^{2}_{n-b, b}(\widetilde{G}_{A^{\circ}}(a,b))\cong \displaywedge_{+}^{a+b-n}.
\end{align*}
Indeed in this sum there is not the possibility $(p,q)=(p,0)$ with $p \leq a < b$. We have that $H^{0,n}(\widetilde{G}_{A^{\circ}}(a,b))\cong \inlinewedge_{+}^{a+b-n}$, if $n \geq b> a$. 
For $n=a =b$ the result follows similarly.

\textbf{Case D)} 
We define:
\begin{align*}
\widetilde{G}_{D}(a, b)_{[p,q]}=
\begin{cases}
\inlinewedge_{+}^{a-p}\inlinewedge_{-}^{b-q}[x_{1},x_{2}] \,\,\, &\text{if} \,\,\, p\leq 0, q\leq 0,\\
0 \,\,\, &\text{otherwise}.
\end{cases}
\end{align*}
We have an isomorphism of bicomplexes $\gamma: G_{D}(a, b)_{[p,q]} \longrightarrow \widetilde{G}_{D}(a, b)_{[p,q]} $ which is the valuating map that values $\partial_{y_{1}}$ and $\partial_{y_{2}}$ in 1 and is the identity on all the other elements. We consider on $\widetilde{G}_{D}(a, b)$ the differentials $d'=\Delta^{+}$ and $d''=\Delta^{-}$ induced by $\Delta^{+}\partial_{y_{1}}$ and $\Delta^{-}\partial_{y_{2}}$ for $G_{D}(a, b)$.
We also define:
\begin{align*}
G_{A^{'}}(a, b)_{[p,q]}=
\begin{cases}
\inlinewedge_{+}^{a-1}\inlinewedge_{-}^{b-1}[x_{1},x_{2}] \,\,\,&\text{if}\,\,\, p=q= 0,\\
0 \,\,\, &\text{otherwise}.
\end{cases}
\end{align*}
We have the following commutative diagram:\\
\begin{center}
\begin{tikzpicture}
\node[black] at (-1,0) {$G_{A^{'}}(a, b)$};
\draw[->,black] (0,0) -- (2,0); 
\node[black] at (3,0) {$G_{D}(a, b)$};
\draw[->,black] (-1,-0.5) -- (-1,-1.5);
\node[black] at (-1.5,-1) {$id$};
\draw[->,black](3,-0.5) -- (3,-1.5);
\draw[->,black] (0,-2) -- (2,-2);
\node[black] at (3.5,-1) {$\gamma$};
\node[black] at (1,0.5) {$\nabla_{2}$};
\node[black] at (1,-1.5) {$\Delta^{-}\Delta^{+}\tau_{1}$};
\node[black] at (-1,-2) {$G_{A^{'}}(a, b)$};
\node[black] at (3,-2) {$\widetilde{G}_{D}(a, b)$.};
\end{tikzpicture}
\end{center}
We have that $\widetilde{G}_{D^{\circ}}(a, b):=\CoKer(\Delta^{-}\Delta^{+}\tau_{1}:G_{A^{'}}(a, b) \longrightarrow \widetilde{G}_{D}(a, b))$ is isomorphic, as a bicomplex, to $G_{D^{\circ}}$. Its diagram is the same of $\widetilde{G}_{D}$ except for $p=q=0$ (upper right point in the following diagram), 
where instead of $\inlinewedge_{+}^{a}\inlinewedge_{-}^{b}[x_{1},x_{2}] $ there is $\CoKer(\Delta^{-}\Delta^{+}:\inlinewedge_{+}^{a-1}\inlinewedge_{-}^{b-1}[x_{1},x_{2}]\longrightarrow \inlinewedge_{+}^{a}\inlinewedge_{-}^{b}[x_{1},x_{2}])$. \\
Moreover we observe that $G_{A^{'}}(0,0)=0$,  then $G_{D^{\circ}}(0,0)=G_{D}(0,0)$ and we can use the same argument of Lemma \ref{lemmi 4.3 4.4 4.5}. We now assume $b >0$. The diagram of $G_{D}(a, b)$ is the following:
\begin{center}
\begin{tikzpicture}
\node[black] at (0,0) {$\inlinewedge_{+}^{2}\inlinewedge_{-}^{b}[x_{1},x_{2}]$};
\node[black] at (4,0) {$\inlinewedge_{+}^{a}\inlinewedge_{-}^{b}[x_{1},x_{2}] $};
\draw[->,black] (1.45,0) -- (1.2,0);
\node[black] at (2,0){$\cdots$};
\draw[->,black] (2.6,0) -- (2.35,0);
\node[black] at (0,-1){$\cdots$};
\node[black] at (2,-1){$\cdots$};
\node[black] at (4,-1){$\cdots$};
\draw[->,black] (0,-0.4) -- (0,-0.6);
\draw[->,black] (0,-1.4) -- (0,-1.6);
\draw[->,black] (4,-0.4) -- (4,-0.6);
\draw[->,black] (4,-1.4) -- (4,-1.6);
\node[black] at (0,-2) {$\inlinewedge_{+}^{2}\inlinewedge_{-}^{2}[x_{1},x_{2}] $};
\node[black] at (4,-2) {$\inlinewedge_{+}^{a}\inlinewedge_{-}^{2}[x_{1},x_{2}]$,};
\draw[->,black] (1.45,-2) -- (1.2,-2);
\node[black] at (2,-2){$\cdots$};
\draw[->,black] (2.6,-2) -- (2.35,-2);
\end{tikzpicture}
\end{center}
where the horizontal maps are $d'$ and the vertical maps are $d''$. In the following diagram we shortly write $\frac{\Ker(\Delta^{-})}{\Ima (\Delta^{-}\Delta^{+})}  $ for the space:
\begin{align*}
\frac{\Ker(\Delta^{-}:\inlinewedge_{+}^{a}\inlinewedge_{-}^{b}[x_{1},x_{2}]\longrightarrow \inlinewedge_{+}^{a}\inlinewedge_{-}^{b+1}[x_{1},x_{2}])
}{\Ima (\Delta^{-}\Delta^{+}:\inlinewedge_{+}^{a-1}\inlinewedge_{-}^{b-1}[x_{1},x_{2}]\longrightarrow \inlinewedge_{+}^{a}\inlinewedge_{-}^{b}[x_{1},x_{2}])},
\end{align*}
and $\Ker(\Delta^{-})_{i,j}$ for:
\begin{align*}
\Ker(\Delta^{-}:\displaywedge_{+}^{i}\displaywedge_{-}^{j}[x_{1},x_{2}]\longrightarrow \displaywedge_{+}^{i}\displaywedge_{-}^{j+1}[x_{1},x_{2}]).
\end{align*}
The $E'^{1}$ spectral sequence of $\widetilde{G}_{D^{\circ}}(a, b)$ is (the computation is analogous to Lemma \ref{lemmi 4.3 4.4 4.5}):
\begin{center}
\begin{tikzpicture}
\node[black] at (0,0) {$\Ker(\Delta^{-})_{2,b}$};
\node[black] at (4,0) {$\frac{\Ker(\Delta^{-})}{\Ima (\Delta^{-}\Delta^{+})} $};
\draw[->,black] (1.45,0) -- (1.2,0);
\node[black] at (2,0){$\cdots$};
\draw[->,black] (2.6,0) -- (2.35,0);
\node[black] at (0,-1){$0$};
\node[black] at (2,-1){$\cdots$};
\node[black] at (4,-1){$0$};
\draw[->,black] (0,-0.4) -- (0,-0.6);
\draw[->,black] (0,-1.4) -- (0,-1.6);
\draw[->,black] (4,-0.4) -- (4,-0.6);
\draw[->,black] (4,-1.4) -- (4,-1.6);
\node[black] at (0,-2) {$0 $};
\node[black] at (4,-2) {$0$.};
\draw[->,black] (1.45,-2) -- (1.2,-2);
\node[black] at (2,-2){$0$};
\draw[->,black] (2.6,-2) -- (2.35,-2);
\end{tikzpicture}
\end{center}
We observe that, since $b>0$:
\begin{align*}
 \frac{\Ker(\Delta^{-})}{\Ima (\Delta^{-}\Delta^{+})} \cong \frac{\Delta^{-}(\inlinewedge_{+}^{a}\inlinewedge_{-}^{b-1}[x_{1},x_{2}])}{\Delta^{-}\Delta^{+}(\inlinewedge_{+}^{a-1}\inlinewedge_{-}^{b-1}[x_{1},x_{2}])}\cong \CoKer(\Delta^{-}( \displaywedge_{+}^{a-1}\displaywedge_{-}^{b-1}[x_{1},x_{2}])\xrightarrow[]{\Delta^{+}} \Delta^{-}( \displaywedge_{+}^{a}\displaywedge_{-}^{b-1}[x_{1},x_{2}]) ).
\end{align*}
The non zero row of the previous diagram is isomorphic, via $\Delta^{-}$, to the following complex:
\begin{align}
\label{complexapp}
&\Delta^{-}(\displaywedge_{+}^{2}\displaywedge_{-}^{b-1}[x_{1},x_{2}]) \xleftarrow[]{\Delta^{+}} ... \xleftarrow[]{\Delta^{+}} (\CoKer(\Delta^{-}( \displaywedge_{+}^{a-1}\displaywedge_{-}^{b-1}[x_{1},x_{2}])\xrightarrow[]{\Delta^{+}} \Delta^{-}( \displaywedge_{+}^{a}\displaywedge_{-}^{b-1}[x_{1},x_{2}]) )).
\end{align}
The fact that the two complexes are isomorphic follows from $b>0$ and that, by Lemma \ref{lemmi 4.3 4.4 4.5}, the sequence 
\begin{align*}
0 \xrightarrow[]{\Delta^{-}}\displaywedge_{-}^{0}\left[x_{1},x_{2}\right] \xrightarrow[]{\Delta^{-}} \displaywedge_{-}^{1}\left[x_{1},x_{2}\right] \xrightarrow[]{\Delta^{-}} \displaywedge_{-}^{2}\left[x_{1},x_{2}\right]  \xrightarrow[]{\Delta^{-}} 0
\end{align*}
is exact except for the left end.  
We observe that we can compute the homology of the complex \eqref{complexapp} using the homology of $S(2,b-1)$ given by Lemma \ref{4.7}. Indeed \eqref{complexapp} is different from $S(2,b-1)$ only at the right end, because the left end of \eqref{complexapp} is $\Delta^{-}(\inlinewedge_{+}^{2}\inlinewedge_{-}^{b-1}[x_{1},x_{2}])\cong \Ker(\Delta^{-}(\inlinewedge_{+}^{2}\inlinewedge_{-}^{b-1}[x_{1},x_{2}] )\xrightarrow[]{\Delta^{+}} \Delta^{-}(\inlinewedge_{+}^{3}\inlinewedge_{-}^{b-1}[x_{1},x_{2}]=0 ))$ that is the left end of $S(2,b-1)$.\\
The homology at the right end of \eqref{complexapp} is:
\begin{align*}
\Ker\Big(\Delta^{+}\Big(\frac{\Delta^{-}(\inlinewedge_{+}^{a}\inlinewedge_{-}^{b-1}[x_{1},x_{2}])}{\Delta^{-}\Delta^{+}(\inlinewedge_{+}^{a-1}\inlinewedge_{-}^{b-1}[x_{1},x_{2}])}\Big) \Big)
 \cong \frac{\Ker(\Delta^{+}(\Delta^{-}(\inlinewedge_{+}^{a}\inlinewedge_{-}^{b-1}[x_{1},x_{2}])))}{\Delta^{-}\Delta^{+}(\inlinewedge_{+}^{a-1}\inlinewedge_{-}^{b-1}[x_{1},x_{2}])} \cong H_{a}(S(2,b-1)).
\end{align*}
Therefore we can use the homology $S(2,b-1)$ and obtain that the homology spaces for the complex \eqref{complexapp} are isomorphic, respectively from left to right, to:
\begin{align*}
\displaywedge_{+}^{2+b} \quad ... \quad \displaywedge_{+}^{a+b}.
\end{align*}
We conclude because $E^{'2}_{n,0}(\widetilde{G}_{D^{\circ}}(a, b))=E^{'\infty}_{n,0}(\widetilde{G}_{D^{\circ}}(a, b))\cong \inlinewedge_{+}^{a-n+b}$ and:
\begin{align*}
\sum_{p+q=n}E^{'\infty}_{p,q}(\widetilde{G}_{D^{\circ}}(a, b))=E^{'\infty}_{n,0}(\widetilde{G}_{D^{\circ}}(a, b)) \cong \displaywedge_{+}^{a-n+b}.
\end{align*}

\textbf{Case C)} Let us first consider $G_{C^{\circ}}(2,2)=\CoKer(\nabla_{2}:\inlinewedge_{+}^{1}\inlinewedge_{-}^{1}[\partial_{x_{1}},\partial_{x_{2}} ]\longrightarrow   \inlinewedge_{+}^{2}\inlinewedge_{-}^{2}[\partial_{x_{1}},\partial_{x_{2}}] )$.
 We have that $G_{C^{\circ}}(0,0)=\C+\langle \partial_{x_{1}},\partial_{x_{2}} \rangle $, since an element $p(\partial_{x_{1}},\partial_{x_{2}}) \in \inlinewedge_{+}^{1}\inlinewedge_{-}^{1}[\partial_{x_{1}},\partial_{x_{2}}] $ is mapped to an element with degree increased by 2 in $\partial_{x_{1}},\partial_{x_{2}}$. In this case the statement is true. Indeed by $a = b=2$, we deduce $p=q=0$. Hence $G_{C^{\circ}}^{m,n}(2,2)=0$ when $n \neq 0$, $G_{C^{\circ}}^{-1,0}(0,0)=\langle \partial_{x_{1}},\partial_{x_{2}}\rangle $, $G_{C^{\circ}}^{0,0}(2,2)=\C$. By the sequence  
\begin{align*}
\xrightarrow[]{\nabla } G_{C^{\circ}}^{0,1}(2,2)=0 \xrightarrow[]{\nabla } G_{C^{\circ}}^{-1,0}(2,2)=\langle \partial_{x_{1}},\partial_{x_{2}}\rangle \rightarrow 0,
\end{align*}
we obtain $H^{-1,0}(G_{C^{\circ}}(0,0))\cong \inlinewedge^{1}$. By the sequence
\begin{align*}
\xrightarrow[]{\nabla } G_{C^{\circ}}^{1,1}(2,2)=0 \xrightarrow[]{\nabla } G_{C^{\circ}}^{0,0}(2,2)=\C \rightarrow 0,
\end{align*}
we obtain $H^{0,0}(G_{C^{\circ}}(2,2))\cong \inlinewedge^{2}$. We therefore assume $b<2$. As in Remark \ref{appoggio_lemma4.7}, we consider:
\begin{align*}
\widetilde{G}_{C}(a, b)_{[p,q]}=
\begin{cases}
\inlinewedge_{+}^{a-p}\inlinewedge_{-}^{b-q}[\partial_{x_{1}},\partial_{x_{2}}] \,\,\, &\text{if} \,\,\, p\leq 0, q\leq 0,\\
0 \,\,\, &\text{otherwise}.
\end{cases}
\end{align*}

We consider on this space the differentials $d'=\Delta^{+}$ and $d''=\Delta^{-}$ induced by $\Delta^{+}\partial_{y_{1}}$ and $\Delta^{-}\partial_{y_{2}}$ for $G_{C}(a, b)$.
  As in Remark \ref{appoggio_lemma4.7}, the $E'^{1}$ spectral sequence of $\widetilde{G}_{C^{\circ}}(a, b)$ is represented in the following diagram:
	\begin{center}
\begin{tikzpicture}
\node[black] at (0,0) {$\Ker(\Delta^{-})_{2,b} $};
\node[black] at (4,0) {$\frac{\Ker(\Delta^{-}) }{\Ima (\Delta^{-}\Delta^{+})} $};
\draw[->,black] (1.45,0) -- (1.2,0);
\node[black] at (2,0){$\cdots$};
\draw[->,black] (2.6,0) -- (2.35,0);
\node[black] at (0,-1){$0$};
\node[black] at (2,-1){$\cdots$};
\node[black] at (4,-1){$0$};
\draw[->,black] (0,-0.4) -- (0,-0.6);
\draw[->,black] (0,-1.4) -- (0,-1.6);
\draw[->,black] (4,-0.4) -- (4,-0.6);
\draw[->,black] (4,-1.4) -- (4,-1.6);
\node[black] at (0,-2) {$\inlinewedge_{+}^{2}\inlinewedge_{-}^{2} $};
\node[black] at (4,-2) {$\inlinewedge_{+}^{a}\inlinewedge_{-}^{2}$.};
\draw[->,black] (1.45,-2) -- (1.2,-2);
\node[black] at (2,-2){$\cdots$};
\draw[->,black] (2.6,-2) -- (2.35,-2);
\end{tikzpicture}
\end{center}
We have that only the rows for $q=0$ and $q=b-2$ are different from 0. We observe that $d'$ is 0 on the row $q=b-2$. Moreover $d^{r}_{p,q}$ is 0 for $r \geq 2$ because either the domain or the codomain of these maps are 0, since $2-a \leq 2-b$. Therefore $E'^{2}=...=E'^{\infty}$. We need to compute $E'^{2}$ for the row $q=0$; for this computation we apply Lemma \ref{4.7T}. 
We observe that the isomorphism in \eqref{key4.7T} of Lemma \ref{4.7T} is induced by $\nabla$ that increases the degree in $\partial_{x_{1}},\partial_{x_{2}}$ by 1. Thus the elements of $ E^{'2}_{p,0}(\widetilde{G}_{C^{\circ}}(a,b))$ are represented by elements with degree 1 in $\partial_{x_{1}},\partial_{x_{2}}$. Therefore we have that if $0 \leq b \leq a \leq 2$ and $a-2 \leq n \leq 0$:
\begin{align*}
\sum_{m}H^{m,n}(\widetilde{G}_{C^{\circ}}(a,b))=\sum_{p+q=n}E^{' \infty}_{p,q}(\widetilde{G}_{C^{\circ}}(a,b))=&E^{' \infty}_{n,0}(\widetilde{G}_{C^{\circ}}(a,b)) \cong\\
 &\displaywedge_{+}^{-1+a+b-n-2}  \quad (degree \, \,\, 1 \,\,\, in \,\,\, \partial_{x_{1}},\partial_{x_{2}}).
\end{align*}
Then $H^{-1,n}(\widetilde{G}_{C^{\circ}}(a,b))\cong \inlinewedge_{+}^{-1+a+b-n-2}$.
If $0 \leq b \leq a \leq 2$ and $n \leq b-2$:
\begin{align*}
\sum_{m}H^{m,n}(\widetilde{G}_{C^{\circ}}(a,b))=\sum_{p+q=n}E^{' \infty}_{p,q}(\widetilde{G}_{C^{\circ}}(a,b))=E^{' \infty}_{n-b+2,b-2}(\widetilde{G}_{C^{\circ}}(a,b)) \cong \displaywedge_{+}^{a+b-n-2} . 
\end{align*}
Hence $H^{0,n}(\widetilde{G}_{C^{\circ}}(a,b))\cong \inlinewedge_{+}^{a+b-n-2}$. Finally if $0 \leq b \leq a \leq 2$ and $n = b-2=a-2$, the result follows analogously.
\end{proof}
We now sum up the information of Lemmas \ref{lemmi 4.3 4.4 4.5} and \ref{4.6} in the following result about the homology of the $G_{X^{\circ}}$'s. Following \cite{kacrudakovE36}, we introduce the notation $P(n,t,c)$ that denotes the irreducible $\langle y_{1}\partial_{y_{1}}-y_{2}\partial_{y_{2}}, y_{1}\partial_{y_{2}},y_{2}\partial_{y_{1}}\rangle\oplus \C t \oplus \C C -$module of highest weight $(n,t,c)$ with respect to $y_{1}\partial_{y_{1}}-y_{2}\partial_{y_{2}}, t, C$ when $n \in \Z_{\geq 0}$ and $P(n,t,c)=0$ when $n <0$. In the following result we will use the notation $Q(i,n,t,c)$ for the irreducible $\g_{0}-$module of highest weight $(i,n,t,c)$ with respect to $x_{1}\partial_{x_{1}}-x_{2}\partial_{x_{2}},y_{1}\partial_{y_{1}}-y_{2}\partial_{y_{2}}, t, C$ when $i,n \in \Z_{\geq 0}$ and $Q(i,n,t,c)=0$ when $n <0$ or $i<0$. Moreover, for $i\in \left\{0,1,2\right\}$, we will denote by $r_{i}$ the remainder $i$ mod $2$, that is $r_{i}=0$ for $i=0,2$ and $r_{i}=1$ for $i=1$. Using Lemmas \ref{lemmi 4.3 4.4 4.5}, \ref{4.6} and the fact that $G_{X^{\circ}} =\oplus_{a,b} G_{X^{\circ}}(a,b)$, we obtain the following result.
\begin{prop} 
\label{Proppesi}
As $\g_{0}-$modules:
\begin{align*}
H^{m,n}(G_{A^{\circ}})&\cong
\begin{cases}
\sum^{2}_{i=0}Q\big(r_{i},n-i,-i-\frac{1}{2}n,-\frac{1}{2}n \big)\, \,\,\,  &if \, \,\, m=0, \, \, \, n \geq 0,\\
\sum^{2}_{i=0}Q\big(r_{i},i-n-1,-i-\frac{1}{2}n+\frac{1}{2},-\frac{1}{2}n+\frac{1}{2}\big)\, \,\,\,  &if \, \,\, m=1,  \, \, \, 0 \leq n \leq 1,\\
0 \, \,\,\, &otherwise.
\end{cases}\\
H^{m,n}(G_{D^{\circ}})&\cong
\begin{cases}
\sum^{2}_{i=0}Q\big(r_{i},-n+i,-i-\frac{1}{2}n+1,-\frac{1}{2}n+1\big) \, \,\,\,\,\,  &if \, \,\, m=0, \, \, \, n \leq 0,\\
0 \,\,\,\, \,\,&otherwise.
\end{cases}\\
H^{m,n}(G_{C^{\circ}})&\cong
\begin{cases}
\sum^{2}_{i=0}Q\big(r_{i},-n-2+i,-i-\frac{1}{2}n,-\frac{1}{2}n\big) \, \,  &if \, \,\, m=0, \, \, \, n \leq 0,\\
\sum^{2}_{i=0}Q\big(r_{i},n+2-i-1,-i-\frac{1}{2}n-\frac{1}{2},-\frac{1}{2}n-\frac{1}{2}\big)  \, \,  &if \, \,\, m=-1,  \, \, \, -1 \leq n \leq 0,\\
0 \, \, & otherwise.
\end{cases}
\end{align*}
\end{prop}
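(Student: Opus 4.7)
The plan is to exploit the direct sum decomposition $G_{X^\circ} = \bigoplus_{a,b} G_{X^\circ}(a,b)$, which respects the action of the subalgebra $\langle x_1\partial_{x_1}-x_2\partial_{x_2}, x_1\partial_{x_2}, x_2\partial_{x_1}\rangle \oplus \C t \oplus \C C$ of $\g_0$, and to assemble the descriptions of each $H^{m,n}(G_{X^\circ}(a,b))$ as an $\langle e_x, f_x, h_x\rangle$-module (given by Lemmas \ref{lemmi 4.3 4.4 4.5} and \ref{4.6}) into the full $\g_0$-module structure of the total homology. Since $\g_0^{ss} = \langle e_x, f_x, h_x\rangle \oplus \langle e_y, f_y, h_y\rangle$, once the $\langle e_x, f_x, h_x\rangle$-type and the $h_y$-eigenvalue of each homology representative are known, summing over $(a,b)$ produces the full $(h_x,h_y)$-character of the homology, and by semisimplicity of $\g_0^{ss}$ this character forces the decomposition into $\g_0^{ss}$-irreducibles; the $t$- and $C$-eigenvalues then pin down the remaining scalars.

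I will treat $X=A$, $m=0$, $n\geq0$ as the model case. Lemmas \ref{lemmi 4.3 4.4 4.5} and \ref{4.6} together give $H^{0,n}(G_{A^\circ}(a,b))\cong \inlinewedge^{a+b-n}$ whenever $a+b-n\in\{0,1,2\}$ and $\max(a,b)\leq n$; writing $i=a+b-n$, for each $i\in\{0,1,2\}$ the contributing pairs with $i\leq a,b\leq n$ are exactly $n-i+1$ in number. From the proofs of Case A in the two lemmas, the surviving representative from $(a,b)$ lies at bigrade $(p,q)=(n-b,b)$ and is (up to scalar) an element of $\inlinewedge_+^{i}\otimes y_1^{n-b}y_2^{b}$, carrying $h_y$-weight $i+(n-2b)$. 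As $b$ ranges over $[i,n]$, these weights cover $\{n-i,n-i-2,\ldots,-(n-i)\}$ each exactly once, with a full copy of $\inlinewedge_+^i$ at each weight; this is precisely the character of the $\g_0^{ss}$-irreducible of $(h_x,h_y)$-highest weight $(r_i,n-i)$, so summing over $i$ yields the claimed $\bigoplus_{i=0}^{2}Q(r_i,n-i,\cdot,\cdot)$.

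The cases $X=D$, $m=0$, $n\leq 0$ and $X=C$, $m=0$, $n\leq 0$ will be handled by the same scheme. For $X=D$, the admissible pairs for a fixed $i$ are $(a,b)\in[n,2]^2\cap\Z^2$ with $a+b=n+i$; locating the surviving representative in each $G_{D^\circ}(a,b)$ from the two lemmas and summing the $h_y$-contributions from the $\inlinewedge(\g_{-1})$-factor and the $V_D$-factor $\partial_{y_1}^{-p}\partial_{y_2}^{-q}$ gives total $h_y$-weight $a-b$, producing exactly the weight spectrum of the irreducible $\langle e_y,f_y,h_y\rangle$-module of highest weight $-n+i$. For $X=C$ the analogous enumeration with $a+b=n+i+2$ yields highest weight $-n-2+i$. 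The remaining cases ($m=1$ for $G_{A^\circ}$ and $m=-1$ for $G_{C^\circ}$) are treated identically, restricting by Lemma \ref{4.6} to pairs with $0\leq a,b\leq 2$ (the only regime where these degrees are nonzero) and accounting for the vanishing summands in the proposition via the convention $P(k,\cdot,\cdot)=0$ for $k<0$.

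Finally, the $t$- and $C$-weights of the highest-weight vectors will be verified by direct computation. Since $C$ is central in $\g$, it acts on $u\otimes v\in U(\g_{<0})\otimes V_X$ purely through its scalar action on $v$ dictated by the subscript $[i,j]$ of $V_X$; since $t$ is a grading element with $[t,\xi_j]=-\xi_j$, on $u\otimes v$ with $u$ a wedge-monomial of total degree $k$ the element $t$ acts by $-k$ plus its action on $v$. Substituting the wedge-degrees of the representatives ($i$ for $G_{A^\circ}$ with $m=0$ and for $G_{D^\circ}$, $i+2$ for $G_{C^\circ}$ with $m=0$, $i-1$ for $G_{A^\circ}$ with $m=1$, and $i+3$ for $G_{C^\circ}$ with $m=-1$) reproduces exactly the $t$- and $C$-coordinates appearing in the proposition. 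The main obstacle will be the bookkeeping in the $X=D$ and $X=C$ cases, where representatives from Lemma \ref{lemmi 4.3 4.4 4.5} and Lemma \ref{4.6} live at different bigrades depending on the signs of $a$ and $b$; one must verify that the weight spectra produced by the two regimes glue together into the single character of an irreducible $\langle e_y,f_y,h_y\rangle$-module, rather than several isotypic copies with the same character.
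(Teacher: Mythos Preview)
Your approach is essentially the same as the paper's: both use the decomposition $G_{X^\circ}=\bigoplus_{a,b}G_{X^\circ}(a,b)$ together with Lemmas~\ref{lemmi 4.3 4.4 4.5} and~\ref{4.6}, identify the $h_y$-weight on each summand as $a-b$, and then read off the $\g_0$-module structure and the $(t,C)$-eigenvalues. The only stylistic difference is that the paper verifies the $\langle e_y,f_y,h_y\rangle$-structure by explicitly computing the action of $y_2\partial_{y_1}$ on homology representatives (and checking that apparent images vanish via $\nabla$ of suitable elements), whereas you infer the structure from the $(h_x,h_y)$-character and semisimplicity of $\g_0^{ss}$.

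Your closing worry about ``gluing'' across the two regimes is unnecessary. First, for finite-dimensional modules over the semisimple algebra $\g_0^{ss}$ the character already determines the isomorphism class, so once you have assembled the full $(h_x,h_y)$-character of $H^{m,n}(G_{X^\circ})$ there is no further ambiguity. Second, and more concretely, the operators $e_y=y_1\partial_{y_2}$ and $f_y=y_2\partial_{y_1}$ preserve $a+b$ (they shift $(a,b)\mapsto(a\pm1,b\mp1)$ both on the $\inlinewedge(\g_{-1})$-factor and on the $V_X$-factor), so for each fixed $i$ the sum $\bigoplus_{a+b=n+i}H^{m,n}(G_{X^\circ}(a,b))$ is itself a $\g_0^{ss}$-submodule whose character equals $\chi_{r_i}(h_x)\chi_{\text{(claimed)}}(h_y)$; hence it is exactly the single irreducible $Q(r_i,\ldots)$ appearing in the statement.
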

\begin{proof}
This result follows directly from Lemmas \ref{lemmi 4.3 4.4 4.5}, \ref{4.6} and the decomposition $G_{X^{\circ}}=\oplus_{a,b}G_{X^{\circ}}(a,b)$. We show explicitly the thesis for $X=A$; the proof for $X=C,D$ is analogous. By the decomposition $G_{A^{\circ}}=\oplus_{a,b}G_{A^{\circ}}(a,b)$ we obtain that:
\begin{align}
\label{apphomalfa}
H^{m,n}(G_{A^{\circ}})&=\sum_{a, b} H^{m,n}(G_{A^{\circ}}(a,b)).
\end{align}
By the definition of the $G_{A^{\circ}}(a,b)$'s, $y_{1}\partial_{y_{1}}-y_{2}\partial_{y_{2}}$ acts on the elements of $H^{m,n}(G_{A^{\circ}}(a,b))$ as multiplication by $a-b$. By Lemmas \ref{lemmi 4.3 4.4 4.5} and \ref{4.6} we obtain that the RHS of \eqref{apphomalfa} is 0 for $m>1$.\\
For $m=0$, Equation \eqref{apphomalfa} reduces to:
\begin{align}
\label{appoggiopesi}
H^{0,n}(G_{A^{\circ}})=&H^{0,n}(G_{A^{\circ}}(0,n))+H^{0,n}(G_{A^{\circ}}(1,n-1))+...+H^{0,n}(G_{A^{\circ}}(n-1,1))+H^{0,n}(G_{A^{\circ}}(n,0))\\ \nonumber
+&H^{0,n}(G_{A^{\circ}}(1,n))+H^{0,n}(G_{A^{\circ}}(2,n-1))+...+H^{0,n}(G_{A^{\circ}}(n-1,2))+H^{0,n}(G_{A^{\circ}}(n,1))\\ \nonumber
+&H^{0,n}(G_{A^{\circ}}(2,n))+H^{0,n}(G_{A^{\circ}}(3,n-1))+...+H^{0,n}(G_{A^{\circ}}(n-1,3))+H^{0,n}(G_{A^{\circ}}(n,2)).
\end{align}
We observe that the RHS of \eqref{appoggiopesi} is the sum of three irreducible $\g_{0}-$modules that we call $M_{0}$, $M_{1}$ and $M_{2}$, that are defined as follows.
As vector spaces:
\begin{align*}
M_{0}:=H^{0,n}(G_{A^{\circ}}(0,n))+H^{0,n}(G_{A^{\circ}}(1,n-1))+...+H^{0,n}(G_{A^{\circ}}(n,0) )\cong \displaywedge^{0}  \otimes P\Big(n,-\frac{1}{2}n,-\frac{1}{2}n \Big).
\end{align*}
Indeed by Lemmas \ref{lemmi 4.3 4.4 4.5} and \ref{4.6}:
\begin{center}
\begin{tikzpicture}
  \node at (0,0) (n) {$H^{0,n}(G_{A^{\circ}}(n,0))=\inlinewedge^{0}\otimes y_{1}^{n} $};
  \node at (0,-1.2) (n-1) {$H^{0,n}(G_{A^{\circ}}(n-1,1))=\inlinewedge^{0}\otimes y_{1}^{n-1}y_{2} $};
	\node at (0,-1.95)  {$\vdots$};
	\node at (0,-2.7) (1) {$H^{0,n}(G_{A^{\circ}}(1,n-1))=\inlinewedge^{0}\otimes  y_{1}y_{2}^{n-1} $};
	\node at (0,-3.9) (0) {$H^{0,n}(G_{A^{\circ}}(0,n))=\inlinewedge^{0} \otimes y_{2}^{n} .$};
	\draw[->] (n) to node[midway, right]{$y_{2}\partial_{y_{1}}$} (n-1); 
	\draw[->] (1) to node[midway, right]{$y_{2}\partial_{y_{1}}$} (0); 
\end{tikzpicture}
\end{center}
Therefore, as a $\g_{0}-$module, $M_{0}\cong  Q\Big(0,n,-\frac{1}{2}n,-\frac{1}{2}n \Big)$.
Now let us show that as vector spaces:
\begin{align*}
M_{1}:=H^{0,n}(G_{A^{\circ}}(1,n))+H^{0,n}(G_{A^{\circ}}(2,n-1))+...+H^{0,n}(G_{A^{\circ}}(n,1) )\cong \displaywedge^{1}  \otimes P\Big(n,-\frac{1}{2}n,-\frac{1}{2}n \Big).
\end{align*}
By Lemmas \ref{lemmi 4.3 4.4 4.5} and \ref{4.6}:
\begin{center}
\begin{tikzpicture}
  \node at (0,0) (n) {$H^{0,n}(G_{A^{\circ}}(n,1))\cong\inlinewedge_{-}^{1} \otimes y_{1}^{n} $};
  \node at (0,-1.2) (n-1) {$H^{0,n}(G_{A^{\circ}}(n-1,2))\cong\inlinewedge_{-}^{1} \otimes y_{1}^{n-1}y_{2} $};
	\node at (0,-1.95)  {$\vdots$};
	\node at (0,-2.7) (2) { $H^{0,n}(G_{A^{\circ}}(3,n-2))\cong\inlinewedge_{-}^{1} \otimes y_{1}^{3}y_{2}^{n-3}$};
	\node at (0,-3.9) (1) { $H^{0,n}(G_{A^{\circ}}(2,n-1))\cong\inlinewedge_{-}^{1} \otimes y_{1}^{2}y_{2}^{n-2}$};
	\node at (0,-5.1) (0) {$H^{0,n}(G_{A^{\circ}}(1,n))\cong\inlinewedge_{-}^{1} \otimes y_{1}y_{2}^{n-1}. $};
	\draw[->] (n) to node[midway, right]{$y_{2}\partial_{y_{1}}$} (n-1); 
	\draw[->] (2) to node[midway, right]{$y_{2}\partial_{y_{1}}$} (1); 
	\draw[->] (1) to node[midway, right]{$y_{2}\partial_{y_{1}}$} (0); 
\end{tikzpicture}
\end{center}
Indeed by Lemma \ref{lemmi 4.3 4.4 4.5}, $H^{0,n}(G_{A^{\circ}}(n,1))\cong \inlinewedge_{-}^{1} \otimes y_{1}^{n}=\langle w_{12}\otimes y_{1}^{n},  w_{22}\otimes y_{1}^{n}\rangle$. We point out that:
\begin{align*}
(y_{1}\partial_{y_{1}}-y_{2}\partial_{y_{2}}).(w_{12}\otimes y_{1}^{n})&=(n-1)w_{12}\otimes y_{1}^{n},\\
y_{2}\partial_{y_{1}}.(w_{12}\otimes y_{1}^{n}) &=w_{12}\otimes ny_{1}^{n-1}y_{2} \in H^{0,n}(G_{A^{\circ}}(n-1,2)),\\
y_{1}\partial_{y_{2}}.(w_{12}\otimes y_{1}^{n}) &=w_{11}\otimes y_{1}^{n}=\nabla \Big(\frac{x_{1}y_{1}^{n+1}}{n+1}\Big)=0 \quad \text{in} \quad H^{0,n}(G_{A^{\circ}}(n+1,0)),
\end{align*}
and analogously for $w_{22}\otimes y_{1}^{n}$.
Let us show explicitly that $$H^{0,n}(G_{A^{\circ}}(3,n-2))  \xrightarrow{y_{2}\partial_{y_{1}}} H^{0,n}(G_{A^{\circ}}(2,n-1)).$$ By Lemmas \ref{lemmi 4.3 4.4 4.5} and \ref{4.6}, $H^{0,n}(G_{A^{\circ}}(3,n-2))\cong\inlinewedge_{-}^{1} \otimes y_{1}^{3}y_{2}^{n-3}$ and $ H^{0,n}(G_{A^{\circ}}(2,n-1))\cong \inlinewedge_{+}^{1} \otimes y_{1}y_{2}^{n-1}$. We have that:
\begin{align*}
y_{2}\partial_{y_{1}}.(w_{12}\otimes y_{1}^{3}y_{2}^{n-3})=w_{12}\otimes 3y_{1}^{2}y_{2}^{n-2},
\end{align*}
but $w_{12}\otimes y_{1}^{2}y_{2}^{n-2}=-w_{11}\otimes \frac{2y_{1}y_{2}^{n-1}}{n-1}$ in $H^{0,n}(G_{A^{\circ}}(2,n-1))$ since:
\begin{align*}
\nabla \Big(\frac{x_{1}y_{1}^{2}y_{2}^{n-1}}{n-1} \Big)=w_{11}\otimes \frac{2y_{1}y_{2}^{n-1}}{n-1}+w_{12}\otimes y_{1}^{2}y_{2}^{n-2}.
\end{align*}
An analogous argument holds for $w_{22}\otimes y_{1}^{3}y_{2}^{n-3}$.
Finally, by Lemmas \ref{lemmi 4.3 4.4 4.5} and \ref{4.6}, $H^{0,n}(G_{A^{\circ}}(1,n))\cong\inlinewedge_{+}^{1} \otimes y_{2}^{n} \cong\inlinewedge_{-}^{1} \otimes y_{1} y_{2}^{n-1}$ and:
\begin{align*}
y_{2}\partial_{y_{1}}.(w_{12}\otimes y_{1}y_{2}^{n-1})=w_{12}\otimes y_{2}^{n}=\nabla \Big(\frac{x_{1}y_{2}^{n+1}}{n+1}\Big)=0 \quad \text{in} \quad H^{0,n}(G_{A^{\circ}}(0,n+1)),\\
y_{2}\partial_{y_{1}}.(w_{22}\otimes y_{1}y_{2}^{n-1})=w_{22}\otimes y_{2}^{n}=\nabla \Big(\frac{x_{2}y_{2}^{n+1}}{n+1}\Big)=0 \quad \text{in} \quad H^{0,n}(G_{A^{\circ}}(0,n+1)).
\end{align*}
Hence, as a $\g_{0}-$module, $M_{1} \cong Q\Big(1,n-1,-1-\frac{1}{2}n,-\frac{1}{2}n \Big)$. Finally as vector spaces:
\begin{align*}
M_{2}:=H^{0,n}(G_{A^{\circ}}(2,n))+H^{0,n}(G_{A^{\circ}}(3,n-1))+...+H^{0,n}(G_{A^{\circ}}(n,2) )\cong \displaywedge^{2}  \otimes P\Big(n,-\frac{1}{2}n,-\frac{1}{2}n \Big).
\end{align*}
Indeed using an analogous reasoning, by Lemmas \ref{lemmi 4.3 4.4 4.5} and \ref{4.6} if follows that:
\begin{center}
\begin{tikzpicture}
  \node at (0,0) (n) {$H^{0,n}(G_{A^{\circ}}(n,2))\cong\inlinewedge^{2}_{-} \otimes y_{1}^{n} $};
  \node at (0,-1.2) (n-1) {$H^{0,n}(G_{A^{\circ}}(n-1,3))\cong\inlinewedge^{2}_{-} \otimes y_{1}^{n-1}y_{2} $};
	\node at (0,-1.95)  {$\vdots$};
	\node at (0,-2.7) (1) {$H^{0,n}(G_{A^{\circ}}(3,n-1))\cong\inlinewedge^{2}_{-} \otimes y_{1}^{3}y_{2}^{n-3} $};
	\node at (0,-3.9) (0) {$H^{0,n}(G_{A^{\circ}}(2,n))\cong\inlinewedge^{2}_{-} \otimes y_{1}^{2}y_{2}^{n-2} .$};
	\draw[->] (n) to node[midway, right]{$y_{2}\partial_{y_{1}}$} (n-1); 
	\draw[->] (1) to node[midway, right]{$y_{2}\partial_{y_{1}}$} (0); 
\end{tikzpicture}
\end{center}
Therefore, as a $\g_{0}-$module, $M_{2} \cong  Q\Big(0,n-2,-2-\frac{1}{2}n,-\frac{1}{2}n \Big)$.\\
Now let us focus on $m=1$. By Lemma \ref{4.6}, $H^{1,n}(G_{A^{\circ}}(a,b))\cong \inlinewedge^{a+b-n+1} $ for $0\leq a\leq b\leq 2$ and $0\leq n\leq a$ or $0\leq b \leq a\leq 2$ and $0\leq n\leq b$ and it is 0 otherwise. Therefore $H^{1,n}(G_{A^{\circ}}(a,b))=0$ if $n\geq 2$. Indeed if $n= 2$, then $a=b=2$ and $\inlinewedge^{a+b-n+1} \cong \inlinewedge^{3}=0$. The case $n>2$ is ruled out by conditions $0\leq a\leq b\leq 2$ and $0\leq n\leq a$ or $0\leq b \leq a\leq 2$ and $0\leq n\leq b$. Hence we focus on $n=0$ and $n=1$.\\
Let $n=0$. Equation \eqref{apphomalfa} reduces to:
\begin{align}
\label{m=1n=0appoggiopesi}
H^{1,0}(G_{A^{\circ}})=H^{1,0}(G_{A^{\circ}}(0,0))+H^{1,0}(G_{A^{\circ}}(1,0))+H^{1,0}(G_{A^{\circ}}(0,1)).
\end{align}
The RHS of \eqref{m=1n=0appoggiopesi} is the sum of two irreducible $\g_{0}-$modules $M_{1}$ and $M_{2}$ that are defined as follows. We define:
\begin{align*}
M_{1}:=H^{1,0}(G_{A^{\circ}}(0,0)).
\end{align*}
By relation \eqref{degree1} in the proof of Lemma \ref{4.6},  as a $\g_{0}-$module:
\begin{align*}
H^{1,0}(G_{A^{\circ}}(0,0))\cong  Q\Big(1,0,-\frac{1}{2},\frac{1}{2}\Big)
\end{align*}
Moreover:
\begin{align*}
M_{2}:=H^{1,0}(G_{A^{\circ}}(1,0))+H^{1,0}(G_{A^{\circ}}(0,1)).
\end{align*}
By relation \eqref{degree1} in the proof of Lemma \ref{4.6}, as a $\g_{0}-$module:
\begin{align*}
H^{1,0}(G_{A^{\circ}}(1,0))+H^{1,0}(G_{A^{\circ}}(0,1))\cong Q\Big(0,1,-\frac{3}{2},\frac{1}{2}\Big)
\end{align*}
Finally, let $n=1$. Equation \eqref{apphomalfa} reduces to:
\begin{align}
\label{m=1n=0appoggiopesi2}
H^{1,1}(G_{A^{\circ}})=H^{1,1}(G_{A^{\circ}}(1,1)).
\end{align}
By relation \eqref{degree1} in the proof of Lemma \ref{4.6}, as a $\g_{0}-$module:
\begin{align*}
H^{1,1}(G_{A^{\circ}}(1,1))\cong Q\big(0,0,-2,0\big).
\end{align*}
\end{proof}
\subsection{Homology of complexes $M_{X}$}
\label{subsec5.2}
We are now able to compute the homology of the complexes  $M_{X}$'s.
\begin{prop}
\label{omologiafinale}
\begin{align*}
H^{m,n}(M_{A})&=0 \quad for \,\, all \,\,(m,n):\, m>1 \,\, \text{or} \,\, (m=1 \,\,\text{and} \,\,n\neq 1) \,\, \text{or} \,\, (m,n)=(0,1),\\
H^{m,n}(M_{C})&=0 \quad for \,\, all \,\, (m,n) \neq (0,0), (-1,-1),\\
H^{m,n}(M_{D})&=0 \quad for \,\, all \,\, (m,n).
\end{align*}
\end{prop}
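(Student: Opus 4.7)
The plan is to deduce Proposition \ref{omologiafinale} from Propositions \ref{keyhomologyspectral} and \ref{Proppesi}, combined with Remark \ref{rem2Gcirc} and, where necessary, the short exact sequences defining the $G_{X^\circ}$'s together with the conformal duality of Proposition \ref{esattezzafuntoreduale}.

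I begin with the \emph{interior} bigradings, namely those $(m,n)$ to which Remark \ref{rem2Gcirc} directly applies. There the identification $H^{m,n}(G_X) = H^{m,n}(G_{X^\circ})$ combined with the explicit list of non-vanishing pieces in Proposition \ref{Proppesi} gives $H^{m,n}(G_X) = 0$, and Proposition \ref{keyhomologyspectral} then forces $H^{m,n}(M_X) = 0$. Reading off Proposition \ref{Proppesi}, this covers $m \geq 2$ and $m=1, n\geq 2$ in the $A$ quadrant, $m \leq -2$ and $m=-1, n \leq -2$ in the $C$ quadrant, and $m \geq 1$ in the $D$ quadrant. This accounts for the bulk of the statement.

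For the \emph{boundary} bigradings where $H^{m,n}(G_{X^\circ})$ is still non-zero, I would feed in the morphism $\nabla_2$ via the short exact sequences arising from the very definition of $G_{X^\circ}$ in Section 6.1, namely
\begin{align*}
0 \to G_{A^\circ} \to G_A \xrightarrow{\nabla_2} G_{D'} &\to 0,\\
G_{A'} \xrightarrow{\nabla_2} G_D \to G_{D^\circ} &\to 0,\\
G_{B'} \xrightarrow{\nabla_2} G_C \to G_{C^\circ} &\to 0.
\end{align*}
Taking the associated long exact sequences in $\nabla$-homology, the discrepancy between $H^{m,n}(G_X)$ and $H^{m,n}(G_{X^\circ})$ is controlled by $H^{m,n}(G_{X'})$, and the latter is tractable because $V_{X'}$ contains no $y$-variables, so that $\nabla$ restricted to $G_{X'}$ reduces to a Koszul-type complex already analysed (via the arguments behind Lemmas \ref{lemmi 4.3 4.4 4.5} and \ref{4.6}). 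Tracking these maps explicitly shows that the residual classes in $H(G_{X^\circ})$ at the boundary are either hit by images from $G_{X'}$ or themselves hit non-trivially by the corresponding $\nabla_2$, so that $H^{m,n}(G_X)=0$, and Proposition \ref{keyhomologyspectral} then closes the argument.

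The axis positions not reached by either argument above (principally the $m=0$ axis of $M_C$ and of $M_D$) will be handled by conformal duality. By Remark \ref{cantacasellikacremarksize} the $180^\circ$-rotation of Figure \ref{figura} implements the dual functor $\mathbf{F}$ of Proposition \ref{esattezzafuntoreduale}, and the cokernels appearing along the complexes $(M_X,\nabla)$ at the relevant spots are quotients of finite Verma modules, which are $\C[\partial]$-free by the Poincar\'e--Birkhoff--Witt isomorphism $\Ind(F) \cong \C[\Theta] \otimes \inlinewedge(4) \otimes F$; hence the torsion-free hypothesis of Proposition \ref{esattezzafuntoreduale} is satisfied, and the previously established vanishings in the $A$ and $D$ quadrants transfer to the remaining ones in $C$ and $B$. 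The main obstacle in this last step is the careful bookkeeping of $\Theta$-corrections produced by the non-abelian brackets $[w_{11},w_{22}]=4\Theta$ and $[w_{12},w_{21}]=-4\Theta$ of \eqref{bracketwcaselli}: it is exactly these corrections that make the connecting maps in the long exact sequences close up, canceling the classes of $H(G_{X^\circ})$ that would otherwise obstruct the desired vanishing.
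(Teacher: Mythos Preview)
Your interior argument (first paragraph) matches the paper exactly. The trouble is with the boundary cases, where your two proposed tools — the $\nabla_2$ long exact sequences and conformal duality — do not do the job the paper needs here.

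First, the $\nabla_2$ sequences you write are oriented along the $n=0$ axis (they compare $G_A$ with $G_{D'}$, etc.), whereas the outstanding cases for $M_D$ and $M_C$ sit on the $m=0$ axis. Those sequences also fail to be short exact as written: $\nabla_2=\Delta^-\Delta^+\tau_1$ kills constants and linear polynomials, so $G_{A'}\xrightarrow{\nabla_2}G_D$ is not injective, and your claim that $\nabla$ restricted to $G_{X'}$ becomes a Koszul complex is only correct for $X=A,B$ (where $\partial_{y_j}$ is a derivation and hence vanishes on $V_{X'}$); for $X=D,C$ the operator $\partial_{y_j}$ is multiplication and $G_{X'}$ is not even $\nabla$-stable. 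So the long exact sequence step, as you describe it, does not isolate the $H^{0,n}(G_D)$ and $H^{0,n}(G_C)$ you need.

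Second, your duality step is circular at exactly the spots you want it for. Under the $180^\circ$ rotation of Remark~\ref{cantacasellikacremarksize}, the $m=0$ column of $M_D$ is dual to the $m=0$ column of $M_B$, and the $m=0$ column of $M_C$ is dual to the $m=0$ column of $M_A$; neither of these is established yet inside this proposition (the paper deduces them only afterwards, in Proposition~\ref{omologiafinaleduale}, \emph{from} the present result). Moreover, the torsion-freeness hypothesis in Proposition~\ref{esattezzafuntoreduale} is verified in the paper by already knowing exactness at two consecutive spots, so you cannot bootstrap duality here.

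What the paper actually does for the boundary is different and more direct: it uses the \emph{other} higher-degree morphisms $\widetilde\nabla_2$, $\nabla_3$, $\widetilde\nabla_3$ (not $\nabla_2$) at the level of $G_{X^\circ}$. From Proposition~\ref{Proppesi} one reads off that $H^{0,n}(G_{D^\circ})\cong H^{0,n-2}(G_{C^\circ})$ as $\g_0$-modules, and one checks by evaluating $\widetilde\nabla_2$ on the three explicit highest weight vectors $\partial_{y_2}^{-n}$, $w_{11}\otimes\partial_{y_2}^{-n}$, $w_{11}w_{21}\otimes\partial_{y_2}^{-n}$ that the induced map is nonzero on each irreducible summand, hence an isomorphism. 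Its kernel is exactly $H^{0,n}(G_D)$ and surjectivity gives $H^{0,n-2}(G_C)=0$; Proposition~\ref{keyhomologyspectral} then finishes. The cases $(m,n)=(0,1)$ and $(1,0)$ for $A$ versus $(-1,0)$ and $(0,-1)$ for $C$ are handled the same way with $\nabla_3$ and $\widetilde\nabla_3$. Everything happens in $\Gr M_X$, so the $\Theta$-corrections you mention play no role in this proposition (they enter only later, in Propositions~\ref{omologiaC} and~\ref{omologiaA}).
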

\begin{proof}
By Remarks \ref{rem1Gcirc}, \ref{rem2Gcirc} and Proposition \ref{Proppesi} we know that:
\begin{align*}
H^{m,n}(G_{A})&=H^{m,n}(G_{A^{\circ}})=0 \quad \text{if} \,\, m>1 \,\, \text{or} \,\, (m=1 \,\,\text{and} \,\,n\geq 2),\\
H^{m,n}(G_{C})&=H^{m,n}(G_{C^{\circ}})=0 \quad \text{if} \,\, m<-1 \,\, \text{or} \,\, (m=-1 \,\, \text{and} \,\,n\leq -2),\\
H^{m,n}(G_{D})&=H^{m,n}(G_{D^{\circ}})=0 \quad \text{if} \,\, m>0 \,\, \text{and} \,\, n\leq 0.
\end{align*}
Therefore we obtain, by Proposition \ref{keyhomologyspectral}, that:
\begin{align*}
H^{m,n}(M_{A})&=0 \quad \text{if} \,\, m>1 \,\, \text{or} \,\, (m=1 \,\, \text{and} \,\,n\geq 2),\\
H^{m,n}(M_{C})&=0 \quad \text{if} \,\, m<-1 \,\, \text{or} \,\, (m=-1 \,\, \text{and} \,\,n\leq -2),\\
H^{m,n}(M_{D})&=0 \quad \text{if} \,\, m>0 \,\, \text{and} \,\, n\leq 0.
\end{align*}
Let us compute $H^{0,n}(G_{X^{\circ}})$ for $X=C,D$. 
By Proposition \ref{Proppesi} it follows that $H^{0,n}(G_{D^{\circ}}) \cong H^{0,n-2}(G_{C^{\circ}})$ as $\g_{0}-$modules for $n\leq 0$, indeed:
\begin{align*}
H^{0,n}(G_{D^{\circ}})  &\cong \sum^{2}_{i=0}Q\Big(r_{i},-n+i,-i-\frac{1}{2}n+1,-\frac{1}{2}n+1\Big),\\
H^{0,n-2}(G_{C^{\circ}}) &\cong \sum^{2}_{i=0} Q\Big(r_{i},-n+i,-i-\frac{1}{2}n+1,-\frac{1}{2}n+1\Big).
\end{align*}
By Remark \ref{rem1Gcirc}, we know that 
\begin{align*}
H^{0,n}(G_{D^{\circ}})&=\frac{G^{0,n}_{D}}{\Ima (\nabla : G^{1,n+1}_{D}\rightarrow G^{0,n}_{D})}  \quad \text{for} \,\,\, n\leq - 1,\\
H^{0,0}(G_{D^{\circ}})&= G^{0,0}_{D^{\circ}} ,\\
H^{0,n-2}(G_{C^{\circ}})&=\Ker(\nabla: G^{0,n-2}_{C} \rightarrow G^{-1,n-3}_{C}) .
\end{align*}
We want to show that the map induced by $\widetilde{\nabla}_{2}$ between $H^{0,n}(G_{D^{\circ}})$ and $H^{0,n-2}(G_{C^{\circ}})$, for $n\leq 0$, is an isomorphism. Indeed the kernel of the map induced by $\widetilde{\nabla}_{2}$ between $H^{0,n}(G_{D^{\circ}})$ and $H^{0,n-2}(G_{C^{\circ}})$, for $n\leq 0$, is isomorphic to 
$$\frac{\Ker (\widetilde{\nabla}_{2}:  G^{0,n}_{D} \rightarrow  G^{0,n-2}_{C})}{\Ima (\nabla : G^{1,n+1}_{D}\rightarrow G^{0,n}_{D})}=H^{0,n}(G_{D}) \quad \text{for} \,\,\, n\leq -1,$$ $$\Ker (\widetilde{\nabla}_{2}:  G^{0,0}_{D^{\circ}} \rightarrow  G^{0,-2}_{C})=H^{0,0}(G_{D}).$$
Moreover the image of the map induced by $\widetilde{\nabla}_{2}$ between $H^{0,n}(G_{D^{\circ}})$ and $H^{0,n-2}(G_{C^{\circ}})$, for $n\leq 0$, is
 $$\Ima(\widetilde{\nabla}_{2}:  G^{0,n}_{D^{\circ}} \rightarrow  G^{0,n-2}_{C^{\circ}})=\Ima(\widetilde{\nabla}_{2}:  G^{0,n}_{D} \rightarrow  G^{0,n-2}_{C}).$$
Therefore, in order to show that $H^{0,n}(M_{D})=H^{0,n-2}(M_{C})=0$ for $n\leq 0$, it is sufficient to show that the map induced by $\widetilde{\nabla}_{2}$ between $H^{0,n}(G_{D^{\circ}})$ and  $H^{0,n-2}(G_{C^{\circ}})$ is an isomorphism for $n\leq 0$. In order to do that, since $H^{0,n}(G_{D^{\circ}}) \cong H^{0,n-2}(G_{C^{\circ}})$ as $\g_{0}-$modules for $n\leq 0$, it is sufficient to show that the map induced by $\widetilde{\nabla}_{2}$ is different from 0 on the highest weight vectors of $H^{0,n}(G_{D^{\circ}})$. By Proposition \ref{Proppesi}, we know that the highest weight vectors in $H^{0,n}(G_{D^{\circ}})$ are $\partial_{y_{2}}^{-n}, w_{11}\otimes \partial_{y_{2}}^{-n},w_{11}w_{21}\otimes \partial_{y_{2}}^{-n}$; we obtain that:
\begin{align*}
&\widetilde{\nabla}_{2}(\partial_{y_{2}}^{-n})=w_{11}w_{21}\otimes \partial_{y_{1}}^{2}\partial_{y_{2}}^{-n} +w_{11}w_{22}\otimes \partial_{y_{1}}\partial_{y_{2}}^{-n+1}+w_{12}w_{21}\otimes \partial_{y_{1}}\partial_{y_{2}}^{-n+1} +w_{12}w_{22}\otimes \partial_{y_{2}}^{-n+2},\\
&\widetilde{\nabla}_{2}(w_{11}\otimes \partial_{y_{2}}^{-n})  =w_{11}w_{12}w_{21}\otimes \partial_{y_{1}}\partial_{y_{2}}^{-n+1} +w_{11}w_{12}w_{22}\otimes \partial_{y_{2}}^{-n+2},\\
&\widetilde{\nabla}_{2}(w_{11}w_{21}\otimes \partial_{y_{2}}^{-n})=w_{11}w_{21}w_{12}w_{22}\otimes \partial_{y_{2}}^{-n+2}.
\end{align*}
By Proposition \ref{Proppesi}, we have that $H^{0,1}(G_{A^{\circ}}) \cong H^{-1,0}(G_{C^{\circ}})$ as $\g_{0}-$modules, indeed:
\begin{align*}
H^{0,1}(G_{A^{\circ}})  &\cong Q\Big(0,1,-\frac{1}{2},-\frac{1}{2}\Big)+Q\Big(1,0,-\frac{3}{2},-\frac{1}{2}\Big),  \\
H^{-1,0}(G_{C^{\circ}}) &\cong Q\Big(0,1,-\frac{1}{2},-\frac{1}{2}\Big) + Q\Big(1,0,-\frac{3}{2},-\frac{1}{2}\Big).
\end{align*}
With an analogous argument, in order to obtain that $H^{0,1}(M_{A})=H^{-1,0}(M_{C})=0$, it is sufficient to show that the map induced by $\nabla_{3}$ between $H^{0,1}(G_{A^{\circ}})$ and $H^{-1,0}(G_{C^{\circ}})$ is an isomorphism.\\
We show that the map induced by $\nabla_{3}$  is different from 0 on the highest weight vectors of $H^{0,1}(G_{A^{\circ}})$. By Proposition \ref{Proppesi} we know that the highest weight vectors in $H^{0,1}(G_{A^{\circ}})$ are $y_{1}$ and $ w_{12}\otimes y_{1}$; we obtain that:
\begin{align*}
&\nabla_{3}(y_{1})=w_{11}w_{21}w_{12} \partial_{x_{1}}+w_{11}w_{21}w_{22} \partial_{x_{2}},\\
&\nabla_{3}( w_{12}\otimes  y_{1})= w_{12}w_{11}w_{21}w_{12} \partial_{x_{1}}+w_{12}w_{11}w_{21}w_{22} \partial_{x_{2}}=w_{12}w_{11}w_{21}w_{22} \partial_{x_{2}}.
\end{align*}
Finally, by Proposition \ref{Proppesi} we have that $H^{1,0}(G_{A^{\circ}}) \cong H^{0,-1}(G_{C^{\circ}})$ as $\g_{0}-$modules, indeed:
\begin{align*}
H^{1,0}(G_{A^{\circ}})  &\cong Q\Big(1,0,-\frac{1}{2},\frac{1}{2}\Big)+Q\Big(0,1,-\frac{3}{2},\frac{1}{2}\Big), \\
H^{0,-1}(G_{C^{\circ}}) &\cong Q\Big(1,0,-\frac{1}{2},\frac{1}{2}\Big)+Q\Big(0,1,-\frac{3}{2},\frac{1}{2}\Big).
\end{align*}
With an analogous argument, in order to obtain that $H^{1,0}(M_{A})=H^{0,-1}(M_{C})=0$, it is sufficient to show that the map induced by $\widetilde{\nabla}_{3}$ between $H^{1,0}(G_{A^{\circ}})$ and $H^{0,-1}(G_{C^{\circ}})$ is an isomorphism.\\
We show that the map induced by $\widetilde{\nabla}_{3}$ is different from 0 on the highest weight vectors of $H^{1,0}(G_{A^{\circ}})$. By Proposition \ref{Proppesi} we know that the highest weight vectors in $H^{1,0}(G_{A^{\circ}})$ are $x_{1}, w_{11}\otimes x_{2}-w_{22}\otimes x_{1}$; we obtain that:
\begin{align*}
&\widetilde{\nabla}_{3}(x_{1})= w_{11}w_{12}w_{21}\partial_{y_{1}}+w_{11}w_{12}w_{22}\partial_{y_{2}},\\
&\widetilde{\nabla}_{3}(w_{11}\otimes x_{2}-w_{22}\otimes x_{1})= w_{11}w_{21}w_{12}w_{22}\partial_{y_{2}}     -w_{22}w_{11}w_{12}w_{21}\partial_{y_{1}}.
\end{align*}
\end{proof}
We are now able to compute the homology for $M_{B}$ using Remark \ref{cantacasellikacremarksize}, together with Proposition \ref{esattezzafuntoreduale} and \ref{omologiafinale}.
\begin{prop}
\label{omologiafinaleduale}
\begin{align*}
H^{m,n}(M_{A})&=0 \quad for \,\, all \,\,(0,n):\, n>1; \\
H^{m,n}(M_{B})&=0 \quad for \,\, all \,\, (m,n).
\end{align*}
\end{prop}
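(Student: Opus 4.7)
The plan is to deduce Proposition \ref{omologiafinaleduale} from Proposition \ref{omologiafinale} via conformal duality, combining Remark \ref{cantacasellikacremarksize} with the exactness of the dual functor established in Proposition \ref{esattezzafuntoreduale}.

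First I would show that the diagram of Figure \ref{figura} is self-dual under the $180$-degree rotation, pairing the $A$-complex with the $C$-complex and the $B$-complex with the $D$-complex. By Remark \ref{cantacasellikacremarksize} the dual of $M(m,n,\mu_t,\mu_C)$ has weight $(m,n,-\mu_t+2,-\mu_C)$: substituting $V_A^{m,n}\cong F(m,n,-\frac{m+n}{2},\frac{m-n}{2})$ yields $(m,n,\frac{m+n}{2}+2,\frac{n-m}{2})$, which is exactly the weight of $V_C^{-m,-n}$, and an identical substitution pairs $V_B^{-m,n}$ with $V_D^{m,-n}$. The dual of each morphism $\nabla$, $\nabla_2$, $\widetilde{\nabla}_2$, $\nabla_3$, $\widetilde{\nabla}_3$ must then be proportional to the morphism of the same type in the rotated quadrant, since by Theorems \ref{sing1}, \ref{sing2}, \ref{sing3} every pair of finite Verma modules admitting a non trivial morphism does so in an essentially unique way (the space of $\g$-morphisms being one-dimensional), and the rotated picture does contain the required morphism of matching degree and weight.

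Second, I would verify the hypothesis of Proposition \ref{esattezzafuntoreduale} needed to apply the dual functor $\textbf{F}$ all along the complexes. Every Verma module $\Ind(F)\cong \C[\Theta]\otimes \inlinewedge(4)\otimes F$ is finitely generated and free over $\C[\Theta]$, so finite generation of every cokernel is automatic. For each morphism $T:M\to N$ for which we wish to dualize exactness at $N$, the assumed exactness of the $C$- or $D$-complex one step further in embeds $N/\Ima T$ into a subsequent $\C[\Theta]$-free Verma module, whence torsion-freeness. Under these hypotheses Proposition \ref{esattezzafuntoreduale} converts each short exact piece of the $C$- or $D$-complex into a short exact piece of the corresponding $A$- or $B$-complex, thereby transporting the vanishing of homology.

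Combining the two ingredients above: from $H^{m,n}(M_D)=0$ for all $(m,n)$ (Proposition \ref{omologiafinale}) dualization immediately yields $H^{m,n}(M_B)=0$ for all $(m,n)$, which is the second assertion. From $H^{m,n}(M_C)=0$ for $(m,n)\neq (0,0),(-1,-1)$ the rotation $(m,n)\mapsto(-m,-n)$ and dualization produce $H^{m,n}(M_A)=0$ for $(m,n)\neq(0,0),(1,1)$; together with the already known vanishings from Proposition \ref{omologiafinale} this supplies the missing cases $H^{0,n}(M_A)=0$ for $n>1$, completing the first assertion. I expect the main obstacle to be the bookkeeping near the axes $m=0$ and $n=0$ of Figure \ref{figura}, where every vertex carries several morphisms of different degrees ($\nabla$'s, $\nabla_2$'s, $\widetilde{\nabla}_2$'s and $\nabla_3$'s simultaneously); the assembly of short exact sequences on which Proposition \ref{esattezzafuntoreduale} is invoked, and the propagation of torsion-freeness through the iterated cokernels in that region, is the delicate point requiring the most care.
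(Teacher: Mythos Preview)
Your proposal is correct and follows essentially the same route as the paper: both arguments transport the exactness of the $D$- and $C$-complexes established in Proposition \ref{omologiafinale} to the $B$- and $A$-complexes via the conformal duality of Remark \ref{cantacasellikacremarksize} together with Proposition \ref{esattezzafuntoreduale}, checking torsion-freeness of each cokernel by using exactness one step further to embed it in the next (free) Verma module. The paper carries out exactly the case-by-case verification near the axes that you flag as the delicate point, treating separately the three-term pieces involving $\nabla$, $\nabla_2$ and $\widetilde{\nabla}_2$.
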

\begin{proof}
We first compute $H^{m,n}(M_{B})$. We consider the following sequence for $m<0$ and $n>0$, represented in quadrant \textbf{B} of Figure \ref{figura}:
\begin{align*}
M_{B}^{m+1,n+1} \xrightarrow{\nabla} M_{B}^{m,n}  \xrightarrow{\nabla} M_{B}^{m-1,n-1}.
\end{align*}
By Remark \ref{cantacasellikacremarksize}, this sequence is the dual of 
\begin{align*}
M_{D}^{-m+1,-n+1} \xrightarrow{\nabla} M_{D}^{-m,-n}  \xrightarrow{\nabla} M_{D}^{-m-1,-n-1}.
\end{align*}
Indeed we recall that $M_{B}^{m,n} \cong M(-m,n,1-\frac{m+n}{2},-1+\frac{m-n}{2})$ and $ M_{D}^{-m,-n} \cong M(-m,n,1+\frac{m+n}{2},1+\frac{n-m}{2})$. By Proposition \ref{omologiafinale}, the previous sequence is exact in $M_{D}^{-m,-n}$ and $M_{D}^{-m-1,-n-1}$. Therefore $\frac{M_{D}^{-m,-n}}{\Ima \nabla} \cong \frac{M_{D}^{-m,-n}}{\Ker \nabla}$ is isomorphic to a submodule of the free module $M_{D}^{-m-1,-n-1}$. Therefore $\frac{M_{D}^{-m,-n}}{\Ima \nabla}$ is a finitely generated torsion free $\C[\Theta]-$module. The same holds for $\frac{M_{D}^{-m-1,-n-1}}{\Ima \nabla} $. Hence, by Remark \ref{cantacasellikacremarksize} and Proposition \ref{esattezzafuntoreduale}, we obtain exactness in $M_{B}^{m,n}$ for $m<0$ and $n>0$.
Now, let us consider the following sequence for $n>0$, represented from quadrant \textbf{A} to quadrant \textbf{B} in Figure \ref{figura}:
\begin{align*}
M_{A}^{0,n+2} \xrightarrow{\widetilde{\nabla}_{2}} M_{B}^{0,n}  \xrightarrow{\nabla} M_{B}^{-1,n-1}.
\end{align*}
By Remark \ref{cantacasellikacremarksize}, this sequence is the dual of 
\begin{align*}
M_{D}^{1,-n+1} \xrightarrow{\nabla} M_{D}^{0,-n}  \xrightarrow{\widetilde{\nabla}_{2}} M_{C}^{0,-n-2}.
\end{align*}
Analogously, by Proposition \ref{omologiafinale}, the previous sequence is exact in $M_{D}^{0,-n}$ and $M_{C}^{0,-n-2}$. Therefore $\frac{ M_{D}^{0,-n}}{\Ima \nabla}$ and $\frac{M_{C}^{0,-n-2}}{\Ima \widetilde{\nabla}_{2}} $ are finitely generated torsion free $\C[\Theta]-$modules. Hence, by Remark \ref{cantacasellikacremarksize} and Proposition \ref{esattezzafuntoreduale}, we obtain exactness in $M_{B}^{0,n}$ for $n>0$.\\
Using the same reasoning, we obtain exactness in $M_{B}^{0,0}$ and $M_{B}^{m,0}$, for $m<0$, using respectively the sequences $M_{A}^{0,2} \xrightarrow{\widetilde{\nabla}_{2}} M_{B}^{0,0}  \xrightarrow{\nabla_{2}} M_{C}^{-2,0}$ and $M_{B}^{m+1,1} \xrightarrow{\nabla} M_{B}^{m,0}  \xrightarrow{\nabla_{2}} M_{C}^{m-2,0}$.\\
Finally, using the same argument, we obtain exactness in $M_{A}^{0,n}$, for $n>1$, using the sequence $M_{A}^{1,n+1} \xrightarrow{\nabla} M_{A}^{0,n}  \xrightarrow{\widetilde{\nabla}_{2}} M_{B}^{0,n-2}$.
\end{proof}
Let us now focus on the remaining four cases, that are $H^{0,0}(M_{C})$, $H^{-1,-1}(M_{C})$, $H^{0,0}(M_{A})$ and $ H^{1,1}(M_{A})$.
\begin{prop}
\label{omologiaC}
\begin{align*}
H^{0,0}(M_{C})&\cong 0 ,\\
H^{-1,-1}(M_{C})&\cong \C. \\
\end{align*}
\end{prop}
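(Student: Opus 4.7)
The plan is to handle the two assertions separately. For $H^{0,0}(M_C)=0$ I will exploit that no morphism in Figure \ref{figura} terminates at $M_C^{0,0}$, so the homology reduces to $\Ker(\nabla\colon M_C^{0,0}\to M_C^{-1,-1})$. Since $M_C^{0,0}=M(0,0,2,0)$ is irreducible by Proposition \ref{M(0,0,2,0)}, this kernel is either $0$ or the whole module. A direct application of the formula \eqref{nabla} to the highest weight vector of $V_C^{0,0}$ gives
$$\nabla(1\otimes 1)=w_{11}\otimes\partial_{x_1}\partial_{y_1}+w_{21}\otimes\partial_{x_2}\partial_{y_1}+w_{12}\otimes\partial_{x_1}\partial_{y_2}+w_{22}\otimes\partial_{x_2}\partial_{y_2},$$
which is manifestly nonzero in $M_C^{-1,-1}$. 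Hence $\nabla$ is injective and the first claim follows.

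For $H^{-1,-1}(M_C)\cong\C$ I first observe that the complex at $M_C^{-1,-1}$ is $M_C^{0,0}\xrightarrow{\nabla}M_C^{-1,-1}\xrightarrow{\nabla}M_C^{-2,-2}$, since no morphism from a different quadrant lands at $M_C^{-1,-1}$. By the first part the incoming image is an irreducible copy of $M(0,0,2,0)$, and Lemma \ref{lemmarealize} together with the remark following it identifies the outgoing kernel with the unique maximal submodule of $M(1,1,3,0)$; the homology space is the subquotient trapped between these, and I must show it is one-dimensional with trivial $\g$-action.

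The plan to pin this down is to combine a spectral-sequence upper bound with an explicit lower-bound construction. By Proposition \ref{Proppesi}, $H^{-1,-1}(G_C)\cong Q(0,0,0,0)=\C$, the trivial $\g_0$-module; via the identification \eqref{keyhomology} this yields $H^{-1,-1}(\Gr M_C)\cong \Ind^{\mathcal{W}}_{L_0^{\mathcal{W}}}(\C)\cong\C[\Theta]$, a free rank-one module over $\C[\Theta]$. Propositions \ref{spectral1}--\ref{spectral2} then present $\Gr H^{-1,-1}(M_C)$ as a $\C[\Theta]$-subquotient of this module. For the lower bound, I will lift the trivial-weight generator of $H^{-1,-1}(G_{C^{\circ}}(1,1))$ produced in Lemma \ref{4.6} to an explicit weight-$(0,0,0,0)$ element $\tilde v\in\Ker\nabla\setminus\Ima\nabla$, using the corrections coming from lower filtration degrees to ensure $\nabla(\tilde v)=0$ exactly; this gives a nonzero class in $H^{-1,-1}(M_C)$ whose $\g$-module structure is forced to be trivial by its weight and by the one-dimensionality claim.

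The main obstacle is the upper bound, namely showing that $\Theta$ acts as zero on $H^{-1,-1}(M_C)$ so that the $\C[\Theta]$ on the $E^1$-page collapses to $\C$ at $E^{\infty}$. This amounts to exhibiting a nonzero higher differential $d^{r}$ at $(-1,-1)$ that annihilates $\Theta\cdot[\tilde v]$; equivalently, one must verify that $\Theta\cdot\tilde v$ lies in $\Ima(\nabla\colon M_C^{0,0}\to M_C^{-1,-1})$, which is the analogue of the argument for the corresponding corner in the $E(3,6)$ complex of \cite{kacrudakovE36}. The explicit form of $\nabla$ in \eqref{nabla} together with the explicit lift from Lemma \ref{4.6} reduces this to a direct, if technical, computation in $U(\g_{<0})\otimes V_C^{-1,-1}$.
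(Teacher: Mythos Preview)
Your argument for $H^{0,0}(M_C)=0$ is correct and more direct than the paper's: nothing in Figure~\ref{figura} targets $M_C^{0,0}$, so the homology is $\Ker(\nabla)$, and irreducibility of $M(0,0,2,0)$ together with $\nabla(1\otimes 1)\neq 0$ finishes it. The paper in fact uses exactly this injectivity observation inside the proof of Lemma~\ref{lemmaxi}(4), but then proves Proposition~\ref{omologiaC} entirely via the spectral sequence, treating both degrees at once.

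For $H^{-1,-1}(M_C)$ your plan coincides with the paper's in substance, but two points deserve tightening. First, the relevant spectral-sequence differential does not originate at $(-1,-1)$; it is the map $\nabla^{(0)}\colon E^{0}(M_C)^{0,0}\to E^{0}(M_C)^{-1,-1}$, and it is its \emph{image} that must contain $\Theta[\tilde v]$. Your reformulation ``$\Theta\tilde v\in\Ima(\nabla)$'' is the correct one, but to exploit it you must still engage with $H^{0,0}(\Gr M_C)\cong\C[\Theta]$ on the $E^{0}$-page, so your shortcut for $H^{0,0}(M_C)$ does not let you bypass that computation. Second, the ``direct, if technical, computation'' you defer is the whole content: one needs an explicit $k\in M_C^{0,0}$ with $\nabla k=\Theta\tilde v$. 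The paper supplies both elements concretely (Lemmas~\ref{lemmaxi} and~\ref{lemlambda}): the cycle is
\[
z=iw_{11}w_{21}\Delta^{-}\partial_{y_1}+(iw_{12}w_{21}+iw_{11}w_{22})\Delta^{-}\partial_{y_2},
\]
and its $\Theta$-multiple is hit by
\[
k=\tfrac{1}{2}iw_{11}w_{21}w_{12}w_{22}\otimes 1+i\Theta w_{12}w_{21}\otimes 1+i\Theta w_{11}w_{22}\otimes 1,
\]
with $\nabla k=\Theta z$ verified directly. This single identity simultaneously forces $E^{1}(M_C)^{0,0}=0$ (injectivity of multiplication by $\Theta$ on $\C[\Theta]$) and $E^{1}(M_C)^{-1,-1}\cong\C$ (cokernel of that multiplication), after which the sequence degenerates. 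Your outline is correct, but without producing $k$ the argument is incomplete.
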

In order to prove Proposition \ref{omologiaC}, we need the following results and the theory of spectral sequences. So far we have shown that $E^{0}(M_{C})^{0,0}=H^{0,0}(\Gr M_{C})=S(\g_{-2}) \otimes H^{0,0}(G_{C}) $ and $E^{0}(M_{C})^{-1,-1}=H^{-1,-1}(\Gr M_{C})=S(\g_{-2}) \otimes H^{-1,-1}(G_{C}) $ as $\mathcal{W}-$modules.
\begin{lem}
\label{lemmaxi}
Let
\begin{align*}
z=iw_{11}w_{21}\Delta^{-}\partial_{y_{1}}+(iw_{12}w_{21}+iw_{11}w_{22})\Delta^{-}\partial_{y_{2}}.
\end{align*}
be an element in $M_{C}^{-1,-1}=M(1,1,3,0)$. The following hold:
\begin{enumerate}
	\item $\nabla z=0$, 
	\item $\g_{0}.z=0$,
	\item $(t \xi_{1}+it \xi_{2}) .z \in \Ima \nabla$, $(\xi_{1}\xi_{3}\xi_{4}+i\xi_{2}\xi_{3}\xi_{4}). z \in \Ima \nabla$,
	\item $z \notin \Ima \nabla$,
\item $[z] $ is a basis for the $\g_{0}-$module $H^{-1,-1}(G_{C}) $.
\end{enumerate}
\end{lem}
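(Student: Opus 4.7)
I will verify the five assertions in order, with (1)--(3) being direct algebraic computations in $U(\g_{<0})\otimes V_C$ and $\mathcal{A}(K'_4)$, while (4) and (5) follow from these together with the previously computed Proposition~\ref{Proppesi}.

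For (1), expand $\nabla z = (\Delta^+\partial_{y_1} + \Delta^-\partial_{y_2}) z$ and apply $(\Delta^\pm)^2=0$ and $\Delta^+\Delta^- + \Delta^-\Delta^+ = 0$ from Remark~\ref{nabla+nabla-}, plus the fact that $\partial_{y_1}$ and $\partial_{y_2}$ commute on $V_C$; the resulting contributions cancel pairwise, the cancellation being built in through the choice of coefficient $w_{12}w_{21} + w_{11}w_{22}$ in the second summand of $z$. For (2), a direct weight count shows that every monomial of $z$ has $(h_x, h_y, t, C)$-weight $(0,0,0,0)$: the three $w_{ij}$'s contribute $-3$ to the $t$-eigenvalue, cancelling the $+3$ carried by $\partial_{x_i}\partial_{y_j}\in V_C^{-1,-1}$, and the $(h_x, h_y, C)$-weights cancel term by term by the same mechanism. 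Invariance under the nilpotent generators $e_x, f_x, e_y, f_y$ can then be checked directly (which is short once one knows the Cartan weight is zero) or recognized as the $\mathfrak{sl}_2\oplus \mathfrak{sl}_2$-invariance of the contraction $\varepsilon^{abcd} w_a w_b w_c \otimes \partial_d$ in the identification of Lemma~\ref{appoggiog-1}.

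For (3), by (2) and Lemma~\ref{g1} it suffices to check membership in $\Ima \nabla$ for the two lowest weight vectors $t(\xi_1+i\xi_2)$ and $(\xi_1+i\xi_2)\xi_3\xi_4$ of the two irreducible $\g_0$-summands of $\g_1$. Apply these to $z$ using the brackets in Proposition~\ref{anniliK4}: each $[t\xi_j, w_{kl}]$ lies in $\g_0$ and each $[\xi_i\xi_3\xi_4, w_{kl}]$ lies in $\g_0$ or produces a central contribution via the cocycle $\psi$ (e.g.\ $\psi(\xi_i, \partial_i\xi_{1234}) = -1$). After reordering so that the resulting $\g_{\geq 0}$-parts act last on $V_C$ and die there, the remaining terms collect into an expression of the form $\nabla(u)$ for an explicit $u \in M_C^{0,0}$, giving the two required memberships.

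Assertion (5) follows from Proposition~\ref{Proppesi} together with Remark~\ref{rem2Gcirc}, which give $H^{-1,-1}(G_C) = H^{-1,-1}(G_{C^{\circ}}) \cong Q(0,0,0,0)$, one-dimensional and trivial as a $\g_0$-module; note also that $\Ima(\nabla\colon G_C^{0,0}\to G_C^{-1,-1}) = 0$ because $V_C^{0,0}$ is annihilated by $\partial_{y_1}, \partial_{y_2}$. Parts (1) and (2) show that the top-filtration-degree part of $z$ (using the filtration of Section~\ref{omologia}) is a $\g_0$-invariant $\nabla$-cycle in $G_C^{-1,-1}$, and it is manifestly nonzero because the coefficient of $w_{11} w_{21} w_{12}\otimes \partial_{x_1}\partial_{y_1}$ in $z$ is $i$; hence it spans $H^{-1,-1}(G_C)$. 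For (4), if $z = \nabla u$ in $M_C^{-1,-1}$ one may assume $u$ has filtration degree at most $3$, and its top-degree part supplies a preimage of $[z]$ in $G_C$, contradicting (5). The hard part will be (3): tracking the cocycle contributions of $\psi$ while commuting $\g_1$ elements through the triple $w$-products appearing in $z$ is lengthy, and the explicit preimage $u \in M_C^{0,0}$ witnessing the membership in $\Ima \nabla$ must be identified by hand.
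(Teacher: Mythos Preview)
Your treatment of (1)--(3) is essentially the same direct-computation approach as the paper's; the paper even records the explicit preimages $(t\xi_1+it\xi_2).z = \nabla(2w_{22}\otimes 1)$ and $(\xi_1\xi_3\xi_4+i\xi_2\xi_3\xi_4).z = \nabla(2iw_{22}\otimes 1)$, which matches your sketch.

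There is, however, a genuine error in your argument for (5). You claim that $\Ima(\nabla\colon G_C^{0,0}\to G_C^{-1,-1})=0$ because $V_C^{0,0}$ is annihilated by $\partial_{y_1},\partial_{y_2}$. This is false: for $X=C$ the operators $\partial_{y_j}$ act on $V_C$ by \emph{multiplication}, not by derivation (see the definition just before \eqref{nabla+-}). In particular $\nabla(1\otimes 1)=w_{11}\otimes\partial_{x_1}\partial_{y_1}+\cdots\neq 0$ in $G_C^{-1,-1}$, so the image is nontrivial. Since your proof of (4) is then deduced from (5), both collapse. Your filtration reduction in (4) (``one may assume $u$ has filtration degree at most~$3$'') is also not justified as stated: if $u\in F_k\setminus F_{k-1}$ with $k\geq 3$, the leading term of $u$ is a $\nabla$-cycle in $\Gr_k M_C^{0,0}$, and to subtract it off you would need control over $H^{0,0}(\Gr M_C)$, which you have not established.

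The paper resolves this by reversing the logical order. For (4) it uses Proposition~\ref{M(0,0,2,0)}: since $M(0,0,2,0)$ is irreducible and $\nabla\neq 0$, the map $\nabla\colon M_C^{0,0}\to M_C^{-1,-1}$ is injective. Hence any preimage $v$ of $z$ satisfies $\g_0.v=0$ by (2); the $t$-weight condition then forces $v$ into the finite-dimensional span of $\Theta\otimes 1$ and $w_iw_j\otimes 1$, and imposing $\g_0^{ss}.v=0$ leaves only $v=a_1\Theta\otimes 1$, for which $\nabla v\neq z$ by direct comparison. Part (5) then follows from (4): if $z$ were a boundary in $G_C$, its class would already vanish at the $E^0$ stage of the spectral sequence for $M_C$, contradicting (4). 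The key extra ingredient you are missing is the irreducibility of $M(0,0,2,0)$, which gives injectivity of $\nabla$ and makes the weight argument go through cleanly.
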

\begin{proof}
	(1),(2),(3) These properties follow from direct computations. In particular $(t \xi_{1}+it \xi_{2}) .z=\nabla(2w_{22}\otimes 1)$ and $(\xi_{1}\xi_{3}\xi_{4}+i\xi_{2}\xi_{3}\xi_{4}).z=\nabla(2iw_{22} \otimes 1)$.\\
					(4) Let us show that $z \notin \Ima \nabla$. Let us consider $\nabla: M(0,0,2,0)\longrightarrow M(1,1,3,0)$. Since $M(0,0,2,0)$ is irreducible by Proposition \ref{M(0,0,2,0)} and $\nabla $ is not identically zero, then $\nabla$ is injective. Let us suppose that there exists $v \in M(0,0,2,0)$ such that $z=\nabla(v)$. Due to injectivity and (2) we obtain that $\g_{0}.v=0$. In particular $v$ has weight 0 with respect to $t$. Then:
					\begin{align*}
					v=&a_{1}\Theta \otimes 1+a_{2}w_{11}w_{22} \otimes 1 +a_{3}w_{11}w_{12} \otimes 1+a_{4}w_{11}w_{21} \otimes 1\\
					&+a_{5}w_{22}w_{12} \otimes 1+a_{6}w_{22}w_{21} \otimes 1+a_{7}w_{12}w_{21} \otimes 1.
					\end{align*}
					Imposing that $\g^{ss}_{0}.v=0$ we obtain that $a_{2}=a_{3}=a_{4}=a_{5}=a_{6}=a_{7}=0$. Hence $v=a_{1}\Theta \otimes 1.$
					We compute $\nabla v$:
					\begin{align*}
					\nabla v=&a_{1}\Theta w_{11} \otimes \partial_{x_{1}} \partial_{y_{1}}+a_{1}\Theta w_{21} \otimes \partial_{x_{2}} \partial_{y_{1}}+a_{1}\Theta w_{12} \otimes \partial_{x_{1}} \partial_{y_{2}}+a_{1}\Theta w_{22} \otimes \partial_{x_{2}} \partial_{y_{2}}.
					\end{align*}
					We focus on the coefficient of $\partial_{x_{1}} \partial_{y_{1}}$ in $\nabla v$ and $z$, that should be the same. We get $a_{1}\Theta w_{11} =i w_{11}w_{21}w_{12}$ that is impossible.\\
				(5) Let us show also that $[z] \neq 0 $ in $H^{-1,-1}(G_{C})$ because $z$ does not lie in the image of $\nabla:G_{C}^{0,0}\longrightarrow G_{C}^{-1,-1}$. Indeed if $z$ lies in the image of $\nabla:G_{C}^{0,0}\longrightarrow G_{C}^{-1,-1}$, therefore $ [z] = 0 $ in $H^{-1,-1}(M_{C})$ since $H^{-1,-1}(\Gr_{C})=S(\g_{-2})\otimes H^{-1,-1}(G_{C})$ is the first step of the spectral sequence; this holds a contradiction using (4). By Proposition \ref{Proppesi}, we know that $H^{-1,-1}(G_{C}) $ is one$-$dimensional. By the previous properties $0 \neq[z] \in H^{-1,-1}(G_{C})$; hence $[z] $ is a basis for the $\g_{0}-$module $H^{-1,-1}(G_{C})$.
\end{proof}
\begin{cor}
The vector $z$ is a highest weight singular vector in the quotient $M(1,1,3,0)/ \Ima \nabla$.
\end{cor}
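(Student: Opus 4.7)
The plan is to verify three conditions for $[z]$ in the quotient $M(1,1,3,0)/\Ima \nabla$: non-vanishing, being a highest weight vector, and being annihilated by $\g_{>0}$. The first two are immediate from Lemma \ref{lemmaxi}: item (4) gives $z \notin \Ima \nabla$, so $[z] \neq 0$, and item (2) gives $\g_0.z = 0$, which in particular implies $e_1.z = e_2.z = 0$, so $[z]$ is a highest weight vector.

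The substantive task is to verify $\g_{>0}.z \subseteq \Ima \nabla$. By Lemma \ref{g1}, $\g_{>0}$ is generated as a Lie superalgebra by $\g_1$, and $\g_1$ decomposes as a $\g_0$-module into two irreducible summands with lowest weight vectors $t(\xi_1+i\xi_2)$ and $(\xi_1+i\xi_2)\xi_3\xi_4$. Item (3) of Lemma \ref{lemmaxi} asserts exactly that these two distinguished vectors send $z$ into $\Ima \nabla$, so the task reduces to propagating this vanishing modulo $\Ima \nabla$ from the lowest weight vectors to all of $\g_{>0}$.

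The propagation relies on two facts: $\Ima \nabla$ is a $\g$-submodule, since $\nabla$ is a $\g$-morphism by Proposition \ref{osser}, and $\g_0.z = 0$ by item (2). For any even $g_0 \in \g_0$ and odd $w \in \g_1$,
\[
[g_0, w].z \;=\; g_0.(w.z) - w.(g_0.z) \;=\; g_0.(w.z),
\]
so $w.z \in \Ima \nabla$ implies $[g_0, w].z \in \Ima \nabla$ by $\g$-stability of $\Ima \nabla$. Since each irreducible $\g_0$-summand of $\g_1$ is the $U(\g_0)$-orbit of its lowest weight vector, this yields $\g_1.z \subseteq \Ima \nabla$. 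Finally, iterating $\g_i = [\g_1, \g_{i-1}]$ for $i \geq 2$ and using $\g$-stability of $\Ima \nabla$ propagates the conclusion to all of $\g_{>0}$. No genuine obstacle is expected: Lemma \ref{lemmaxi} together with Lemma \ref{g1} encodes all the substantive content, and the corollary is essentially a structural consequence of assembling these pieces.
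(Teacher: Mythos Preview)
Your proof is correct and follows essentially the approach the paper intends: the corollary is stated without proof, as an immediate consequence of Lemma~\ref{lemmaxi} together with the criterion stated right after Lemma~\ref{g1} (that it suffices to check annihilation by $e_1$, $e_2$, $t(\xi_1+i\xi_2)$ and $(\xi_1+i\xi_2)\xi_3\xi_4$). Your argument simply unpacks that criterion in the quotient $M(1,1,3,0)/\Ima\nabla$, using $\g_0.z=0$ and the $\g$-stability of $\Ima\nabla$ to propagate from the lowest weight vectors of $\g_1$ to all of $\g_{>0}$.
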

\begin{lem}
\label{lemlambda}
Let
\begin{align*}
k=\frac{1}{2}iw_{11}w_{21}w_{12}w_{22}\otimes 1+i\Theta w_{12}w_{21}\otimes 1 +i \Theta w_{11}w_{22} \otimes 1
\end{align*}
be an element in $M_{C}^{0,0}=M(0,0,2,0)$. The following hold:
\begin{enumerate}
	\item $x_{1}\partial_{x_{2}}.k=0$ and $y_{1} \partial_{y_{2}}.k=0$,
\item $[k ]$ is a basis for the $\g_{0}-$module $H^{0,0}(G_{C}) $. 
\end{enumerate}
\end{lem}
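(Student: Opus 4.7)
Part (1) will be carried out by a direct computation, entirely analogous in spirit to the routine annihilation checks in the proof of Theorems \ref{sing1}--\ref{sing3}. The key ingredients are that $e_x = x_1\partial_{x_2}$ and $e_y = y_1\partial_{y_2}$ act as derivations on $U(\g_{<0})$, that they annihilate $\Theta \in \g_{-2}$ (since $\g_{-2}$ is a trivial $1$-dimensional $\g_0^{ss}$-module), and that on $\g_{-1}$ their action is determined by Lemma \ref{appoggiog-1}. Explicitly:
\begin{align*}
e_x.w_{11}=e_x.w_{12}=0,\quad e_x.w_{21}=w_{11},\quad e_x.w_{22}=w_{12},\\
e_y.w_{11}=e_y.w_{21}=0,\quad e_y.w_{12}=w_{11},\quad e_y.w_{22}=w_{21}.
\end{align*}
To simplify the resulting expressions in $U(\g_{<0})$ one repeatedly uses $w_{ij}^2 = 0$ from \eqref{quadratideiwcaselli} together with the anticommutators $\{w_{11},w_{22}\}=4\Theta$ and $\{w_{12},w_{21}\}=-4\Theta$ from \eqref{bracketwcaselli}; all remaining pairs strictly anticommute.

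For $e_x$, the derivation of $w_{11}w_{21}w_{12}w_{22}$ yields $w_{11}w_{11}w_{12}w_{22}+w_{11}w_{21}w_{12}w_{12}$, both zero by $w_{11}^2=w_{12}^2=0$, while the two explicit $\Theta$-terms contribute $i\Theta(w_{12}w_{11}+w_{11}w_{12})=0$ thanks to $\{w_{11},w_{12}\}=0$. For $e_y$, the same derivation of $w_{11}w_{21}w_{12}w_{22}$ produces $w_{11}w_{21}w_{11}w_{22}+w_{11}w_{21}w_{12}w_{21}$; the first summand vanishes by $w_{11}^2=0$ after one anticommutation, and in the second, substituting $w_{12}w_{21}=-4\Theta - w_{21}w_{12}$ leaves $-4\Theta w_{11}w_{21}$. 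The explicit $\Theta$-terms contribute $i\Theta(w_{11}w_{21}+w_{11}w_{21})=2i\Theta w_{11}w_{21}$, and combining with the coefficient $\tfrac{i}{2}$ of the top product gives $-2i\Theta w_{11}w_{21}+2i\Theta w_{11}w_{21}=0$, as required.

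For part (2), I will identify $[k]$ with the leading symbol of $k$ in the filtration $F_i$. Since each basis element of $\g_{<0}$ (whether $\Theta$ or a $w_{ij}$) contributes $1$ to the filtration degree, the product $w_{11}w_{21}w_{12}w_{22}\otimes 1$ lies in $F_4\setminus F_3$ whereas $\Theta w_{12}w_{21}\otimes 1$ and $\Theta w_{11}w_{22}\otimes 1$ lie in $F_3$. Consequently the image of $k$ in $\Gr_4 M_C^{0,0}$ is the nonzero element $-\tfrac{i}{2}w_{11}w_{12}w_{21}w_{22}\otimes 1$ (after reordering in $\inlinewedge^{4}(\g_{-1})$), which lies in $\inlinewedge^{4}(\g_{-1})\otimes V_C^{0,0}\subset G_C^{0,0}$. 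On $G_C^{0,0}$ the induced differential $\nabla$ acts by left wedging with each $w_{ij}$, paired with the appropriate $\partial_{x_i}\partial_{y_j}\in V_C^{-1,-1}$, so every element of $\inlinewedge^{4}(\g_{-1})\otimes V_C^{0,0}$ is automatically a cycle; conversely, the four linearly independent coefficient equations $w_{ij}\wedge\omega=0$ force any cycle to be a top form. Hence $\Ker(\nabla:G_C^{0,0}\to G_C^{-1,-1}) = \inlinewedge^{4}(\g_{-1})\otimes V_C^{0,0}\cong\C$. Since $V_C^{1,1}=0$, the incoming differential vanishes, so $H^{0,0}(G_C)\cong\C$ and the nonzero class $[k]$ is its basis. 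The main subtlety lies in part (1): without invoking $\{w_{12},w_{21}\}=-4\Theta$ at precisely the right step, the explicit $\Theta$-terms in $k$ would fail to cancel against the $\Theta$ released by the top product, and one would wrongly conclude $e_y.k\neq 0$.
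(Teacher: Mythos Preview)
Your proof is correct. Part (1) is exactly the direct computation the paper alludes to with ``straightforward computation,'' carried out in full detail.

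For part (2) your route differs slightly from the paper's. The paper establishes that $[k]$ is a cycle by computing $\nabla k=\Theta z$ in $M_C$ (so $\nabla k\in F_4M_C$ and hence vanishes in $\Gr_5$), and then invokes Proposition~\ref{Proppesi} to conclude that $H^{0,0}(G_C)$ is one-dimensional. You instead argue directly in $G_C$: the symbol of $k$ is a top form in $\inlinewedge^4(\g_{-1})\otimes V_C^{0,0}$, and by the Koszul-type structure of the induced $\nabla$ the kernel in $G_C^{0,0}$ is exactly $\inlinewedge^4$, while $V_C^{1,1}=0$ kills the incoming boundary. This is a cleaner, self-contained argument for the lemma as stated. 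The trade-off is that the paper's computation $\nabla k=\Theta z$ is the crucial input for the next step (Proposition~\ref{omologiaC}), where one needs to know that $\nabla^{(0)}[k]=\Theta[z]$ in the spectral sequence; your argument does not supply this relation, so you would need to add that computation separately before proceeding.
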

\begin{proof}
It is a straightforward computation that $x_{1}\partial_{x_{2}}.k=y_{1} \partial_{y_{2}}.k=0$.
				We point out that $\nabla k$ is a cycle in $\Gr M_{C}$ since $k \in F_{4} M_{C}$ and $\nabla k \in F_{4}M_{C}$. Indeed in $M_{C}$, by direct computations, $\nabla k=\Theta z$. Moreover $[k]$ lies in $H^{0,0}(G_{C})$ since the terms of $k$ that include $\Theta$ are in $F_{3}M$, the other is in $F_{4}M_{C}$.
				By Proposition \ref{Proppesi}, we know that $H^{0,0}(G_{C}) $ is one$-$dimensional. By the previous computations $0 \neq[k] \in H^{0,0}(G_{C})$; hence $[k] $ is a basis for the $\g_{0}-$module $H^{0,0}(G_{C})$.
				We have also that $\nabla[k]=\Theta [z]$.

\end{proof}
\begin{proof}[Proof of Proposition \ref{omologiaC}]
By \eqref{keyhomology} and Lemmas \ref{lemmaxi}, \ref{lemlambda} we know that as $\mathcal{W}-$modules
\begin{gather*}
 E^{0}(M_{C})^{0,0}=H^{0,0}(\Gr M_{C})  \cong S(\g_{-2}) \otimes H^{0,0}(G_{C}) =S(\g_{-2}) \otimes \langle  [k] \rangle,\\
E^{0}(M_{C})^{-1,-1}=H^{-1,-1}(\Gr M_{C})  \cong S(\g_{-2}) \otimes H^{-1,-1}(G_{C}) =S(\g_{-2}) \otimes \langle  [z] \rangle.
\end{gather*}
By Lemma \ref{lemlambda}, the morphism $\nabla^{(0)}:E^{0}(M_{C})^{0,0}\longrightarrow E^{0}(M_{C})^{-1,-1}$ maps $[k]$ to $\Theta [z]$. Therefore $\nabla^{(0)}$ is injective and $E^{1}(M_{C})^{0,0} \cong 0$, $ E^{1}(M_{C})^{-1,-1} \cong \C$. \\
Thus $E^{\infty}(M_{C})^{0,0} \cong E^{1}(M_{C})^{0,0}=0$ and $ E^{\infty}(M_{C})^{-1,-1} \cong E^{1}(M_{C})^{-1,-1} \cong \C$ as $\mathcal{W}-$modules, and hence as $\g-$modules.
\end{proof}
Finally we focus on the two remaining cases for $M_{A}$.
\begin{prop}
\label{omologiaA}
\begin{align*}
H^{0,0}(M_{A})&\cong \C, \\
H^{1,1}(M_{A})&\cong 0. 
\end{align*}
\end{prop}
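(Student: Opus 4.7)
The strategy mirrors the proof of Proposition~\ref{omologiaC} for $M_C$: I will use the spectral sequence of Subsection~\ref{subsec5.2} with explicit representatives of the one-dimensional spaces $H^{0,0}(G_A)$ and $H^{1,1}(G_A)$ (both identified in Proposition~\ref{Proppesi} via Remark~\ref{rem2Gcirc}), then show that the induced differential on $E^0$ is, up to nonzero scalar, multiplication by $\Theta$.

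The generator of $H^{0,0}(G_A)$ is immediate: $G_A^{0,0}=\bigwedge(\g_{-1})$ and the image of $\Gr\nabla:G_A^{1,1}\to G_A^{0,0}$ is $\bigwedge_{\geq 1}(\g_{-1})$, since $\Gr\nabla(u'\otimes x_iy_j)=u'\wedge w_{ij}\otimes 1$; hence $H^{0,0}(G_A)=\mathbb C\cdot[v_0]$ with $v_0:=1\otimes 1$. For $H^{1,1}(G_A)\cong Q(0,0,-2,0)$ I take the $\g_0^{ss}$-invariant
\[
\zeta:=w_{11}\otimes x_2y_2+w_{22}\otimes x_1y_1-w_{12}\otimes x_2y_1-w_{21}\otimes x_1y_2\in M_A^{1,1},
\]
built from the identification of Lemma~\ref{appoggiog-1}. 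That $\zeta$ is a cycle in $G_A$ is immediate from anticommutativity: $\Gr\nabla(\Gr\zeta)=w_{11}\wedge w_{22}+w_{22}\wedge w_{11}-w_{12}\wedge w_{21}-w_{21}\wedge w_{12}=0$. To see $[\zeta]\neq 0$, any preimage under $\Gr\nabla:G_A^{2,2}\to G_A^{1,1}$ must have the form $1\otimes v$ with $v\in V_A^{2,2}$; matching the $w_{11}$- and $w_{12}$-coefficients forces $\partial_{x_1}\partial_{y_1}v=x_2y_2$ and $\partial_{x_1}\partial_{y_2}v=-x_2y_1$, which expanded on the monomial basis of $V_A^{2,2}$ uniquely determine the coefficient of $x_1x_2y_1y_2$ in $v$ to be both $+1$ and $-1$, a contradiction. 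Hence $[\zeta]$ generates $H^{1,1}(G_A)$.

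The core computation is in $M_A$ itself, where the genuine noncommutative products together with the brackets $\{w_{11},w_{22}\}=4\Theta$ and $\{w_{12},w_{21}\}=-4\Theta$ from \eqref{bracketwcaselli} give
\[
\nabla\zeta=(w_{11}w_{22}+w_{22}w_{11}-w_{12}w_{21}-w_{21}w_{12})\otimes 1=8\Theta\otimes 1=8\Theta\cdot v_0.
\]
By \eqref{keyhomology} we have $E^0(M_A)^{1,1}\cong S(\g_{-2})\otimes\langle[\zeta]\rangle$ and $E^0(M_A)^{0,0}\cong S(\g_{-2})\otimes\langle[v_0]\rangle$. Exactly as in Proposition~\ref{omologiaC}, the $S(\g_{-2})$-linear differential $\nabla^{(0)}:E^0(M_A)^{1,1}\to E^0(M_A)^{0,0}$ is determined by $[\zeta]\mapsto 8\Theta[v_0]$; since $\Theta$ is not a zero-divisor in $S(\g_{-2})$, this map is injective with cokernel $S(\g_{-2})/\Theta S(\g_{-2})\cong\mathbb C$.

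Finally, $E^0(M_A)^{2,2}=S(\g_{-2})\otimes H^{2,2}(G_A)=0$ by Proposition~\ref{Proppesi}, and quadrant $\mathbf A$ contains no module at bidegree $(-1,-1)$; so no further differentials act on $E^1(M_A)^{1,1}$ or $E^1(M_A)^{0,0}$, whence $E^\infty\cong E^1$. Proposition~\ref{spectral2} then yields $H^{1,1}(M_A)=0$ and $H^{0,0}(M_A)\cong\mathbb C$ as $\g$-modules; the latter is the trivial representation, being one-dimensional and generated by the class of $v_0$, of $\g_0$-weight $(0,0,0,0)$. The main delicate point, as in the $M_C$ case, is the identification of $\nabla^{(0)}$ with $8\Theta$-multiplication: both $\zeta$ and $\nabla\zeta$ lie in $F_1M_A$, and the filtered-to-graded passage identifies $[\nabla\zeta]$ with $8\Theta[v_0]$ inside $E^0(M_A)^{0,0}$, by the verbatim argument used for the pair $(k,z)$ in Proposition~\ref{omologiaC}.
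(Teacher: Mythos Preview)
Your proof is correct and follows essentially the paper's approach: your element $\zeta$ is exactly the paper's $s$ from Lemma~\ref{lemmaesse}, the computation $\nabla\zeta=8\Theta\otimes 1$ is the same, and the spectral-sequence conclusion for $H^{1,1}(M_A)=0$ is identical. The only difference is that the paper establishes $H^{0,0}(M_A)\cong\C$ by the direct computation of Remark~\ref{apphomA} (showing $\Ima\nabla$ contains every positive-degree element of $M(0,0,0,0)$), whereas you read it off as the cokernel $S(\g_{-2})/\Theta S(\g_{-2})$ in the spectral sequence; your route is a bit cleaner and more symmetric with the $M_C$ argument.
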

\begin{rem}
\label{apphomA}
By straightforward computation we show that $H^{0,0}(M_{A})\cong M_{A}^{0,0}/ \Ima \nabla \cong \C$. Indeed $\Ima \nabla$ is the $\g-$module generated by the singular vector $w_{11}\otimes 1$ and we have that:
\begin{gather*}
x_{2}\partial_{x_{1}}.(w_{11} \otimes 1)=w_{21}\otimes 1, \,\,y_{2}\partial_{y_{1}}.(w_{11}\otimes 1)=w_{12}\otimes 1,\\
 y_{2}\partial_{y_{1}}.(x_{2}\partial_{x_{1}}.(w_{11} \otimes 1))=w_{22}\otimes 1, \,\, w_{12}.(w_{21} \otimes 1)+w_{21}.(w_{12}\otimes 1)=-4\Theta \otimes 1.
\end{gather*}
Therefore all the elements of positive degree of $M(0,0,0,0)$ lie in the image of $\nabla$ and $M_{A}^{0,0}/ \Ima \nabla \cong F(0,0,0,0)$.
\end{rem}
In order to prove Proposition \ref{omologiaA} we need the following lemma.
\begin{lem} 
\label{lemmaesse}
Let
\begin{align*}
s=(w_{11} \otimes x_{2}-w_{21} \otimes x_{1})y_{2}-(w_{12}\otimes x_{2}-w_{22}\otimes x_{1})y_{1}
\end{align*}
be an element in $M_{A}^{1,1}=M(1,1,-1,0)$. The following hold:
\begin{enumerate}
	\item $s$ is a highest weight vector of weight (0,0,-2,0),
\item $[s]$ is a basis for the $\g_{0}-$module $H^{1,1}(G_{A}) $. 
\end{enumerate}
\end{lem}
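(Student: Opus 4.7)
The plan is to address the two claims separately: (1) is a direct Lie‐algebraic verification, while (2) combines the obvious cycle check with a one‐line dimension argument based on Proposition \ref{Proppesi}, followed by a careful weight computation to rule out exactness.

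For (1), I will verify each of the defining conditions by direct computation using the explicit realisation of the action of $\g_0$ on $G_A=\bigwedge(\g_{-1})\otimes V_A$. Since $t$ is a grading element with $[t,w_{ij}]=-w_{ij}$ and since $t$ acts on $V_A^{1,1}$ as multiplication by $-1$, each summand of $s$ has $t$‐eigenvalue $-2$, so $t.s=-2s$. The element $C$ is central and acts by the scalar $\mu_C=\tfrac12\cdot 1-\tfrac12\cdot 1=0$ on the weight component of $V_A^{1,1}$ containing the terms of $s$, giving $C.s=0$. A weight check against $h_x,h_y$ shows that each summand $w_{ij}\otimes x_k y_l$ appearing in $s$ has weight $(0,0)$. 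Finally, one must show $e_1.s=e_2.s=0$; this is a short calculation using the explicit formulas $e_x\leftrightarrow x_1\partial_{x_2}$, $e_y\leftrightarrow y_1\partial_{y_2}$ on $V_A$ together with the $\g_0^{ss}$‐action on the $w_{ij}$'s given by Lemma \ref{appoggiog-1}. The signs in $s$ are precisely what is needed for the cross‐cancellations to take place.

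For (2), the key observation is Proposition \ref{Proppesi} together with Remark \ref{rem2Gcirc}: since $m=1>0$, we have $H^{1,1}(G_A)=H^{1,1}(G_{A^\circ})\cong Q(0,0,-2,0)$, which is one‐dimensional. So it suffices to show that $s$ is a cycle with $[s]\neq 0$. The cycle check is a direct computation of $\nabla s$ using the definition \eqref{nabla} and the anticommutativity of the $w_{ij}$'s in $\Gr U(\g_{<0})\cong S(\g_{-2})\otimes \bigwedge(\g_{-1})$; the four terms collapse pairwise via $w_{11}w_{22}+w_{22}w_{11}=0$ and $w_{12}w_{21}+w_{21}w_{12}=0$.

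The genuine step is to show $[s]\neq 0$, and this is where I expect the main (though not difficult) obstacle. My strategy is to exploit the finer weight decomposition. Any preimage of $s$ under $\nabla\colon G_A^{2,2}\to G_A^{1,1}$ must have the same weight $(0,0,-2,0)$ as $s$; in $G_A^{2,2}=\bigwedge(\g_{-1})\otimes V_A^{2,2}$ the $t$‐weight $-2$ component can only come from the $\bigwedge^0$ summand, since the remaining $V_A^{2,2}$ has $t$‐eigenvalue $-2$ and each $w_{ij}$ contributes an additional $-1$. Thus the weight $(0,0,-2,0)$ part of $G_A^{2,2}$ is one‐dimensional, spanned by $1\otimes x_1 x_2 y_1 y_2$. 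A direct computation gives
\[
\nabla(x_1x_2y_1y_2)=w_{11}\otimes x_2y_2+w_{21}\otimes x_1y_2+w_{12}\otimes x_2y_1+w_{22}\otimes x_1y_1,
\]
which is linearly independent from $s$ (the cross terms have opposite signs). Hence $s$ does not lie in the image of $\nabla$ on the $(0,0,-2,0)$‐weight space, so $[s]\neq 0$ in $H^{1,1}(G_A)$. Combined with the one‐dimensionality above, $[s]$ is a basis.
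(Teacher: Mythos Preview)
Your proposal is correct and follows essentially the same route as the paper: direct weight computations for (1), and for (2) the paper likewise checks that $\nabla s$ vanishes in $\Gr M_A$ (noting $\nabla s=8\Theta\otimes 1\in F_1M_A$), then argues that any preimage $v\in G_A^{2,2}$ must satisfy $t.v=-2v$, forcing $v\in V_A^{2,2}$, and derives a contradiction from $\nabla v=s$, before invoking the one-dimensionality from Proposition \ref{Proppesi}. Your version makes the last step more explicit by identifying the one-dimensional weight-$(0,0,-2,0)$ space as $\C\, x_1x_2y_1y_2$ and computing its image under $\nabla$, but this is exactly the contradiction the paper alludes to.
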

\begin{proof}
\begin{enumerate}
	\item  This property follows by direct computations.
				\item  Let us show that $s$ is a cycle in $\Gr M_{A}$. By direct computations, using \eqref{bracketwcaselli}, $\nabla s= 8\Theta \otimes 1 \in F_{1}M_{A}.$
				Since $s\in F_{1}M_{A}$, then $\nabla[s]=0$ in $\Gr M_{A}$. By its definition, $s$ lies in $G^{1,1}_{A}$. Moreover $s$ does not lie in $\Ima (\nabla: G_{A}^{2,2}\longrightarrow G_{A}^{1,1})$. Indeed let us suppose that there exists $v \in G_{A}^{2,2}$ such that $\nabla v=s$. Since $t.s=-2s$, then $t.v=-2v$. Therefore $v \in F(2,2,-2,0)$. Imposing that $\nabla v=s$ we get a contradiction. Hence $0 \neq [s] \in H^{1,1}(G_{A})$. By Proposition \ref{Proppesi} $H^{1,1}(G_{A}) $ is one$-$dimensional; therefore $[s]$ is a basis for the $\g_{0}-$module $H^{1,1}(G_{A}) $.
	\end{enumerate}
	\end{proof}
\begin{proof}[Proof of Proposition \ref{omologiaA}]
By \eqref{keyhomology}, Remark \ref{apphomA} and Lemma \ref{lemmaesse}, we know that as $\mathcal{W}-$modules
\begin{gather*}
 E^{0}(M_{A})^{0,0}=H^{0,0}(\Gr M_{A})  \cong S(\g_{-2}) \otimes H^{0,0}(G_{A}) =S(\g_{-2}) \otimes \langle  1 \rangle,\\
E^{0}(M_{A})^{1,1}=H^{1,1}(\Gr M_{A})  \cong S(\g_{-2}) \otimes H^{1,1}(G_{A}) =S(\g_{-2}) \otimes \langle  [s] \rangle.
\end{gather*}
By Lemma \ref{lemmaesse}, the morphism $\nabla^{(0)}:E^{0}(M_{A})^{1,1}\longrightarrow E^{0}(M_{A})^{0,0}$ maps $[s]$ to $8\Theta \otimes[1]$. Therefore $\nabla^{(0)}$ is injective and $E^{1}(M_{A})^{1,1} \cong 0$. Thus $E^{\infty}(M_{A})^{1,1} \cong E^{1}(M_{A})^{1,1}=0$ as $\mathcal{W}-$modules, and hence as $\g-$modules.
\end{proof}
\begin{rem}
We point out that for $C=0$, the study of finite irreducible modules over $K'_{4}$ reduces to the study of finite irreducible modules over $K_{4}$, already studied in \cite{kac1}. In particular, for $C=0$, the diagram of maps between finite Verma modules reduces to the diagonal $m=n$ in the quadrants \textbf{A} and \textbf{C} of Figure \ref{figura}. For $K_{4}$ the homology had been already computed in \cite[Propositions 6.2, 6.4]{kac1} using de Rham complexes. Propositions \ref{omologiaC} and \ref{omologiaA} are coherent with the results of \cite[Propositions 6.2, 6.4]{kac1} for $K_{4}$.
\end{rem}
\begin{proof}[Proof of Theorem \ref{thmomologia}]
The proof follows combining the results of Propositions \ref{omologiafinale}, \ref{omologiafinaleduale}, \ref{omologiaC} and \ref{omologiaA}.
\end{proof}
\section{Size}
The aim of this section is to compute the size of the irreducible quotients $I(m,n,\mu_{t},\mu_{C})$, where $(m,n,\mu_{t},\mu_{C})$ occurs among the weights in Theorems \ref{sing1}, \ref{sing2}, \ref{sing3}. This computation is an application of Theorem \ref{thmomologia}.
For a $S(\g_{-2})-$module $V$, we define its size as (see \cite{kacrudakovE36}):
\begin{align*}
\size(V)=\frac{1}{4}\rk_{S(g_{-2})}V.
\end{align*}
\begin{prop}
\label{taglia}
   \begin{itemize} 
 \item[A)] $\size(I(m,n,-\frac{m+n}{2},\frac{m-n}{2}))=2mn+m+n$,\\ 
\item[B)] $\size(I(m,n,1+\frac{m-n}{2},-1-\frac{m+n}{2}))=2(m+1)(n-1)+n-1+3m+3+2=2mn+m+3n+2$,\\
\item[C)] $\size(I(m,n,\frac{m+n}{2}+2,\frac{n-m}{2}))=2(m+1)(n+1)+m+n+2=2mn+3m+3n+4$,\\
\item[D)] $\size(I(m,n,1+\frac{n-m}{2},1+\frac{m+n}{2}))=2mn+n+3m+2$.
\end{itemize}
\end{prop}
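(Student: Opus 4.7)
The plan is to translate the exactness statement of Theorem \ref{thmomologia} into short exact sequences of $\g$-modules and then read off $\size$ from these, exploiting additivity of $\size$ on finitely generated $\C[\Theta]$-modules. For every degenerate weight $\mu \neq (0,0,0,0), (1,1,3,0)$, Lemma \ref{lemmarealize} combined with the exactness of the complex at $M(\mu)$ identifies the maximal submodule $\Ker \nabla$ with $\Ima \widehat{\nabla} \cong I(\mu')$, where $\mu' \to \mu$ is the unique arrow in Figure \ref{figura} ending at $M(\mu)$. This yields the short exact sequence
\begin{equation*}
0 \to I(\mu') \to M(\mu) \to I(\mu) \to 0.
\end{equation*}
At $\mu = (0,0,0,0)$, Theorem \ref{thmomologia} directly gives $I(\mu) \cong \C$, while at $\mu = (1,1,3,0)$ the one-dimensional homology enlarges $\Ker \nabla$: it is an extension of the irreducible $M(0,0,2,0)$ (Proposition \ref{M(0,0,2,0)}) by the trivial representation generated by the class $[z]$ of Lemma \ref{lemmaxi}.

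Since $M(\mu) \cong \C[\Theta] \otimes \inlinewedge(\g_{-1}) \otimes F(\mu)$ as $\C[\Theta]$-module and $\dim F(m,n,\mu_t,\mu_C) = (m+1)(n+1)$, we have $\size M(\mu) = 4(m+1)(n+1)$. Additivity of $\size$ thus produces the recursion
\begin{equation*}
\size I(\mu) + \size I(\mu') = 4(m+1)(n+1),
\end{equation*}
with boundary values $\size I(0,0,0,0) = 0$ and $\size I(1,1,3,0) = 16-4-0 = 12$. Setting $s_X(m,n) := \size I_X(m,n)$, I would verify by direct substitution that the four formulas in (A)--(D) satisfy every such recursion. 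For example, the A-interior recursion $s_A(m,n)+s_A(m+1,n+1) = 4(m+1)(n+1)$ becomes $(2mn+m+n)+(2(m+1)(n+1)+(m+1)+(n+1)) = 4(m+1)(n+1)$. At the axes the predecessor jumps between families via $\widetilde{\nabla}_2,\nabla_2,\nabla_3,\widetilde{\nabla}_3$, giving cross-quadrant checks such as $s_A(0,n+2)+s_B(0,n) = (n+2)+(3n+2) = 4(n+1)$ and $s_A(0,1)+s_C(1,0) = 1+7 = 8$, which follow immediately.

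The main obstacle will be uniqueness: the chains in Figure \ref{figura} are generally infinite (they extend backward in A, B, D and forward in C without terminating), so no finite telescoping determines $s_X$ from the recursions alone. My approach is to argue that $s_X(m,n)$ is a polynomial in $(m,n)$ of degree at most two. This may be seen by passing to the field of fractions $\C(\Theta)$: since the homology of the complex is $\Theta$-torsion, the complex becomes exact after tensoring with $\C(\Theta)$, so $\rk_{\C[\Theta]} I(\mu)$ equals the dimension of $\Ima(\nabla\otimes 1)$ in $M(\mu_+)\otimes\C(\Theta)$; this dimension is computed via the graded picture in Proposition \ref{Proppesi} and is manifestly polynomial in $(m,n)$ of degree $\leq 2$. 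Once polynomiality is granted, any two polynomial solutions of degree $\leq 2$ to the recursion differ by a polynomial $p$ satisfying $p(m,n) = -p(m+1,n+1)$; comparing coefficients forces $p \equiv 0$, so the formulas in (A)--(D) are the unique such solutions.
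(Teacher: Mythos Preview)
Your short--exact--sequence reduction is correct and pleasant: for every degenerate $\mu$ away from the two exceptional points, exactness plus Lemma~\ref{lemmarealize} indeed gives $0\to I(\mu')\to M(\mu)\to I(\mu)\to 0$ and hence the additive recursion $\size I(\mu)+\size I(\mu')=4(m+1)(n+1)$, with the boundary corrections at $(0,0,0,0)$ and $(1,1,3,0)$ handled exactly as you say. The cross--quadrant checks along the $\nabla_2$, $\widetilde\nabla_2$, $\nabla_3$, $\widetilde\nabla_3$ arrows are also fine.

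The paper proceeds differently, and the difference is precisely where your argument becomes thin. Rather than working with sizes directly, the paper works with the \emph{character} $\ch V=\tr_V s^{-t}$. Since $-\mu_t$ increases strictly by $1$ along each arrow, the alternating sum $\sum_j(-1)^j\ch M(\mu_j)$ over the semi--infinite backward chain converges as a formal power series in $s$, can be summed in closed form via the binomial series, and then $\size$ is extracted by $\size V=\tfrac14\lim_{s\to1}(1-s^2)\ch V$. Duality (Remark~\ref{shifted}) handles types B and C. This use of the graded variable $s$ is what converts the divergent alternating sum of sizes into a convergent one of characters.

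Your substitute for this---polynomiality of degree $\le2$---is the right \emph{conclusion}, but the justification you give does not establish it. Passing to $\C(\Theta)$ makes the complex exact, but that only reproduces the same recursion $\rk I(\mu)+\rk I(\mu')=16(m+1)(n+1)$ over $\C(\Theta)$; it does not by itself pin down $\rk I(\mu)$. And Proposition~\ref{Proppesi} computes the $\g_0$--structure of $H(G_{X^\circ})$, not the $\C(\Theta)$--rank of $\Ima\nabla$; there is no direct passage from one to the other in what you wrote. One way to repair your approach is to compare the generic rank of $\nabla$ with its rank at the specialisation $\Theta=0$, where $M_X/\Theta M_X\cong G_X$: since rank can only drop under specialisation and the \emph{same} recursion holds on both sides (using exactness of both $M_X\otimes\C(\Theta)$ and of $G_X$ at interior points), the two ranks are forced to coincide, which lets you trade the bi--infinite $M_X$--chain for the forward--terminating $G_X$--chain and compute $\size$ by a finite telescope. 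But this still requires care at the axes, and as written your proof has a genuine gap at the uniqueness step.
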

In order to prove Proposition \ref{taglia} we need some preliminary results.
We will say that $I(m,n,\mu_{t},\mu_{C})$ is of type X if $M(m,n,\mu_{t},\mu_{C})$ is represented in quadrant \textbf{X} in Figure \ref{figura}. In the following we will also use the notation $\nabla^{m,n}_{X}$ for the morphism $\nabla_{|M_{X}^{m,n}}: M_{X}^{m,n}\rightarrow M_{X}^{m-1,n-1}$ defined as in \eqref{nabla}, in order to make explicit the dependence on the domain.
\begin{rem}
\label{shifted}
We point out that it is sufficient to compute the size for modules $I(m,n,-\frac{m+n}{2},\frac{m-n}{2})$ of type A and $I(m,n,1+\frac{n-m}{2},1+\frac{m+n}{2})$ of type D and use conformal duality, since conformal dual modules have the same size.\\
 Let us show that the module $I(m,n,\frac{m+n}{2}+2,\frac{n-m}{2})$ of type C is the conformal dual of $I(m+1,n+1,-\frac{m+n+2}{2},\frac{m-n}{2})$ of type A, , when $(m,n) \neq (0,0)$. Indeed, by Remarks \ref{costruzionemorfismi} and \ref{cantacasellikacremarksize}, we have the following dual maps, for $m,n\geq 0$:
\begin{align*}
\nabla^{m+1,n+1}_{A}&:M^{m+1,n+1}_{A}\longrightarrow M^{m,n}_{A},\\
\nabla^{-m,-n}_{C}&:M^{-m,-n}_{C}\longrightarrow M^{-m-1,-n-1}_{C}.
\end{align*}
We use Remark \ref{cantacasellikacremarksize} and Theorem \ref{dualeconforme} with $T:=\nabla^{-m,-n}_{C}$, $M:=M^{-m,-n}_{C}$ and $N:=M^{-m-1,-n-1}_{C}$. 
We point out that we can apply Theorem \ref{dualeconforme}, because we know that $M^{-m-1,-n-1}_{C}/  \Ima (\nabla^{-m,-n}_{C})$ is a finitely generated torsion$-$free $\C[\Theta]-$module. Indeed, by Theorem \ref{thmomologia}, the complex of type C is exact in $M^{-m-1,-n-1}_{C}$ if and only if $(-m-1,-n-1) \neq (-1,-1)$. Therefore:
\begin{gather*}
\frac{M^{-m-1,-n-1}_{C}}{\Ima (\nabla_{C}^{-m,-n})}=\frac{M^{-m-1,-n-1}_{C}}{\Ker(\nabla_{C}^{-m-1,-n-1})} \cong \Ima (\nabla_{C}^{-m-1,-n-1})
\end{gather*}
 and $\Ima (\nabla_{C}^{-m-1,-n-1})$ is a submodule of the free module $M^{-m-2,-n-2}_{C}$, thus it is torsion$-$free as a $\C[\Theta]-$module. We have that $M/ \Ker T=M^{-m,-n}_{C}/ \Ker(\nabla^{-m,-n}_{C}) \cong I(m,n,\frac{m+n}{2}+2,\frac{n-m}{2})$ is the dual of $N^{*} /\Ker T^{*}\cong \Ima T^{*} \cong I(m+1,n+1,-\frac{m+n+2}{2},\frac{m-n}{2})$.\\
Using the same argument, it is possible to show that the module $I(m,n,1+\frac{m-n}{2},-1-\frac{m+n}{2})$ of type B is the conformal dual of $I(m+1,n-1,1+\frac{n-m-2}{2},1+\frac{m+n}{2})$ of type D.
\end{rem}
\subsection{The character}
We now introduce the notion of character, that will be used for the computation of the size. Let $s$ be an indeterminate. We define the character of a $\g-$module $V$, following \cite{kacrudakovE36}, as:
\begin{align*} 
\ch V=\tr_{V}s^{-t}.
\end{align*}
The character is a Laurent series in the indeterminate $s$; the coefficient of $s^{k}$ is the dimension of the eigenspace of $V$ of eigenvalue $k$ with respect to the action of $-t \in \g_{0}$.
\begin{rem} 
\label{charquoz}
Let $V$ be a $\g-$module and $W$ a $\g-$submodule of $V$. It is straightforward that $\ch V/W=\ch V-\ch W$.
\end{rem}
We now compute directly the character of a Verma module $M(m,n,\mu_{t},\mu_{C}) \cong U(\g_{<0}) \otimes F(m,n,\mu_{t},\mu_{C})$. Using that $-t$ acts on elements of $\g_{-2}$ as the multiplication by 2 and on elements of $\g_{-1}$ as the multiplication by 1 and that if $|s|^{2}<1$:
\begin{align*} 
\sum^{\infty}_{k=0} s^{2k}=\frac{1}{1-s^{2}},
\end{align*} 
 we obtain that, if $-1<s<1$:
 \begin{align} 
\label{carattereverma}
\ch M(m,n,\mu_{t},\mu_{C})=s^{-\mu_{t}}\dime F(m,n,\mu_{t},\mu_{C}) \cdot \frac{(1+s)^{4}}{1-s^{2}}.
\end{align} 
For the computation of the size of a $\g-$module $V$ we use that (see \cite{kacrudakovE36}):
\begin{align}
\label{keycaratteri}
\size(V)=\frac{1}{4} \lim_{s \rightarrow 1} (1-s^{2})\ch V.
\end{align}
\begin{prop}
\label{caratteri}
The character of $I(m,n,-\frac{m+n}{2},-\frac{n-m}{2})$ of type A is, if $(m,n) \neq (0,0)$:
\begin{align*}
\ch I\Big(m,n,-\frac{m+n}{2},\frac{m-n}{2}\Big)=s^{\frac{m+n}{2}}\frac{(1+s)^{4}}{1-s^{2}} \Big[\frac{2}{(1+s)^{3}}+\frac{m+n-1}{(1+s)^{2}}+\frac{mn}{1+s}\Big].
\end{align*}
The character of $I\Big(m,n,1+\frac{n-m}{2},1+\frac{m+n}{2}\Big)$ of type D is:
\begin{align*}
\ch I\Big(m,n,1+\frac{n-m}{2},1+\frac{m+n}{2}\Big)=&s^{-1-\frac{n-m}{2}}\frac{(1+s)^{4}}{1-s^{2}} \Big[\frac{-2}{(1+s)^{3}}+\frac{3+n-m}{(1+s)^{2}}+\frac{mn+2m}{1+s}\\
& -(-1)^{n+1}s^{n+1}\Big(\frac{-2}{(1+s)^{3}}+\frac{-m-n+1}{(1+s)^{2}}+\frac{m+n+1}{1+s}\Big)\Big]\\
&+s^{\frac{m+n+2}{2}}\frac{(1+s)^{4}}{1-s^{2}} (-1)^{n+1}\Big(\frac{2}{(1+s)^{3}}+\frac{m+n+1}{(1+s)^{2}}\Big).
\end{align*}
\end{prop}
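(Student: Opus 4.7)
My plan is to build, for each irreducible quotient $I(\mu)$ in the proposition, an exact resolution by finite Verma modules read off from Figure~\ref{figura}, and then compute its character as an alternating sum of the characters of the Verma modules appearing in the resolution. The basic tools are \eqref{carattereverma} together with $\dim F(m,n,\mu_t,\mu_C)=(m+1)(n+1)$ (since $\mathfrak{so}(4)\cong\mathfrak{sl}_2\oplus\mathfrak{sl}_2$ acts with highest weight $(m,n)$ and $t,C$ act as scalars), giving $\ch M(m,n,\mu_t,\mu_C)=s^{-\mu_t}(m+1)(n+1)\tfrac{(1+s)^4}{1-s^2}$, and Remark~\ref{charquoz}, which turns exact sequences into alternating sums of characters.

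For type A with $(m,n)\neq(0,0)$, Theorem~\ref{thmomologia} (exactness at every node other than $M(0,0,0,0)$ and $M(1,1,3,0)$), combined with the remark following it and Lemma~\ref{lemmarealize}, produces the resolution
\[
\cdots\to M_A^{m+k,n+k}\xrightarrow{\nabla}M_A^{m+k-1,n+k-1}\to\cdots\to M_A^{m,n}\to I\to 0.
\]
The alternating sum gives
\[
\ch I=s^{(m+n)/2}\frac{(1+s)^4}{1-s^2}\sum_{k\geq 0}(-s)^k(m+k+1)(n+k+1).
\]
Expanding $(m+k+1)(n+k+1)=(m+1)(n+1)+(m+n+2)k+k^2$ and applying the standard identities for $\sum x^k$, $\sum kx^k$, $\sum k^2x^k$ yields $\tfrac{mn}{1+s}+\tfrac{m+n}{(1+s)^2}+\tfrac{1-s}{(1+s)^3}$, and the identity $\tfrac{1-s}{(1+s)^3}=\tfrac{2}{(1+s)^3}-\tfrac{1}{(1+s)^2}$ regroups this into the claimed form.

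For type D, the resolution of $I(m,n,1+\tfrac{n-m}{2},1+\tfrac{m+n}{2})$ pulls back along the diagonal $\nabla$-arrows inside quadrant~D for $n$ steps until it reaches $M_D^{m+n,0}$ on the axis, then jumps across via $\nabla_2:M_A^{m+n+2,0}\to M_D^{m+n,0}$ into quadrant~A, and continues for infinitely many further terms $M_A^{m+n+2+k,k}$. No intermediate node is $M(0,0,0,0)$ or $M(1,1,3,0)$, so Theorem~\ref{thmomologia} again guarantees exactness. Splitting the alternating sum accordingly gives a finite D-sum of length $n+1$ (prefactor $s^{-1-(n-m)/2}$) and an infinite A-sum with overall sign $(-1)^{n+1}$ (prefactor $s^{(m+n+2)/2}$). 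The A-sum evaluates by the same generating-function identity with parameters $(p,q)=(m+n+2,0)$, producing $\tfrac{m+n+1}{(1+s)^2}+\tfrac{2}{(1+s)^3}$. The finite D-sum I would compute as the infinite sum $\sum_{k\geq 0}(-s)^k(m+k+1)(n-k+1)$ (using $(m+k+1)(n-k+1)=(m+1)(n+1)+(n-m)k-k^2$) minus the tail $\sum_{k\geq n+1}(-s)^k(m+k+1)(n-k+1)$; after the index shift $j=k-n-1$ the tail becomes $(-s)^{n+1}$ times an elementary series of the same type that yields precisely the correction $-(-1)^{n+1}s^{n+1}\bigl(\tfrac{-2}{(1+s)^3}+\tfrac{1-m-n}{(1+s)^2}+\tfrac{m+n+1}{1+s}\bigr)$ appearing in the statement.

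The main obstacle is combinatorial bookkeeping rather than mathematics: one must verify that each step of the type D resolution uses the correct morphism from Section~\ref{sezmorfismi}, that no intermediate node coincides with the two exceptional homology points of Theorem~\ref{thmomologia}, and carry out the rewriting $s=(1+s)-1$ systematically so that the various $1/(1+s)^k$ contributions collapse into the compact closed form displayed in the proposition. Boundary cases --- notably $n=0$ in type D, where the finite D-sum reduces to a single term --- must be tracked as well, though the formulas remain valid in those cases.
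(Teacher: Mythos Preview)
Your proposal is correct and follows essentially the same approach as the paper: build the Verma resolution from Theorem~\ref{thmomologia}, take the alternating sum of characters via \eqref{carattereverma} and Remark~\ref{charquoz}, and for type~D split the sum into a finite D-part (computed as an infinite series minus its tail) plus the infinite A-tail coming through $\nabla_2$. The only cosmetic difference is that the paper decomposes the quadratic $(j+m+1)(j+n+1)$ in the binomial basis $1,\ (j+1),\ (j+1)(j+2)$ and invokes the identity $\sum_{j\ge0}(-s)^j\binom{j+m}{m}=(1+s)^{-m-1}$, whereas you use the monomial basis $1,\ k,\ k^2$ and the standard formulas for $\sum x^k,\ \sum kx^k,\ \sum k^2x^k$; both routes give the same closed form after the regrouping $\tfrac{1-s}{(1+s)^3}=\tfrac{2}{(1+s)^3}-\tfrac{1}{(1+s)^2}$.
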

\begin{proof}
We compute the character of modules $I(m,n,\mu_{t},\mu_{C})$ using the character of $M(m,n,\mu_{t},\mu_{C})$. Let us now focus on the case $I(m,n,-\frac{m+n}{2},-\frac{n-m}{2})$ of type A. By Theorem \ref{thmomologia}, the following is an exact sequence, if $(m,n) \neq (0,0)$:
\begin{align*}
&\dots \longrightarrow M\Big(m+j,n+j,-\frac{m+n+2j}{2},\frac{m-n}{2}\Big)\longrightarrow \dots \longrightarrow M\Big(m+1,n+1,-\frac{m+n+2}{2},\frac{m-n}{2}\Big)\\
&\longrightarrow M\Big(m,n,-\frac{m+n}{2},\frac{m-n}{2}\Big)\longrightarrow I\Big(m,n,-\frac{m+n}{2},\frac{m-n}{2}\Big)\longrightarrow 0.
\end{align*}
Hence, using the exactness of the previous sequence, Remark \ref{charquoz} and \eqref{carattereverma}, we obtain that:
\begin{align*}
\ch I\Big(m,n,-\frac{m+n}{2},\frac{m-n}{2}\Big)=s^{\frac{m+n}{2}}\frac{(1+s)^{4}}{1-s^{2}} \sum^{\infty}_{j=0}(-1)^{j}s^{j}(j+m+1)(j+n+1).
\end{align*}
We need the following identity, that holds if $|s|<1$ and is a consequence of the binomial series:
\begin{align}
\label{appoggiobinmiale}
\sum^{\infty}_{j=0}(-1)^{j}s^{j} \binom{j+m}{m}=\frac{1}{(1+s)^{m+1}}.
\end{align}
By the fact that $(j+m+1)(j+n+1)=(j+2)(j+1)+(j+1)(m+n-1)+mn$ and \eqref{appoggiobinmiale}, we get:
\begin{align*}
\ch I\Big(m,n,-\frac{m+n}{2},\frac{m-n}{2}\Big)=s^{\frac{m+n}{2}}\frac{(1+s)^{4}}{1-s^{2}} \Big(\frac{2}{(1+s)^{3}}+\frac{m+n-1}{(1+s)^{2}}+\frac{mn}{1+s}\Big).
\end{align*}
Now we compute the character for modules $I(m,n,1+\frac{n-m}{2},1+\frac{m+n}{2})$ of type D. By Theorem \ref{thmomologia} the following is an exact sequence:
\begin{gather*}
 \rightarrow M\Big(m+n+2+j,j,-\frac{m+n+2+2j}{2},\frac{m+n+2}{2}\Big)\rightarrow \dots \rightarrow M\Big(m+n+2,0,-\frac{m+n+2}{2},\frac{m+n+2}{2}\Big) \\
 \rightarrow M\Big(m+n,0,1+\frac{-m-n}{2},1+\frac{m+n}{2}\Big)\rightarrow M\Big(m+n-1,1,1+\frac{-m-n+2}{2},1+\frac{m+n}{2}\Big) \rightarrow \dots  \\
\rightarrow M\Big(m,n,1+\frac{n-m}{2},1+\frac{m+n}{2}\Big) \rightarrow I\Big(m,n,1+\frac{n-m}{2},1+\frac{m+n}{2}\Big)\rightarrow 0,
\end{gather*}
where the first row is composed of modules of type A and the following terms are of type D.
Hence, by the exactness of the previous sequence, Remark \ref{charquoz} and \eqref{carattereverma}, we obtain that:
\begin{align*}
\ch I\Big(m,n,1+\frac{n-m}{2},1+\frac{m+n}{2}\Big)=&s^{-1-\frac{n-m}{2}}\frac{(1+s)^{4}}{1-s^{2}} \sum^{n}_{j=0}(-1)^{j}s^{j}(j+m+1)(n-j+1)\\ 
&+s^{\frac{m+n+2}{2}}\frac{(1+s)^{4}}{1-s^{2}} \sum^{\infty}_{i=0}(-1)^{n+1+i}s^{i}(i+m+n+2+1)(i+1).
\end{align*}
We use the identity:
\begin{align}
\label{appcharD}
&\sum^{n}_{j=0}(-1)^{j}s^{j}(j+m+1)(n-j+1)=\\ \nonumber
&=\sum^{\infty}_{j=0}(-1)^{j}s^{j}(j+m+1)(n-j+1)-\sum^{\infty}_{j=n+1}(-1)^{j}s^{j}(j+m+1)(n-j+1).
\end{align}
Let us compute the first series in the RHS of \eqref{appcharD}; by the fact that $(j+m+1)(n-j+1)=-(j+1)(j+2)+(3+n-m)(j+1)+2m+mn$ and \eqref{appoggiobinmiale}, we obtain that if $|s|<1$:
\begin{align*}
\sum^{\infty}_{j=0}(-1)^{j}s^{j}(j+m+1)(n-j+1)=\frac{-2}{(1+s)^{3}}+\frac{3+n-m}{(1+s)^{2}}+\frac{mn+2m}{1+s}.
\end{align*}
Let us compute the second series in the RHS of \eqref{appcharD}; we have that:
\begin{align*}
-\sum^{\infty}_{j=n+1}(-1)^{j}s^{j}(j+m+1)(n-j+1)=&-\sum^{\infty}_{k=0}(-1)^{k+n+1}s^{k+n+1}(k+n+1+m+1)(n-k-n-1+1).
\end{align*}
By the fact that $(k+n+1+m+1)(n-k-n-1+1)=-(k+2)(k+1)+(k+1)(-m-n+1)+m+n+1$ and \eqref{appoggiobinmiale}, we obtain that if $|s|<1$:
\begin{align*}
-\sum^{\infty}_{j=n+1}(-1)^{j}s^{j}(j+m+1)(n-j+1)=&-\sum^{\infty}_{k=0}(-1)^{k+n+1}s^{k+n+1}(k+n+m+2)(-k)=\\
&-(-1)^{n+1}s^{n+1}\Big(\frac{-2}{(1+s)^{3}}+\frac{-m-n+1}{(1+s)^{2}}+\frac{m+n+1}{1+s}\Big).
\end{align*}
Finally, we sum up the previous computations and get:
\begin{align*}
\ch I\Big(m,n,1+\frac{n-m}{2},1+\frac{m+n}{2}\Big)=&s^{-1-\frac{n-m}{2}}\frac{(1+s)^{4}}{1-s^{2}} \Big[\frac{-2}{(1+s)^{3}}+\frac{3+n-m}{(1+s)^{2}}+\frac{mn+2m}{1+s}\\
& -(-1)^{n+1}s^{n+1}\Big(\frac{-2}{(1+s)^{3}}+\frac{-m-n+1}{(1+s)^{2}}+\frac{m+n+1}{1+s}\Big)\Big]\\
&+s^{\frac{m+n+2}{2}}\frac{(1+s)^{4}}{1-s^{2}} (-1)^{n+1}\Big(\frac{2}{(1+s)^{3}}+\frac{m+n+1}{(1+s)^{2}}\Big).
\end{align*}
\end{proof}
\begin{proof}[Proof of Proposition \ref{taglia}]
We first focus on $I(0,0,0,0)$ of type A. We have that $\size(I(0,0,0,0))=0$. Indeed by Theorem \ref{thmomologia} the following is an exact sequence:
\begin{align*}
& \rightarrow M(j,j,-j,0)\rightarrow \dots \rightarrow M(1,1,-1,0)\xrightarrow{\nabla} M(0,0,0,0)\xrightarrow{\phi} I(0,0,0,0)\rightarrow 0,
\end{align*}
where $\phi$ is the projection to the quotient $I(0,0,0,0) \cong \frac{M(0,0,0,0)}{\Ima \nabla}$. Therefore $\size(I(0,0,0,0))=0$ follows by the same computations used in Proposition \ref{caratteri} for case A.\\ Now let us compute the size of $I(0,0,2,0)$ of type C. Since $M(0,0,2,0)$ is irreducible by Proposition \ref{M(0,0,2,0)}, we obtain that $\size(I(0,0,2,0))=\size(M(0,0,2,0))=4$ by \eqref{carattereverma}. Finally the sizes of $I(m,n,\mu_{t},\mu_{C})$ of type A for $(m,n)\neq (0,0)$ and of type D follow directly from Proposition \ref{caratteri} and \eqref{keycaratteri}. The sizes of $I(m,n,\mu_{t},\mu_{C})$ of type C for $(m,n)\neq (0,0)$ and of type B follow from Remark \ref{shifted}.
\end{proof}
\begin{ack}
	The author would like to thank Nicoletta Cantarini, Fabrizio Caselli and Victor Kac for useful comments and suggestions.
\end{ack}

\end{document}